\documentclass[12pt]{amsart}
\usepackage{amssymb, amscd, mathpazo}
\usepackage{fullpage}
\usepackage{enumerate} 
\usepackage{url}

\title[Equivariant degenerations of spherical modules in type $\ssA$]{Equivariant degenerations of spherical modules for groups of type $\ssA$ }

\author{Stavros Argyrios Papadakis}
\address{Centro de An\'{a}lise Matem\'atica, Geometria e Sistemas
Din\^{a}micos,
Departamento de Ma\-te\-m\'atica, Instituto Superior T\'ecnico,
Universidade T\'ecnica de Lisboa,
Av. Rovisco Pais, 1049-001 Lisboa,
Portugal}
\email{papadak@math.ist.utl.pt}

\author{Bart Van Steirteghem}
\address{Department of Mathematics, Medgar Evers College - City University of New York, 1650 Bedford Ave., Brooklyn, NY 11225, USA}
\email{bartvs@mec.cuny.edu}

\date{22 December 2011}

\input xy
\xyoption{all}

\numberwithin{equation}{section}


\newcommand{\inn}{\subseteq}
\newcommand{\isom}{\simeq}
\newcommand{\quot}{/\!\!/}
\newcommand{\onto}{\twoheadrightarrow}
\newcommand{\into}{\hookrightarrow}

\renewcommand{\>}{\rangle}
\newcommand{\<}{\langle}

\renewcommand{\epsilon}{\varepsilon}


\DeclareMathOperator{\im}{im}

\DeclareMathOperator{\rk}{rk}
\DeclareMathOperator{\Spec}{Spec}
\DeclareMathOperator{\ad}{ad}
\DeclareMathOperator{\Hom}{Hom}
\DeclareMathOperator{\Lie}{Lie}
\DeclareMathOperator{\Mor}{Mor}
\DeclareMathOperator{\Sym}{Sym}
\DeclareMathOperator{\Aut}{Aut}

\DeclareMathOperator{\Coind}{Coind}
\newcommand{\mf}{\mathfrak}
\newcommand{\fa}{\mf{a}}
\newcommand{\fb}{\mf{b}}

\newcommand{\fg}{\mf{g}}
\newcommand{\fh}{\mf{h}}

\newcommand{\fk}{\mf{k}}

\newcommand{\ft}{\mf{t}}
\newcommand{\fu}{\mf{u}}

\newcommand{\fz}{\mf{z}}

\renewcommand{\sl}{\mf{sl}}

\newcommand{\msf}{\mathsf}
\newcommand{\ssA}{\msf{A}}

\newcommand{\GL}{\mathrm{GL}}
\newcommand{\SL}{\mathrm{SL}}
\newcommand{\SO}{\mathrm{SO}}


\newcommand{\shE}{\mathcal{E}}
\newcommand{\shF}{\mathcal{F}}
\newcommand{\shI}{\mathcal{I}}

\newcommand{\shN}{\mathcal{N}}
\newcommand{\shM}{\mathcal{M}}
\newcommand{\shO}{\mathcal{O}}
\newcommand{\Oh}{\mathcal{O}}


\newcommand{\cS}{\mathcal{S}}

\renewcommand{\AA}{{\mathbb A}}
\newcommand{\CC}{{\mathbb C}}
\newcommand{\NN}{{\mathbb N}}
\newcommand{\PP}{{\mathbb P}}
\newcommand{\QQ}{{\mathbb Q}}

\newcommand{\ZZ}{{\mathbb Z}}
\newcommand{\GGm}{{\mathbb G}_m}

\newcommand{\om}{\omega}

\newcommand{\tHilb}{\mathrm{Hilb}}
\newcommand{\tM}{\mathrm{M}}
\newcommand{\st}{\mathrm{st}}
\newcommand{\wm}{\mathcal{S}}
\newcommand{\wmo}{\overline{\wm}}
\newcommand{\wg}{\Delta}
\newcommand{\wgo}{\overline{\wg}}
\newcommand{\Tad}{T_{\ad}}
\newcommand{\Vgg}{(V/\fg\cdot x_0)^{G_{x_0}}}
\newcommand{\Vggo}{(V/\fg\cdot x_0)^{\overline{G}_{x_0}}}
\newcommand{\Vggp}{(V/\fg\cdot x_0)^{G'_{x_0}}}
\newcommand{\Vg}{V/\fg \cdot x_0}
\DeclareMathOperator{\Mat}{Mat}

\newcommand{\ESo}{{ES1}}
\newcommand{\ESt}{{ES2}}
\newcommand{\ESth}{{ES3}}
\newcommand{\ESf}{{ES4}}

\newcommand{\SRo}{{SR1}}
\newcommand{\SRt}{{SR2}}
\newcommand{\SRth}{{SR3}}
\newcommand{\SRf}{{SR4}}

\newcommand{\Sch}{\mathrm{(Schemes)}}  
\newcommand{\Cat}{\mathrm{(Cat)}}
\newcommand{\Sets}{\mathrm{(Sets)}}

\theoremstyle{plain}
\newtheorem{theorem}{Theorem}[section]
\newtheorem{lemma}[theorem]{Lemma}
\newtheorem{prop}[theorem]{Proposition}
\newtheorem{cor}[theorem]{Corollary}

\theoremstyle{definition}
\newtheorem{defn}[theorem]{Definition}
\newtheorem{remark}[theorem]{Remark}

\newtheorem{example}[theorem]{Example}

\newtheorem{listabc}[theorem]{List}

\begin{document}

\begin{abstract}
Let $G$ be a complex reductive algebraic group. Fix a
Borel subgroup $B$ of $G$ and a maximal torus $T$ in $B$. Call the monoid of dominant weights $\Lambda^+$ and let $\wm$ be a finitely generated submonoid of $\Lambda^+$.
V.~Alexeev and M.~Brion introduced a moduli scheme $\tM_{\wm}$ which
classifies affine $G$-varieties $X$ equipped with a
$T$-equivariant isomorphism $\Spec \CC[X]^U\to \Spec \CC[\wm]$, where $U$ is
the unipotent radical of $B$.
Examples of $\tM_{\wm}$ have been obtained by S. Jansou, P. Bravi
and S. Cupit-Foutou. In this paper, we prove that $\tM_{\wm}$ is isomorphic to an affine space  when
$\wm$ is the weight monoid of a spherical $G$-module
with $G$ of type $\ssA$. Unlike the earlier examples,
this includes cases where $\wm$ does not satisfy the condition $\<\wm\>_{\ZZ} \cap
\Lambda^+ = \wm$.
\end{abstract}

\maketitle

\section{Introduction and statement of results}
As part of the classification of affine $G$-varieties $X$, where $G$ is a complex connected reductive group,  a natural question is to what extent the $G$-module structure of the ring $\CC[X]$ of regular functions on $X$ determines $X$. Put differently, to what extent does the $G$-module structure of $\CC[X]$ determine its algebra structure?

In the mid 1990s, F. Knop conjectured that the answer to this question is `completely' when  $X$ is a smooth affine \emph{spherical} variety. To be precise, \emph{Knop's Conjecture}, which has since been proved by I.~Losev~\cite{losev-knopconj}, says that if $X$ is a smooth affine $G$-variety such that the $G$-module $\CC[X]$ has no multiplicities, then this $G$-module uniquely determines the $G$-variety $X$ (up to $G$-equivariant isomorphism).  Knop also proved \cite{knop-autoHam} that the validity of his conjecture implies that of Delzant's Conjecture~\cite{delzant-rank2} about multiplicity-free symplectic manifolds. 

In~\cite{alexeev&brion-modaff}, V.~Alexeev and M.~Brion brought geometry to the general question. Given a maximal torus $T$ in $G$ and an affine $T$-variety $Y$ such that all $T$-weights in $\CC[Y]$ have finite multiplicity, they introduced a moduli scheme $\tM_{Y}$ which parametrizes (equivalence classes of) pairs $(X,\varphi)$, where $X$ is an affine $G$-variety and $\varphi\colon X\quot U \to Y$ is a $T$-equivariant isomorphism (here $U \inn G$ is a fixed maximal unipotent subgroup normalized by $T$ and $X\quot U := \Spec \CC[X]^U$ is the categorical quotient).  They also proved that $\tM_Y$ is an affine connected scheme, of finite type over $\CC$, and that the orbits of the natural action of $\Aut^T(Y)$ on $\tM_Y$ are in bijection with the isomorphism classes of affine $G$-varieties $X$ such that $X\quot U \isom Y$. See also \cite[Section 4.3]{brion-ihs-arxivv2} for more information on $\tM_Y$.

The first examples of $\tM_Y$ were obtained by S.~Jansou \cite{jansou-deformations}. He dealt with the following situation. Suppose $\Lambda^+$ is the set of dominant weights of $G$ (with respect to the Borel subgroup $B = TU$ of $G$) and let $\lambda \in \Lambda^+$. Jansou proved that if $Y = \CC$ with $T$ acting linearly with weight $-\lambda$, then $\tM_{\lambda}:=\tM_Y$ is a (reduced) point or an affine line. Moreover, he linked $\tM_Y$ to the theory of \emph{wonderful varieties} (see, e.g., \cite{bravi-luna-f4} or \cite{pezzini-cirmspher}) by showing that $\tM_{\lambda}$ is an affine line if and only if $\lambda$ is a spherical root for $G$.

P.~Bravi and S.~Cupit-Foutou~\cite{bravi&cupit} generalized Jansou's result as follows. 
Given a free submonoid $\wm$ of $\Lambda^+$ such that
\begin{equation} \label{eq:Psat}
\<\wm\>_{\ZZ} \cap \Lambda^+ =\wm,
\end{equation}
put $Y := \Spec \CC[\wm]$ and $\tM_{\wm} := \tM_Y$. Bravi and Cupit-Foutou proved that $\tM_{\wm}$ is isomorphic to an affine space. More precisely, the map $T \to \Aut^T(Y)$ coming from the action of $T$ on $Y$ induces an action of $T$ on $\tM_{\wm}$, and they proved that $\tM_{\wm}$ is (isomorphic to) a multiplicity-free representation of $T$ whose weight set is the set of spherical roots of a wonderful variety associated to $\wm$. 
The connections between the moduli schemes $\tM_Y$ and wonderful varieties have been studied further in \cite{cupit-ihswv-prep, cupit-wvgr-prep}.

In this paper we compute examples of $\tM_{\wm}$ where $\wm$ is a  free submonoid of $\Lambda^+$, but does not necessarily satisfy (\ref{eq:Psat}). To be more precise, we prove that $\tM_{Y}$ is (again) isomorphic to an affine space whenever $Y = W \quot U$ with $W$ a spherical $G$-module and $G$ of type $\ssA$ (see Theorem~\ref{thm:main} below for the precise statement). The reason we chose to work with spherical modules is that they have been classified (`up to central tori') and that many of their combinatorial invariants have been computed (see~\cite{knop-rmks}). We prove Theorem~\ref{thm:main} by reducing it to a case-by-case verification (Theorem~\ref{thm:cbc}). It turns out that in most of our cases, condition (\ref{eq:Psat}) is not satisfied. The fact that the classification of spherical modules is `up to central tori' means that this verification needs some care, see Section~\ref{sec:reduction} and Remark~\ref{rem:saturation}. In this paper we restrict ourselves to groups of type $\ssA$ because the work needed is already quite lengthy. The reduction of the proof of Theorem~\ref{thm:main} to the case-by-case analysis is independent of the type of $G$. 

The main consequence of the absence of condition (\ref{eq:Psat}) is that computing the tangent space to $\tM_{\wm}$ at its unique $T$-fixed point and unique closed $T$-orbit $X_0$, which is also the first step in the work of Jansou, and Bravi and Cupit-Foutou, becomes more involved (see Section~\ref{sec:Brionstrat} below). On the other hand, we know, by definition, that our moduli schemes $\tM_{\wm} = \tM_Y$ (where $Y=W\quot U$)  contain the closed point $(W,\pi)$ where $\pi\colon W\quot U \to Y$ is the identity map. By general results from \cite{alexeev&brion-modaff} this point has an (open) $T$-orbit of which we know the dimension $d_W$.  This implies that once we have determined that $\dim T_{X_0} \tM_{\wm} \le d_W$, our main result follows. Jansou and especially  Bravi--Cupit-Foutou have to do quite a bit more work (involving the existence of a certain wonderful variety depending on $\wm$) to prove that $\tM_{\wm}$ contains a $T$-orbit of the same dimension as $T_{X_0} \tM_{\wm}$.

\subsection{Notation and preliminaries}
We will consider algebraic groups and schemes over $\CC$. In addition, like in \cite{alexeev&brion-modaff}, all schemes will be assumed to be Noetherian. By a variety, we mean an integral separated scheme of
finite type over $\CC$. In particular, varieties are irreducible.

In this paper, unless stated otherwise, $G$ will be a connected reductive linear algebraic group over $\CC$ in which we have chosen a (fixed) maximal torus $T$ and a (fixed) Borel subgroup $B$ containing $T$.  We will use $U$ for the unipotent radical of $B$; it is a maximal unipotent subgroup of $G$. For an algebraic group $H$, we denote $X(H)$ the group of characters, that is, the set of all homomorphisms of algebraic groups $H \to \GGm$, where $\GGm$ denotes the
multiplicative group $\CC^{\times}$.
By a $G$-module or a representation of $G$ we will always mean a
(possibly infinite dimensional)
\emph{rational} $G$-module (sometimes also called a locally finite $G$-module).
For the definition, which applies to non-reductive groups too, see for example~\cite[p.86]{alexeev&brion-modaff}. Because $G$ is reductive, every $G$-module $E$ is the direct sum of irreducible (or \emph{simple}) $G$-submodules. We call $E$ \emph{multiplicity-free} if it is the direct sum of pairwise non-isomorphic simple $G$-modules. 

 We will use $\Lambda$ for the weight lattice $X(T)$ of $G$, which is naturally identified with $X(B)$, and $\Lambda^+$ for the submonoid of $X(T)$ of dominant weights (with respect to $B$). 
 Every $\lambda \in \Lambda^+$ corresponds to a unique irreducible representation of $G$, which we will denote $V(\lambda)$. It is specified by the property that $\lambda$ is its unique $B$-weight.
 Conversely every irreducible representation of $G$ is of the form $V(\lambda)$ for a unique $\lambda \in \Lambda^+$.
 Furthermore, we will use $v_{\lambda}$ for a highest weight vector in $V(\lambda)$. It is defined up to nonzero scalar: $V(\lambda)^U = \CC v_{\lambda}$. For $\lambda \in \Lambda^+$, we will use $\lambda^*$ for the highest weight of the dual $V(\lambda)^*$ of $V(\lambda)$. We then have that
 $\lambda^* = -w_0(\lambda)$, where $w_0$ is the longest element of the Weyl group $N_G(T)/T$ of $G$. For a $G$-module $M$ and $\lambda \in \Lambda^+$, we will use $M_{(\lambda)}$ for the isotypical component of $M$ of type $V(\lambda)$.

We denote the center of $G$ by $Z(G)$ and use $T_{\ad}$ for the adjoint torus $T/Z(G)$ of $G$. 
The set of simple roots of $G$ (with respect to $T$ and $B$) will be denoted $\Pi$, the set of positive roots $R^+$ and the root lattice $\Lambda_R$.  When $\alpha$ is a root,  $\alpha^{\vee} \in \Hom_{\ZZ}(\Lambda, \ZZ)$ will stand for its coroot. In particular, $\<\alpha, \alpha^{\vee}\> = 2$ where $\<\cdot,\cdot\>$ is the natural pairing between $\Lambda$ and its dual $\Hom_{\ZZ}(\Lambda, \ZZ)$ (which is naturally identified with the group of one-parameter subgroups of $T$).  

The Lie algebra of an algebraic group $G, H, T, B, U$ etc.\ will be denoted by the corresponding fraktur letter $\fg, \fh, \ft, \fb, \fu$, etc. At times, we will also use $\Lie(H)$ for the Lie algebra of $H$. For a reductive group $G$, we will use $G'$ for its derived group $(G,G)$. It is a semisimple group and its Lie algebra is $\fg' = [\fg,\fg]$. When $G$ acts on a set $X$ and $x \in X$, then $G_x$ stands for the isotropy group of $x$. We adopt the convention that $G'_x := (G')_x$ and analogous notations for $\fg$-actions. 
For every root $\alpha$ of $G$, we choose a non-zero element $X_{\alpha}$ of the (one-dimensional) root space $\fg^{\alpha} \inn \fg$. We call $X_{\alpha}$ a \emph{root operator}. 

A reductive group $G$ is said to be of \emph{type  $\ssA$} if $\fg'$ is $0$ or isomorphic to a direct sum
\[\sl(n_1) \oplus \sl(n_2) \oplus\ldots\oplus \sl(n_k)\]
for some positive integer $k$ and integers $n_i \ge 2$ ($1\le i \le k$).   

When $G=\SL(n)$ and $i\in \{1,\ldots,n-1\}$,  we denote $\omega_i$ the highest weight of the module $\bigwedge^i \CC^n$. In addition, for $\SL(n)$ we put $\omega_n = \omega_0 = 0$. 
Similarly, when $G = \GL(n)$ and $i \in \{1,\ldots, n\}$, the highest weight of the module $\bigwedge^i \CC^n$ will also be denoted $\omega_i$.  The set $\{\omega_1,\ldots,\omega_n\}$ forms a  basis of the weight lattice $\Lambda$ of $\GL(n)$. Moreover, we put $\omega_0= 0$. It is well-known that the simple roots of $\GL(n)$ have the following expressions in terms of the $\omega_i$:
\begin{equation} \label{eq:simpleroots}
 \alpha_i = -\omega_{i-1} + 2\omega_i - \omega_{i+1} \quad \text{ for } i\in\{1,2,\ldots,n-1\},
\end{equation} 
and that the same formulas also hold for $\SL(n)$. The representations $V(\omega_i)$ are called the \emph{fundamental representations} of $\GL(n)$ (resp.\ $\SL(n)$). 

A finitely generated $\CC$-algebra $A$ is called a \emph{$G$-algebra} if it comes equipped with an action of $G$ (by automorphisms) for which $A$ is a rational $G$-module. The \emph{weight set} of $A$ is then defined as
\[\Lambda^+_A:= \{\lambda \in \Lambda^+ \colon A_{(\lambda)} \neq 0
\}.\]
Such an algebra $A$ is called \emph{multiplicity-free} if it is multiplicity-free as a $G$-module.  
When the $G$-algebra $A$ is an integral domain, the multiplication on $A$ induces a
monoid structure on $\Lambda^+_A$, which we then call  the \emph{weight
  monoid} of A; it is a finitely generated submonoid of $\Lambda^+$
(see e.g.~
\cite[Corollary 2.8]{brion-cirmactions}).

For an affine scheme $X$, we will use $\CC[X]$ for its ring of regular functions. In particular,
$X = \Spec \CC[X]$.
As in \cite{alexeev&brion-modaff}, an \emph{affine $G$-scheme} is an affine scheme $X$ of finite type equipped with an action of $G$. Then $\CC[X]$ is a $G$-algebra for the following action:
\[(g\cdot f)(x) = f(g^{-1}\cdot x)\quad \text{for $f\in \CC[X], g\in G$ and $x \in X$}.\]
We remark that even when $G$ is abelian we use this action on $\CC[X]$. A \emph{$G$-variety} is a variety equipped with an action of $G$.
If $X$ is an affine $G$-scheme, then
its \emph{weight set} $\Lambda^+_{(G,X)}$ is defined, like in \cite[p.87]{alexeev&brion-modaff}, as
the weight set of the $G$-algebra $\CC[X]$. If $X$ is an 
affine $G$-variety, then we call $\Lambda^+_{(G,X)}$ its \emph{weight monoid},
and the \emph{weight group} $\Lambda_{(G,X)}$ of $X$ is defined
as the subgroup of $X(T)$ generated by $\Lambda_{(G,X)}^+$. It is
well--known that $\Lambda_{(G,X)}$  is also equal to the set of
$B$-weights  in the function field of $X$ (see e.g.~\cite[p. 17]{brion-cirmactions}).
When no confusion can arise about the group $G$ in question, we will use $\Lambda^+_X$ and $\Lambda_X$ for $\Lambda^+_{(G,X)}$ and $\Lambda_{(G,X)}$, respectively. 
An affine $G$-scheme $X$ is called \emph{multiplicity-free} if $\CC[X]$ is multiplicity-free as a $G$-module. 
An affine $G$-variety is multiplicity-free if and only if it has a dense $B$-orbit.
We call  a $G$-variety \emph{spherical} if it is normal and has a dense orbit for $B$. 
A \emph{spherical $G$-module} is a finite-dimensional $G$-module that is spherical as a $G$-variety.  
We remark that if $W$ is a spherical $G$-module, then any two distinct simple $G$-submodules of $W$ are non-isomorphic. For general information on spherical varieties we refer to \cite[Section 2]{brion-cirmactions} and \cite{pezzini-cirmspher}.

The indecomposable saturated spherical modules were classified up to geometric equivalence by Kac, Benson-Ratcliff and Leahy~\cite{kac-rmks, benson-ratcliff-mf, leahy}, see~\cite{knop-rmks} for an overview or Section~\ref{sec:reduction} for the definitions of these terms. We will use Knop's presentation in \cite[\S5]{knop-rmks} of this classification and refer to it as {\bf Knop's List}. For groups of type $\ssA$ we recall the classification in List~\ref{KnopLTypeA} on page~\pageref{KnopLTypeA}.

When $H$ is a torus and $M$ is a finite-dimensional $H$-module, then by the \emph{$H$-weight set} of $M$, we mean the (finite) set of elements $\lambda$ of $X(H)$ such that $M_{(\lambda)} \neq 0$. For the weight monoid $\Lambda^+_M$ of $M$ (seen as an $H$-variety) we then have that
\[\Lambda^+_M=\<-\lambda | \lambda \text{ is an element of the $H$-weight set of $M$}\>_{\NN}.\]

Given an affine $T$-scheme $Y$ such that each $T$-eigenspace in
$\CC[Y]$ is finite-dimensional, Alexeev and
Brion~\cite{alexeev&brion-modaff} introduced a moduli scheme $\tM_{Y}$
which classifies (equivalence classes of) pairs $(X,\varphi)$, where
$X$ is an affine $G$-scheme and $\varphi\colon X \quot U \to Y$ is a
$T$-equivariant isomorphism. Here 
$X\quot U := \Spec(\CC[X]^U)$ is the categorical quotient.  Moreover, they proved that $\tM_Y$ is a connected, affine scheme of finite type over $\CC$ and they equipped it with an action by $\Tad$, induced by the action of $\Aut^T(Y)$ on $\tM_Y$ and the map $T \to \Aut^T(Y)$.  We call $\gamma$ this action of $\Tad$ on $\tM_Y$ (see Section~\ref{subsec:emb} for details).

Now, suppose $\wm$ is a finitely generated submonoid of $\Lambda^+$ and $Y = \Spec \CC[\wm]$ is the multiplicity-free $T$-variety with weight monoid $\wm$. Like \cite{alexeev&brion-modaff}, we then put \[\tM_{\wm}:= \tM_{Y}.\]
We will use $\tM^G_{\wm}$ for $\tM_{\wm}$ when we want to stress the group under consideration. 

We need to define one more combinatorial invariant of affine $G$-varieties. Let $X$ be  such a variety.  Put $R:=\CC[X]$ and define the \emph{root monoid} $\Sigma_X$ of $X$ as the submonoid of $X(T)$ generated by
\[\{\lambda+\mu-\nu\in \Lambda\ |\ \lambda,\mu,\nu \in \Lambda^+: \<R_{(\lambda)}R_{(\mu)} \>_{\CC} \cap R_{(\nu)} \neq 0\},\]
where $\<R_{(\lambda)}R_{(\mu)} \>_{\CC}$ denotes the $\CC$-vector subspace of $R$ spanned by the set $\{fg\ | \ f\in R_{(\lambda)}, g\in  R_{(\mu)}\}$. Note that $\Sigma_X \inn \<\Pi\>_{\NN}$. 
We call $d_X$ the rank of the (free) abelian group generated (in $X(T)$) by $\Sigma_X$, that is,
\[d_X := \rk\<\Sigma_X\>_{\ZZ}. \]
We remark that for a given spherical module $W$, the invariant $d_W$ is easy to calculate from the rank of $\Lambda_W$, see Lemma~\ref{lem:camusbound}.

\subsection{Main results}
The main result of the present paper is the following theorem. Its formal proof will be given in Section~\ref{subsec:proofthmmain}.
\begin{theorem} \label{thm:main} 
Assume $W$ is a spherical $G$-module, where $G$ is a
connected reductive algebraic group of type $\ssA$.  Let
$\wm$ be the weight monoid of  $W$. Then
\begin{enumerate}[(a)]
\item $\Sigma_W$ is a freely generated monoid; and
\item the $\Tad$-scheme $\tM_{\wm}$, where the action is $\gamma$,  is $\Tad$-equivariantly isomorphic to the $\Tad$-module with weight monoid $\Sigma_W$. In particular, the scheme $\tM_{\wm}$ is
isomorphic to  the affine space $\AA^{d_W}$, hence it is irreducible and smooth. 
\end{enumerate} 
\end{theorem}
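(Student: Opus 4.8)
The plan is to reduce Theorem~\ref{thm:main} to the case-by-case verification announced as Theorem~\ref{thm:cbc}, following the strategy outlined at the end of the introduction. The argument rests on Brion's description of the tangent space to $\tM_{\wm}$ at its distinguished $T$-fixed point $X_0$ (the closed $T$-orbit), which I will recall in Section~\ref{sec:Brionstrat}. The key observation is that it suffices to bound the dimension of this tangent space from above. Indeed, by the general theory of \cite{alexeev&brion-modaff}, the point $(W,\pi)\in\tM_{\wm}$ corresponding to the spherical module $W$ itself (with $\pi$ the identity on $W\quot U$) lies in an open $\Tad$-orbit, and the dimension of that orbit equals $d_W = \rk\<\Sigma_W\>_{\ZZ}$. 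Since $\tM_{\wm}$ is connected of finite type and carries a $\Tad$-action with $X_0$ as its unique fixed point, every irreducible component passes through $X_0$, so $\dim\tM_{\wm}\le \dim T_{X_0}\tM_{\wm}$. Therefore, once I establish the single inequality
\begin{equation*}
\dim T_{X_0}\tM_{\wm}\le d_W,
\end{equation*}
the chain $d_W\le\dim(\Tad\text{-orbit of }(W,\pi))\le\dim\tM_{\wm}\le\dim T_{X_0}\tM_{\wm}\le d_W$ forces equality throughout. This makes $\tM_{\wm}$ smooth at $X_0$ of dimension exactly $d_W$, and smoothness at the unique $T$-fixed point propagates by the $\Tad$-action to all of $\tM_{\wm}$, yielding irreducibility and smoothness.

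For the identification as a $\Tad$-module in part (b), I would use that $T_{X_0}\tM_{\wm}$ is naturally a $\Tad$-module whose weights, by Brion's analysis, are constrained to lie in the root monoid direction, and in fact one shows the relevant weight set is $\Sigma_W$. The tangent space construction realizes $\tM_{\wm}$ as a $\Tad$-stable closed subscheme of the tangent module; the dimension equality just proved shows this closed immersion is an isomorphism onto the full $\Tad$-module $T_{X_0}\tM_{\wm}$, which I identify with the $\Tad$-module having weight monoid $\Sigma_W$. Part (a), that $\Sigma_W$ is freely generated, then follows because a $\Tad$-module's weight monoid is free precisely when the module is a multiplicity-free representation with linearly independent weights; this is exactly what the affine-space structure $\AA^{d_W}$ encodes, so (a) and (b) are proved together.

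The main obstacle—and the genuinely new content relative to Jansou and Bravi--Cupit-Foutou—is computing $\dim T_{X_0}\tM_{\wm}$ when the saturation condition \eqref{eq:Psat} fails, which the introduction warns is the typical situation for spherical modules of type $\ssA$. Without \eqref{eq:Psat}, the tangent space does not decompose as cleanly, and I expect to reduce its computation to understanding the $G_{x_0}$-invariants in $(V/\fg\cdot x_0)$, i.e. the space $\Vgg$ appearing in the macro definitions. Here the subtlety flagged in the introduction enters: because Knop's classification of spherical modules is only \emph{up to central tori}, passing between $W$ and the listed representative requires care in tracking how the central torus acts, and one must verify that the relevant invariants are computed for the correct group (see Remark~\ref{rem:saturation}). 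I would handle this by first carrying out the reduction of Section~\ref{sec:reduction} to normalize $W$ against Knop's List~\ref{KnopLTypeA}, and then reducing the inequality $\dim T_{X_0}\tM_{\wm}\le d_W$ to the explicit Theorem~\ref{thm:cbc}, to be checked entry by entry through the type-$\ssA$ spherical modules. The lengthiness of that verification is precisely why the paper restricts to type $\ssA$, even though the reduction itself is type-independent.
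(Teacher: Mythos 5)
Your proposal is correct and takes essentially the same route as the paper: the paper likewise combines the a priori lower bound $\dim \tM_{\wm} \ge d_W$ coming from the $\Tad$-orbit of $(W,\pi)$ (Proposition~\ref{prop:apriori}, Corollary~\ref{cor:apriori}) and the propagation of smoothness from the unique fixed point $X_0$ (Corollary~\ref{cor:smoothX0}) with the reduction through geometric equivalence, products and central tori (Section~\ref{sec:reduction}, Corollary~\ref{cor:mainthm}) to the case-by-case Theorem~\ref{thm:cbc}. Your derivation of parts (a) and (b) via an equivariant closed embedding of $\tM_{\wm}$ into $T_{X_0}\tM_{\wm}$ is only a mild variant of the paper's argument, which instead identifies $\tM_{\wm}$ with the $\Tad$-orbit closure of $(W,\pi)$ and invokes the fact that its normalization has free weight monoid $\widetilde{\Sigma}_W$, forcing $\Sigma_W = \widetilde{\Sigma}_W$ once smoothness is known.
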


Our strategy for the proof of Theorem~\ref{thm:main} is as follows.  Suppose $W$ is a spherical module with weight monoid $\wm$. Because $\dim \tM_{\wm} \ge d_W$, it is sufficient to prove that $\dim T_{X_0} \tM_{\wm} \le d_W$, where $X_0$ is the unique $\Tad$-fixed point and the unique closed $\Tad$-orbit in $\tM_{\wm}$ (see Corollary~\ref{cor:apriori}). In Section~\ref{sec:reduction} (see Corollary~\ref{cor:mainthm}) we further reduce the proof of Theorem~\ref{thm:main} to the following theorem.
\begin{theorem}\label{thm:cbc}
Suppose $(\overline{G},W)$ is an entry in Knop's  List of saturated indecomposable spherical modules with $\overline{G}$ of type $\ssA$ (see List~\ref{KnopLTypeA} on page~\pageref{KnopLTypeA}).
If $G$ is a connected reductive group such that
\begin{enumerate}
\item $\overline{G}' \inn G \inn \overline{G}$; and
\item $W$ is spherical as a $G$-module
\end{enumerate} 
then $$\dim T_{X_0}\tM^{G}_{\wm} = d_W,$$ where $\wm$ is the weight monoid of $(G,W)$.  
\end{theorem}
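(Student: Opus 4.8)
The plan is to compute the tangent space $T_{X_0}\tM^G_{\wm}$ directly and to show that $\dim T_{X_0}\tM^G_{\wm}\le d_W$; since the point $(W,\pi)\in\tM^G_{\wm}$ lies in a $\Tad$-orbit of dimension $d_W$, the scheme satisfies $\dim\tM^G_{\wm}\ge d_W$, and as $X_0$ lies in the closure of that orbit this forces $\dim T_{X_0}\tM^G_{\wm}\ge d_W$ as well (Corollary~\ref{cor:apriori}). Hence the reverse inequality is all that is needed, and it yields the asserted equality. Throughout I use the concrete model of the distinguished point: $X_0=\Spec R_0$, where $R_0=\bigoplus_{\lambda\in\wm}V(\lambda)$ is the multiplicity-free $G$-algebra whose product is the Cartan (horospherical) multiplication, so that $V(\lambda)\cdot V(\mu)$ lands in the Cartan component $V(\lambda+\mu)$ while all strictly lower isotypic components vanish.

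First I would recall, from Section~\ref{sec:Brionstrat}, the explicit description of the tangent space to $\tM^G_{\wm}$ at $X_0$. Following Alexeev--Brion, a tangent vector is a first-order $G$-equivariant deformation of the multiplication on $R_0$ that is compatible, through the isomorphism $R_0^U\isom\CC[\wm]$, with the fixed $T$-algebra structure on $\CC[\wm]$. Such a deformation is assembled from $G$-morphisms $V(\lambda)\otimes V(\mu)\to V(\nu)$ with $\lambda,\mu,\nu\in\wm$ and $\nu$ strictly below $\lambda+\mu$, subject to the first-order associativity and $U$-compatibility constraints. The action $\gamma$ makes $T_{X_0}\tM^G_{\wm}$ a $\Tad$-module, and the piece built from morphisms $V(\lambda)\otimes V(\mu)\to V(\nu)$ has $\Tad$-weight $\lambda+\mu-\nu$. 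Since $\lambda,\mu,\nu$ lie in $\wm\inn\Lambda^+$ and agree on $Z(G)$, each such weight lies in $\<\Pi\>_{\NN}\setminus\{0\}$ (and the generators of the root monoid $\Sigma_W$ are among these). Thus
\[
T_{X_0}\tM^G_{\wm}=\bigoplus_{\sigma}\bigl(T_{X_0}\tM^G_{\wm}\bigr)_{\sigma},
\]
a finite sum over nonzero $\sigma\in\<\Pi\>_{\NN}$, and it suffices to bound the total dimension of the right-hand side by $d_W$.

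Next I would pin down the target number $d_W$. By Lemma~\ref{lem:camusbound}, $d_W$ is determined by $\rk\Lambda_W$, so for each entry it is read off once the weight group $\Lambda_W$ of $(G,W)$ has been recorded. It then remains to carry out, for every entry $(\overline{G},W)$ of List~\ref{KnopLTypeA} and every admissible intermediate group $G$ with $\overline{G}'\inn G\inn\overline{G}$, the following steps. (i) Determine the generators of $\wm$ from the $T$-weight set of $W$. (ii) Decompose the relevant tensor products $V(\lambda)\otimes V(\mu)$ of generators by the Littlewood--Richardson rule (we are in type $\ssA$), thereby listing the candidate weights $\sigma=\lambda+\mu-\nu$ and the spaces $\Hom_G(V(\lambda)\otimes V(\mu),V(\nu))$ feeding into each $\bigl(T_{X_0}\tM^G_{\wm}\bigr)_{\sigma}$. (iii) Impose the associativity and $U$-compatibility relations to show that each graded piece is at most one-dimensional and that at most $d_W$ of them are nonzero, so that $\dim T_{X_0}\tM^G_{\wm}\le d_W$. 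The sphericity of $W$ (equivalently, the multiplicity-freeness of $R_0$) keeps all multiplicities in (ii) equal to one and makes these computations finite and uniform across the list.

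I expect the main obstacle to be exactly the point flagged in the introduction: because Knop's classification is only up to central tori, for a fixed $\overline{G}$ one must treat \emph{all} intermediate groups $G$, and for many of these the weight monoid $\wm$ fails the saturation condition $\<\wm\>_{\ZZ}\cap\Lambda^+=\wm$ of~(\ref{eq:Psat}). When~(\ref{eq:Psat}) fails, $\wm$ acquires extra generators, which in turn produce extra candidate deformation directions $\sigma$ in step (ii); the delicate part is to verify, via the associativity and $U$-compatibility constraints of step (iii), that these additional weight spaces either vanish or coincide with ones already counted, so that the bound $\dim T_{X_0}\tM^G_{\wm}\le d_W$ survives. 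Keeping track of how $\wm$, its generators, and the pairing with $Z(G)$ vary as $G$ shrinks from $\overline{G}$ to $\overline{G}'$ is where the bookkeeping, and the real work, lies.
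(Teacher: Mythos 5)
Your reduction to the inequality $\dim T_{X_0}\tM^G_{\wm}\le d_W$ is sound and matches the paper (Corollary~\ref{cor:apriori}), and your weight-space decomposition of the tangent space under $\Tad$ is the right first move. But the proposal has a genuine gap: step (iii), where you ``impose the associativity and $U$-compatibility relations'' to cut the candidate space down to dimension $d_W$, is asserted rather than carried out, and it is precisely where all the content of Theorem~\ref{thm:cbc} lives. The difficulty is not bookkeeping: the paper shows that for families (\ref{list.16}), (\ref{list.17}) and (\ref{list.18}) the space of ``naive'' first-order invariants --- in the paper's model, $\Vgg\isom H^0(G\cdot x_0,\shN_{X_0})^G$, which is exactly the space your linear/weight conditions carve out --- is strictly \emph{larger} than $d_W$ for suitable intermediate groups $G$ (see Corollary~\ref{cor:vgg17} and Remark~\ref{rem:propcase17}). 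The excess directions satisfy every symmetry and weight constraint visible at the level of generators; they are eliminated only by the geometric non-extension criterion of Section~\ref{sec:Brionstrat} (Proposition~\ref{prop:excl}): one degenerates $x_0$ along a one-parameter subgroup of $\GL(V)^G$ to a point $z_0=v_D$ whose orbit has codimension one in the normal variety $X_0$, and shows the candidate section blows up like $t^{-k}$ there. Your framework would need an equivalent mechanism --- a proof that the corresponding first-order deformation of the multiplication law, defined consistently on generators, violates associativity somewhere in the full ring $R_0$ --- and the proposal contains no such argument, nor any finite reduction of the infinite web of associativity constraints to a checkable computation (the paper achieves this reduction geometrically, via $T_{X_0}\tM^G_{\wm}\isom H^0(X_0,\shN_{X_0})^G\into\Vgg$, turning everything into finite-dimensional linear algebra inside one $G$-module $V$).

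Two further points. First, your claim that sphericity of $W$ ``keeps all multiplicities in (ii) equal to one'' is false: multiplicity-freeness of $\CC[W]$ as a $G$-module says nothing about the Littlewood--Richardson coefficients of $V(\lambda)\otimes V(\mu)$ for generators $\lambda,\mu\in\wm$, which can exceed one in type $\ssA$; moreover a single $\Tad$-weight $\sigma$ typically arises from many triples $(\lambda,\mu,\nu)$ at once, so the graded pieces are not a priori one-dimensional. What actually bounds them in the paper is invariance under the full isotropy group $G_{x_0}$ together with the lattice condition that $\sigma$ lie in the weight group $\wg$ (Lemmas~\ref{lem:isotropygroup}, \ref{lem:Tadweights}, \ref{lem:tangentcrit}), plus the imported results of Bravi--Cupit-Foutou (Theorem~\ref{thm:bcf}, Lemma~\ref{lem:consecfund}) in the saturated pieces. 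Second, your instinct about the intermediate groups $G$ is correct but underestimated: deciding for which $G$ the extra weight (e.g.\ $\beta=\alpha_{n-2}+\alpha_{n-1}$ in family (\ref{list.16})) actually lies in $\wg$ requires the explicit lattice analyses of Lemmas~\ref{lem:betainlam16}, \ref{lem:betam1inlambda} and \ref{lem:betan1inlambda}, and the exclusion argument must then be run separately (with different choices of $\lambda\in E$ depending on an integer parameter classifying $\ft$). As it stands, the proposal is a plausible program restating the problem, not a proof.
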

\noindent In Section~\ref{sec:cases} we will prove this theorem case-by-case 
for the $8$ families of spherical modules in Knop's List  with $\overline{G}$ of type $\ssA$. 

For that purpose $X_0$ is identified in Section~\ref{subsec:emb} with the closure of a certain orbit $G\cdot x_0$ in  a certain $G$-module $V$ and  $T_{X_0}\tM_{\wm}$ with the vector space of $G$-invariant global sections of the normal sheaf of $X_0$ in $V$. This is a subspace of the space of $G$-invariant sections of the same sheaf over $G\cdot x_0$. This latter space is naturally identified with $\Vgg$. 
In Section~\ref{sec:cases} we use the $\Tad$-action (more precisely a variation of it) to bound $\Vgg$ by explicit computations for the pairs $(G,W)$ in the statement of Theorem~\ref{thm:cbc}. In most cases we find that already $\dim \Vgg \le d_W$. To obtain the desired inequality for $\dim T_{X_0}\tM_{\wm}$ in the remaining cases we use the exclusion criterion of Section~\ref{sec:Brionstrat}, which was suggested to us by M. Brion, to prove that enough sections over $G\cdot x_0$ do not extend to $X_0$. 

\subsection{Formal proof of Theorem~\ref{thm:main}} \label{subsec:proofthmmain}
We now give the proof of Theorem~\ref{thm:main}. Corollary~\ref{cor:apriori} and Corollary~\ref{cor:mainthm} reduce the proof to Theorem~\ref{thm:cbc}, which we prove by a case-by-case verification in Section~\ref{sec:cases}.

\subsection{Structure of the paper}
In Section~\ref{sec:tools} we present known results, mostly from \cite{alexeev&brion-modaff} and \cite{bravi&cupit}, in the form we need them. In Section~\ref{sec:Brionstrat}, which may be of independent interest, we formulate a criterion about non-extension of invariant sections of  the normal sheaf. In Section~\ref{sec:reduction} we review the known classification of spherical modules~\cite{kac-rmks, benson-ratcliff-mf, leahy} as presented in~\cite{knop-rmks} and reduce the proof of Theorem~\ref{thm:main} to a case-by-case verification. We perform this case-by-case analysis in Section~\ref{sec:cases}, using results from \cite{bravi&cupit} mentioned in Section~\ref{sec:tools} and, for the most involved cases, also the exclusion criterion of Section~\ref{sec:Brionstrat}.

\section{From the literature} \label{sec:tools}

In this section we gather known results, mostly from~\cite{alexeev&brion-modaff} and \cite{bravi&cupit}, together with immediate consequences relevant to our purposes. In particular we explain
that to prove Theorem~\ref{thm:main} it is sufficient to show that $\tM_{\wm}$ is smooth when $\wm$ is the weight monoid of a spherical module $W$ for $G$ of type $\ssA.$ Indeed, \cite[Corollary 2.14]{alexeev&brion-modaff} then implies Theorem~\ref{thm:main} (see Corollary~\ref{cor:apriori}). That result of Alexeev and Brion's also tells us that $\dim \tM_{\wm} \ge d_W$.  Moreover, by \cite[Theorem 2.7]{alexeev&brion-modaff}, we only have to prove smoothness at a specific point $X_0$ of $\tM_{\wm}$ (see Corollary~\ref{cor:smoothX0}), and for that it is enough to show that
\begin{equation} 
\label{eq:maineq}
\dim T_{X_0}\tM_{\wm} \le d_W. 
\end{equation}

Here is an overview of the content of this section. In Sections
\ref{subsec:emb} and \ref{subsec:Tapprox} we recall known facts
(mostly from \cite{alexeev&brion-modaff}) about the moduli scheme
$\tM_{\wm}$ when $\wm$ is a freely generated submonoid
of $\Lambda^+$ and apply them to the case where $\wm$ is the weight monoid $\Lambda^+_W$ of a spherical $G$-module $W$. More specifically, in  Section~\ref{subsec:emb} we identify $\tM_{\wm}$ with a certain open subscheme of an invariant Hilbert scheme $\tHilb_{\wm}^G(V)$, where $V$ is a specific finite-dimensional $G$-module determined by $\wm$. Under this identification, the point $X_0$ of $\tM_{\wm}$ corresponds to a certain $G$-stable subvariety of $V$, which we also denote $X_0$. Moreover, $X_0$ is the closure of the $G$-orbit of a certain point $x_0 \in V$. We then have that 
\[T_{X_0}\tM_{\wm} \isom H^0(X_0,\shN_{X_0})^G \into  H^0(G\cdot x_0,\shN_{X_0})^G\isom \Vgg,\] where $\shN_{X_0}$ is the normal sheaf of $X_0$ in $V$. In addition, following~\cite{alexeev&brion-modaff} we introduce an action of $\Tad$ on $\tM_{\wm}$.  In Section~\ref{subsec:Tapprox} we give some more details about the inclusion 
$H^0(X_0,\shN_{X_0})^G \into \Vgg$ which will be of use in Section~\ref{sec:Brionstrat} and in the case-by-case analysis of Section~\ref{sec:cases}. 
In Section~\ref{subsec:auxiliary} we collect some elementary technical lemmas on $\Vgg$ and the $\Tad$-action. Finally, in Section~\ref{subsec:bcf} we recall some results from~\cite{bravi&cupit} about $\Vgg$. 

\subsection{Embedding of $\tM_{\wm}$ into an invariant Hilbert scheme  and the $\Tad$-action}\label{subsec:emb}
Here we recall, from \cite{alexeev&brion-modaff}, that if $\wm$ is a freely generated submonoid of $\Lambda^+$, then $\tM_{\wm}$ can be identified with an open subscheme of a certain invariant Hilbert scheme $\tHilb^G_{\wm}(V)$. We also review the $\Tad$-action  on $\tHilb^G_{\wm}(V)$ defined in \cite{alexeev&brion-modaff}, its relation to the natural action of $\GL(V)^G$ on that Hilbert scheme and how it allows us to reduce the question of the smoothness  of $\tM_{\wm}$ to the question whether $\tM_{\wm}$ is smooth at a specific point $X_0$. 

Like the results in Sections \ref{subsec:Tapprox}, \ref{subsec:auxiliary} and \ref{sec:Brionstrat} everything in this section applies to any $\tM_{\wm}$ with $\wm$ freely generated.  In particular, by the following well-known proposition 
it applies to $\tM_{\wm}$ with $\wm = \Lambda^+_W$ when $(G,W)$ is a spherical module. For a proof, see \cite[Theorem 3.2]{knop-rmks}.
\begin{prop}
The weight monoid of a spherical module is freely generated; that is, it is generated by a set of linearly independent dominant weights.
\end{prop}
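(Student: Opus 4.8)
The plan is to exploit two structural features of a spherical module $W$: its coordinate ring $\CC[W] = \Sym(W^*)$ is a polynomial ring, hence a unique factorization domain, and $\CC[W]$ is multiplicity-free as a $G$-module. I will show that $\Lambda^+_W$ is isomorphic, as a monoid, to the free commutative monoid on the prime $B$-semiinvariants of $\CC[W]$, and that the weights of these primes are $\ZZ$-linearly independent, which is exactly the asserted conclusion.

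First I would identify $\Lambda^+_W$ with the monoid of weights of $B$-semiinvariants (that is, $U$-invariant $T$-eigenvectors, equivalently highest weight vectors). For each $\lambda$ the space $(\CC[W]^U)_\lambda$ of $U$-invariants of $T$-weight $\lambda$ has dimension equal to the multiplicity of $V(\lambda)$ in $\CC[W]$; since $W$ is multiplicity-free this is at most $1$, so $\lambda \in \Lambda^+_W$ precisely when there exists a $B$-semiinvariant of weight $\lambda$, unique up to scalar. Sending a $B$-semiinvariant to its weight therefore descends to a surjective monoid homomorphism from the $B$-semiinvariants modulo scalars onto $\Lambda^+_W$ (the product of semiinvariants of weights $\lambda,\mu$ is a nonzero, since $\CC[W]$ is a domain, semiinvariant of weight $\lambda+\mu$), and multiplicity-freeness makes it injective, because two semiinvariants of equal weight span the same line.

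Next I would analyze the source monoid using the UFD property. If $f$ is a $B$-semiinvariant with prime factorization $f = f_1 \cdots f_m$, then for each $b\in B$ the identity $(b\cdot f_1)\cdots(b\cdot f_m) = \chi(b)\, f_1\cdots f_m$ shows that $B$ permutes the finitely many lines $\CC f_i$ spanned by the prime factors; as $B$ is connected this permutation action is trivial, so each $f_i$ is again a $B$-semiinvariant. By unique factorization the monoid of $B$-semiinvariants modulo scalars is thus freely generated by the prime $B$-semiinvariants, and transporting this across the isomorphism of the previous step exhibits $\Lambda^+_W$ as a free monoid on the weights $\lambda_1,\dots,\lambda_k$ of those primes. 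Finally, injectivity of the weight map means the homomorphism $\NN^k \to \Lambda$, $(a_i)\mapsto \sum_i a_i\lambda_i$, is injective; splitting any integral relation $\sum_i c_i\lambda_i = 0$ into its positive and negative parts would otherwise produce two distinct elements of $\NN^k$ with the same image, so the $\lambda_i$ are $\ZZ$-linearly independent. The step I expect to be the most delicate is the passage from unique factorization to the triviality of the $B$-action on the set of prime factors; once multiplicity-freeness has been used to pin down semiinvariants by their weights, the rest is bookkeeping.
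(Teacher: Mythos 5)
Your proof is correct: the paper itself gives no argument, deferring to Knop's Theorem 3.2 in \emph{Some remarks on multiplicity free spaces}, and the proof there is essentially yours --- factoriality of $\CC[W]$ plus connectedness of $B$ to show every $B$-semiinvariant factors into prime $B$-semiinvariants, with multiplicity-freeness pinning each weight to a single semiinvariant line, so that the weight monoid is free on the weights of the prime semiinvariants. Your final bookkeeping step, deducing $\ZZ$-linear independence of these weights from injectivity of $\NN^k \to \Lambda$ by splitting a relation into positive and negative parts, is also sound and correctly delivers the stronger form of the statement.
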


For the following, we fix a freely generated submonoid $\wm$ of $\Lambda^+$ and let $E^*$ be its (unique) basis.
Put $E = \{\lambda^*\ |\ \lambda \in E^*\}$ and  
 \[V = \oplus_{\lambda \in E} V(\lambda).\] 

Alexeev and Brion~\cite{alexeev&brion-modaff} introduced the invariant Hilbert scheme $\tHilb_{\wm}^G(V)$, which parametrizes all multiplicity-free closed $G$-stable subschemes $X$ of $V$ with weight set $\wm$ (they actually introduced the invariant Hilbert scheme in a more general setting; for more information on this object, see the survey \cite{brion-ihs-arxivv2}).
They also defined an action of $\Tad$ on $\tHilb_{\wm}^G(V)$, see \cite[Section 2.1]{alexeev&brion-modaff}, which we call $\gamma$ and now briefly review. It is obtained by lifting the natural action of $GL(V)^G$ on $\tHilb_{\wm}^G(V)$ to $T$. First, define the following homomorphism:
\begin{equation}
h\colon T \to \GL(V)^G,\ t \mapsto (-\lambda^*(t))_{\lambda \in E}  \label{eq:TGLVGAB}
\end{equation}
Composing the natural action of $\GL(V)^G$ on $V$ with $h$ yields an action $\phi$ of $T$ on $V$:
\[\phi(t,v) = h(t) \cdot v \quad \text{ for $t \in T$ and $v\in V$}.\]
 Note 
that $\phi$ is a linear action on $V$ and that each $G$-isotypical component  $V(\lambda^*)$ of $V^*$ (with $\lambda \in E$) is the $T$-weight space for $\phi$  of weight $\lambda^*$. Since $\GL(V)^G$ acts naturally on $\tHilb_{\wm}^G(V)$, $\phi$ induces an action of $T$ on $\tHilb_{\wm}^G(V)$. We call this last action $\gamma$. 
It has $Z(G)$ in its kernel and so descends to an action of $\Tad = T/Z(G)$ on $\tHilb_{\wm}^G(V)$ which we also call $\gamma$.  Indeed, if $\rho: G\to \GL(V)$ is the (linear) action of $G$ on $V$, then for every $z\in Z(G)$, $\rho(z) = h(z)$,
because $-\lambda^*= w_0\lambda$ is the lowest weight of $V(\lambda)$ and therefore differs from all other weights in $V(\lambda)$ by an element of $\<\Pi\>_{\NN}$. This implies that if $I$ is a $G$-stable ideal in $\CC[V]$, then $h(z)\cdot I = \rho(z)\cdot I = I$. More generally, if $S$ is a scheme with trivial $G$-action and $\shI$ is a $G$-stable ideal sheaf on $V\times S$, then $\shI$ is also stable under the action induced by $h$ on the structure sheaf $\shO_{V\times S}$, since $\rho|_{Z(G)}=h|_{Z(G)}$. Because $\tHilb_{\wm}^G(V)$ represents the functor $\Sch \to \Sets$ that associates to a scheme $S$ the set of flat families $Z \inn V \times S$ with invariant Hilbert function the characteristic function of $\wm \inn \Lambda^+$ (see \cite[Section 1.2]{alexeev&brion-modaff} or \cite[Section 2.4]{brion-ihs-arxivv2}), this implies our claim.

From \cite[Corollary 1.17]{alexeev&brion-modaff} we know that  the open subscheme $\tHilb^G_{E^*}$ of $\tHilb_{\wm}^G(V)$ that classifies the (irreducible) non-degenerate subvarieties $X \inn V$ with $\Lambda^+_X = \wm$ is stable under $\GL(V)^G$ and therefore under the $\Tad$-action $\gamma$. Recall from \cite[Definition 1.14]{alexeev&brion-modaff}  that a closed $G$-stable subvariety of $V$ is called \emph{non-degenerate} if its projections to the simple components $V(\lambda)$ of $V$, where $\lambda \in E$, are all nonzero. We call a closed $G$-stable subvariety of $V$ \emph{degenerate} if it is not non-degenerate. 

Next suppose $Y = \Spec\CC[\wm]$, the multiplicity-free $T$-variety with weight monoid $\wm$. Recall that $\tM_{\wm}=\tM_Y$ classifies (equivalence classes of) pairs $(X,\varphi)$ where $X$ is an affine $G$-variety and $\varphi\colon X\quot U \to Y$ is a $T$-equivariant isomorphism. The action of $T$ on $Y$ through $T \to \Aut^T(Y)$ induces an action of $T$ on $\tM_{\wm}$. From \cite[Lemma 2.2]{alexeev&brion-modaff} we know that this action descends to an action of $\Tad$ on $\tM_{\wm}$.  By Corollary 1.17 and Lemma 2.2 in \cite{alexeev&brion-modaff} the moduli scheme $\tM_{\wm}$ is $\Tad$-equivariantly isomorphic to $\tHilb^G_{E^*}$, where the $\Tad$-action on $\tHilb^G_{E^*}$ is $\gamma$. 
{\bf From now on}, we will identify $\tM_{\wm}$ with $\tHilb_{E^*}^G$. As in~\cite{bravi&cupit}, the $\Tad$-action it carries will play a fundamental role in what follows.

\begin{remark} \label{rem:Wdegnondeg}
\begin{enumerate}[(a)]
\item Let $(G,W)$ be a spherical module with weight monoid $\wm$, put $Y = W \quot U$ and let $\pi: W\quot U \to Y$ be the identity map. Then $(W,\pi)$ corresponds to a closed point of $\tM_Y = \tM_{\wm} = \tHilb_{E^*}^G \inn \tHilb^G_{\wm}(V)$. On the other hand, note that the highest weights of $W$ belong to $E$. Put
$E_1= \{\lambda \in \Lambda^+\colon W_{(\lambda)} \neq 0\} \inn E$ and $E_2 = E \setminus E_1$.  Then
\[V = \oplus_{\lambda \in E}V(\lambda) = [\oplus_{\lambda \in E_1}V(\lambda)] \oplus [\oplus_{\lambda\in E_2}V(\lambda)] \isom W \oplus  [\oplus_{\lambda\in E_2}V(\lambda)] \]
Identifying $W$ with $\oplus_{\lambda \in E_1}V(\lambda) \inn V$ we see that $W$ corresponds to a closed point of $\tHilb^G_{\wm}(V)$. As soon as $E_{2} \neq \emptyset$, $W \inn V$ is a degenerate subvariety of $V$, that is, it corresponds to a closed point of $\tHilb^G_{\wm}(V) \setminus \tHilb^G_{E^*}$. 

\item \label{itemnondegW} 
The subvariety of $V$ corresponding to the closed point $(W,\pi)$ of $ \tM_{\wm} = \tHilb_{E^*}^G \inn \tHilb^G_{\wm}(V)$ can be described as follows.  Let $\Mor^G(W,V(\lambda))$ be the set of $G$-equivariant morphisms of algebraic varieties $W \to V(\lambda)$.  We consider $\Mor^G(W,V(\lambda))$ with vector space structure induced from the one of $V(\lambda)$. 
Note that, by Schur's lemma and because $W$ is spherical, 
\[\Mor^{G}(W,V(\lambda)) \isom (\CC[W] \otimes_{\CC} V(\lambda))^{G} \isom (V(\lambda^*) \otimes V(\lambda))^{G}\]
is one-dimensional for every $\lambda \in {E_2}$.  After choosing, for every $\lambda \in E_2$,  a  nonzero $f_{\lambda} \in \Mor^G(W,V(\lambda))$, we can define the following $G$-equivariant closed
embedding of $W$ into $V$: 
\[\varphi\colon W \to V,\  w \mapsto w + (\oplus_{\lambda \in E_2} f_{\lambda}(w)).\] Its image corresponds to a closed point of $\tHilb^G_{E^*}$. 
An appropriate choice of the functions $f_{\lambda}$ (which depends on the identification $\tM_{\wm} = \tHilb^G_{E^*}$) yields the closed point of $\tHilb^G_{E^*}$ corresponding to $(W,\pi)$.
\end{enumerate}
\end{remark}

The next proposition, taken from \cite[Theorem 2.7]{alexeev&brion-modaff}, means we can verify the smoothness of $\tM_{\wm}$ at just one of its points. It also implies that $\tM_{\wm}$ is connected. 
\begin{prop} \label{prop:X0uniqueclosed}
The affine scheme $\tM_{\wm}$ has a unique $\Tad$-fixed point $X_0$, which is also its 
only closed orbit. 
\end{prop}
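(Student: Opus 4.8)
The plan is to realize $X_0$ explicitly and then show that the $\Tad$-action $\gamma$ is \emph{attractive} toward it, in the sense that a single well-chosen one-parameter subgroup contracts all of $\tM_{\wm}$ to $X_0$. Concretely, $X_0$ is the horospherical (``Cartan'') degeneration: it is the closed $G$-subvariety of $V$ whose coordinate ring is $\bigoplus_{\nu \in \wm} V(\nu^*)$, with multiplication defined so that the product of $V(\lambda^*)$ and $V(\mu^*)$ is the image of the Cartan projection onto $V((\lambda+\mu)^*)$ alone. This is a multiplicity-free, non-degenerate, $G$-stable subvariety with weight set $\wm$, hence it determines a closed point of $\tHilb^G_{E^*} = \tM_{\wm}$, and it is the closure of $G\cdot x_0$. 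First I would verify that $X_0$ is $\Tad$-fixed: its defining ideal in $\CC[V]$ is generated by $\phi$-eigenvectors (the relations killing every component $V(\nu^*)$ with $\nu \neq \lambda+\mu$ in each product), so $X_0$ is stable under $\phi(t) = h(t)\cdot(-)$ for all $t \in T$, whence $\gamma(t)\cdot X_0 = h(t)\cdot X_0 = X_0$.

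The heart of the argument is the contraction. A point $X$ of $\tM_{\wm}$ near $X_0$ is encoded by the structure constants of $\CC[X]$, that is, by the $G$-equivariant components $V(\lambda^*)\otimes V(\mu^*) \to V(\nu^*)$ with $\lambda,\mu \in E$, $\nu \in \wm$ and $\nu \neq \lambda+\mu$; for every such nonzero component one has $\lambda+\mu-\nu \in \<\Pi\>_{\NN}\setminus\{0\}$. Under $\gamma$ the corresponding deformation coordinate is scaled by the character $\lambda+\mu-\nu$. Choosing a one-parameter subgroup $\tau \in \Hom(\GGm,\Tad)$ with $\<\lambda+\mu-\nu,\tau\> > 0$ for all simple roots (equivalently a regular dominant $\tau$) makes every such weight strictly positive. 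This equips $\CC[\tM_{\wm}]$ with a $\ZZ_{\ge 0}$-grading whose degree-zero part is $\CC$ and whose irrelevant ideal cuts out exactly $X_0$; thus $\lim_{s\to 0}\tau(s)\cdot X = X_0$ for every closed point $X$.

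From this the two uniqueness statements follow formally. Any $\Tad$-fixed point is in particular $\tau$-fixed, but $x$ is $\tau$-fixed if and only if $\lim_{s\to 0}\tau(s)\cdot x = x$, and this limit is always $X_0$; hence $X_0$ is the only $\tau$-fixed point and a fortiori the only $\Tad$-fixed point. Likewise, a closed $\Tad$-orbit $O$ is $\tau$-stable, so its closure contains $\lim_{s\to 0}\tau(s)\cdot x = X_0$; closedness forces $X_0 \in O$, and since $X_0$ is a single fixed point, $O = \{X_0\}$. I expect the main obstacle to be establishing the positivity in the second paragraph rigorously, that is, identifying the $\Tad$-weights on the deformation directions of the invariant Hilbert scheme with elements of the root monoid $\Sigma \inn \<\Pi\>_{\NN}$ and checking that the grading they define has $\CC$ in degree zero; this is precisely where the structure of $\tHilb^G_{E^*}$ and the non-negativity coming from $\Sigma \inn \<\Pi\>_{\NN}$ enter, and it is the content one borrows from \cite[Theorem 2.7]{alexeev&brion-modaff}.
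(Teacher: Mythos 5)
Your argument is correct, but note that the paper itself offers no proof of this proposition: it is quoted verbatim from \cite[Theorem 2.7]{alexeev&brion-modaff}, so what you have written is in substance a reconstruction of Alexeev and Brion's own contraction argument rather than an alternative to anything in this paper. The reconstruction is sound: the ambient space you grade is the affine space of $G$-equivariant multiplication laws on the fixed module $\oplus_{\nu\in\wm}V(\nu^*)$, its coordinates are indexed by triples $(\lambda,\mu,\nu)$ with $\Hom_G\bigl(V(\lambda^*)\otimes V(\mu^*),V(\nu^*)\bigr)\neq 0$, which forces $\lambda+\mu-\nu\in\<\Pi\>_{\NN}$, and a regular dominant one-parameter subgroup $\tau$ of $\Tad$ then pairs strictly positively with every nonzero such weight, giving the $\ZZ_{\ge 0}$-grading with $\CC$ in degree zero and hence $\lim_{s\to 0}\tau(s)\cdot X=X_0$ for every closed point; your deduction of both uniqueness statements from this limit property is the standard one and is correct. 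Two points deserve care in a polished write-up. First, you should run the grading argument on $\tM_{\wm}=\tM_Y$ itself via the functorial description by multiplication laws (\cite[Proposition 2.10]{alexeev&brion-modaff}), as you implicitly do, and not on the invariant Hilbert scheme: inside $\tHilb^G_{\wm}(V)$ the scheme $\tM_{\wm}=\tHilb^G_{E^*}$ is only \emph{open} (degenerate subvarieties such as $W$ itself in Remark~\ref{rem:Wdegnondeg} lie in the complement), so a limit taken there could a priori escape; the closed embedding into the space of multiplication laws is what guarantees the limit stays in $\tM_{\wm}$. Second, the space of multiplication laws has infinitely many coordinates ($\nu$ ranges over all of $\wm$), so one needs finite generation of $\CC[\tM_{\wm}]$, which is part of what Alexeev and Brion establish; you flag this borrowed content honestly in your last sentence.
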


\begin{cor} \label{cor:smoothX0}
$\tM_{\wm}$ is smooth if and only if it is smooth at $X_0$. 
\end{cor}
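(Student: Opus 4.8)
The plan is to deduce the corollary from Proposition~\ref{prop:X0uniqueclosed} together with two general principles: the smooth locus of a finite-type scheme is open, and for an action of an algebraic group the unique closed orbit lies in the closure of every orbit. One implication is immediate: if $\tM_{\wm}$ is smooth everywhere, then in particular it is smooth at $X_0$. So the content is the converse, and I will assume from now on that $\tM_{\wm}$ is smooth at $X_0$.

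First I would record that the smooth locus $\tM_{\wm}^{\mathrm{sm}}$ is open, because $\tM_{\wm}$ is of finite type over $\CC$, and that it is stable under the $\Tad$-action $\gamma$, because $\Tad$ acts by automorphisms and smoothness is an isomorphism-invariant local property. Hence, under our assumption, $\tM_{\wm}^{\mathrm{sm}}$ is a nonempty $\Tad$-stable open subset containing $X_0$, and its complement $Z := \tM_{\wm}\setminus \tM_{\wm}^{\mathrm{sm}}$ is a closed, $\Tad$-stable subset with $X_0\notin Z$.

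The key step is to show that $X_0$ lies in the orbit closure $\overline{\Tad\cdot X}$ of every closed point $X$ of $\tM_{\wm}$. Given such an $X$, the set $\overline{\Tad\cdot X}$ is a nonempty, closed, $\Tad$-stable subset of $\tM_{\wm}$. By the standard fact that an orbit of minimal dimension inside an orbit closure is closed, $\overline{\Tad\cdot X}$ contains a closed $\Tad$-orbit of $\tM_{\wm}$; by the uniqueness of the closed orbit in Proposition~\ref{prop:X0uniqueclosed}, that orbit is $\{X_0\}$, so $X_0\in \overline{\Tad\cdot X}$. I would carry out this orbit-dimension argument on the underlying reduced scheme, where it applies verbatim; this is harmless since the statement is purely topological and $Z$ and $\tM_{\wm}^{\mathrm{sm}}$ are determined by their closed points together with the local rings there.

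Finally I would combine these observations. If some closed point $X$ belonged to $Z$, then, $Z$ being closed and $\Tad$-stable, the whole orbit closure $\overline{\Tad\cdot X}$ would lie in $Z$, whence $X_0\in Z$, contradicting $X_0\in\tM_{\wm}^{\mathrm{sm}}$. Therefore $Z$ contains no closed point, so $Z=\emptyset$ and $\tM_{\wm}=\tM_{\wm}^{\mathrm{sm}}$ is smooth. The main obstacle is really just the appeal to the existence of a closed orbit inside each orbit closure; everything else is formal manipulation of the open, $\Tad$-stable smooth locus against the uniqueness statement in Proposition~\ref{prop:X0uniqueclosed}.
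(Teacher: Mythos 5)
Your proof is correct and is essentially the paper's own argument: both rest on the smooth locus being open and invariant under the $\Tad$-action, together with the fact that every orbit closure contains a closed orbit, which by Proposition~\ref{prop:X0uniqueclosed} must be $\{X_0\}$. You merely phrase it contrapositively (showing the non-smooth locus contains no closed point, hence is empty), whereas the paper shows directly that each orbit meets, and hence lies in, the smooth locus.
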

\begin{proof}
Denote by $\tM_{\wm}^{sm}$ the smooth locus of $\tM_{\wm}$. Assume  $Z$ is a $\Tad$-orbit inside $\tM_{\wm}$. Then the closure $\overline{Z}$ of $Z$ in 
$\tM_{\wm}$  contains a closed orbit (see, e.g. \cite[Proposition 21.4.5]{tauvel-yu-laag}) which has to be $\{X_0\}$. Hence
the intersection of $\tM_{\wm}^{sm}$ with $Z$ is not empty, because  $\tM_{\wm}^{sm}$ is open and, by assumption, $X_0 \in  \tM_{\wm}^{sm}$. Since local rings of $\tM_{\wm}$ are
isomorphic for points on the same orbit, we get  $Z \subset \tM_{\wm}^{sm}$. This
proves that $\tM_{\wm}$ is smooth.
\end{proof}

Under the identification of $\tM_{\wm}$ with $\tHilb^G_{E^*}$
the distinguished point $X_0$ of $\tM_{\wm}$ corresponds to a certain subvariety of $V$, which we also denote $X_0$ (see \cite[p.99]{alexeev&brion-modaff}). It is the closure of the $G$-orbit in $V$ of
\begin{equation*}
x_0 := \sum_{\lambda \in E} v_{\lambda} \in \oplus_{\lambda \in E}V(\lambda) = V. 
\end{equation*}
Indeed this orbit closure has the right weight monoid by \cite[Theorem 6]{vin&pop-quasi} and is fixed under the action of $\GL(V)^G$.
Yet another result of Alexeev and Brion's gives us an a priori lower bound on the dimension of the moduli schemes we are considering. We first recall a result of F.~Knop. Suppose $X$ is an affine $G$-variety. Let $\widetilde{\Sigma}_X$ be the saturated monoid generated by $\Sigma_X$, that is
\[\widetilde{\Sigma}_X:= \QQ_{\ge 0}\Sigma_X \cap \<\Sigma_X\>_{\ZZ} \inn X(T) \otimes_{\ZZ} \QQ.\]
 Then by \cite[Theorem 1.3]{knop-auto} the monoid $\widetilde{\Sigma}_X$ is free. 
 \begin{prop}[Cor 2.9, Prop 2.13 and Cor 2.14 in \cite{alexeev&brion-modaff}] \label{prop:apriori}
Suppose $X$ is a spherical affine $G$-variety. We view $X$ as a closed point of $\tM_{\Lambda^+_X}$. 
\begin{enumerate}
\item The weight monoid of the closure of the $\Tad$-orbit
  of $X$ in $\tM_{\Lambda^+_X}$ is $\Sigma_X$. Consequently,
  using that the dimension of a (not necessarily normal)
  toric variety
  is equal to the rank of the group spanned by its weight
  monoid, we obtain that $\dim \tM_{\Lambda^+_X} \ge d_X$.\label{clossigmax}
\item The normalization of the $\Tad$-orbit closure of $X$ in $\tM_{\Lambda^+_X}$ has weight monoid
$\widetilde{\Sigma}_X$. Consequently, it is $\Tad$-equivariantly
 isomorphic to a
 multiplicity-free $\Tad$-module of dimension $d_X$,
 since a toric variety is a representation when its weight monoid is
 free.
\item Suppose $X$ is a smooth variety. Then its $\Tad$-orbit
 is open in
 $\tM_{\Lambda^+_X}$. Consequently,
 combining (\ref{clossigmax}) with Proposition~\ref{prop:X0uniqueclosed}, we
 get that
 $\tM_{\Lambda^+_X}$ is smooth if and only if $ \dim T_{X_0} \tM_{\Lambda^+_X} \le d_X$.
\end{enumerate}
\end{prop}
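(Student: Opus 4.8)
The plan is to describe the points of $\tM_{\Lambda^+_X}=\tHilb^G_{E^*}$ concretely as $G$-algebra structures on the fixed multiplicity-free $G$-module $A=\bigoplus_{\lambda\in\Lambda^+_X}V(\lambda)$ and to make the $\Tad$-action $\gamma$ fully explicit on this description. Since $A$ is multiplicity-free, for dominant weights $\lambda,\mu,\nu$ the space of $G$-equivariant maps $V(\lambda)\otimes V(\mu)\to V(\nu)$ is at most one-dimensional, so an associative $G$-algebra structure on $A$ is recorded by a family of structure constants $c^{\nu}_{\lambda\mu}\in\CC$ (the coefficient, relative to fixed projections, of the product $V(\lambda)\cdot V(\mu)$ in $V(\nu)$), subject to the associativity relations; these relations cut out $\tM_{\Lambda^+_X}$ inside the affine space of all such families. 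The first step is to unwind the definition of $\gamma$ through the homomorphism $h\colon T\to\GL(V)^G$ of~\eqref{eq:TGLVGAB} and check that $\gamma(t)$ scales the coordinate $c^{\nu}_{\lambda\mu}$ by the character $\lambda+\mu-\nu$ (up to the overall sign fixed by the conventions in~\eqref{eq:TGLVGAB}). The point to stress is that $\lambda+\mu-\nu$ lies in the root lattice and is therefore a genuine character of $\Tad=T/Z(G)$; this single computation is the conceptual key to the whole statement.

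For part~(\ref{clossigmax}), the variety $X$ \emph{is} the family $\bigl(c^{\nu}_{\lambda\mu}(X)\bigr)$, and by the very definition of the root monoid $\Sigma_X$ the weights $\lambda+\mu-\nu$ for which $c^{\nu}_{\lambda\mu}(X)\neq 0$ generate $\Sigma_X$. Hence $\overline{\Tad\cdot X}$ is a (possibly non-normal) affine toric $\Tad$-variety whose coordinate ring is spanned by the $\Tad$-eigenvectors obtained as monomials in the nonvanishing coordinates, and its weight monoid is exactly $\Sigma_X$. Since the dimension of an affine toric variety equals the rank of the group spanned by its weight monoid, $\dim\overline{\Tad\cdot X}=\rk\<\Sigma_X\>_{\ZZ}=d_X$, and as $\overline{\Tad\cdot X}\inn\tM_{\Lambda^+_X}$ this gives $\dim\tM_{\Lambda^+_X}\ge d_X$. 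I would also record here that, because $X_0$ is the \emph{unique} closed $\Tad$-orbit (Proposition~\ref{prop:X0uniqueclosed}), it lies in $\overline{\Tad\cdot X}$, so $\dim_{X_0}\tM_{\Lambda^+_X}\ge d_X$; this remark makes the easy half of part~(3) essentially formal.

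Part~(2) is the most routine step. By the standard theory of semigroup algebras, the normalization of an affine toric variety with weight monoid $\Sigma_X$ has weight monoid equal to the saturation $\widetilde\Sigma_X=\QQ_{\ge 0}\Sigma_X\cap\<\Sigma_X\>_{\ZZ}$. By Knop's theorem $\widetilde\Sigma_X$ is free, so $\CC[\widetilde\Sigma_X]$ is a polynomial ring and the normalization is $\Tad$-equivariantly a multiplicity-free $\Tad$-module; its dimension is the number of free generators, namely $\rk\<\widetilde\Sigma_X\>_{\ZZ}=\rk\<\Sigma_X\>_{\ZZ}=d_X$.

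For part~(3) I would separate the formal assembly of the smoothness criterion from its one genuinely geometric input. The assembly is easy: by part~(\ref{clossigmax}) and the remark above, $\dim_{X_0}\tM_{\Lambda^+_X}\ge d_X$ and always $\dim T_{X_0}\tM_{\Lambda^+_X}\ge\dim_{X_0}\tM_{\Lambda^+_X}$, so the hypothesis $\dim T_{X_0}\tM_{\Lambda^+_X}\le d_X$ forces equality $\dim T_{X_0}\tM_{\Lambda^+_X}=\dim_{X_0}\tM_{\Lambda^+_X}$, i.e.\ smoothness at $X_0$, hence smoothness everywhere by Corollary~\ref{cor:smoothX0}; conversely, if $\tM_{\Lambda^+_X}$ is smooth it is irreducible (being connected), openness of $\Tad\cdot X$ gives $\dim\tM_{\Lambda^+_X}=d_X$, and smoothness at $X_0$ gives $\dim T_{X_0}\tM_{\Lambda^+_X}=d_X\le d_X$. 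The hard part is the openness of $\Tad\cdot X$ for smooth $X$: here I would use the local structure theory of smooth affine spherical varieties to show that $H^0(X,\shN_X)^G\isom T_{[X]}\tM_{\Lambda^+_X}$ is spanned by the tangent directions to the $\Tad$-orbit, forcing the orbit to be open. I expect this to be the main obstacle, since it is precisely where smoothness of $X$ (rather than mere sphericity) is used and where one cannot avoid genuine information about the deformations of $X$.
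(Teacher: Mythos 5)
Your reconstruction of parts (1) and (2) follows essentially the same route as Alexeev and Brion's original arguments (note that the paper itself does not prove this proposition at all: it imports it wholesale by citation from \cite{alexeev&brion-modaff}), but it contains one false claim along the way. Multiplicity-freeness of $A=\bigoplus_{\lambda\in\Lambda^+_X}V(\lambda)$ as a $G$-module does \emph{not} make $\Hom_G(V(\lambda)\otimes V(\mu),V(\nu))$ at most one-dimensional; tensor products of simple modules are in general not multiplicity-free (already for $\SL(3)$), so a multiplication law is not a family of scalar structure constants. This slip is repairable: the moduli scheme embeds $\Tad$-equivariantly into $\bigoplus_{\lambda,\mu,\nu}\Hom_G\bigl(V(\lambda)\otimes V(\mu),V(\nu)\bigr)$, where each summand is an entire $\Tad$-eigenspace of weight $\lambda+\mu-\nu$ (with the Cartan components $\nu=\lambda+\mu$ normalized, since the functor fixes the multiplication on the $U$-invariants), so your weight computation, the identification of the weight monoid of $\overline{\Tad\cdot X}$ with $\Sigma_X$, the normalization argument via Knop's theorem that $\widetilde{\Sigma}_X$ is free, and the formal assembly of the equivalence in (3) (using that $X_0$ lies in $\overline{\Tad\cdot X}$ as the unique closed orbit, together with Corollary~\ref{cor:smoothX0}) all go through unchanged.

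The genuine gap is exactly where you flag it: the openness of $\Tad\cdot X$ for smooth $X$, which is the real content of Proposition 2.13 and Corollary 2.14 of \cite{alexeev&brion-modaff} and is needed for the ``only if'' direction of (3) (smoothness of $\tM_{\Lambda^+_X}$ forces $\dim T_{X_0}\tM_{\Lambda^+_X}\le d_X$). Your plan --- ``show that $H^0(X,\shN_X)^G\isom T_{[X]}\tM_{\Lambda^+_X}$ is spanned by the tangent directions to the $\Tad$-orbit'' --- is a restatement of the assertion to be proved, not an argument: nothing in your sketch produces the bound $\dim T_{[X]}\tM_{\Lambda^+_X}\le d_X$, and the torus-weight formalism of parts (1)--(2) cannot yield it, since that formalism never uses smoothness of $X$. (Note also that the identification of the tangent space with invariant global sections of a normal sheaf, as in Proposition~\ref{prop:concrtan}, presupposes realizing $X$ as a non-degenerate closed subvariety of the module $V$.) Alexeev and Brion obtain the bound from a genuine deformation-theoretic computation for smooth affine spherical varieties resting on Knop's structure results; since the present paper disposes of the entire proposition by citation, this unproved step is the one place where your write-up would have to stand on its own, and as written it does not.
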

Applying this proposition to our situation we immediately obtain the following corollary. It reduces the proof of Theorem~\ref{thm:main} to Corollary~\ref{cor:mainthm} and Theorem~\ref{thm:cbc}.
\begin{cor} \label{cor:apriori}
Let $W$ be a spherical $G$-module and let $\wm$ be its weight monoid. Then the following are equivalent
\begin{enumerate}
\item $\tM_{\wm}$ is smooth; \label{apriorio}
\item $\dim T_{X_0}\tM_{\wm} = d_W$;
\item $\dim T_{X_0}\tM_{\wm} \le d_W$ \label{apriorith}
\end{enumerate}
Moreover, if $\tM_{\wm}$ is smooth then $\Sigma_W = \widetilde{\Sigma}_W$ and $\tM_{\wm}$ is $\Tad$-equivariantly isomorphic to the multiplicity-free $\Tad$-module with $\Tad$-weight set $-\Psi_W$, where $\Psi_W$ is the (unique) basis of $\Sigma_W$. 
\end{cor}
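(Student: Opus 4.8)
The plan is to read the entire statement off Proposition~\ref{prop:apriori} and Proposition~\ref{prop:X0uniqueclosed}, applied to the variety $X = W$. First I would record the one thing that makes the proposition applicable: a spherical $G$-module $W$ is in particular a smooth affine spherical $G$-variety with $\Lambda^+_W = \wm$, so that all three parts of Proposition~\ref{prop:apriori} are available. In particular the smoothness hypothesis needed for part~(3) is automatic, since $W$ is a vector space.

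Next I would dispatch the three equivalences. The equivalence of (1) and (3) is exactly the last assertion of Proposition~\ref{prop:apriori}(3), read with $X = W$ and $d_X = d_W$. The implication (2)$\Rightarrow$(3) is trivial. To close the cycle I would prove (1)$\Rightarrow$(2): assume $\tM_{\wm}$ is smooth. Since $\tM_{\wm}$ is connected by Proposition~\ref{prop:X0uniqueclosed} and smooth, hence regular and therefore normal, it is irreducible. By Proposition~\ref{prop:apriori}(3) the orbit $\Tad\cdot W$ is open in $\tM_{\wm}$, so by irreducibility it is dense, and thus $\dim\tM_{\wm} = \dim\overline{\Tad\cdot W}$. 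By Proposition~\ref{prop:apriori}(1) the weight monoid of $\overline{\Tad\cdot W}$ is $\Sigma_W$, and the dimension of this (toric) variety equals $\rk\<\Sigma_W\>_{\ZZ} = d_W$. Smoothness then gives $\dim T_{X_0}\tM_{\wm} = \dim\tM_{\wm} = d_W$, which is (2).

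For the ``Moreover'' part I would again use that, once $\tM_{\wm}$ is smooth, it is normal and irreducible and so coincides with the orbit closure $\overline{\Tad\cdot W}$. On the one hand this orbit closure has weight monoid $\Sigma_W$ by Proposition~\ref{prop:apriori}(1); on the other hand, being normal, it coincides with its own normalization, whose weight monoid is $\widetilde{\Sigma}_W$ by Proposition~\ref{prop:apriori}(2). Hence $\Sigma_W = \widetilde{\Sigma}_W$. Proposition~\ref{prop:apriori}(2) then identifies $\tM_{\wm}$ $\Tad$-equivariantly with a multiplicity-free $\Tad$-module of dimension $d_W$ whose weight monoid, viewed as a $\Tad$-variety, is $\widetilde{\Sigma}_W = \Sigma_W$. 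By the sign convention relating the weight monoid of a torus-module to its set of weights, the $\Tad$-weight set of this module is $-\Psi_W$, where $\Psi_W$ is the basis of the free monoid $\Sigma_W$.

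The only non-formal point, and hence the step to be careful about, is the passage from \emph{open} to \emph{dense} for the orbit $\Tad\cdot W$: this is precisely what upgrades the weight-monoid computations of Proposition~\ref{prop:apriori}, which a priori describe only the orbit closure, to statements about all of $\tM_{\wm}$. I expect it to follow painlessly from combining connectedness (Proposition~\ref{prop:X0uniqueclosed}) with smoothness, since a connected normal scheme is irreducible; everything else is bookkeeping with the invariants $\Sigma_W$, $\widetilde{\Sigma}_W$ and $d_W$.
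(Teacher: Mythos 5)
Your proof is correct and follows the paper's route exactly: the paper declares the corollary an immediate consequence of Proposition~\ref{prop:apriori} (with Proposition~\ref{prop:X0uniqueclosed}), and your argument is precisely a careful unpacking of that deduction, including the one genuinely non-trivial point (connected plus smooth implies irreducible, so the open $\Tad$-orbit of $W$ is dense and the orbit-closure statements of Proposition~\ref{prop:apriori}(1)--(2) apply to all of $\tM_{\wm}$). The sign bookkeeping giving the weight set $-\Psi_W$ also matches the paper's convention for weight monoids of torus modules.
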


The following formula for  $d_W$, which is an immediate consequence of \cite[Lemme 5.3]{camus}, will be of use. For the convenience of the reader, we provide a proof suggested by the referee.
\begin{lemma} \label{lem:camusbound}
If $W$ is a spherical $G$-module, then $d_W=a-b$, where $a$ is the rank of the (free) 
abelian group  $\Lambda_W$ and 
$b$ is the number of summands in the decomposition of $W$ into simple $G$-modules.
\end{lemma}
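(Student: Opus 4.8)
The plan is to realize $d_W=\rk\langle\Sigma_W\rangle_\ZZ$ as the corank in $\Lambda_W$ of a sublattice cut out by a natural multidegree, and then to show this corank equals $b$. Write $W=\bigoplus_{i=1}^{b}V(\lambda_i)$ as a sum of $b$ pairwise non-isomorphic simple modules, so that $R:=\CC[W]=\Sym(W^*)=\Sym\bigl(\bigoplus_{i=1}^b V(\lambda_i^*)\bigr)$ carries the $\NN^b$-grading $R=\bigoplus_{c\in\NN^b}R_c$, where $R_c=\Sym^{c_1}V(\lambda_1^*)\otimes\cdots\otimes\Sym^{c_b}V(\lambda_b^*)$ records the degree in each simple factor of $W^*$. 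Each $R_c$ is $G$-stable. Since $W$ is spherical, $R$ is multiplicity-free, so every isotypic component $R_{(\lambda)}$ is a single copy of $V(\lambda)$ and therefore lies in exactly one $R_c$; I would write $c(\lambda)$ for this multidegree. Comparing top components shows $c(\lambda+\mu)=c(\lambda)+c(\mu)$ whenever $\lambda,\mu,\lambda+\mu\in\wm$, so $c$ extends to a group homomorphism $c\colon\Lambda_W\to\ZZ^b$. As $c(\lambda_i^*)$ equals the $i$-th standard basis vector ($\lambda_i^*$ being the degree-one highest weight of the $i$-th factor), $c$ is surjective, whence $\rk\ker c=a-b$ with $a=\rk\Lambda_W$.

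Second, I would check $\Sigma_W\inn\ker c$, which is immediate: if $\langle R_{(\lambda)}R_{(\mu)}\rangle_\CC\cap R_{(\nu)}\neq 0$, then $R_{(\nu)}$ meets $R_{c(\lambda)+c(\mu)}$, forcing $c(\nu)=c(\lambda)+c(\mu)$ and hence $c(\lambda+\mu-\nu)=0$. This gives $\langle\Sigma_W\rangle_\ZZ\inn\ker c$, and therefore the upper bound $d_W\le a-b$.

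The main work — and the step I expect to be the real obstacle — is the matching lower bound $d_W\ge a-b$, equivalently that the quotient $Q:=\Lambda_W/\langle\Sigma_W\rangle_\ZZ$ is generated by the $b$ classes $\overline{\lambda_1^*},\dots,\overline{\lambda_b^*}$. I would prove this by induction on the $\Sym$-degree: given $\mu\in\wm$ with a highest weight vector $f\in R_{(\mu)}$ of degree $e\ge 2$, surjectivity of the multiplication $\Sym^{e'}(W^*)\otimes\Sym^{e-e'}(W^*)\to\Sym^{e}(W^*)$ (for $1\le e'\le e-1$), together with the decomposition of each graded piece into isotypic components, lets me write $f$ as a sum of products of elements of components $R_{(\alpha)}$, $R_{(\beta)}$ of strictly smaller degree. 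Because $f\neq 0$, at least one such product has nonzero projection onto $R_{(\mu)}$, i.e.\ $\langle R_{(\alpha)}R_{(\beta)}\rangle_\CC\cap R_{(\mu)}\neq 0$; hence $\alpha+\beta-\mu\in\Sigma_W$ and $\overline{\mu}=\overline{\alpha}+\overline{\beta}$ in $Q$. The induction then reduces every class to a combination of the degree-one classes $\overline{\lambda_i^*}$, so $\rk Q\le b$ and $d_W=\rk\Lambda_W-\rk Q\ge a-b$; combined with the upper bound this yields $d_W=a-b$. (In fact $c$ descends to an isomorphism $Q\isom\ZZ^b$, so $\langle\Sigma_W\rangle_\ZZ=\ker c$ on the nose.) The delicate point throughout is that everything rests on the multiplicity-freeness of $R$: it is what makes $c(\lambda)$ well defined and what guarantees that a nonzero $f$ produces a genuine tail relation in the inductive step. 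Note that this argument nowhere uses that $G$ is of type $\ssA$.
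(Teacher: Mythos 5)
Your proof is correct, and it takes a genuinely different route from the paper's. The paper's argument is geometric and leans on the literature: writing $G/H$ for the open orbit in $W$, it quotes Brion's formula $d_W = \rk \Lambda_W - \dim N_G(H)/H$, then Losev's identification $N_G(H)/H \isom \Aut^G(W)$, and finally observes that $\Aut^G(W) = \GL(W)^G$ (a $G$-automorphism of the multiplicity-free algebra $\CC[W]$ preserves every isotypic component, hence $W^*$), a torus of dimension $b$. Your argument instead stays entirely inside $\CC[W]$: the $\NN^b$-multidegree is constant on each isotypic component precisely because of multiplicity-freeness, and it is additive on $\wm$ since the product of two highest weight vectors is a nonzero $B$-eigenvector which must land in $R_{(\lambda+\mu)}$; this gives a surjective homomorphism $c\colon \Lambda_W \to \ZZ^b$ with $\langle \Sigma_W\rangle_{\ZZ} \inn \ker c$, and your induction on degree — using surjectivity of multiplication in $\Sym(W^*)$ and the fact that the $G$-stable span $\langle R_{(\alpha)}R_{(\beta)}\rangle_{\CC}$ must hit $R_{(\mu)}$ for at least one term — supplies the matching lower bound. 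All the steps check out. Your route is more elementary and self-contained (no input from Brion or Losev), and it proves strictly more: it identifies $\langle\Sigma_W\rangle_{\ZZ}$ as exactly $\ker c$, so $\Lambda_W/\langle\Sigma_W\rangle_{\ZZ} \isom \ZZ^b$ is free, i.e.\ $\langle\Sigma_W\rangle_{\ZZ}$ is saturated in $\Lambda_W$ — a refinement in the spirit of Knop's structural results on the root monoid. What the paper's approach buys, besides brevity, is the conceptual identification of $a - d_W$ with $\dim \Aut^G(G/H) = \dim \GL(W)^G$, which fits naturally into the paper's framework where these centralizer tori act on the moduli scheme.
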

\begin{proof}
Let $G/H$ be the open orbit in $W$. From \cite[Th\'eor\`eme 4.3]{brion-spheriques}, we have that
$d_W = \rk \Lambda_W - \dim N_G(H)/H$. Since $N_G(H)/H$ is isomorphic to the group of $G$-equivariant automorphisms $\Aut^G(G/H)$ of $G/H$, we obtain by \cite[Lemma 3.1.2]{losev-uniqueness} that $N_G(H)/H \isom \Aut^G(W)$. Moreover, $\Aut^G(W)=\GL(W)^G$, because a $G$-automorphism of the multiplicity-free $G$-algebra $\CC[W]$ preserves all irreducible submodules of $\CC[W]$ and therefore sends $W^*$ to $W^*$. Since $\dim \GL(W)^G = b$, it follows that  $\dim N_G(H)/H = b$, which finishes the proof.
\end{proof}

\begin{remark} \label{rem:lwg}
In~\cite{knop-rmks} Knop computed the simple reflections of the so-called `little Weyl group' of $W^*$, whenever $W$ is a saturated indecomposable spherical module. This entry in Knop's List is equivalent to giving the basis of the free monoid $\widetilde{\Sigma}_{W^*}=-w_0\widetilde{\Sigma}_W$: that basis is the set of simple roots of a certain root system of which the `little Weyl group' is the Weyl group (see \cite[Section 1]{knop-auto}, \cite[Section 3]{losev-knopconj} or \cite[Appendix A]{bravi-cirmclass} for details). Knop's List also contains the basis
of $\Lambda^+_{W^*} = -w_0\Lambda^+_{W}$ for the same modules $W$. Those were computed in
\cite{howe-umeda} and \cite{leahy}.  
\end{remark}

Here now is a proposition which provides a concrete description of the tangent space 
$T_{X_0}\tM_{\wm}$.
\begin{prop}[\cite{alexeev&brion-modaff}, Proposition 1.13] \label{prop:concrtan}
Let $V$ be a finite dimensional $G$-module and suppose $X$ is a multiplicity-free closed $G$-subvariety of $V$. Also writing $X$ for the corresponding closed point in $\tHilb_{\Lambda^+_X}^G(V)$, we have that the Zariski tangent space $T_X\tHilb_{\Lambda^+_X}^G(V)$ is canonically isomorphic to 
$H^0(X, \shN_X)^G$, where $\shN_X$ is the normal sheaf of $X$ in $V$.
\end{prop}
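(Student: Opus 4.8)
The plan is to combine the functor-of-points description of the invariant Hilbert scheme with the classical computation of the tangent space to a Hilbert scheme. Since $\tHilb_{\Lambda^+_X}^G(V)$ represents a functor on schemes, its Zariski tangent space at the point $X$ is the set of morphisms $\Spec \CC[\epsilon]/(\epsilon^2) \to \tHilb_{\Lambda^+_X}^G(V)$ sending the closed point to $X$. As $V$ is affine, such a morphism is the same datum as a $G$-stable ideal $I_{\epsilon} \inn \CC[V]\otimes_{\CC}\CC[\epsilon]/(\epsilon^2)$ which is flat over $\CC[\epsilon]/(\epsilon^2)$, reduces modulo $\epsilon$ to the ideal $I_X$ of $X$, and whose quotient by $I_{\epsilon}$ has invariant Hilbert function equal to that of $\CC[X]$. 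First I would recall the classical, non-equivariant fact that flat first-order deformations of $X$ inside $V$ correspond bijectively (and $\CC$-linearly) to $\Hom_{\CC[X]}(I_X/I_X^2,\CC[X]) = H^0(X,\shN_X)$: to a homomorphism $\varphi$ one attaches the ideal generated by the elements $f-\epsilon\,\widetilde{\varphi}(f)$ with $f \in I_X$, where $\widetilde{\varphi}\colon I_X \to \CC[V]$ is any lift of $\varphi$; this ideal is independent of the chosen lift and the resulting family is automatically flat.

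Next I would bring in the $G$-action. Under the correspondence just described, the deformed ideal $I_{\epsilon}$ is $G$-stable if and only if the associated homomorphism $\varphi$ is $G$-equivariant, which is immediate from the explicit formula once one notes that $\Spec \CC[\epsilon]/(\epsilon^2)$ carries the trivial $G$-action. Hence the $G$-stable first-order deformations are in natural bijection with $\Hom_{\CC[X]}^G(I_X/I_X^2,\CC[X]) = H^0(X,\shN_X)^G$.

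The point that requires genuine care is that restricting attention to the invariant Hilbert scheme should impose no further condition, i.e.\ that the invariant Hilbert function is automatically preserved by every $G$-equivariant first-order deformation; this is where the reductivity of $G$ is used. Because $\CC[\epsilon]/(\epsilon^2)$ is a local Artinian ring, flatness of the quotient $\CC[X_{\epsilon}]$ over it is equivalent to freeness, and since $G$ is reductive the passage to the isotypical component of type $\lambda$ is exact. Consequently each $\CC[X_{\epsilon}]_{(\lambda)}$ is a free $\CC[\epsilon]/(\epsilon^2)$-module whose rank equals $\dim_{\CC}\bigl(\CC[X_{\epsilon}]_{(\lambda)}\otimes_{\CC[\epsilon]}\CC\bigr)=\dim_{\CC}\CC[X]_{(\lambda)}$, so the invariant Hilbert function of $X_{\epsilon}$ coincides with that of $X$ with no additional constraint. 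I expect precisely this verification --- that flatness together with reductivity forces the invariant Hilbert function to stay constant to first order, so that the tangent space is the \emph{full} space of $G$-invariant normal sections and not a proper subspace of it --- to be the crux; the remaining steps are the standard deformation-theoretic dictionary for Hilbert schemes, applied $G$-equivariantly.
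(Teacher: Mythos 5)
Your argument is correct: the paper does not prove this statement itself but cites \cite[Proposition 1.13]{alexeev&brion-modaff}, and your dual-numbers computation --- identifying first-order embedded deformations with $\Hom_{\CC[X]}(I_X/I_X^2,\CC[X]) = H^0(X,\shN_X)$, matching $G$-stability of the deformed ideal with $G$-equivariance of the homomorphism, and then checking that flatness over $\CC[\epsilon]/(\epsilon^2)$ together with reductivity of $G$ keeps the invariant Hilbert function constant --- is exactly the standard proof given in that reference. You also correctly isolated the only non-formal point, namely that fixing the invariant Hilbert function imposes no extra condition on $G$-stable first-order deformations, because exactness of isotypical decomposition makes each $\CC[X_\epsilon]_{(\lambda)}$ a direct summand of a flat (hence free) $\CC[\epsilon]/(\epsilon^2)$-module, whose rank is computed by its fiber $\CC[X]_{(\lambda)}$.
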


\subsection{$\Vgg$ as a first estimate of  $T_{X_0}\tM_{\wm}$}\label{subsec:Tapprox}
In this section we describe a natural inclusion of $T_{X_0}\tM_{\wm}$ into $\Vgg$, see Corollary~\ref{cor:injtan}. 
For calculational purposes we introduce a second $\Tad$-action on $\tHilb^G_{\wm}(V)$ denoted $\widehat{\psi}$, 
which is a twist of the action $\gamma$ defined in Section~\ref{subsec:emb}, and also the infinitesimal version of $\widehat{\psi}$ on $\Vgg$ 
denoted $\alpha$. The action $\alpha$ is the one used throughout \cite{bravi&cupit}. ÊThe main ideas of this section come Ê
from the proof of \cite[Proposition 1.15]{alexeev&brion-modaff}. ÊWe continue to use the notation of Section~\ref{subsec:emb}.

Because $G\cdot x_0$ is dense in $X_0$, we have an injective restriction map  
\[H^0(X_0, \shN_{X_0}) \into H^0(G\cdot x_0, \shN_{X_0})=H^0(G\cdot x_0, \shN_{G\cdot x_0}),\] 
where $\shN_{G\cdot x_0}$ is defined as the restriction of $\shN_{X_0}$ to the open subset $G\cdot x_0\inn X_0$.
This map is $G\times \GL(V)^G$-equivariant because $X_0$ and $G \cdot x_0$ are stable under the natural action of $G \times \GL(V)^G$ on $V$. 
Restricting to $G$-invariants we obtain a $\GL(V)^G$-equivariant inclusion
\begin{equation} \label{eq:injtan}
H^0(X_0, \shN_{X_0})^G \into H^0(G\cdot x_0, \shN_{X_0})^G=H^0(G\cdot x_0, \shN_{G\cdot x_0})^G\end{equation}
Since $G\cdot x_0$ is homogeneous, $\shN_{G\cdot x_0}$ is the $G$-linearized sheaf
on $G/G_{x_0}$ associated with the $G_{x_0}$-module $V/{\fg\cdot x_0}$, that is, the vector bundle associated to $\shN_{G\cdot x_0}$ is $G$-equivariantly isomorphic to $G\times_{G_{x_0}}(\Vg)$. In particular,  
we have a canonical isomorphism 
\begin{equation}
H^0(G\cdot x_0, \shN_{G\cdot x_0})^G \to (V/{\fg\cdot x_0})^{G_{x_0}},\ s \mapsto s(x_0) \label{eq:Nx0}
\end{equation}
which is the precise way of saying that $G$-invariant global sections of $\shN_{G\cdot x_0}$ are determined by their value at $x_0$.

The $T$-action $\phi$ on $V$ defined in ÊSection~\ref{subsec:emb} induces an action 
on Ê$H^0(G\cdot x_0, \shN_{G\cdot x_0})^G$ and we could use the isomorphism (\ref{eq:Nx0}) to 
induce an action on $\Vgg$. ÊBecause it is better suited to our calculations, we prefer to 
work with a slightly different action. Recall that $\phi$ was obtained by composing the 
natural action of $\GL(V)^G$ Êwith the Êhomomorphism $h$ of (\ref{eq:TGLVGAB}).
Instead, we Êobtain a $T$-action, denoted $\psi$, on $V$ by composing the Êaction 
of Ê$\GL(V)^G$ with Êthe homomorphism
\begin{equation}
f\colon T \to \GL(V)^G,\ t \mapsto (\lambda(t))_{\lambda \in E}.  \label{eq:TGLVG}
\end{equation} 
In other words, $\psi$ is the following action:
\[\psi: T \times V \to V,\ \psi(t,v) = f(t) \cdot v.\] 
\begin{remark} \label{rem:fstinj}
Since the $T$-weights in $E$ are linearly independent, $f$ is surjective. 
\end{remark}

Since $\psi$ commutes with the action of $G$ on $V$, it induces an action of $T$ on $\tHilb_{\wm}^G(V)$ and on $H^0(G\cdot x_0, \shN_{G\cdot x_0})^G$. By slight abuse of notation we call both of these actions $\widehat{\psi}$.\label{widehatpsi} 
Using the isomorphism of equation (\ref{eq:Nx0}) we now translate this action into an action of $T$ on  $(V/{\fg\cdot x_0})^{G_{x_0}}$. The relationship (via $f$) between the action of $T$ on $\Vgg$ and the action of $\GL(V)^G$ on  $H^0(G\cdot x_0, \shN_{G\cdot x_0})^G \isom \Vgg$ will play a part in the proof of Proposition~\ref{prop:excl}.  Let $\rho: T\times V \to V$ be the action of $T$ on $V$ induced by restriction of the action of $G$.
\begin{defn} \label{def:alpha}
We denote $\alpha$ the action of $T$ on $V$ given by
\[\alpha(t,v) := \psi(t, \rho(t^{-1},v)) \quad \text{for $t \in T$ and $v\in V$}.\]
\end{defn}
\begin{remark}
One immediately checks that for all $\lambda \in E$ and every $v \in V(\lambda) \inn V$, 
\begin{equation}
 \alpha(t,v) = \lambda(t)t^{-1} v. \label{eq:defTadact}
 \end{equation} 
\end{remark}
\begin{prop}\label{prop:Teqv} 
The action $\alpha$ induces an action of $T$ on $(V/{\fg\cdot x_0})^{G_{x_0}}$, which we also call $\alpha$. For $H^0(G\cdot x_0, \shN_{X_0})$ equipped with the action $\widehat{\psi}$ and $\Vgg$ with the action $\alpha$, the isomorphism (\ref{eq:Nx0}) is $T$-equivariant.  
Moreover, both actions $\alpha$ and $\widehat{\psi}$ have $Z(G)$ in their kernel, whence the isomorphism (\ref{eq:Nx0}) is $\Tad$-equivariant. 
\end{prop}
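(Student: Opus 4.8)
The plan is to derive all three assertions from the single identity
\[\alpha(t,v) = \psi(t)\bigl(\rho(t^{-1},v)\bigr) = \rho(t^{-1})\bigl(\psi(t,v)\bigr),\]
the last equality holding because $\psi(t)\in\GL(V)^G$ commutes with the $G$-action $\rho$. First I would record two consequences of the definitions of $\psi$ and $\rho$. Since $\psi(t)$ scales each $V(\lambda)$ by $\lambda(t)$ while $\rho(t)$ scales the highest weight vector $v_\lambda$ by that same $\lambda(t)$, the two actions agree on $x_0=\sum_{\lambda\in E}v_\lambda$; more precisely $\psi(t^{-1})x_0 = \rho(t^{-1})x_0 = t^{-1}\cdot x_0 \in G\cdot x_0$. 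In particular $\alpha(t,x_0)=\psi(t)\rho(t^{-1})x_0 = x_0$, so $\alpha(t)$ fixes $x_0$. Moreover, because $\psi(t)$ commutes with $\rho$, one checks that $\alpha(t)\,\rho(g)\,\alpha(t)^{-1} = \rho(t^{-1}gt)$ for all $g\in G$; differentiating this and using $\alpha(t)x_0=x_0$ gives $\alpha(t)(X\cdot x_0)=(\mathrm{Ad}(t^{-1})X)\cdot x_0$ for $X\in\fg$. Hence $\alpha(t)$ maps $\fg\cdot x_0$ onto itself and descends to a $T$-action on $V/\fg\cdot x_0$, which is the first thing the statement requires.

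Next I would establish the equivariance of (\ref{eq:Nx0}), from which everything else follows formally. Since $\psi(t)\in\GL(V)^G$ preserves $G\cdot x_0$ and $X_0$ and commutes with $G$, the induced action $\widehat\psi$ on sections preserves the subspace of $G$-invariant sections, and is given by $(\widehat\psi(t)\cdot s)(p) = \psi(t)\bigl(s(\psi(t)^{-1}\cdot p)\bigr)$. Evaluating at $p=x_0$, using $\psi(t)^{-1}x_0 = t^{-1}\cdot x_0$ together with the $G$-equivariance $s(t^{-1}\cdot x_0)=\rho(t^{-1})\bigl(s(x_0)\bigr)$ of a $G$-invariant section, I obtain
\[(\widehat\psi(t)\cdot s)(x_0) = \psi(t)\rho(t^{-1})\bigl(s(x_0)\bigr) = \alpha(t)\bigl(s(x_0)\bigr).\]
Thus under the isomorphism $s\mapsto s(x_0)$ the action $\widehat\psi$ corresponds exactly to $\alpha$. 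Because $\widehat\psi$ is a genuine action on $H^0(G\cdot x_0,\shN_{G\cdot x_0})^G$ and (\ref{eq:Nx0}) is an isomorphism onto $\Vgg$, this simultaneously shows that $\alpha$ preserves $\Vgg$ (giving the claimed induced action there) and that (\ref{eq:Nx0}) is $T$-equivariant.

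Finally, for the kernel statement I would compute $\alpha$ on the center. For $z\in Z(G)$ the root characters are trivial, so $z$ acts on all of $V(\lambda)$ by the scalar $\lambda(z)$; since $-\lambda^*=w_0\lambda$ and $w_0$ fixes $z$, we also have $-\lambda^*(z)=\lambda(z)$, so $\rho(z)$, $\psi(z)=f(z)$ and $h(z)=\phi(z)$ all act on $V(\lambda)$ by the same scalar $\lambda(z)$. Consequently $\alpha(z)=\psi(z)\rho(z^{-1})=\Id_V$, so $Z(G)$ lies in the kernel of $\alpha$; by the equivariance just proved (or by the same direct computation on sections) $Z(G)$ also lies in the kernel of $\widehat\psi$. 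Both actions therefore descend to $\Tad$, and (\ref{eq:Nx0}) is $\Tad$-equivariant.

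The main obstacle is the middle step: one must resist transporting $\psi$ naively across (\ref{eq:Nx0}) and instead track carefully how the value $s(x_0)$ transforms. The point is that $\psi(t)$ does \emph{not} fix $x_0$ but moves it within the orbit, so evaluating $\widehat\psi(t)\cdot s$ at $x_0$ forces one to pull $s$ back from $t^{-1}\cdot x_0$ using $G$-invariance; it is precisely this pull-back that produces the twist by $\rho(t^{-1})$ and turns $\psi$ into $\alpha$. Getting the bookkeeping of the normal-bundle fibers, and the direction of the twist, right is the only genuinely delicate part.
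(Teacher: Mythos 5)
Your proposal is correct and follows essentially the same route as the paper: you evaluate $\widehat{\psi}(t)\cdot s$ at $x_0$, use $\psi(t^{-1},x_0)=\rho(t^{-1},x_0)$ and the $G$-invariance of $s$ to produce the twist $\psi(t)\rho(t^{-1})=\alpha(t)$, and conclude the kernel statement from the fact that all weights of $V(\lambda)$ agree on $Z(G)$. The only difference is cosmetic: where the paper leaves the stability of $\fg\cdot x_0$ under $\alpha(t)$ and the well-definedness of $\alpha$ on $\Vgg$ as a ``straightforward verification,'' you spell these out via the conjugation identity $\alpha(t)\rho(g)\alpha(t)^{-1}=\rho(t^{-1}gt)$ and by transporting the action through the isomorphism (\ref{eq:Nx0}), which is a perfectly valid (and slightly more self-contained) way to fill in the same steps.
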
 
\begin{proof}
 In this proof, we will write $\shN$ for $\shN_{X_0}$. 
Suppose $t\in T$ and $s \in 
H^0(G\cdot x_0, \shN_{G\cdot x_0})^G$. Then
\[\widehat{\psi}(t, s)(x_0) = (f(t)\cdot s)(x_0) = f(t) \cdot s(f(t)^{-1}\cdot x_0) = f(t) \cdot s(\psi(t^{-1}, x_0)).\]
Now note that $\psi(t^{-1}, x_0) = \rho(t^{-1},x_0)$ by the definitions of $f$ and $x_0$. In other words, we have that 
\(\widehat{\psi}(t, s)(x_0) = f(t) \cdot s(\rho(t^{-1}, x_0)).\)
Let $v$ be an element of  $V$ such that $s(x_0) = [v] \in \shN|_{x_0} = V/\fg\cdot x_0$. 
Then \[s(\psi(t^{-1}, x_0)) = [\rho(t^{-1}, v)] \in \shN|_{\rho(t^{-1}, x_0)} \]
because $s$ is $G$-invariant and therefore $T$-invariant. 
It follows that 
\begin{equation}
f(t) \cdot s(\rho(t^{-1}, x_0)) = [\psi(t,\rho(t^{-1},v))] = [\alpha(t,v)]\in  V/\fg\cdot x_0. \label{eq:psitoalpha}
\end{equation}
A straightforward verification (or equation (\ref{eq:psitoalpha})) shows that
$\alpha$ induces a well-defined action on $\Vgg$. From (\ref{eq:psitoalpha}) we can conclude that  the isomorphism (\ref{eq:Nx0}) is $T$-equivariant. Finally, that $Z(G)$ belongs to the kernel of $\alpha$ is an immediate consequence of highest weight theory. 
\end{proof}
{\bf From now on}, the \emph{$\Tad$-action on $V$} (and on $\Vg$, $\Vgg$, etc.) will refer to the action given by Ê$\alpha$, and the
\emph{$\Tad$-action on $\tHilb^G_{\wm}(V)$} (and on $\tM_{\wm}$) Êwill refer to the action given by Ê$\widehat{\psi}$.
Combining Proposition~\ref{prop:concrtan} and equations (\ref{eq:injtan}) and (\ref{eq:Nx0}) we obtain a natural injection $T_{X_0}\tM_{\wm} \into \Vgg$. 
\begin{cor} \label{cor:injtan}
The natural injection
$T_{X_0}\tM_{\wm}  \into (V/\fg\cdot x_0)^{G_{x_0}}$ just defined is $\Tad$-equivariant, where we consider $T_{X_0}\tM_{\wm}$ as a $\Tad$-module via $\widehat{\psi}$ and $\Vgg$ via $\alpha$.
\end{cor}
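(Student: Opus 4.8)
The plan is to realise the injection of the statement as a composite of three maps and to check equivariance factor by factor. Writing $\shN$ for $\shN_{X_0}$, the map in question is
\[T_{X_0}\tM_{\wm} \isom H^0(X_0,\shN)^G \into H^0(G\cdot x_0,\shN_{G\cdot x_0})^G \isom \Vgg,\]
where the first isomorphism is the canonical one of Proposition~\ref{prop:concrtan}, the inclusion is (\ref{eq:injtan}), and the last isomorphism is (\ref{eq:Nx0}). By Proposition~\ref{prop:Teqv} the last isomorphism is already $\Tad$-equivariant once $H^0(G\cdot x_0,\shN_{G\cdot x_0})^G$ carries $\widehat{\psi}$ and $\Vgg$ carries $\alpha$. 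It therefore suffices to prove that the first two maps are equivariant for the action $\widehat{\psi}$ on all three cohomology spaces, and then to descend the composite to $\Tad$.

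The inclusion (\ref{eq:injtan}) is the easy factor: the restriction map $H^0(X_0,\shN)\into H^0(G\cdot x_0,\shN_{G\cdot x_0})$ is $G\times\GL(V)^G$-equivariant because $X_0$ and $G\cdot x_0$ are both stable under $G\times\GL(V)^G$, and passing to $G$-invariants preserves this. Since $\widehat{\psi}$ is by construction the $T$-action obtained from the $\GL(V)^G$-action by composing with the homomorphism $f$ of (\ref{eq:TGLVG}), the inclusion is automatically $\widehat{\psi}$-equivariant. First I would record this.

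The hard part will be the first factor. Here I would use that $X_0$ is a $\GL(V)^G$-fixed point of $\tHilb^G_{\wm}(V)$, as noted when $X_0$ was defined, so that $\GL(V)^G$ acts linearly on $T_{X_0}\tHilb^G_{\wm}(V)=T_{X_0}\tM_{\wm}$; it also acts on $H^0(X_0,\shN)^G$ because $X_0$ is $\GL(V)^G$-stable. The point to establish is that the canonical isomorphism of Proposition~\ref{prop:concrtan} intertwines these two $\GL(V)^G$-actions. This is the naturality of the deformation-theoretic description of the tangent space to the invariant Hilbert scheme: a tangent vector at $X_0$ is a first-order $G$-invariant deformation of $X_0$ inside $V$; the differential at the fixed point $X_0$ of the automorphism of $\tHilb^G_{\wm}(V)$ induced by $g\in\GL(V)^G$ carries such a deformation to its $g$-translate; and under the identification of first-order deformations with $G$-invariant sections of $\shN$ this translate is precisely the image of the section under the action of $g$ on $H^0(X_0,\shN)^G$ coming from $g$ acting on $V$ while preserving $X_0$. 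Granting this functoriality, the first factor is $\GL(V)^G$-equivariant, hence $\widehat{\psi}$-equivariant via $f$, just like the inclusion.

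Finally I would assemble the three factors, obtaining a map that is $T$-equivariant from $\widehat{\psi}$ to $\alpha$, and descend it to $\Tad$. On the source side, the $\widehat{\psi}$-action on $\tM_{\wm}$, and hence on $T_{X_0}\tM_{\wm}$, kills $Z(G)$: for $z\in Z(G)$ one has $f(z)=\rho(z)$ in $\GL(V)$, since both act on each simple summand $V(\lambda)$ by the scalar $\lambda(z)$, and $\rho(z)$, being the action of the group element $z\in G$, fixes every $G$-stable closed subscheme and so acts trivially on $\tHilb^G_{\wm}(V)$. On the target side $\alpha$ kills $Z(G)$ by Proposition~\ref{prop:Teqv}. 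The composite therefore factors through $\Tad$ and is $\Tad$-equivariant, as claimed. The only genuinely non-formal step is the $\GL(V)^G$-equivariance of the Proposition~\ref{prop:concrtan} isomorphism; everything else is bookkeeping with the definitions of $f$, $\widehat{\psi}$ and $\alpha$.
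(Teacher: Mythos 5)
Your proposal is correct and takes essentially the paper's route: the paper obtains the injection as the same composite $T_{X_0}\tM_{\wm} \isom H^0(X_0,\shN_{X_0})^G \into H^0(G\cdot x_0,\shN_{G\cdot x_0})^G \isom \Vgg$, with the $\GL(V)^G$-equivariance of the restriction map noted in Section~\ref{subsec:Tapprox} and the equivariance of (\ref{eq:Nx0}), together with the triviality of the $Z(G)$-actions, supplied by Proposition~\ref{prop:Teqv}. The one step you single out as non-formal---the $\GL(V)^G$-equivariance of the canonical isomorphism of Proposition~\ref{prop:concrtan}---is precisely what the paper leaves implicit in calling that isomorphism canonical, and your naturality argument via first-order $G$-invariant deformations fills it in correctly.
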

\begin{remark} \label{rem:sigmawstar}
Let $(G,W)$ be a spherical module as in Theorem~\ref{thm:cbc} and let $\wm$ be its weight monoid. 
The $\Tad$-weight set we obtain in Section~\ref{sec:cases} for $T_{X_0}\tM^G_{\wm}$ using the action $\alpha$ is the basis of the free monoid $\widetilde{\Sigma}_{W^*}=-w_0\widetilde{\Sigma}_W$ (instead of $-\widetilde{\Sigma}_W$ as in Theorem~\ref{thm:main} where the action $\gamma$ was used).
\end{remark}

\begin{remark} \label{rem:abinjiso}
Thanks to \cite[Proposition 1.15(iii)]{alexeev&brion-modaff} and Lemma~\ref{lem:X0normal} below, 
we know that the injection in Corollary~\ref{cor:injtan} is an isomorphism when $X_0 \setminus G\cdot x_0$ 
has codimension at least $2$ in $X_0$. This condition is often not met in our situation. Even when it is not, the injection
is often an isomorphism, but Êwe also have a number of cases where the injection is not surjective; see, 
for example, Remark~\ref{rem:propcase17}.
\end{remark}

\subsection{Auxiliary lemmas on $\Vgg$ and the $\Tad$-action} \label{subsec:auxiliary}
We continue to use the notation  of Sections \ref{subsec:emb} and \ref{subsec:Tapprox}.
Let $G\rtimes \Tad$ be the semidirect product of $G$ and $\Tad$, where $\Tad$ acts on $G$ as follows:
\begin{equation}
\Tad \times G \to G, (t,g) \mapsto t^{-1}gt. \label{eq:TactonG}
\end{equation} 
As explained in \cite[p.102]{alexeev&brion-modaff}, the linear actions of $\Tad$ and $G$ on $V$ can be extended together to a linear action of $G\rtimes \Tad$ on $V$ as follows. 
 Suppose $(g,t) \in G\rtimes \Tad$ and $v\in V$, then
\begin{equation}
(g,t) \cdot v := g\cdot\alpha(t,v) = \alpha(t,(tgt^{-1})\cdot v),   \label{eq:GrtimesTad}
\end{equation} 
where $\alpha$ is the $\Tad$-action. Since $\Tad$ fixes $x_0$, we have that $(G\rtimes \Tad)_{x_0} = G_{x_0} \rtimes \Tad$ and
$(G\rtimes \Tad) \cdot x_0 = G\cdot x_0$. It follows that $(G\rtimes \Tad)_{x_0}$ acts on $\fg \cdot x_0 = T_{x_0}(G\cdot x_0)$ and
we have an exact sequence of $(G_{x_0} \rtimes \Tad)$-modules
\begin{equation} \label{eq:fundseq}
0 \longrightarrow \fg\cdot x_0 \longrightarrow V \longrightarrow V/\fg\cdot x_0 \longrightarrow 0.
\end{equation}

The next lemma gathers some elementary facts about $G_{x_0}$ and $\fg \cdot x_0$. They will  be of use in Sections \ref{sec:Brionstrat} and \ref{sec:cases}. 
\begin{lemma}\label{lem:isotropygroup}
Let $E$ be a finite subset 
 of $\Lambda^+$, and define $V$ and $x_0$ as before, that is,
\(x_0:=\sum_{\lambda\in E}v_{\lambda} \in V := \oplus_{\lambda\in E}V(\lambda).\)
Then the following hold:
\begin{enumerate}
\item $G_{x_0} = T_{x_0}.G_{x_0}^{\circ} $, where $G_{x_0}^{\circ}$ is the connected component of $G_{x_0}$ containing the identity; \label{iso1}
\item $T_{x_0} = \cap_{\lambda \in E} \ker \lambda$; \label{iso2}
\item $\fg_{x_0} = \fu \oplus \ft_{x_0} \oplus \bigoplus_{\alpha \in E^{\perp}} \fg_{-\alpha}$, where 
$E^{\perp}:=\{\alpha \in R^+\ |\ \<\lambda,\alpha^{\vee}\> = 0 \text{ for all } \lambda \in E\}$; \label{iso3}
\item The $\Tad$-weight set of $\fg\cdot x_0$ is $(R^+ \setminus  E^{\perp})\cup \{0\}$. 
\label{weightsgx0}
\end{enumerate}
\end{lemma}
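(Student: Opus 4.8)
The plan is to read off everything from the weight bookkeeping furnished by the $\Tad$-action $\alpha$ (Definition~\ref{def:alpha}) together with elementary $\sl(2)$-representation theory. Statement~(\ref{iso2}) is immediate: since the $v_{\lambda}$ lie in distinct simple summands $V(\lambda)$ they are linearly independent, so for $t\in T$ one has $t\cdot x_0=\sum_{\lambda\in E}\lambda(t)v_{\lambda}$, which equals $x_0$ precisely when $\lambda(t)=1$ for every $\lambda\in E$; that is, $T_{x_0}=\bigcap_{\lambda\in E}\ker\lambda$.

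For (\ref{iso3}) and (\ref{weightsgx0}) I would study the differential of the orbit map, namely the linear map $\fg\to V$, $X\mapsto X\cdot x_0$, whose kernel is $\fg_{x_0}$ and whose image is $\fg\cdot x_0$. The key observation is that this map is equivariant for the torus action in which $\Tad$ acts on $\fg$ by conjugation (so that $\fg^{\beta}$ has $\Tad$-weight $-\beta$ and $\ft$ has weight $0$) and on $V$ by $\alpha$: indeed, by (\ref{eq:defTadact}) a vector of $T$-weight $\mu$ in $V(\lambda)$ has $\alpha$-weight $\lambda-\mu$, so $X_{-\alpha}\cdot v_{\lambda}$, of $T$-weight $\lambda-\alpha$, has $\alpha$-weight $\alpha$, matching the $\Tad$-weight of $\fg^{-\alpha}$. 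Because the nonzero $\Tad$-weight spaces of $\fg$ are exactly the one-dimensional root spaces (with pairwise distinct weights) and the weight-$0$ space is $\ft$, both $\fg_{x_0}$ and $\fg\cdot x_0$ are sums of weight spaces and may be computed root space by root space. Three computations then finish the job: (i) $\fu=\oplus_{\alpha\in R^+}\fg^{\alpha}$ kills $x_0$ since each $v_{\lambda}$ is a highest weight vector; (ii) on $\ft$ the map is $H\mapsto\sum_{\lambda}\lambda(H)v_{\lambda}$, with kernel $\ft_{x_0}$ and nonzero image (as $E$ contains a nonzero weight), which accounts for the weight $0$; and (iii) for $\alpha\in R^+$, applying the $\sl(2)$-triple attached to $\alpha$ shows $X_{-\alpha}\cdot v_{\lambda}=0$ iff $\<\lambda,\alpha^{\vee}\>=0$, so $X_{-\alpha}\cdot x_0=0$ iff $\alpha\in E^{\perp}$. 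Assembling these yields $\fg_{x_0}=\fu\oplus\ft_{x_0}\oplus\bigoplus_{\alpha\in E^{\perp}}\fg^{-\alpha}$, which is (\ref{iso3}), and reading off the nonzero weights of the image gives that the $\Tad$-weight set of $\fg\cdot x_0$ is $(R^+\setminus E^{\perp})\cup\{0\}$, which is (\ref{weightsgx0}).

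The substantial part is (\ref{iso1}). Here I would first use (\ref{iso3}) to identify the shape of $G_{x_0}^{\circ}$: its Lie algebra is the sum of a reductive part $\ft_{x_0}\oplus\bigoplus_{\alpha\in E^{\perp}}(\fg^{\alpha}\oplus\fg^{-\alpha})$, with root system $\{\pm\alpha\colon\alpha\in E^{\perp}\}$, and the nilpotent ideal $\bigoplus_{\beta\in R^+\setminus E^{\perp}}\fg^{\beta}$. Hence a Borel subgroup of $G_{x_0}^{\circ}$ has unipotent part of dimension $|E^{\perp}|+|R^+\setminus E^{\perp}|=|R^+|=\dim U$. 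Since $U\inn G_{x_0}$ (highest weight vectors are $U$-fixed) is connected, it lies in $G_{x_0}^{\circ}$, and by this dimension count $U$ is a maximal connected unipotent subgroup of $G_{x_0}^{\circ}$. Now take any $g\in G_{x_0}$: then $gUg^{-1}$ is a second maximal connected unipotent subgroup of the connected group $G_{x_0}^{\circ}$, so by conjugacy of such subgroups there is $h\in G_{x_0}^{\circ}$ with $h(gUg^{-1})h^{-1}=U$, whence $hg\in N_G(U)=B$. Intersecting with $G_{x_0}$ and using $U\cdot x_0=x_0$ gives $B_{x_0}=T_{x_0}U$, so $hg=tu$ with $t\in T_{x_0}$, $u\in U$; rewriting $g=t\,(t^{-1}h^{-1}t\,u)$ and noting that $t$ normalizes $G_{x_0}^{\circ}$ exhibits $g\in T_{x_0}G_{x_0}^{\circ}$. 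As the reverse inclusion is clear, this proves (\ref{iso1}).

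The step I expect to be the main obstacle is (\ref{iso1}): parts (\ref{iso2})--(\ref{weightsgx0}) reduce to weight bookkeeping once the $\Tad$-equivariance of the orbit map is in hand, whereas (\ref{iso1}) requires correctly recognizing $U$ as a \emph{maximal} connected unipotent subgroup of $G_{x_0}^{\circ}$ from the structure extracted in (\ref{iso3}), and then combining conjugacy of maximal unipotent subgroups with $N_G(U)=B$. Some care is needed to check that the element produced lands in $T_{x_0}G_{x_0}^{\circ}$ and not merely in $N_G(U)$.
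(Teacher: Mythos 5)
Your proof is correct, and it is genuinely more self-contained than the paper's, whose proof of this lemma is essentially a pointer to the literature: part (\ref{iso1}) is obtained by ``replacing $v_{\lambda}$ by $x_0$'' in the proof of Jansou's Lemme 1.7, part (\ref{iso3}) is attributed to standard facts about root operators acting on highest weight vectors, and part (\ref{weightsgx0}) is deduced from the single remark $\fg\cdot x_0=\fb^-\cdot x_0$. For (\ref{iso3}) and (\ref{weightsgx0}) your organizing device --- the observation that $X\mapsto X\cdot x_0$ intertwines the conjugation action of $\Tad$ on $\fg$ with the action $\alpha$ on $V$, so that kernel and image split along the one-dimensional, pairwise distinct nonzero weight spaces of $\fg$ --- is exactly the equivariance underlying the paper's sequence (\ref{eq:fundseq}); your three $\sl(2)$-computations then carry the same content as the paper's citations, just written out. (One could even avoid the equivariance: inside each $V(\lambda)$ the vectors $d\lambda(H)v_{\lambda}$ and $X_{-\beta}v_{\lambda}$ lie in distinct $T$-weight spaces, so the terms of $(H+\sum_\beta c_\beta X_{-\beta})\cdot x_0=0$ vanish separately.) For (\ref{iso1}), where the paper defers entirely to Jansou, your argument via conjugacy of maximal connected unipotent subgroups of $G_{x_0}^{\circ}$, together with $N_G(U)=B$, $B_{x_0}=T_{x_0}U$, and normality of $G_{x_0}^{\circ}$ in $G_{x_0}$, is sound and stands on its own; this is what the approach buys --- a complete proof in place of a reference. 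One simplification worth noting: your dimension count for the Borel of $G_{x_0}^{\circ}$ (and hence the Levi-type analysis of $\fg_{x_0}$, including the claim that $\pm E^{\perp}$ is a root subsystem, which itself needs the dominance argument the paper invokes in Proposition~\ref{prop:codim1stab}) is dispensable: $U$ is already a maximal connected unipotent subgroup of $G$ itself, since any connected unipotent overgroup lies in the unipotent radical of some Borel of $G$, which has the same dimension $|R^+|$ as $U$; maximality inside $G_{x_0}^{\circ}$ is then automatic, and the conjugacy theorem applies directly.

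Two small caveats, neither a gap relative to the paper. First, your parenthetical hypothesis in (ii) is genuinely needed: if $E\inn\{0\}$ then $\ft\cdot x_0=0$ and the weight $0$ drops out of (\ref{weightsgx0}); the lemma as stated in the paper has the same degenerate-case blemish, and it is harmless since in all applications $E$ is a linearly independent set. Second, the conjugacy statement you use --- that maximal connected unipotent subgroups of a connected (not necessarily reductive) group are precisely the unipotent radicals of its Borel subgroups, hence conjugate --- deserves its one-line justification (every connected unipotent subgroup lies in a Borel, hence in its unipotent radical), but this is standard Borel theory and valid here.
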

\begin{proof}
The proof of (\ref{iso1})  just requires replacing $v_{\lambda}$ by $x_0$ in the proof of \cite[Lemme1.7]{jansou-deformations}. (\ref{iso2}) is immediate. (\ref{iso3})  follows form the well-known properties of the action of root operators on highest weight vectors. For (\ref{weightsgx0})
just note that $\fg\cdot x_0 = \fb^{-}\cdot x_0$, where $\fb^-$ is the Lie algebra of the Borel subgroup $B^-$ opposite to $B$ with respect to $T$. 
\end{proof}

In addition to the facts listed in Lemma~\ref{lem:isotropygroup}, the following will be useful too in Section~\ref{sec:cases}. Recall our convention that $G'_{x_0} := (G')_{x_0}$ and $\fg'_{x_0}:=(\fg')_{x_0}$. Recall also that if $\fk$ is a Lie-subalgebra of $\fg_{x_0}$, then $(V/\fg\cdot{x_0})^{\fk} = \{[v] \in V/\fg\cdot{x_0}\ |\ Xv \in \fg\cdot{x_0} \text{ for all } X \in \fk\}$, by definition.

\begin{lemma} \label{lem:Tadweights}
Using the notations of this section, the following hold:
\begin{enumerate}[(a)]
\item  The inclusions $\Vgg \inn  \Vggp \inn (V/\fg\cdot{x_0})^{\fg'_{x_0}}$ are inclusions of $T_{\ad}$-modules; \label{inclVgs}
\item Let $H$ be a subgroup of $G$  and let $T_H$ be a subtorus of $T \cap H$. Let $\Gamma$ be the subgroup of $X(T_H)$ generated by the image of $E$ under the restriction map $p\colon X(T) \onto X(T_H)$.  Suppose $v \in V$ is a $\Tad$-eigenvector of weight $\beta$ so that 
$[v]$ is a nonzero element of $(\Vg)^{H_{x_0}}$. Then $p(\beta)$ belongs to $\Gamma$;
\label{weightinvar}
\item If $\fh$ is a Lie-subalgebra of $\fg$ containing $\fg'$, then $(V/\fg{x_0})^{G_{x_0}} = (V/\fg{x_0})^{\fh_{x_0}}_{\<E\>}$, where $(V/\fg{x_0})^{\fh_{x_0}}_{\<E\>}$ is the subspace
of $(V/\fg{x_0})^{\fh_{x_0}}$ spanned by $$\{[v] \in (V/\fg{x_0})^{\fh_{x_0}}\ |\ v \text{ is a $\Tad$-eigenvector with
    weight in } \<E\>_{\ZZ}\}.$$ \label{liealgandtorinvs}
\end{enumerate} 
\end{lemma}
\begin{proof}
For assertion (\ref{inclVgs}) we first note that the subgroups $G'_{x_0}$ and 
$(G'_{x_0})^{\circ}$ of $G$ are stable under the action 
of $\Tad$ on $G$ in (\ref{eq:TactonG}), so that the $(G_{x_0}\rtimes \Tad)$-action on $\Vg$ restricts to 
$G'_{x_0} \rtimes \Tad$ and $(G'_{x_0})^{\circ}\rtimes \Tad$. The assertion now follows since
$\Lie(G'_{x_0}) = \fg'_{x_0}$. 
We now prove (\ref{weightinvar}). Let $\beta$ be the $\Tad$-weight of $v$ and for every $\lambda\in E$, let $x_{\lambda}$ be the projection of $v$ onto $V(\lambda) \inn V$.  Then $v =\sum_{\lambda \in E} x_{\lambda}$. Since $v$ is nonzero, at least one of the $x_{\lambda}$ is nonzero. Choose one.
Then $x_{\lambda}$ is a $T$-eigenvector of weight $\lambda - \beta$. Since $v$ is fixed by $(T_H)_{x_0}$ it follows that $x_{\lambda}$ is and so $(\lambda - \beta)|_{(T_H)_{x_0}} = 0$. Since $(T_H)_{x_0} = \cap_{\lambda \in E}\ker p(\lambda)$ this implies that $p(\lambda- \beta)$ and therefore $p(\beta)$ lie in $\Gamma$. Assertion (\ref{liealgandtorinvs}), finally, is a consequence of parts (\ref{iso1}) and (\ref{iso2}) of Lemma~\ref{lem:isotropygroup}.
\end{proof}

\begin{lemma} \label{lem:tangentcrit}
  We use the notations of this section. Let $v \in V$ be a $\Tad$-eigenvector.
  If $[v]$ is a nonzero element of $(V/\fg{x_0})^{\fg'_{x_0}}$, then the following two statements hold.
\begin{enumerate}[(A)]
\item For every positive root $\alpha$ one of the following situations occurs \label{AAA}
\begin{enumerate}[(1)]
\item $X_{\alpha}v =0$;
\item $X_{\alpha}v$ is a $T_{\ad}$-eigenvector of weight $0$;
\item $X_{\alpha}v$ is a $T_{\ad}$-eigenvector with weight in $R^+ \setminus E^{\perp}$;
\end{enumerate}
\item  There is at least one simple root $\alpha$ such that $X_{\alpha}v \neq 0$. \label{BBB}
\end{enumerate}  
\end{lemma}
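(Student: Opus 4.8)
My plan is to deduce both assertions from the structure of $\fg_{x_0}$ and $\fg\cdot x_0$ recorded in Lemma~\ref{lem:isotropygroup}, together with the description (\ref{eq:defTadact}) of the twisted $\Tad$-action $\alpha$. The first thing I would pin down is the $\Tad$-weight bookkeeping. Since $v$ is a $\Tad$-eigenvector, say of weight $\beta$, I would decompose $v=\sum_{\lambda\in E}v^{(\lambda)}$ along $V=\bigoplus_{\lambda\in E}V(\lambda)$; each $v^{(\lambda)}$ is then a $\Tad$-eigenvector of weight $\beta$, which by (\ref{eq:defTadact}) means $v^{(\lambda)}$ has ordinary $T$-weight $\lambda-\beta$. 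Because each root operator $X_\alpha$ preserves $V(\lambda)$ and raises the ordinary $T$-weight by $\alpha$, I get that $X_\alpha v$, whenever it is nonzero, is again a $\Tad$-eigenvector, now of weight $\beta-\alpha$. This single computation drives everything.

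For part~(\ref{AAA}) I would fix a positive root $\alpha$ and argue that $X_\alpha v\in\fg\cdot x_0$. The key observation is that $X_\alpha\in\fu$ and, by Lemma~\ref{lem:isotropygroup}(\ref{iso3}), $\fu\inn\fg_{x_0}$; since root spaces lie in the derived algebra, $X_\alpha\in\fg'\cap\fg_{x_0}=\fg'_{x_0}$. As $[v]\in(\Vg)^{\fg'_{x_0}}$, this gives $X_\alpha v\in\fg\cdot x_0$ for free. Now if $X_\alpha v\neq 0$ it is a $\Tad$-eigenvector of weight $\beta-\alpha$ lying in $\fg\cdot x_0$, so by Lemma~\ref{lem:isotropygroup}(\ref{weightsgx0}) its weight lies in $(R^+\setminus E^\perp)\cup\{0\}$. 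The resulting trichotomy $X_\alpha v=0$, or $\beta-\alpha=0$, or $\beta-\alpha\in R^+\setminus E^\perp$ is exactly cases~(1)--(3).

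For part~(\ref{BBB}) I would argue by contradiction, assuming $X_\alpha v=0$ for every \emph{simple} root $\alpha$. Since the simple root operators generate $\fu$ as a Lie algebra, a height induction (writing a non-simple positive root as $\alpha_i+\gamma$ with $\alpha_i$ simple and $\gamma\in R^+$, so that $X_{\alpha_i+\gamma}$ is a nonzero multiple of $[X_{\alpha_i},X_\gamma]$) would upgrade this to $\fu\cdot v=0$, i.e.\ $v\in V^U=\bigoplus_{\lambda\in E}\CC v_\lambda$. The point is then that such a $v$ automatically lies in $\fg\cdot x_0$: using that the weights in $E$ are linearly independent, the map $\ft\to\CC^E$, $H\mapsto(\lambda(H))_{\lambda\in E}$, is onto, so $\ft\cdot x_0=\{\sum_{\lambda}\lambda(H)v_\lambda:H\in\ft\}=V^U$. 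Hence $v\in V^U=\ft\cdot x_0\inn\fg\cdot x_0$, forcing $[v]=0$ and contradicting the hypothesis.

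I do not expect a serious obstacle: the content is really the observation $\fu\inn\fg_{x_0}$, which hands us $X_\alpha v\in\fg\cdot x_0$ at no cost, combined with the weight list of Lemma~\ref{lem:isotropygroup}(\ref{weightsgx0}). The one place demanding care is keeping the twisted action $\alpha$ straight from the ordinary $T$-action; once (\ref{eq:defTadact}) fixes that $X_\alpha$ shifts the $\Tad$-weight by $-\alpha$, both parts follow at once. The only hypothesis genuinely used beyond the eigenvector condition is the linear independence of $E$ (equivalently, that $\wm$ is freely generated), which in part~(\ref{BBB}) yields $V^U=\ft\cdot x_0$.
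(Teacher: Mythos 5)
Your proof is correct and follows essentially the same route as the paper: part~(A) from $X_{\alpha}\in\fu\inn\fg'_{x_0}$ together with the $\Tad$-weight set $(R^+\setminus E^{\perp})\cup\{0\}$ of $\fg\cdot x_0$ from Lemma~\ref{lem:isotropygroup}(\ref{weightsgx0}), and part~(B) from the linear independence of $E$, which forces $V^U=\ft\cdot x_0\inn\fg\cdot x_0$ so that $[v]\neq 0$ rules out $v$ being a sum of highest weight vectors. You merely make explicit two steps the paper leaves tacit (the shift of the $\Tad$-weight by $-\alpha$ under $X_{\alpha}$, and the height induction showing that vanishing of all simple root operators on $v$ gives $v\in V^U$), both of which are accurate.
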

\begin{proof}
  Part (\ref{AAA}) follows from the fact that $\fu \inn \fg'_{x_0}$ and part (\ref{weightsgx0}) of
  Lemma~\ref{lem:isotropygroup}. For (\ref{BBB}) first note that the linear
  independence of $E$ implies that the subspace $\ft \cdot x_0$ of $\fg\cdot x_0$ contains all the highest weight vectors of $V$.
Therefore $[v]\neq 0$ implies that $v$ is not a sum of highest weight vectors.
\end{proof}

\begin{lemma} \label{lem:gx0}
Let $(\overline{G},W)$ be a spherical $\overline{G}$-module and let $G$ be a reductive subgroup of $\overline{G}$ containing $\overline{G}'$ and such that $(G,W)$ is spherical. Then $\fg\cdot x_0 = \overline{\fg}\cdot x_0$.   
\end{lemma}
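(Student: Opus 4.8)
The plan is to prove the nontrivial inclusion $\overline{\fg}\cdot x_0 \inn \fg\cdot x_0$, the reverse one being immediate from $\fg \inn \overline{\fg}$. The first step is to reduce everything to a statement about the two maximal tori. Since $\overline{G}' \inn G \inn \overline{G}$ and the semisimple group $\overline{G}'$ is perfect, we get $G' = \overline{G}'$ and hence $\fg' = \overline{\fg}'$. Writing the reductive Lie algebra $\overline{\fg}$ as $\overline{\fg} = \overline{\fg}' \oplus \fz$, where $\fz$ is the center of $\overline{\fg}$ (the Lie algebra of the central torus $Z(\overline{G})^{\circ}$, so that $\fz \inn \overline{\ft}$), and using $\overline{\fg}' = \fg' \inn \fg$, I obtain $\overline{\fg} = \fg + \overline{\ft}$. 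Consequently $\overline{\fg}\cdot x_0 = \fg\cdot x_0 + \overline{\ft}\cdot x_0$, so it suffices to prove that $\overline{\ft}\cdot x_0 \inn \fg\cdot x_0$.

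The second step uses that a torus acts diagonally on highest weight vectors. For $H \in \overline{\ft}$ and $\lambda \in E$, the element $H$ acts on $v_{\lambda}$ by a scalar that depends linearly on $H$ (the value of the differential of $\lambda$), so both $\overline{\ft}\cdot x_0$ and $\ft\cdot x_0$ are contained in the subspace $L := \langle v_{\lambda} \mid \lambda \in E\rangle_{\CC}$. I would then show that already $\ft\cdot x_0 = L$. This amounts to the surjectivity of the linear map $\ft \to \CC^E$ that sends $H$ to the tuple of these scalars, which holds exactly when the characters $\lambda|_T$ ($\lambda \in E$) are linearly independent. This independence is where the sphericity of $(G,W)$ enters: the weight monoid $\wm = \Lambda^+_W$ of a spherical $G$-module is freely generated (by the Proposition on freely generated weight monoids), so its basis $E^*$ is a linearly independent set of dominant weights, and hence so is $E = \{\lambda^* \mid \lambda \in E^*\}$, since $\lambda \mapsto \lambda^* = -w_0(\lambda)$ is a linear automorphism of $\Lambda \otimes_{\ZZ} \QQ$.

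Combining the two steps gives $\overline{\ft}\cdot x_0 \inn L = \ft\cdot x_0 \inn \fg\cdot x_0$, whence $\overline{\fg}\cdot x_0 = \fg\cdot x_0$, as desired.

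The one point that needs care—and which is really the crux—is that the directions in $\overline{\ft}$ not already in $\ft$ lie in the central torus part $\fz$, so they act on each $v_{\lambda}$ merely by a scalar and cannot push $\overline{\ft}\cdot x_0$ outside $L$; the linear independence of $E$ then forces $\ft$ alone to sweep out all of $L$. Equivalently, one may run the argument through the $\Tad$-action: by part (\ref{weightsgx0}) of Lemma~\ref{lem:isotropygroup}, both $\fg\cdot x_0$ and $\overline{\fg}\cdot x_0$ are built from the common root operators $X_{\alpha}$ (the two groups share the same root system, since $\fg' = \overline{\fg}'$) applied to $x_0$, together with $\ft\cdot x_0$ respectively $\overline{\ft}\cdot x_0$; the nonzero $\Tad$-weight spaces therefore coincide, and the two spaces can differ only in their $\Tad$-weight-$0$ part, which is exactly $\ft\cdot x_0$ versus $\overline{\ft}\cdot x_0$. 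The computation above shows these agree.
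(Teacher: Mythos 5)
Your proof is correct and takes essentially the same route as the paper's: the paper also reduces via $\fg' = \overline{\fg}'$ to comparing the torus contributions, and concludes from $\ft \cdot x_0 = \<v_{\lambda} \colon \lambda \in E\>_{\CC} = \overline{\ft}\cdot x_0$, which holds because the elements of $E$ are linearly independent for both $G$ and $\overline{G}$ (exactly the free-generation-from-sphericity point you spell out). Your extra detail on the surjectivity of $\ft \to \CC^E$ and the closing $\Tad$-weight remark merely elaborate what the paper leaves implicit.
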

\begin{proof}
We have that  $\fg\cdot x_0 = \ft \cdot x_0 + \fg' \cdot x_0$. By hypothesis, $\fg' = \overline{\fg}'$. Finally 
\(\ft \cdot x_0 = \<v_{\lambda}\colon \lambda \in E\>_{\CC} = \overline{\ft} \cdot x_0\) 
because the elements of $E$ are linearly independent (for both $G$ and $\overline{G}$).
\end{proof}

\subsection{Further results and notions from~\cite{bravi&cupit}}  \label{subsec:bcf}
We continue to use the notation of Sections \ref{subsec:emb} and \ref{subsec:Tapprox}.
In this section we recall results from \cite{bravi&cupit} about $\tM_{\wm}$ and $T_{X_0}\tM_{\wm}$ under the condition that $\wm$ is $G$-saturated (see Definition~\ref{def:Psat}), and we mention some immediate consequences. 

The following condition on submonoids of $\Lambda^+$ was considered by D.~Panyushev in \cite{panyushev-deform}. It also occurs in \cite{vin&pop-quasi}. We will use the terminology of \cite[Section 4.5]{brion-ihs-arxivv2}. 
\begin{defn}  \label{def:Psat}
A submonoid of $\wm$ of $\Lambda^+$ is called \emph{$G$-saturated} if 
\[\<\wm\>_{\ZZ} \cap \Lambda^+ = \wm. \]
\end{defn}

\begin{remark} \label{rem:Psatinjtan}
As explained in \cite[Section 3]{bravi&cupit} the injection 
\[T_{X_0}\tM_{\wm} \into (V/\fg\cdot x_0)^{G_{x_0}} \]
of Corollary~\ref{cor:injtan} is an isomorphism when $\wm$ is $G$-saturated. The reason is that, by  Theorem 9 of \cite{vin&pop-quasi}, $X_0 \setminus G\cdot x_0$ then has codimension at least $2$ in the normal variety $X_0$; see Remark~\ref{rem:abinjiso}. 
\end{remark}

\begin{remark}
Clearly, a  submonoid  $\wm \inn \Lambda^+$ is $G$-saturated if and only if $-w_0(\wm)$ is. This fact will be used in Section~\ref{sec:cases}, because if $\wm$ is the weight monoid of a spherical module $(G,W)$, then $-w_0(\wm)$ is the weight monoid of the dual module $(G,W^*)$. 
\end{remark}

\begin{lemma}[Lemma 2.1 in \cite{bravi&cupit}] \label{lem:bcfcritpsat}
Let $\lambda_1, \ldots, \lambda_k$ be linearly independent dominant weights. The following are equivalent:
\begin{enumerate}[(a)]
\item 
$\wm = \<\lambda_1, \ldots, \lambda_k\>_{\NN}$ is $G$-saturated;
\item  there exist $k$ simple roots $\alpha_{t_1},..,\alpha_{t_k}$ such that  $\< \lambda_i , \alpha_{t_j}^{\vee} \> \neq 0$ if and only if $i = j$.
\end{enumerate}
\end{lemma}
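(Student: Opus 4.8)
The plan is to prove the two implications separately, extracting first a convenient restatement of (b). Since the $\lambda_i$ are linearly independent, every $\mu \in \<\wm\>_{\ZZ}$ has a \emph{unique} expression $\mu = \sum_{i=1}^k n_i\lambda_i$ with $n_i \in \ZZ$, and $\mu \in \wm$ if and only if all $n_i \ge 0$. Recall too that $\mu$ is dominant precisely when $\<\mu,\alpha^{\vee}\> \ge 0$ for every simple root $\alpha$, and that $\<\lambda_i,\alpha^{\vee}\> \ge 0$ for all $i$ and all simple $\alpha$ because the $\lambda_i$ are dominant; hence for these weights ``$\ne 0$'' is the same as ``$> 0$''. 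With this in mind I would reformulate (b) as: for each index $j$ there is a simple root (which I call $\alpha_{t_j}$) that is \emph{private} to $\lambda_j$, meaning $\<\lambda_j,\alpha_{t_j}^{\vee}\> > 0$ while $\<\lambda_i,\alpha_{t_j}^{\vee}\> = 0$ for all $i \neq j$. Private roots are automatically distinct, so this really does produce $k$ simple roots.

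For (b)$\Rightarrow$(a), take any $\mu = \sum_i n_i\lambda_i \in \<\wm\>_{\ZZ}\cap\Lambda^+$ and pair it with each $\alpha_{t_j}^{\vee}$. By the private-root property all terms except the $i=j$ term vanish, giving $\<\mu,\alpha_{t_j}^{\vee}\> = n_j\<\lambda_j,\alpha_{t_j}^{\vee}\>$. Dominance of $\mu$ forces this to be $\ge 0$, and since $\<\lambda_j,\alpha_{t_j}^{\vee}\> > 0$ we conclude $n_j \ge 0$. As $j$ was arbitrary, $\mu \in \wm$, so $\wm$ is $G$-saturated.

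The real content is (a)$\Rightarrow$(b), which I would prove by contraposition: assuming (b) fails, I produce a dominant element of $\<\wm\>_{\ZZ}$ that is not in $\wm$. If (b) fails, some $\lambda_{j_0}$ has no private simple root, i.e.\ every simple root $\alpha$ with $\<\lambda_{j_0},\alpha^{\vee}\> > 0$ also satisfies $\<\lambda_i,\alpha^{\vee}\> > 0$ for at least one $i \ne j_0$. I would then set $\mu := N\sum_{i\ne j_0}\lambda_i - \lambda_{j_0}$ and check that $\mu$ is dominant for $N$ large: if $\<\lambda_{j_0},\alpha^{\vee}\> = 0$ the pairing $\<\mu,\alpha^{\vee}\>$ is a nonnegative combination, while if $\<\lambda_{j_0},\alpha^{\vee}\> > 0$ the failure of (b) guarantees $\sum_{i\ne j_0}\<\lambda_i,\alpha^{\vee}\> \ge 1$, so $\<\mu,\alpha^{\vee}\> \ge N - \<\lambda_{j_0},\alpha^{\vee}\>$. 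Since there are only finitely many simple roots, a single $N$ (at least the maximum of $\<\lambda_{j_0},\alpha^{\vee}\>$ over all simple $\alpha$) makes all these nonnegative, so $\mu \in \Lambda^+$. By construction $\mu \in \<\wm\>_{\ZZ}$ but its $\lambda_{j_0}$-coefficient is $-1 < 0$, so by uniqueness of coordinates $\mu \notin \wm$, contradicting $G$-saturation.

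The only point requiring care — the ``hard part'', though it is mild — is the uniform choice of $N$ in the last step: one must use the finiteness of $\Pi$ together with the integrality of the pairings $\<\lambda_i,\alpha^{\vee}\>$ to convert ``each simple root in the support of $\lambda_{j_0}$ is covered by another $\lambda_i$'' into the single inequality $\sum_{i\ne j_0}\<\lambda_i,\alpha^{\vee}\> \ge 1$. Everything else is a direct evaluation of pairings against coroots.
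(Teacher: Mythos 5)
Your proof is correct. Note that the paper itself states this lemma without proof, citing Lemma 2.1 of \cite{bravi&cupit}, so there is no internal argument to compare against; what you give is the standard proof of that cited result. Both directions check out: the private-root reformulation of (b) is legitimate (distinctness of the $\alpha_{t_j}$ follows exactly as you say, and ``$\ne 0$'' upgrades to ``$>0$'' by dominance), the pairing computation $\<\mu,\alpha_{t_j}^{\vee}\> = n_j\<\lambda_j,\alpha_{t_j}^{\vee}\>$ correctly forces $n_j\ge 0$, and in the contrapositive the element $\mu = N\sum_{i\ne j_0}\lambda_i - \lambda_{j_0}$ with $N \ge \max_{\alpha\in\Pi}\<\lambda_{j_0},\alpha^{\vee}\>$ is dominant, lies in $\<\wm\>_{\ZZ}$, and is excluded from $\wm$ by uniqueness of coordinates, which is exactly where the linear independence hypothesis enters. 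One point worth making explicit: in the degenerate case $k=1$ with $\<\lambda_1,\alpha^{\vee}\>=0$ for all simple $\alpha$, your construction gives $\mu = -\lambda_1$ (the sum over $i\ne j_0$ is empty), which is still dominant and still has negative $\lambda_1$-coefficient since $\lambda_1\ne 0$ by linear independence — so your argument covers this case, but only via the empty-sum convention, and a reader would benefit from a sentence acknowledging it.
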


\begin{theorem}[Theorem 2.2 and Corollary 2.4 in \cite{bravi&cupit}] \label{thm:bcf}
Suppose $G$ is a semisimple group and $\wm$ is a $G$-saturated and freely generated submonoid of $\Lambda^+$.  
Then 
\begin{enumerate}
\item the tangent space $T_{X_0}\tM^G_{\wm} \isom \Vgg$ is a multiplicity-free $\Tad$-module whose $\Tad$-weights belong to Table 1 of \cite[p. 2810]{bravi&cupit};
\item  the moduli scheme $\tM^G_{\wm}$ is isomorphic as a $\Tad$-scheme to $\Vgg$.
\end{enumerate}
\end{theorem}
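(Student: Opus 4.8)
The plan is to route both assertions through the $\Tad$-equivariant injection $T_{X_0}\tM^G_{\wm}\into\Vgg$ of Corollary~\ref{cor:injtan} together with the combinatorics of $\fg\cdot x_0$ assembled in Section~\ref{subsec:auxiliary}. I would first identify the tangent space with $\Vgg$ on the nose: since $\wm$ is $G$-saturated, Remark~\ref{rem:Psatinjtan} guarantees that $X_0\setminus G\cdot x_0$ has codimension at least $2$ in the normal variety $X_0$, so every $G$-invariant section of $\shN_{X_0}$ over the open orbit extends and the injection of Corollary~\ref{cor:injtan} becomes an isomorphism. As $G$ is semisimple we have $\fg=\fg'$, whence by Lemma~\ref{lem:Tadweights}(\ref{liealgandtorinvs}) the space $\Vgg$ equals the span of the $\Tad$-eigenclasses $[v]\in(V/\fg\cdot x_0)^{\fg_{x_0}}$ of weight in $\<E\>_{\ZZ}$; moreover Lemma~\ref{lem:Tadweights}(\ref{weightinvar}), applied with $H=G$ and $T_H=T$, shows this weight restriction is automatic. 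It therefore remains to enumerate these classes and to determine their $\Tad$-weights.

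That enumeration is the heart of the matter, and I would carry it out one $\Tad$-weight at a time. Fix a $\Tad$-eigenvector $v$ representing a nonzero class in $(V/\fg\cdot x_0)^{\fg_{x_0}}$ and let $\beta$ be its weight. Using the decomposition $\fg_{x_0}=\fu\oplus\ft_{x_0}\oplus\bigoplus_{\alpha\in E^{\perp}}\fg_{-\alpha}$ from Lemma~\ref{lem:isotropygroup}(\ref{iso3}) and the weight list $(R^+\setminus E^{\perp})\cup\{0\}$ of $\fg\cdot x_0$ from Lemma~\ref{lem:isotropygroup}(\ref{weightsgx0}), the invariance conditions turn into sharp constraints on which root operators fail to annihilate $v$; these are precisely the constraints encoded by Lemma~\ref{lem:tangentcrit}. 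The $G$-saturation hypothesis now enters decisively through Lemma~\ref{lem:bcfcritpsat}: the basis of $\wm$ (equivalently, of $-w_0\wm$) is separated by simple roots $\alpha_{t_1},\dots,\alpha_{t_k}$ with $\<\lambda_i,\alpha_{t_j}^{\vee}\>\neq0$ exactly when $i=j$, and this separation confines the analysis to the rank one and rank two root subsystems generated by the relevant simple roots. The ensuing finite case check pins down each admissible $\beta$, shows that its eigenspace is at most one-dimensional, and identifies the set of admissible weights with Table~1 of \cite{bravi&cupit}. I expect this rank one / rank two reduction, together with the bookkeeping matching the spherical-root table, to be the main obstacle; everything preceding it is formal.

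For assertion~(2) I would upgrade the tangent space computation to a statement about the whole scheme. Write $\Psi$ for the multiplicity-free $\Tad$-weight set found in part~(1), so that $\dim T_{X_0}\tM^G_{\wm}=|\Psi|$. By equation~(\ref{eq:defTadact}) every $\Tad$-weight occurring in $\Vgg$ has the form $\lambda-\nu$ with $\lambda\in E$ and $\nu$ a weight of $V(\lambda)$, hence lies in the strictly convex cone $\<\Pi\>_{\NN}$, which is pointed because $G$ is semisimple. The lower bound $\dim\tM^G_{\wm}\ge|\Psi|$ is the extra work alluded to in the introduction: one produces, via the theory of wonderful varieties, a smooth spherical $G$-variety with weight monoid $\wm$ whose root monoid has rank $|\Psi|$, so that part~(3) of Proposition~\ref{prop:apriori} exhibits an open $\Tad$-orbit of dimension $|\Psi|$ inside $\tM^G_{\wm}$. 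Hence $\dim T_{X_0}\tM^G_{\wm}=\dim\tM^G_{\wm}$, so $X_0$ is a smooth point and $\tM^G_{\wm}$ is smooth by Corollary~\ref{cor:smoothX0}. Finally I would choose a one-parameter subgroup $\mu\colon\GGm\to\Tad$ pairing strictly positively with every weight in $\Psi$, which exists by pointedness of the cone; since $X_0$ is the unique $\Tad$-fixed point by Proposition~\ref{prop:X0uniqueclosed}, it is an attractive fixed point of the smooth affine $\GGm$-variety $\tM^G_{\wm}$, and a Bia\l ynicki--Birula linearization then yields the desired $\Tad$-equivariant isomorphism $\tM^G_{\wm}\isom\Vgg$.
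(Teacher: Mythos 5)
First, a point of order: this paper does not prove Theorem~\ref{thm:bcf} at all --- it is imported verbatim from Bravi--Cupit-Foutou (Theorem 2.2 and Corollary 2.4 of \cite{bravi&cupit}), and the paper only records the consequences it needs (Remark~\ref{rem:Psatinjtan}, Remark~\ref{rem:srA}). Your outline does correctly reconstruct the architecture of the cited proof: $G$-saturation forces $X_0\setminus G\cdot x_0$ to have codimension at least $2$ in the normal variety $X_0$, so the injection of Corollary~\ref{cor:injtan} is an isomorphism and the problem becomes a weight-by-weight analysis of $\Vgg$ via Lemmas~\ref{lem:isotropygroup} and \ref{lem:tangentcrit}; and for part (2), smoothness at $X_0$ plus a contracting one-parameter subgroup linearizes the moduli scheme. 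That skeleton matches what \cite{bravi&cupit} actually does.

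However, as a proof the proposal has genuine gaps, and they sit exactly where the theorem's content lies. First, the whole of part (1) --- that every admissible $\Tad$-weight $\beta$ appears in Table 1 of \cite{bravi&cupit} and that each eigenspace is at most one-dimensional --- is delegated to an ``ensuing finite case check'' that is never performed; the claim that Lemma~\ref{lem:bcfcritpsat} ``confines the analysis to rank one and rank two root subsystems'' is itself the assertion requiring proof, and in \cite{bravi&cupit} it occupies the bulk of the argument. Relatedly, your appeal to Lemma~\ref{lem:Tadweights}(\ref{weightinvar}) to claim the restriction $\beta\in\<E\>_{\ZZ}$ is ``automatic'' is backwards: that lemma \emph{assumes} invariance under the isotropy group to deduce the weight condition, whereas a $\fg_{x_0}$-invariant class may well have weight outside $\<E\>_{\ZZ}$ --- this is precisely the distinction drawn in Lemma~\ref{lem:Tadweights}(\ref{liealgandtorinvs}) and illustrated in Remark~\ref{rem:saturation}. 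Second, in part (2) the lower bound $\dim\tM^G_{\wm}\ge|\Psi|$ is obtained by positing, ``via the theory of wonderful varieties,'' a smooth spherical $G$-variety with weight monoid $\wm$ and root monoid of rank $|\Psi|$; this existence statement is exactly the hard half of \cite{bravi&cupit} (the introduction of the present paper says as much, and avoiding it for weight monoids of spherical modules is the paper's whole strategy), so it cannot simply be invoked --- without it you only get $\dim T_{X_0}\tM^G_{\wm}\le|\Psi|$, hence neither smoothness nor assertion (2). Third, ``unique $\Tad$-fixed point $\Rightarrow$ attractive'' is false as stated (consider $\GGm$ acting on $\AA^2$ with weights $+1,-1$); to choose a contracting $\GGm\subset\Tad$ one must first show that the weight monoid of $\CC[\tM^G_{\wm}]$ itself is pointed, e.g.\ by using the full strength of Proposition~\ref{prop:X0uniqueclosed} (unique \emph{closed orbit}, not merely unique fixed point) together with irreducibility to see that semiinvariants of opposite nonzero weights multiply to zero. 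This last point is fixable, but the first two are the theorem, not routine reductions.
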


\begin{remark}\label{rem:srA}
When $G$ is of type $\ssA$, the $\Tad$-weights which can occur in the space $\Vgg$ of Theorem~\ref{thm:bcf} are (see \cite[Table 1 p. 2810]{bravi&cupit}):
\begin{itemize}
\item[(\SRo)] $\alpha + \alpha'$ with $\alpha, \alpha' \in \Pi$ and $\alpha \perp \alpha'$; 
\item[(\SRt)] $2\alpha$ with $\alpha \in \Pi$;
\item[(\SRth)] $\alpha_{i+1} +\alpha_{i+2} + \ldots + \alpha_{i+r}$ with $r \ge 2$ and $\alpha_i, \alpha_{i+1}, \ldots, \alpha_{i+r}$ simple roots that correspond to consecutive vertices in a connected component of the Dynkin diagram of $G$;
\item[(\SRf)] $\alpha_{i} + 2\alpha_{i+1} + \alpha_{i+2}$ with $\alpha_i, \alpha_{i+1}, \alpha_{i+2}$ simple roots that correspond to consecutive vertices in a connected component of the Dynkin diagram of $G$.
\end{itemize}
\end{remark}

For several cases in Knop's List, Theorem~\ref{thm:cbc} is a consequence of
Bravi and Cupit-Foutou's result 
mentioned above, thanks to Corollary~\ref{cor:bcfred} below. We first establish a lemma needed in the proof of Corollary~\ref{cor:bcfred} and of Proposition~\ref{prop:cbc}. 

\begin{lemma} \label{lem:psigmax}
Suppose $X$ is  an affine $G$-variety and let $H$ be a connected subgroup of $G$ containing $G'$ ($H$ is reductive by Lemma~\ref{lem:redsgr} below). Let $B_H$ be the Borel subgroup $B\cap H$ of $H$ and let $p: X(B) \onto X(B_H)$ be the restriction map. Let $\Sigma_X$ be the root monoid of the $G$-variety $X$ and let $\Sigma'_X$ be the root monoid of $X$ considered as an $H$-variety (where $H$ acts as a subgroup of $G$). Assume that the restriction of $p$ to $\Lambda_{(G,X)} \inn X(B)$ is injective. Then \[\Sigma'_X = p(\Sigma_X).\]
Consequently, the invariant $d_X$ is the same for $(G,X)$ as for $(H,X)$.  
\end{lemma}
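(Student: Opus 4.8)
The plan is to exploit that $H$ contains the derived group $G'$, so that $G$ and $H$ have the same root operators, the same simple roots and the same coroots; in particular $\langle p(\lambda),\alpha^{\vee}\rangle = \langle \lambda,\alpha^{\vee}\rangle$ for every simple root $\alpha$, so a weight $\lambda$ is $G$-dominant if and only if $p(\lambda)$ is $H$-dominant. The central idea is to show that, under the injectivity hypothesis, the $G$-isotypical and the $H$-isotypical decompositions of $R=\CC[X]$ \emph{coincide}, matched up by $\lambda \mapsto p(\lambda)$; once that is established the equality $\Sigma'_X = p(\Sigma_X)$ drops out by comparing the two sets of generators term by term.

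First I would record the representation-theoretic input. Since $U \inn G' \inn H$, the highest weight vector $v_{\lambda}$ of $V(\lambda)$ is fixed by $U = U\cap H$ and is a $B_H$-eigenvector of weight $p(\lambda)$; moreover $V(\lambda)$ is already irreducible over $G'$ (the connected center acts by scalars), hence over $H$. Thus $V(\lambda)|_H$ is the irreducible $H$-module of highest weight $p(\lambda)$. Applying this to the simple $G$-submodules of $R$, each $G$-isotypical component $R_{(\lambda)}$ is $H$-isotypical of type $p(\lambda)$, so $R_{(\lambda)} \inn R^{H}_{(p(\lambda))}$ and, more precisely, $R^{H}_{(\lambda')} = \bigoplus_{\lambda \in \Lambda^+_{(G,X)},\, p(\lambda)=\lambda'} R_{(\lambda)}$ for every $H$-dominant $\lambda'$ (here $R^{H}_{(\lambda')}$ denotes the $H$-isotypical component of $R$). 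Now the hypothesis that $p$ is injective on $\Lambda_{(G,X)}$, and hence on the support $\Lambda^+_{(G,X)}$, forces each of these sums to have a single term: for $\lambda$ in the support of $R$ one obtains the key identity $R^{H}_{(p(\lambda))} = R_{(\lambda)}$. This collapse-control is the real content of the lemma; everything else is formal.

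I would then prove the two inclusions. For $p(\Sigma_X) \inn \Sigma'_X$, take a generator $\lambda+\mu-\nu$ of $\Sigma_X$ arising from $\langle R_{(\lambda)}R_{(\mu)}\rangle_{\CC} \cap R_{(\nu)} \neq 0$; since $R_{(\lambda)} \inn R^{H}_{(p(\lambda))}$ and likewise for $\mu,\nu$, the space $\langle R^{H}_{(p(\lambda))}R^{H}_{(p(\mu))}\rangle_{\CC} \cap R^{H}_{(p(\nu))}$ is nonzero too, so $p(\lambda+\mu-\nu)$ is a generator of $\Sigma'_X$. For $\Sigma'_X \inn p(\Sigma_X)$, a generator $\lambda'+\mu'-\nu'$ of $\Sigma'_X$ comes from three nonzero components $R^{H}_{(\lambda')}, R^{H}_{(\mu')}, R^{H}_{(\nu')}$; by the key identity these equal $R_{(\lambda)}, R_{(\mu)}, R_{(\nu)}$ for the unique $G$-dominant $\lambda,\mu,\nu$ in the support with $p(\lambda)=\lambda'$, etc., so the relevant intersection is literally the same and $\lambda+\mu-\nu$ generates $\Sigma_X$ with $p(\lambda+\mu-\nu)=\lambda'+\mu'-\nu'$. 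Together these give $\Sigma'_X = p(\Sigma_X)$.

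Finally, for the equality of $d_X$, I would observe that every generator $\lambda+\mu-\nu$ of $\Sigma_X$ is a $\ZZ$-combination of elements of the weight monoid $\Lambda^+_{(G,X)}$, whence $\langle\Sigma_X\rangle_{\ZZ} \inn \Lambda_{(G,X)}$. The hypothesis makes $p$ injective on $\Lambda_{(G,X)}$, hence on $\langle\Sigma_X\rangle_{\ZZ}$, so $\rk\langle\Sigma'_X\rangle_{\ZZ} = \rk p\bigl(\langle\Sigma_X\rangle_{\ZZ}\bigr) = \rk\langle\Sigma_X\rangle_{\ZZ}$, which is precisely the statement that $d_X$ is the same for $(G,X)$ as for $(H,X)$. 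The only genuinely delicate point is the isotypic-decomposition identity $R^{H}_{(p(\lambda))}=R_{(\lambda)}$; the rest is bookkeeping with the definition of the root monoid.
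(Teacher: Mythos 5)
Your proposal is correct and takes essentially the same route as the paper: the paper's proof likewise rests on the single observation that, because $p$ is injective on $\Lambda^+_{(G,X)}$ and $G' \inn H$, each $G$-isotypical component $R_{(\lambda)}$ of $\CC[X]$ is exactly the $H$-isotypical component of type $V(p(\lambda))$, after which $\Sigma'_X = p(\Sigma_X)$ and the equality of the invariants $d_X$ follow from the definitions. Your write-up simply spells out the bookkeeping (the two inclusions between the root monoids and the rank computation) that the paper leaves to the reader.
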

\begin{proof}
By Lemma~\ref{lem:spherrestr} below, $p(\Lambda^+_{(G,X)}) = \Lambda^+_{(H,X)}$. 
Put $R = \CC[X]$ and let $R = \oplus_{\lambda \in \Lambda^+_{(G,X)}} R_{(\lambda)}$ be its decomposition into isotypical components as a $G$-module. Then, because $p|_{\Lambda^+_{(G,X)}}$ is injective and $G' \inn H$, we have that for every $\lambda \in \Lambda^+_{(G,X)}$,  $R_{(\lambda)} \inn R$ is the $H$-isotypical component of $R$ of type $V(p(\lambda))$. The lemma now follows from the definitions of $\Sigma_X$ and $d_X$. 
\end{proof}

\begin{cor} \label{cor:bcfred}
Let $G$ be a connected reductive group and let $X$ be a smooth affine spherical $G$-variety with weight monoid $\wm$. Suppose $X$ is spherical for the restriction of the $G$-action to $G'$. Put $T' = T\cap G'$.  Let $\wm'$ be the image\footnote{By Lemma~\ref{lem:spherrestr} below, $\wm'$ is the weight monoid of the $G'$-variety $X$.}  of $\wm$ under the restriction map $p:X(T) \onto X(T')$.  

If $\wm'$ is  freely generated then so is $\wm$.  Suppose $\wm'$ is freely generated and $G'$-saturated. Then $\dim \Vggp = d_X$ and, consequently, $\dim T_{X_0} \tM^G_{\wm} = d_X$. 
\end{cor}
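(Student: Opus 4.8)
The plan is to prove the three assertions in order, the main idea being to transfer the whole computation to the semisimple group $G'$, where Theorem~\ref{thm:bcf} applies.

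First I would record the consequence of multiplicity-freeness that makes the restriction map well behaved. Since $X$ is spherical for both $G$ and $G'$, the algebra $\CC[X]$ is multiplicity-free as a $G$-module and as a $G'$-module. Writing $\CC[X]=\bigoplus_{\lambda\in\wm}V(\lambda)$ and using that each $V(\lambda)$ stays irreducible on restriction to $G'$ with highest weight $p(\lambda)$ (the connected centre acts by a scalar, and $U\inn G'$ so $v_\lambda$ is also a $B\cap G'$-highest weight vector), the $G'$-isotypic decomposition of $\CC[X]$ is $\bigoplus_{\lambda\in\wm}V_{G'}(p(\lambda))$. By $G'$-multiplicity-freeness the weights $p(\lambda)$ are pairwise distinct, so $p\colon\wm\to\wm'$ is a bijective monoid homomorphism, i.e.\ a monoid isomorphism (that $p(\wm)=\wm'$ is the weight monoid of the $G'$-variety $X$ is Lemma~\ref{lem:spherrestr}). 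Since being freely generated is an abstract property of a submonoid of a lattice (a submonoid abstractly isomorphic to $\NN^k$ has $\ZZ$-linearly independent minimal generators), the first claim follows: if $\wm'$ is freely generated then so is $\wm$. The same isomorphism shows $p$ is injective on $\Lambda_X=\<\wm\>_{\ZZ}$, so Lemma~\ref{lem:psigmax} (with $H=G'$) applies and the invariant $d_X$ is the same whether computed for $(G,X)$ or $(G',X)$.

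Next I would identify the space $\Vggp$ with the object governing the $G'$-moduli problem. Because $U\inn G'$, the vectors $v_\lambda$ are highest weight vectors for $G'$ as well, so $V=\oplus_{\lambda\in E}V(\lambda)$ and $x_0=\sum_{\lambda\in E}v_\lambda$ are precisely the module and the base point attached to the freely generated monoid $\wm'$ for the group $G'$. Moreover $\fg\cdot x_0=\ft\cdot x_0+\fg'\cdot x_0$ and $\ft\cdot x_0=\<v_\lambda:\lambda\in E\>_{\CC}=\ft'\cdot x_0\inn\fg'\cdot x_0$ (the weights in $E$, and their restrictions $p(\lambda)$, are linearly independent because $\wm$ and $\wm'$ are free), whence $\fg\cdot x_0=\fg'\cdot x_0$, in the spirit of Lemma~\ref{lem:gx0}. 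Therefore $\Vggp=(V/\fg'\cdot x_0)^{G'_{x_0}}$ is exactly the tangent-space model for $\tM^{G'}_{\wm'}$. Now $G'$ is semisimple and $\wm'$ is freely generated and $G'$-saturated, so Theorem~\ref{thm:bcf}(2) gives a $\Tad$-scheme isomorphism $\tM^{G'}_{\wm'}\isom\Vggp$; in particular $\tM^{G'}_{\wm'}$ is smooth and $\dim T_{X_0}\tM^{G'}_{\wm'}=\dim\tM^{G'}_{\wm'}=\dim\Vggp$. Combining the lower bound $\dim\tM^{G'}_{\wm'}\ge d_X$ from Proposition~\ref{prop:apriori}(1) with the upper bound $\dim T_{X_0}\tM^{G'}_{\wm'}\le d_X$ coming from Proposition~\ref{prop:apriori}(3) (valid since $X$ and $\tM^{G'}_{\wm'}$ are smooth), and using the equality $d_X^{(G')}=d_X$ from the previous step, yields $\dim\Vggp=d_X$, which is the second claim.

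Finally, for the statement about $\tM^G_{\wm}$ I would squeeze. Corollary~\ref{cor:injtan} gives the $\Tad$-equivariant injection $T_{X_0}\tM^G_{\wm}\into\Vgg$, and Lemma~\ref{lem:Tadweights}(a) gives the inclusion of $\Tad$-modules $\Vgg\inn\Vggp$. Together with the a priori lower bound $\dim T_{X_0}\tM^G_{\wm}\ge\dim\tM^G_{\wm}\ge d_X$ from Proposition~\ref{prop:apriori}(1), this produces the chain
\[
d_X\ \le\ \dim T_{X_0}\tM^G_{\wm}\ \le\ \dim\Vgg\ \le\ \dim\Vggp\ =\ d_X,
\]
forcing $\dim T_{X_0}\tM^G_{\wm}=d_X$. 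The main obstacle I anticipate is the second step: carefully checking that the data $(V,x_0)$ and the tangent-space model are literally shared between the $G$- and $G'$-problems---in particular the equality $\fg\cdot x_0=\fg'\cdot x_0$ and the coincidence of highest weight vectors---so that Theorem~\ref{thm:bcf}, which is stated for semisimple groups, can be applied verbatim to $G'$, together with the bookkeeping that keeps $d_X$ invariant under restriction from $G$ to $G'$.
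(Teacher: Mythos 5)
Your proof is correct and takes essentially the same route as the paper's: injectivity of $p|_{\wm}$ (from $G'$-sphericality) gives the freeness transfer; the identifications $V \isom \oplus_{\lambda\in E}V(p(\lambda))$ and $\fg\cdot x_0 = \fg'\cdot x_0$ let you read $\Vggp$ as the tangent-space model for $\tM^{G'}_{\wm'}$, to which Theorem~\ref{thm:bcf} and Proposition~\ref{prop:apriori} (with Lemma~\ref{lem:psigmax} keeping $d_X$ invariant under restriction to $G'$) apply; and the squeeze $T_{X_0}\tM^G_{\wm} \inn \Vgg \inn \Vggp$ concludes. The only cosmetic deviation is that you invoke Theorem~\ref{thm:bcf}(2) for smoothness of $\tM^{G'}_{\wm'}$ where the paper combines part (1) with the $G'$-saturated identification of the tangent space, which changes nothing of substance.
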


\begin{proof}
The fact that $X$ is spherical for $G'$ implies  that the restriction of $p$ to $\wm$ is injective (see Lemma~\ref{lem:spherrestr} below). This proves that $\wm$ is freely generated when $\wm'$ is. 

We now assume that $\wm'$ is freely generated and $G'$-saturated. 
First note that  
\begin{equation}V \isom  \oplus_{\lambda \in E} V(p(\lambda)) \label{eq:Vforgpr}\end{equation}
as a $G'$-module and that 
\begin{equation}
\fg \cdot x_0 = \ft\cdot x_0 + \fu^- \cdot x_0 = \ft'\cdot x_0 + \fu^- \cdot x_0 = \fg' \cdot x_0.
\label{eq:gx0andgprx0}
\end{equation} 
where $\fu^-$ is the sum of the negative root spaces of $\fg'$. 
Here the second equality follows from the fact that because the sets $E \inn X(T)$ and
$p(E) \inn X(T')$ are linearly independent, 
\[\ft \cdot x_0 = \<v_{\lambda} \colon \lambda \in E\>_{\CC} = \ft'\cdot x_0. \]

Now consider $X$ as a closed point of $\tM_{\wm'}^{G'}$. By Theorem~\ref{thm:bcf}, $\tM_{\wm'}^{G'}$ is smooth, and so Proposition~\ref{prop:apriori} (with Lemma~\ref{lem:psigmax}) tells us that $\dim T_{X_0}\tM_{\wm'}^{G'} =d_X$. Since $T_{X_0}\tM_{\wm'}^{G'} \isom (V/\fg'\cdot x_0)^{G'_{x_0}}$ (using (\ref{eq:Vforgpr})) and, since from (\ref{eq:gx0andgprx0}) we have that $(V/\fg\cdot x_0)=(V/\fg'\cdot x_0)$ and therefore that $\Vggp = (V/\fg'\cdot x_0)^{G'_{x_0}}$, it follows that $\dim \Vggp = d_X$. By Corollary~\ref{cor:injtan}, $T_{X_0}\tM^G_{\wm} \inn \Vgg \inn \Vggp$, and Proposition~\ref{prop:apriori} now finishes the proof.
\end{proof}

\section{Criterion for non-extension of sections} \label{sec:Brionstrat}
We continue to use the notation of Sections \ref{subsec:emb} and \ref{subsec:Tapprox}. In particular, by the $\Tad$-action on $V$ and $\Vgg$ we mean the action $\alpha$ of Definition~\ref{def:alpha}.
The criterion we give here (Proposition~\ref{prop:excl}) for  excluding certain $\Tad$-weight spaces of $\Vgg$ from $T_{X_0}\tM_{\wm}$ was  suggested to us by M.~Brion. 
It consists of sufficient conditions on a section $s \in H^0(G\cdot x_0, \shN_{X_0})^G \isom \Vgg$ for it not to extend to $X_0$. The basic idea is that the conditions guarantee that there is a point $z_0 \in X_0$ (which depends on $s$) whose $G$-orbit has codimension $1$ in $X_0$ and such that $s$ does not extend to $z_0$ along the line joining $x_0$ and $z_0$. 

Before we prove the criterion we recall some facts. 
We begin with the orbit structure of $X_0$. It is known (see \cite[Theorem 8]{vin&pop-quasi}) that the following map describes a one-to-one correspondence between the set of subsets of $E$ and the set of $G$-orbits in $X_0$:
\[ (D \inn E) \mapsto G\cdot v_D \quad  \text{ where } v_D:=\sum_{\lambda \in D}v_{\lambda}.\] 
Recall that $\GL(V)^G \isom \GGm^{|E|}$ and that an element $(t_{\lambda})_{\lambda\in E} \in \GL(V)^G$ acts on $V = \oplus_{\lambda \in E} V(\lambda)$ by scalar multiplication by $t_\lambda \in \GGm$ on the submodule $V(\lambda)$. Given $D \inn E$, define the one-parameter subgroup $\sigma_D$ of $\GL(V)^G$ as follows:
\begin{equation*}
\sigma_D\colon \GGm \to \GL(V)^G, t\mapsto (p_{\lambda}(t))_{\lambda \in E}
\end{equation*}
where  
$p_{\lambda}(t)  = t$ if $\lambda \notin D$ and $p_{\lambda}(t)  = 1$
otherwise. 
Then \(\lim_{t\to 0} \sigma_D(t) \cdot x_0 = v_D.\) 
We also put
\(
z_t:=\sigma_D(t)\cdot x_0\text{ for $t \in \GGm$}\)  and \(z_0:=v_D\)
so that $\lim_{t\to 0} z_t = z_0$. The orbits (of codimension $1$) that will play a part in the criterion correspond to subsets $D = E\setminus\{\lambda\}$ where $\lambda \in E$ is a judiciously chosen element, depending on the section to be excluded.

The following proposition tells us which subsets $D \inn E$ correspond to orbits of codimension $1$ in $X_0$. 
\begin{prop} \label{prop:codim1stab}
Let $E$, $V$ and $x_0$ be as before.  
Suppose $\lambda_0 \in E$.
Put
\(
z_0 = \sum_{\lambda \in E, \lambda \neq \lambda_0} v_{\lambda}.
\)
Then $\dim \ft_{z_0} = \dim \ft_{x_0}+1$.
Consequently, the following are equivalent:
\begin{enumerate}[(a)]
\item $\dim \fg_{z_0} = \dim \fg_{x_0} + 1$; \label{codim1staba}
\item $E^{\perp} = (E\setminus \{\lambda_0\})^{\perp}$ (see Lemma~\ref{lem:isotropygroup}(\ref{iso3})  for the definition of $\perp$);\label{codim1stabb}
\item $E^{\perp} \cap \Pi = (E\setminus \{\lambda_0\})^{\perp} \cap \Pi$.\label{codim1stabc}
\end{enumerate}
\end{prop}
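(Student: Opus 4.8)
The plan is to reduce everything to the explicit description of the isotropy Lie algebra in Lemma~\ref{lem:isotropygroup}, applied to both $x_0$ and $z_0$. Write $E' := E\setminus\{\lambda_0\}$. Note first that $z_0 = \sum_{\lambda\in E'}v_{\lambda}$ is exactly the point associated as in Lemma~\ref{lem:isotropygroup} to the subset $E'$ of $\Lambda^+$; since $z_0$ lies in the $G$-stable subspace $\oplus_{\lambda\neq\lambda_0}V(\lambda)$, its stabilizer in $V$ coincides with its stabilizer computed inside that subspace, so Lemma~\ref{lem:isotropygroup} applies verbatim with $E$ replaced by $E'$.

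First I would prove the claim about $\ft_{z_0}$. By part~(\ref{iso2}) of Lemma~\ref{lem:isotropygroup}, in its infinitesimal form, $\ft_{x_0}=\cap_{\lambda\in E}\ker(d\lambda)$ and $\ft_{z_0}=\cap_{\lambda\in E'}\ker(d\lambda)$, so $\ft_{x_0}$ is precisely the kernel of the restriction of the functional $d\lambda_0$ to $\ft_{z_0}$. Because the weights in $E$ are linearly independent (recall $E=\{\lambda^*\colon\lambda\in E^*\}$ with $E^*$ the basis of the freely generated $\wm$), their differentials are linearly independent in $\ft^*$, so $d\lambda_0$ does not lie in the span of $\{d\lambda\colon\lambda\in E'\}$; equivalently, $d\lambda_0$ does not vanish identically on the annihilator $\ft_{z_0}$ of that span. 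Hence $d\lambda_0|_{\ft_{z_0}}$ is a nonzero functional, its kernel $\ft_{x_0}$ has codimension exactly $1$ in $\ft_{z_0}$, and $\dim\ft_{z_0}=\dim\ft_{x_0}+1$.

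For the equivalence (\ref{codim1staba}) $\Leftrightarrow$ (\ref{codim1stabb}) I would simply subtract the two decompositions supplied by part~(\ref{iso3}) of Lemma~\ref{lem:isotropygroup} (the second obtained by applying it with $E'$ in place of $E$):
\[\dim\fg_{z_0}-\dim\fg_{x_0} = \bigl(\dim\ft_{z_0}-\dim\ft_{x_0}\bigr) + \bigl(|(E')^{\perp}|-|E^{\perp}|\bigr) = 1 + \bigl(|(E')^{\perp}|-|E^{\perp}|\bigr),\]
using the first part just proved. Since imposing fewer conditions gives $E^{\perp}\subseteq(E')^{\perp}$, the right-hand side equals $1$ iff $|(E')^{\perp}|=|E^{\perp}|$ iff $(E')^{\perp}=E^{\perp}$, which is exactly~(\ref{codim1stabb}).

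The implication (\ref{codim1stabb}) $\Rightarrow$ (\ref{codim1stabc}) is immediate, so the only real content left---and the step I expect to be the main obstacle---is (\ref{codim1stabc}) $\Rightarrow$ (\ref{codim1stabb}), i.e.\ recovering the full orthogonal sets from their intersections with $\Pi$. Here I would invoke two standard facts: (i) every positive coroot $\alpha^{\vee}$ is a non-negative integral combination $\sum_{\beta}c_{\beta}\beta^{\vee}$ of simple coroots, whose support (the $\beta\in\Pi$ with $c_{\beta}>0$) coincides with that of $\alpha$; and (ii) every $\lambda\in E$ is dominant, so $\<\lambda,\beta^{\vee}\>\ge 0$ for all $\beta\in\Pi$. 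Combining these, for a positive root $\alpha$ the quantity $\<\lambda,\alpha^{\vee}\>=\sum_{\beta}c_{\beta}\<\lambda,\beta^{\vee}\>$ is a sum of non-negative terms, hence vanishes iff $\<\lambda,\beta^{\vee}\>=0$ for every simple $\beta$ in the support of $\alpha$. Thus $\alpha\in E^{\perp}$ iff $\mathrm{supp}(\alpha)\subseteq E^{\perp}\cap\Pi$, and likewise for $E'$. Assuming~(\ref{codim1stabc}), these two support conditions agree for every positive root $\alpha$, giving $E^{\perp}=(E')^{\perp}$, which is~(\ref{codim1stabb}); this closes the chain of equivalences. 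The subtle point to get right is fact~(i) together with the sign argument from dominance, which is what forces the vanishing at $\alpha^{\vee}$ to localize at the simple coroots in the support.
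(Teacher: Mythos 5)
Your proof is correct, and it matches the paper on the first two steps: the paper also gets $\dim\ft_{z_0}=\dim\ft_{x_0}+1$ from the Lie-algebra version of Lemma~\ref{lem:isotropygroup}(\ref{iso2}) together with the linear independence of $E$, and also gets (\ref{codim1staba})$\Leftrightarrow$(\ref{codim1stabb}) directly from the decomposition in Lemma~\ref{lem:isotropygroup}(\ref{iso3}), exactly as in your dimension count. Where you genuinely diverge is the step (\ref{codim1stabb})$\Leftrightarrow$(\ref{codim1stabc}): the paper treats it as an instance of parabolic structure theory, defining $P:=G_{[x_0]}$ for the induced action on $\PP(V)$, observing that $-E^{\perp}$ is the set of negative roots of $P$, and citing the standard fact (Humphreys, Theorem 30.1) that these are precisely the negative roots that are $\ZZ$-linear combinations of the simple roots in $E^{\perp}\cap\Pi$, so that $E^{\perp}$ is determined by $E^{\perp}\cap\Pi$. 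You instead prove this determination by hand: since each $\lambda\in E$ is dominant and $\alpha^{\vee}$ is a non-negative combination of simple coroots whose support agrees with that of $\alpha$, the pairing $\<\lambda,\alpha^{\vee}\>$ is a sum of non-negative terms and vanishes exactly when $\mathrm{supp}(\alpha)\subseteq E^{\perp}\cap\Pi$. This is a valid, self-contained reproof of the cited fact in the situation at hand (note that non-negative \emph{rational} coefficients already suffice for your sign argument, so the integrality in your fact (i) is not actually needed); what the dominance of $E$ buys you is precisely the localization of the vanishing at the simple coroots in the support. The paper's route is shorter given the citation and makes the conceptual source visible (the parabolic $G_{[x_0]}$ containing $B$), while yours avoids any appeal to the structure theory of parabolics; both yield the same conclusion that $E^{\perp}$, and likewise $(E\setminus\{\lambda_0\})^{\perp}$, is recovered from its intersection with $\Pi$.
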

\begin{proof}
The first assertion follows from (the Lie-algebra version of) Lemma~\ref{lem:isotropygroup}(\ref{iso2})
and the fact that $E$ is linearly independent. 
The equivalence of (\ref{codim1staba}) and (\ref{codim1stabb}) is an immediate consequence of  Lemma~\ref{lem:isotropygroup}(\ref{iso3}). For (\ref{codim1stabb}) $\Leftrightarrow$ (\ref{codim1stabc}) we use a standard fact about parabolic subgroups containing $B$. Indeed,
let $\PP(V)$ be the projective space of lines through $0$ in $V$ and $V\setminus\{0\}\to \PP(V), v \mapsto [v]$ the  canonical map. Define the parabolic subgroup $P$ of $G$ by $P:=G_{[x_0]}$.
Then $-E^{\perp}$ is the set of negative roots of $P$. As is well known (see, e.g. \cite[Theorem 30.1]{humphreys-lag}), $-E^{\perp}$ is the set of negative roots of $G$ that are $\ZZ$-linear combinations of the simple roots in $E^{\perp} \cap \Pi$. Consequently, $E^{\perp}$  is completely determined by $E^{\perp} \cap \Pi$. Similarly, $(E\setminus \{\lambda_0\})^{\perp} \cap \Pi$ determines $(E\setminus \{\lambda_0\})^{\perp}$.
\end{proof}

\begin{lemma}\label{lem:X0normal}
The $G$-variety $X_0$ is normal. 
\end{lemma}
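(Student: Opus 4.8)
The plan is to reduce the normality of $X_0$ to that of its algebra of $U$-invariants, which is transparent. Recall that $X_0=\overline{G\cdot x_0}$ with $x_0=\sum_{\lambda\in E}v_{\lambda}$; by Lemma~\ref{lem:isotropygroup}(\ref{iso3}) we have $\fu\inn\fg_{x_0}$, so $G\cdot x_0$ is horospherical and $\CC[X_0]$ is multiplicity-free with weight monoid $\wm$ (see \cite[Theorem 6]{vin&pop-quasi}). Consequently $\CC[X_0]^U\cong\CC[\wm]$, the semigroup algebra of $\wm$. Since $\wm$ is freely generated, i.e.\ free as an abstract monoid, $\CC[\wm]$ is a polynomial ring and in particular normal. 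I stress that this uses only the freeness of $\wm$ and is unrelated to whether $\wm$ is $G$-saturated in the sense of Definition~\ref{def:Psat}: the latter concerns the position of $\wm$ inside $\Lambda^+$, whereas the normality of $\CC[\wm]$ only concerns the position of $\wm$ inside the cone it generates, which it always fills out when $\wm$ is free.

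Next I would pass to the normalization $\nu\colon \overline{X}\to X_0$. As $G$ is connected, $\overline{X}$ carries a $G$-action for which $\nu$ is finite, birational and $G$-equivariant; thus $\CC[X_0]\into\CC[\overline{X}]$ is a $G$-equivariant inclusion of multiplicity-free $G$-algebras (the dense $B$-orbit of $X_0$ pulls back to one in $\overline{X}$, which is normal, hence spherical) with $\CC[\overline{X}]$ normal. Taking $U$-invariants yields an inclusion $\CC[\wm]=\CC[X_0]^U\into\CC[\overline{X}]^U$. The crucial point is that this is an \emph{integral} extension sharing a fraction field. Because $U$ is a maximal unipotent (Grosshans) subgroup of the reductive group $G$ and $\CC[\overline{X}]$ is a finite $\CC[X_0]$-module, a theorem of Grosshans on invariants of the maximal unipotent subgroup gives that $\CC[\overline{X}]^U$ is a finite, hence integral, extension of $\CC[X_0]^U$. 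Moreover the two share the fraction field $\CC(X_0)^U$, since $\CC(\overline{X})=\CC(X_0)$ and, $X_0$ having a dense $B$-orbit, $\CC(X_0)^U=\operatorname{Frac}\big(\CC[X_0]^U\big)$.

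Combining these facts, $\CC[\overline{X}]^U$ is an integral extension of the normal ring $\CC[\wm]$ lying inside its own fraction field, whence $\CC[\overline{X}]^U=\CC[X_0]^U$. It then remains to upgrade this equality of $U$-invariants to an equality of the full coordinate rings: both $\CC[X_0]$ and $\CC[\overline{X}]$ are multiplicity-free $G$-modules, and equal $U$-invariant subalgebras force them to have the same weight monoid $\wm$, with one-dimensional isotypic components on each side; hence for every $\mu\in\wm$ the $G$-equivariant injection $\CC[X_0]_{(\mu)}\into\CC[\overline{X}]_{(\mu)}$ of simple modules is an isomorphism by Schur's lemma, so $\CC[X_0]=\CC[\overline{X}]$ and $X_0$ is normal. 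The main obstacle is the middle step, namely verifying that forming $U$-invariants commutes with normalization: the integrality (finiteness) of $\CC[\overline{X}]^U$ over $\CC[X_0]^U$ and the identity $\CC(X_0)^U=\operatorname{Frac}(\CC[X_0]^U)$. Alternatively, one can bypass this entirely by quoting the normality of orbit closures of sums of highest weight vectors with linearly independent weights, which is among the results of Vinberg and Popov~\cite{vin&pop-quasi}.
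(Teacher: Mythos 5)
Your proof is correct, and it rests on the same reduction as the paper's: normality of $X_0$ is governed by normality of $X_0\quot U \isom \Spec\CC[\wm]$, which holds because a freely generated monoid satisfies $\<\wm\>_{\ZZ}\cap\QQ_{\ge 0}\wm=\wm$, so $\CC[\wm]$ is a polynomial ring; your observation that $G$-saturation in the sense of Definition~\ref{def:Psat} is irrelevant here is exactly right. The difference lies in how the reduction is justified. The paper simply cites \cite[Theorem 10]{vin&pop-quasi}, or the general fact \cite[Theorem 6]{popov-contractions} that $X_0$ is normal if and only if $X_0\quot U$ is a normal $T$-variety, whereas you re-prove the needed implication by hand: pass to the normalization $\overline{X}\to X_0$, show $\CC[\overline{X}]^U$ is integral over $\CC[X_0]^U$ inside $\operatorname{Frac}\bigl(\CC[X_0]^U\bigr)$, conclude equality of $U$-invariants from normality of $\CC[\wm]$, and upgrade to $\CC[X_0]=\CC[\overline{X}]$ using multiplicity-freeness and Schur's lemma. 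This buys a self-contained argument (in effect, a proof of the relevant direction of Popov's theorem in the multiplicity-free case) at the cost of the two standard but nontrivial inputs you correctly flag: (i) integrality of $\CC[\overline{X}]^U$ over $\CC[X_0]^U$, which follows from the transfer principle $B^U\isom(B\otimes\CC[G/U])^G$ combined with the Reynolds-operator argument that integrality passes to $G$-invariants (Hadziev--Grosshans in characteristic $0$), so your attribution is acceptable if slightly loose; and (ii) $\CC(X_0)^U=\operatorname{Frac}\bigl(\CC[X_0]^U\bigr)$, which in fact needs no dense $B$-orbit: for $f\in\CC(X_0)^U$ the denominator ideal $\{q\in\CC[X_0]\colon qf\in\CC[X_0]\}$ is a nonzero $U$-stable subspace, hence contains a nonzero $U$-invariant $q$ by Kolchin's theorem, and $qf$ is then also $U$-invariant. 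Finally, your closing alternative---quoting the Vinberg--Popov normality criterion for orbit closures of sums of highest weight vectors with linearly independent weights---is precisely the paper's first cited option.
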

\begin{proof}
Because $\wm$ is freely generated, we have that $\< \wm \>_{\ZZ} \cap \QQ_{\ge0}\wm = \wm$ in $\Lambda \otimes_{\ZZ}\QQ$. 
We then apply \cite[Theorem 10]{vin&pop-quasi} or the general fact~\cite[Theorem 6]{popov-contractions} that $X_0$ is normal if and only if $X_0 \quot U$ is a normal $T$-variety  (recall that $X_0 \quot U \isom \Spec\CC[\wm]$).
\end{proof}

\begin{lemma} \label{lem:Tz0}
Suppose $\lambda \in E$ is such that for $D = E \setminus \{\lambda\}$, the $G$-orbit of $z_0 = v_D$ 
has codimension $1$ in $X_0$. 
Then $T_{z_0}X_0 = \fg\cdot z_0 \oplus \CC v_{\lambda}$.
\end{lemma}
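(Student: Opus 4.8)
The plan is to deduce the equality from a dimension count, after exhibiting the explicit tangent vector $v_{\lambda}$ and checking that it lies outside $\fg\cdot z_0$. Write $d=\dim X_0=\dim G\cdot x_0=\dim \fg\cdot x_0$, where the last equality holds because $X_0$ is the closure of the orbit $G\cdot x_0$ and $\fg\cdot x_0=T_{x_0}(G\cdot x_0)$. Since $\fg\cdot z_0=T_{z_0}(G\cdot z_0)$, the codimension-one hypothesis reads $\dim \fg\cdot z_0=d-1$. As $\fg\cdot z_0\inn T_{z_0}X_0$, the statement follows once I establish (i) $\dim T_{z_0}X_0=d$, (ii) $v_{\lambda}\in T_{z_0}X_0$, and (iii) $v_{\lambda}\notin\fg\cdot z_0$: indeed, (iii) makes the sum $\fg\cdot z_0+\CC v_{\lambda}$ direct and of dimension $d$, and by (i) and (ii) this $d$-dimensional subspace sits inside the $d$-dimensional space $T_{z_0}X_0$, forcing equality.

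For (i) I would show that $z_0$ is a smooth point of $X_0$. By Lemma~\ref{lem:X0normal} the variety $X_0$ is normal, so its singular locus is closed of codimension at least $2$; being $G$-stable, it is a union of $G$-orbits. The orbit $G\cdot z_0$ has codimension $1$ by hypothesis, hence cannot be contained in the singular locus, and by homogeneity $z_0$ is then a smooth point. Therefore $\dim T_{z_0}X_0=\dim X_0=d$.

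For (ii) I would use the one-parameter subgroup $\sigma_D$ of $\GL(V)^G$ with $D=E\setminus\{\lambda\}$ introduced above. Since $p_{\mu}(t)=1$ for $\mu\in D$ and $p_{\lambda}(t)=t$, one computes $z_t:=\sigma_D(t)\cdot x_0=z_0+t\,v_{\lambda}$. Because $X_0$ is stable under $\GL(V)^G$, the morphism $\AA^1\to V,\ t\mapsto z_0+t\,v_{\lambda}$ maps $\GGm$ into $X_0$ and therefore, $X_0$ being closed, factors through $X_0$. Differentiating at $t=0$ yields $v_{\lambda}\in T_{z_0}X_0$. For (iii) the key observation is that $z_0=v_D$ lies in the $G$-submodule $\oplus_{\mu\in D}V(\mu)$ of $V$, so that $\fg\cdot z_0\inn\oplus_{\mu\in D}V(\mu)$; since $\lambda\notin D$, the vector $v_{\lambda}\in V(\lambda)$ is not contained in this submodule, whence $v_{\lambda}\notin\fg\cdot z_0$. (Equivalently, with respect to the $\Tad$-action $\alpha$ every $v_{\mu}$ has weight $0$, while $\fu^{-}\cdot z_0$ carries strictly positive $\Tad$-weights and $\fu\cdot z_0=0$; so the weight-$0$ part of $\fg\cdot z_0=\ft\cdot z_0\oplus\fu^{-}\cdot z_0$ equals $\<v_{\mu}:\mu\in D\>_{\CC}$, which misses $v_{\lambda}$.)

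Combining (i)--(iii) gives $T_{z_0}X_0=\fg\cdot z_0\oplus\CC v_{\lambda}$. I expect the main obstacle to be step (i): steps (ii) and (iii) are an explicit one-parameter computation and an inclusion of $G$-submodules, respectively, whereas the smoothness of $z_0$ genuinely uses the global geometry of $X_0$—its normality together with the codimension bound on the ($G$-stable) singular locus.
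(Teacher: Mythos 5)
Your proposal is correct and follows essentially the same route as the paper's proof: normality of $X_0$ (Lemma~\ref{lem:X0normal}) plus $G$-stability of the singular locus gives smoothness at the codimension-one orbit, the curve $t\mapsto \sigma_D(t)\cdot x_0 = z_0 + t\,v_{\lambda}$ produces the tangent vector $v_{\lambda}$, and $v_{\lambda}\notin\fg\cdot z_0$ because $\fg\cdot z_0$ lies in the complement of $V(\lambda)\inn V$. Your write-up merely makes explicit a few steps the paper leaves implicit (the dimension count and the factorization of the curve through the closed subvariety $X_0$), so no changes are needed.
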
 
\begin{proof}
By Lemma~\ref{lem:X0normal}, $X_0$ is normal. Therefore its singular locus has codimension at least $2$. Since the singular locus is $G$-stable and $G\cdot z_0$ has codimension $1$, it follows that $X_0$ is smooth at $z_0$.  Therefore, $\dim T_{z_0}X_0 = \dim \fg\cdot z_0 + 1$. Moreover 
$t\mapsto z_t= \sigma_D(t)\cdot x_0$ is an irreducible curve in $X_0$ (because the elements of $E$ are linearly independent) and $z_t = t\cdot v_{\lambda} + z_0$. Thus $\frac{d}{dt}|_{t=0} z_t = v_{\lambda}$ and so $v_{\lambda}\in T_{z_0}X_0$. Further $v_\lambda \notin \fg \cdot z_0$ since $\fg\cdot z_0$ lies in the complement of $V(\lambda) \inn V$. 
\end{proof}

Now let $[v]$ be a $\Tad$-eigenvector in $(V/\fg\cdot x_0)^{G_{x_0}}$. We denote the corresponding  section in $H^0(G\cdot x_0, \shN_{X_0})^G$ by $s$, that is, $s(x_0) = [v]$. Recall from Proposition~\ref{prop:Teqv} that the $\Tad$-action on $\Vgg$ comes from  the action of $T$ on $H^0(G\cdot x_0, \shN_{X_0})^G$ through $f\colon T \to GL(V)^G$, defined in~(\ref{eq:TGLVG}). Since $f$ is surjective (see Remark~\ref{rem:fstinj}), we can also consider $s$ as an eigenvector for $GL(V)^G$. Because it will play a part in what follows, we remark that if the $GL(V)^G$-weight of $s$ is $\delta$, then the $\Tad$-weight of $s(x_0)=[v]$ is $f^{*}(\delta)$.   By definition, we have that for $a\in GL(V)^G$
\begin{equation*}
s^a(x_0) := a\cdot s (a^{-1}\cdot x_0) = \delta(a)s(x_0).
\end{equation*}
This implies that for every $D\inn E$ and $t \in \GGm$,
\begin{equation}
s(z_t) = s (\sigma_D(t) \cdot x_0) = \delta(\sigma_D(t))^{-1}\sigma_D(t)\cdot s(x_0)
= [\delta(\sigma_D(t))^{-1}\sigma_D(t) v] \in V/\fg\cdot z_t. \label{eq:szt}
\end{equation}

We need one final ingredient for the proof of Proposition~\ref{prop:excl}. Recall that any $v \in V$ defines a global section $s_v \in H^0(X_0, \shN_{X_0})$ by
\[s_v(x) = [v] \in V/T_xX_0 \text{ for all }x \in X_0.\]
Here then is the proposition we will use in Sections~\ref{subsec:case16}, \ref{subsec:case17} and \ref{subsec:case18} to prove that certain sections in $H^0(G\cdot x_0, \shN_{X_0})^G$ do not extend to $X_0$. As mentioned at the beginning
of this section, by the $\Tad$-action on $V$ we mean $\alpha$.
\begin{prop} \label{prop:excl}
Suppose $v \in V$ is a $\Tad$-eigenvector of weight $\beta \in \Lambda_R$ such that $[v] \in \Vgg$. Let $s \in H^0(G\cdot x_0, \shN_{X_0})^G$ be defined by $s(x_0) = [v]$. 
If there exists $\lambda \in E$ so that
\begin{itemize}
\item[(\ESo)] the coefficient of $\lambda$ in the unique expression of $\beta \in \<E\>_{\ZZ}$ as a $\ZZ$-linear combination of the elements of $E$ is positive;
\item[(\ESt)] the projection of $v \in V$ onto $V(\lambda) \inn V$ is zero;
\item[(\ESth)] if  $\eta$ is a simple root so that $\<\lambda, \eta^{\vee}\> \neq 0$ then there exists $\widetilde{\lambda} \in E\setminus\{\lambda\}$ so that $\<\widetilde{\lambda}, \eta^{\vee}\> \neq 0$;
\item[(\ESf)] if $\beta \in R_{+}\setminus E^{\perp}$ (see Lemma~\ref{lem:isotropygroup} for the definition of $E^{\perp}$), then there exists  $\xi$ in $E \setminus \{\lambda\}$ so that $\<\xi, \beta^{\vee}\> \neq 0$ and the projection of $v$ onto $V(\xi)$ is zero;
\end{itemize}
then $s$ does not extend to $X_0$. 
\end{prop}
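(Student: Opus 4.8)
The plan is to show that the section $s$, which is defined and $G$-invariant on the open orbit $G\cdot x_0$, fails to extend regularly across the codimension-one orbit $G\cdot z_0$ where $z_0 = v_D$ with $D = E\setminus\{\lambda\}$ and $\lambda$ the weight provided by the hypotheses. The strategy is to examine the behaviour of $s$ along the one-parameter family $z_t = \sigma_D(t)\cdot x_0 = t\, v_{\lambda} + z_0$ as $t\to 0$, using the explicit formula (\ref{eq:szt}) for $s(z_t)$, and to demonstrate that $s(z_t)$ blows up (as an element of the normal space) in the limit. First I would verify, via Proposition~\ref{prop:codim1stab}, that the subset $D = E\setminus\{\lambda\}$ really does correspond to an orbit of codimension $1$ in $X_0$: this is exactly what condition (\ESo) together with (\ESth) is designed to guarantee, since (\ESth) ensures that removing $\lambda$ from $E$ does not enlarge $E^{\perp}\cap \Pi$, which by the proposition is equivalent to the codimension being $1$. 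Once the codimension is confirmed, Lemma~\ref{lem:Tz0} gives the clean description $T_{z_0}X_0 = \fg\cdot z_0 \oplus \CC v_{\lambda}$ of the tangent space at the smooth point $z_0$.

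Next I would compute the leading-order behaviour of $s(z_t)$ in $t$. From (\ref{eq:szt}) we have $s(z_t) = [\delta(\sigma_D(t))^{-1}\sigma_D(t)\, v] \in V/\fg\cdot z_t$, where $\delta$ is the $\GL(V)^G$-weight of $s$; the scalar $\delta(\sigma_D(t))^{-1}$ is a power $t^{-m}$ of $t$ whose exponent $m$ is precisely the coefficient of $\lambda$ in the expansion of $\beta$ in the basis $E$ — this is where hypothesis (\ESo) enters decisively, forcing $m>0$ so that a genuinely negative power of $t$ appears. Because $\sigma_D(t)$ acts trivially on $V(\lambda)$ and by $t$ on the remaining components, condition (\ESt), which says the $V(\lambda)$-component of $v$ vanishes, controls how the numerator scales. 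Combining these, I would show that $s(z_t)$ has a pole in $t$ modulo $\fg\cdot z_t$; the point is to track the lowest-order term and verify it is a nonzero class in the normal space $V/T_{z_0}X_0$ at the limit, rather than being absorbed into $\fg\cdot z_t$ as $t\to 0$.

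The crux — and the step I expect to be the main obstacle — is ruling out that the apparent pole is spurious, i.e.\ that the offending vector could lie in $T_{z_0}X_0 = \fg\cdot z_0\oplus \CC v_{\lambda}$ in the limit and thereby allow the section to extend after all. This is exactly the role of condition (\ESf): when $\beta$ is itself a positive root not in $E^{\perp}$, there is a danger that the leading term lands in $\fg\cdot z_0$ (which carries the $\Tad$-weights in $R^+\setminus E^{\perp}$ by Lemma~\ref{lem:isotropygroup}(\ref{weightsgx0})), and (\ESf) supplies an auxiliary weight $\xi\in E\setminus\{\lambda\}$ with $\<\xi,\beta^{\vee}\>\neq 0$ and vanishing $v$-projection onto $V(\xi)$, which I would use to separate the leading term from $\fg\cdot z_0$. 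Concretely, I would project the limiting class onto a suitable $\Tad$-weight space or onto the $V(\lambda)$-direction and show it is nonzero modulo $T_{z_0}X_0$, contradicting the assumption that $s$ extends to a regular section near $z_0$.

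To assemble the argument I would argue by contradiction: suppose $s$ extends to a section $\tilde{s}\in H^0(X_0,\shN_{X_0})^G$. Then $\tilde{s}(z_0) = \lim_{t\to 0} s(z_t)$ must exist as a well-defined class in $\shN_{X_0}|_{z_0} = V/T_{z_0}X_0$. The computation above shows the limit either does not exist (because of the strictly negative power of $t$ from (\ESo) and (\ESt)) or, where it is finite, represents a nonzero normal vector that the hypotheses (\ESth) and (\ESf) prevent from lying in $T_{z_0}X_0$; in either case we reach a contradiction with regular extension across the smooth codimension-one point $z_0$. This forces $s$ not to extend to $X_0$, as claimed.
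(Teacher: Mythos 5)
Your proposal is correct and follows essentially the same route as the paper's proof: you compare $s$ with the global section $s_v$ along $z_t=\sigma_D(t)\cdot x_0$ to get $s(z_t)=t^{-k}s_v(z_t)$ with $k>0$ from (\ESo) and (\ESt), use (\ESth) with Proposition~\ref{prop:codim1stab} and Lemma~\ref{lem:Tz0} to identify $T_{z_0}X_0=\fg\cdot z_0\oplus\CC v_{\lambda}$ at the smooth codimension-one point $z_0$, and use (\ESf) together with the nonzero $\Tad$-weight of $v$ to show $s_v(z_0)\neq 0$, so the limit of $s(z_t)$ fails to exist (the final reduction to $X_0^{\le 1}$ via normality and local freeness of the normal sheaf, which you invoke implicitly, is spelled out in the paper). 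One minor correction for the write-up: $\sigma_D(t)$ scales $V(\lambda)$ by $t$ and fixes the summands $V(\mu)$ for $\mu\in D$ (you state the opposite), which is precisely why (\ESt) yields $\sigma_D(t)v=v$ and the clean formula $s(z_t)=t^{-k}[v]$ with $k$ the coefficient of $\lambda$ in $\beta$.
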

\begin{proof}
The idea of the proof is to `compare' the section $s$ to the section $s_v \in H^0(X_0,\shN_{X_0})$. Put $D= E\setminus\{\lambda\}$. We first show that 
\begin{enumerate}[(i)]
\item there exists a positive integer $k$ so that $s(\sigma_D(t)\cdot x_0) = t^{-k} s_v(\sigma_D(t)\cdot x_0)$ for all $t \in \GGm$;\label{tpext1} 
\item $s_v(z_0) \neq 0$, \label{tpext2}  
\end{enumerate}
where $z_0 = v_D = \lim_{t\to 0} \sigma_D(t)\cdot x_0$. We then show
that (\ref{tpext1}) and (\ref{tpext2}) imply that $\lim_{t\to 0} s(\sigma_D(t)\cdot x_0)$ does not exist, i.e.\ that $s(z_0)$ does not exist.

We first prove (\ref{tpext1}). Let $f: T \to \GL(V)^G$ be the map (\ref{eq:TGLVG}) on page~\pageref{eq:TGLVG}. Since it is surjective, \(f^*\colon X(\GL(V)^G) \to X(T), \delta \mapsto \delta \circ f\) is injective and $\beta \in \im(f^*)$. 
Put $\delta:=(f^*)^{-1}(\beta)$, the $\GL(V)^G$-weight of $s$. From equation (\ref{eq:szt}) we have that \(
s(z_t) =  [\delta(\sigma_D(t))^{-1}\sigma_D(t) v] 
\) for every $t\in \GGm$.
Using (\ESt), $\sigma_D(t)v = v$ for every $t \in \GGm$. Therefore
\begin{equation*}
s(z_t) =  [\delta(\sigma_D(t))^{-1} v] = \delta(\sigma_D(t))^{-1}[v] = \delta(\sigma_D(t))^{-1} s_v(z_t) 
\end{equation*}
for all $t\in \GGm$. 
Let $k$ be the coefficient of $\lambda$ in the expression of $\beta$ as a $\ZZ$-linear combination of the elements of $E$.  Then $\delta(\sigma_D(t)) = t^k$ for every $t \in \GGm$. Consequently
\(
s(z_t) = t^{-k} s_v(z_t)\) for all $t \in \GGm$.
By (\ESo) $k>0$, and we have proved (\ref{tpext1}). 

For (\ref{tpext2}) we have to prove that $s_v(z_0) = [v] \in V/T_{z_0}X_0$ is nonzero. Condition (\ESth) together with Proposition~\ref{prop:codim1stab} tells us that $G\cdot z_0$ has codimension $1$ in $X_0$. It follows from Lemma~\ref{lem:Tz0} that $T_{z_0}X_0 = \fg \cdot z_0 \oplus \CC v_{\lambda}$. 
We now proceed by contradiction. Indeed, if $s_v(z_0)=[v]$ were zero than we would have $v \in \fg\cdot z_0 \oplus v_{\lambda}$. Since, by (\ESo), $v$ has nonzero $\Tad$-weight this would imply that $v \in \fg \cdot z_0$. The nonzero $\Tad$-weights in $\fg \cdot z_0$ are (by (\ESth)) the same as those in $\fg \cdot x_0$, that is, they are the elements of $R^+ \setminus E^{\perp}$ (by (\ref{weightsgx0}) of Lemma~\ref{lem:isotropygroup}). So if $\beta \notin R^+ \setminus E^{\perp}$ we are done. We only need to deal with the case where  $\beta \in R^+ \setminus E^{\perp}$. Then the $\Tad$-weight space in $\fg \cdot z_0$ of weight $\beta$ is the line spanned by $X_{-\beta} z_0$. Now (\ESf) tells us that $v$ cannot belong to that line:  $X_{-\beta} z_0$ has a nonzero projection to $V(\xi)$, whereas $v$ does not.

We now prove the claim that (\ref{tpext1}) and (\ref{tpext2}) establish the
proposition. Denote by $X_0^{\le 1}$ the union of $G\cdot x_0$ and all
$G$-orbits of codimension $1$ in $X_0$. Then $X_0^{\le1}$ is open
because $X_0$ has finitely many orbits, and it is smooth because $X_0$
is normal. Again by the normality of $X_0$, $s$ extends to $X_0$ if
and only if it extends to $X_0^{\le 1}$ (cf.~\cite[Lemma 3.7]{brion-ihs-arxivv2}). Since  $X_0^{\le 1}$ is smooth, the normal sheaf  $\shN_{X_0^{\le 1}}$ of $X_0^{\le 1}$ in $V$, which is the restriction of $\shN_{X_0}$ to $X_0^{\le 1}$, is locally free. The claim follows. 
\end{proof}

\section{Reduction to classification of spherical modules}  \label{sec:reduction}
In this section we reduce the proof of Theorem~\ref{thm:main} to a case-by-case verification, that is, we reduce it to Theorem~\ref{thm:cbc}. This reduction (formally, Corollary~\ref{cor:mainthm}) does not use the fact that $G$ is of type $\ssA$: if Theorem~\ref{thm:cbc} holds for groups of arbitrary type, then so does Theorem~\ref{thm:main}.  
We first introduce some more notation. We will use $R$ for the radical of $G$; since $G$ is reductive, $R$ is the connected component $Z(G)^{\circ}$ of $Z(G)$ containing the identity. 
When $(G,W)$ is a spherical module and $\wm$ is its weight monoid, we will use 
$\tM^G_W$ for the moduli scheme $\tM_{\wm}$ (in fact, we check in Lemma~\ref{lem:conjandwm}  that $\tM^G_{\wm}$ is, up to isomorphism (of schemes),  independent of the choice of maximal torus $T$ and Borel subgroup $B$ and therefore determined by the pair $(G,W)$).
We introduce this notation because we will have to relate moduli
schemes for different modules and different groups to one
another. More generally, when $\Gamma$ is the weight monoid of a
multiplicity-free  $G$-variety $X$, $\tM_{\Gamma} = \tM^G_{\Gamma}$ will stand for the moduli scheme $\tM_Y$ of \cite{alexeev&brion-modaff} with $Y= X\quot U$. 
Given a spherical module $(G,W)$ we will also use $\rho: G \to \GL(W)$ for the representation and we put
\[G^{\st}:= G' \times \GL(W)^G.\] 
 
We begin with an overview of the reduction.
To make the classification of spherical modules in \cite{kac-rmks, benson-ratcliff-mf, leahy} possible, several issues had to be dealt with (see \cite[Section 5]{knop-rmks}). Indeed, Knop's List gives the \emph{saturated} \emph{indecomposable} spherical modules up to \emph{geometric equivalence}. We begin by recalling the definitions of these terms from \cite[Section 5]{knop-rmks}.
\begin{defn} \label{def:geomeqv}
\begin{enumerate}[(a)]
\item Two finite-dimensional representations $\rho_1\colon G_1 \to \GL(W_1)$ and $\rho_2\colon G_2 \to \GL(W_2)$ are called \emph{geometrically equivalent} if there is an isomorphism of vector spaces $\phi \colon W_1 \to W_2$ such that for the induced map\footnote{By definition, $\GL(\phi)(f) = \phi \circ f \circ \phi^{-1}$ for every $f\in \GL(W_1)$.} $\GL(\phi) \colon \GL(W_1) \to \GL(W_2)$ we have $\GL(\phi)(\rho_1(G_1))=\rho_2(G_2)$. 
\item By the {\em product} of the representations $(G_1,W_1),\ldots,(G_n,W_n)$ we mean the representation $(G_1\times\ldots\times G_n, W_1 \oplus \ldots \oplus W_n)$. 
\item A finite-dimensional representation $(G,W)$ is \emph{decomposable} if it is geometrically equivalent to a representation of the form $(G_1 \times G_2, W_1 \oplus W_2)$ with $W_1$ a non-zero $G_1$-module and $W_2$ a non-zero $G_2$-module. It is called \emph{indecomposable}
if it is not decomposable. 
\item A finite-dimensional representation $\rho\colon G \to \GL(W)$ is called \emph{saturated} if the dimension of the center of $\rho(G)$ equals the number of irreducible summands of $W$.
\end{enumerate}
\end{defn}
\begin{remark} \label{rem:geomeqv}
\begin{enumerate}[(a)]
\item If $\rho$ is saturated and multiplicity-free, then the center of
  $\rho(G)$ is equal to the
  centralizer $\GL(W)^G$.
\item Suppose $(G_1,W_1)$ and $(G_2,W_2)$ are geometrically equivalent representations. Then 
$(G_1,W_1)$ is spherical if and only if $(G_2,W_2)$ is, and $(G_1,W_1)$ is saturated if and only if $(G_2,W_2)$ is. \label{geomeqvxy}
\end{enumerate}
\end{remark}

\begin{example}(\cite[p.311]{knop-rmks})
The spherical modules $(\SL(2), S^2\CC^2)$ and $(\SO(3), \CC^3)$ are geometrically equivalent. Every finite-dimensional representation is geometrically equivalent to its dual representation. The spherical module 
\begin{align*}
(\SL(2) \times \GGm \times \SL(2)) &\times (\CC^2 \oplus \CC^2) &\longrightarrow  &&\CC^2 \oplus \CC^2 \\
((A,t,B)&,(x,y))&\mapsto &&(tAx,tBy)
\end{align*}
is indecomposable but not saturated.
\end{example}

For our reduction to Theorem~\ref{thm:cbc}, we deal with geometric equivalence and products of spherical modules  in a straightforward matter. Indeed, we prove in Proposition~\ref{prop:geomeqv} that if $(G_1,W_1)$ and $(G_2,W_2)$ are geometrically equivalent spherical modules, then $\tM^{G_1}_{W_1} \isom \tM^{G_2}_{W_2}$ as schemes. That the tangent space to $\tM^G_W$ behaves as expected under products is proved in Proposition~\ref{prop:nshfprod}.
Dealing with the fact that the classification consists of
\emph{saturated} spherical modules requires a bit more effort. Indeed,
we could not establish an \emph{a priori} isomorphism between
$\tM^{\overline{G}}_W$ and $\tM^G_W$, where $(\overline{G},W)$ is a
(saturated) spherical module and $G$ is a  subgroup of $\overline{G}$
containing $\overline{G}'$ such that $(G,W)$ is spherical. This is why
in Theorem~\ref{thm:cbc}  we cannot restrict ourselves  to the modules $(\overline{G},W)$ of Knop's List. 
We circumvent this difficulty by proving in Proposition~\ref{prop:torrestr} that even when $(G^{\st},W)$ is decomposable Theorem~\ref{thm:cbc} implies  the equality 
\begin{equation} \label{eq:TX0Ms}
\dim T_{X_0}\tM^{G^{\st}}_{W} = 
\dim T_{X_0}\tM^{G' \times \rho(R)} _W
\end{equation}
for a spherical module $\rho:G \to \GL(W)$ with $G$ of type $\ssA$. In (\ref{eq:TX0Ms}), by abuse of notation, $X_0$ on each side denotes the unique closed orbit of the corresponding moduli scheme. From Proposition~\ref{prop:leahy} we have
that $(G'\times \rho(R),W)$ is geometrically equivalent to $(G,W)$.
Using Theorem~\ref{thm:cbc} and Lemma~\ref{lem:psigmax} we then deduce that $\dim T_{X_0}\tM^{G^{\st}}_{W}= d_W$, thus proving Corollary~\ref{cor:mainthm}. 

\begin{remark}    \label{rem:saturation}
Theorem~\ref{thm:main} proves, \emph{a posteriori}, that $\tM^{\overline{G}}_W$ and $\tM^G_W$ are isomorphic, when $\overline{G}$ is of type $\ssA$, $(\overline{G},W)$ is a (saturated) spherical module and $G$ is a  subgroup of $\overline{G}$ containing $\overline{G}'$ such that $(G,W)$ is spherical. We note that Remarks \ref{rem:vgg16} and \ref{rem:propcase17} show that, contrary to the tangent space $T_{X_0}\tM^G_{W}$, the $\Tad$-module $\Vgg$ that contains it does in general depend on the subgroup $G$  of $\overline{G}$ as above: these remarks give instances where the inclusion $(V/\overline{\fg} \cdot x_0)^{\overline{G}_{x_0}} \inn \Vgg$ is strict. (Recall that $V = \oplus_{\lambda \in E} V(\lambda)$ with $E$ the basis of the weight monoid of the dual module $W^*$.) Furthermore, we expect that the isomorphism $\tM^G_{W} \isom \tM^{\overline{G}}_{W}$ cannot follow from ``very general'' considerations, as the following example, where $\wm$ is not the weight monoid of a spherical module $W$, illustrates. Take $\overline{G}  = \SL(3)\times \GGm$, $G=\SL(3)$ and $\wm = \<\omega_1+\varepsilon, \omega_2 + \varepsilon\>$, where $\epsilon$ is a nonzero character of $\GGm$. Set $V = V(\omega_1+\varepsilon)^* \oplus V(\omega_2+\varepsilon)^*$ as in Section~\ref{subsec:emb}. Since $\wm$ is $G$-saturated, $T_{X_0}\tM_{\wm}^{\overline{G}} \isom (V/\overline{\fg} \cdot x_0)^{\overline{G}_{x_0}}$ and $T_{X_0}\tM_{\wm}^{G} \isom \Vgg$ by Remark~\ref{rem:Psatinjtan}. A direct calculation shows that
$\dim \Vgg=1$, whereas  $\dim (V/\overline{\fg} \cdot x_0)^{\overline{G}_{x_0}} = 0$. 
\end{remark}

The following proposition explains how a general spherical module $(G,W)$ fits into the classification of spherical modules. It is (somewhat implicitly) contained in \cite[Section 2]{leahy} and \cite[Section 5.1]{camus}. Recall that given a spherical module $(G,W)$, we put $G^{\st}:=G' \times \GL(W)^G$.
\begin{prop}[Leahy] \label{prop:leahy}
Suppose $\rho:G \to \GL(W)$ is a spherical module. Then the following hold:
\begin{enumerate}[(i)]
\item If $(G,W)$ is saturated and indecomposable, then $(G,W)$ is geometrically equivalent to an entry in Knop's List; \label{leahy0}
\item $(G^{\st}, W)$ is a saturated spherical module;  \label{leahy1}
\item $(G^{\st}, W)$ is  geometrically equivalent to a product of indecomposable saturated spherical modules; \label{leahy2}
\item 
$\rho(R) \inn \GL(W)^G$ and  $\rho(G) = \rho(G')\rho(R) \inn \GL(W)$; \label{leahy3}
\item Suppose $(G_1, W_1), (G_2,W_2),\ldots, (G_n,W_n)$ are spherical modules and let $(K,E)$ be their product. Suppose  $(K,E)$ and   $(G^{\st}, W)$ are geometrically
   equivalent and  denote by $\phi : W \to E$ a linear isomorphism
   establishing their geometric equivalence (see
   Definition~\ref{def:geomeqv}). If $A = \GL(\phi)(\rho(R))$, then $A \inn \GL(E)^K$ and $(G,W)$ is geometrically equivalent to $(K' \times A, E)$. 
 \label{leahy4}
\end{enumerate}
 \end{prop}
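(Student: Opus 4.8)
The plan is to prove each of the five assertions in Proposition~\ref{prop:leahy}, working mostly from the definitions of $G^{\st}$, geometric equivalence, and saturation, and leaning on the structure theory of reductive groups. Throughout I write $\rho\colon G \to \GL(W)$ for the given spherical representation, recall that $R = Z(G)^{\circ}$ is the radical, and that $G^{\st} = G' \times \GL(W)^G$ acts on $W$ via the product of the restriction of $\rho$ to $G'$ and the inclusion $\GL(W)^G \into \GL(W)$.

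For part (\ref{leahy0}), the classification of \cite{kac-rmks, benson-ratcliff-mf, leahy} as presented in Knop's List gives all \emph{saturated indecomposable} spherical modules up to geometric equivalence, so if $(G,W)$ itself has these two properties it must match some entry; this is essentially a restatement of the scope of the classification. For part (\ref{leahy3}), I would argue as follows. Because $W$ is a spherical, hence multiplicity-free, $G$-module, the decomposition $W = \bigoplus_i W_i$ into simple $G$-submodules is canonical and $\GL(W)^G \isom \GGm^{b}$ acts by independent scalars on the summands (as in Lemma~\ref{lem:camusbound}). Since $R = Z(G)^{\circ}$ is central in $G$, $\rho(R)$ commutes with $\rho(G)$ and hence acts on each simple summand $W_i$ by a scalar (by Schur's lemma), which places $\rho(R) \inn \GL(W)^G$. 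The identity $\rho(G) = \rho(G')\rho(R)$ then follows from the well-known fact that a connected reductive group is the almost-direct product $G = G' \cdot R$ of its derived group and its radical, so that $\rho(G) = \rho(G'\cdot R) = \rho(G')\rho(R)$.

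With (\ref{leahy3}) in hand, parts (\ref{leahy1}) and (\ref{leahy2}) become accessible. For (\ref{leahy1}), I would first check that $(G^{\st},W)$ is spherical: since $\rho(G) = \rho(G')\rho(R) \inn \rho(G')\GL(W)^G$ and the latter is exactly the image of $G^{\st}$, the two groups $\rho(G)$ and the image of $G^{\st}$ have the \emph{same} orbits on $W$ (they differ only by the central torus $\GL(W)^G \supseteq \rho(R)$, which preserves the $B$-orbit structure up to the larger torus); thus a dense $B$-orbit for $G$ yields a dense Borel orbit for $G^{\st}$. Saturation is then immediate from Definition~\ref{def:geomeqv}(d) together with Remark~\ref{rem:geomeqv}(a): for the multiplicity-free module $W$ the centralizer $\GL(W)^G$ is precisely the center of the image of $G^{\st}$, and its dimension $b$ equals the number of simple summands of $W$. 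For (\ref{leahy2}), once $(G^{\st},W)$ is known to be saturated spherical, one decomposes it into indecomposables using Definition~\ref{def:geomeqv}(c); each factor is again saturated and spherical by Remark~\ref{rem:geomeqv}(\ref{geomeqvxy}) and an additivity check on centers versus summands, yielding the asserted geometric equivalence to a product of saturated indecomposable spherical modules.

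Part (\ref{leahy4}) is where I expect the main obstacle to lie, since it requires tracking the central torus $\rho(R)$ carefully through the isomorphism $\phi$. Given the linear isomorphism $\phi\colon W \to E$ establishing the geometric equivalence of $(G^{\st},W)$ and the product $(K,E)$, the induced $\GL(\phi)$ carries the image of $G^{\st}$ to the image of $K$, hence carries its central torus $\GL(W)^G$ onto the corresponding $\GL(E)^K$; since $\rho(R) \inn \GL(W)^G$ by (\ref{leahy3}), setting $A := \GL(\phi)(\rho(R))$ indeed lands $A$ inside $\GL(E)^K$. The remaining claim, that $(G,W)$ is geometrically equivalent to $(K' \times A, E)$, follows by transporting the factorization $\rho(G) = \rho(G')\rho(R)$ through $\GL(\phi)$: the image of $\rho(G')$ is carried onto the image of $K'$ (the derived groups correspond because $\GL(\phi)$ is a group isomorphism between the images of $G^{\st}$ and $K$, hence matches their derived subgroups), and the image of $\rho(R)$ is carried onto $A$ by definition, so $\GL(\phi)(\rho(G)) = \rho(K')\cdot A$, which is exactly the image of $K' \times A$ in $\GL(E)$. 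The delicate point is to verify that $\GL(\phi)$ really matches $\rho(G')$ with $K'$ rather than with some larger or smaller subgroup; this is where I would be most careful, using that geometric equivalence is an isomorphism of the \emph{images} as algebraic subgroups of $\GL(W)$ and $\GL(E)$, so that intrinsic subgroups such as the derived group are preserved.
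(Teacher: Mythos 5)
Your proof is correct and takes essentially the same route as the paper: the classification's scope for (i); centrality of $\rho(R)$ plus $G = G'R$ for (iv); the centralizer $\GL(W)^G \isom \GGm^b$ for saturation in (ii)--(iii); and for (v), transporting the factorization $\rho(G)=\rho(G')\rho(R)$ through $\GL(\phi)$, matching the central torus with the center of $\psi(K)$ (hence landing $A$ in $\GL(E)^K$) and the derived group of the image with $\psi(K')$, exactly as in the paper's proof. Two phrasings are looser than needed but harmless: in (ii) the two images need not have the \emph{same} orbits (the containment $\rho(B) \inn \rho(B')\GL(W)^G$ already yields the dense Borel orbit), and in (v) only the inclusion $\GL(\phi)(\GL(W)^G) \inn \GL(E)^K$ is required, not surjectivity onto it.
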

\begin{proof}
Assertion (\ref{leahy0}) just says that Knop's List contains all indecomposable saturated spherical modules up to geometric equivalence (see \cite[Theorem 2.5]{leahy} or \cite[Theorem 2]{benson-ratcliff-mf}).
Next, let $b$ be the number of irreducible components of $(G,W)$. 
Assertion (\ref{leahy1}) follows from the fact that $\GL(W)^G \isom \GGm^b$ (because $W$ is a multiplicity-free $G$-module). 
Assertion (\ref{leahy2}) follows from the fact that if $(G_1\times G_2, W_1 \oplus W_2)$ is saturated (resp.\ spherical) then $(G_1, W_1)$ and $(G_2,W_2)$ are saturated (resp.\ spherical). 
 We come to (\ref{leahy3}). Note that $R$ commutes with $G$ and so $\rho(R)$ commutes with $\rho(G)$ hence the first assertion. For the second, we use a well-known decomposition of reductive groups: $G = G'R$.  
 Finally we prove (\ref{leahy4}). Let us call $\psi\colon K \to \GL(E)$ and $\rho^{\st}\colon G^{\st} \to \GL(W)$ the representations. Then $\GL(\phi)\colon \rho^{\st}(G^{\st}) \to \psi(K)$ is an isomorphism of algebraic groups. As $\GL(W)^G \inn Z(G^{\st}$), its image $\GL(\phi)(\GL(W)^G)$ belongs to the center of $\psi(K)$, which is a subset of $\GL(E)^K$. Because $\rho(R) \inn GL(W)^G$, it follows that $A=\GL(\phi)(\rho(R)) \inn \GL(E)^K$. This proves the first assertion.  Next, note that $\GL(\phi)(\rho(G)) = \GL(\phi)(\rho(G'))\cdot \GL(\phi)(\rho(R))$. Moreover, 
$$\GL(\phi)(\rho(G')) = \GL(\phi)(\rho((G^{\st})')) = [\GL(\phi)(\rho^{\st}(G^{\st}))]' = \psi(K)' = \psi(K')$$ 
and the second assertion follows. 
\end{proof}

The following lemma is well-known and straightforward. 
\begin{lemma} \label{lem:spherrestr}
Let $X$ be an affine $G$-variety and let $H$ be a connected subgroup
of $G$ containing $G'$ ($H$ is reductive by
Lemma~\ref{lem:redsgr}). Let $B_H$ be the Borel subgroup $B\cap H$ of
$H$ and let $p: X(B) \onto X(B_H)$ be the restriction map.  If we
consider $X$ as an $H$-variety, then its weight monoid is $p(\Lambda^+_{(G,X)})$. 

If, moreover, $X$ is an affine \emph{spherical $G$-variety}, then the following are equivalent
\begin{enumerate}[(i)]
\item$X$ is spherical as an $H$-variety; \label{spherrone}
\item  the restriction of $p$ to $\Lambda^{+}_{(G,X)}$ is injective \label{spherrtwo}
\item  the restriction of $p$ to $\Lambda_{(G,X)}$ is injective. \label{spherrthree}
 \end{enumerate}
\end{lemma}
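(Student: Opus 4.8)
The plan is to reduce everything to the single observation that, because $G' \inn H \inn G$, every irreducible $G$-module remains irreducible on restriction to $H$. First I would record the standard decomposition $G = G' \cdot Z(G)^{\circ}$ together with the inclusions $U \inn G' \inn H$, which hold because the unipotent radical of a Borel subgroup lies in the derived group. Consequently $B_H = B \cap H = T_H \cdot U$, where $T_H := T \cap H$ is a maximal torus of $H$, and $U$ is itself the unipotent radical of $B_H$. Now fix $\lambda \in \Lambda^+$. Since $Z(G)$ is central it acts by a scalar on $V(\lambda)$ by Schur's lemma, so every $G'$-submodule of $V(\lambda)$ is automatically $Z(G)$-stable, hence $G$-stable; thus $V(\lambda)$ is already irreducible as a $G'$-module and \emph{a fortiori} as an $H$-module. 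Its highest weight vector $v_\lambda$ is fixed by $U$ and has $T_H$-weight $p(\lambda)$, so $V(\lambda)$, viewed as an $H$-module, is irreducible with highest weight $p(\lambda)$. (Reductivity of $H$ is Lemma~\ref{lem:redsgr}.)

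With this in hand the first assertion is immediate. Writing $\CC[X] = \bigoplus_{\lambda \in \Lambda^+_{(G,X)}} \CC[X]_{(\lambda)}$ for the $G$-isotypic decomposition, each summand is a sum of copies of $V(\lambda)$ and hence, as an $H$-module, a sum of copies of the irreducible $H$-module of highest weight $p(\lambda)$. Since $X$ is a variety, $\CC[X]$ is a domain, so its $H$-weight set coincides with its weight monoid, and the previous sentence identifies the latter with $p(\Lambda^+_{(G,X)})$. This proves the opening claim using only that $X$ is an affine $G$-variety.

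For the equivalences I would assume in addition that $X$ is $G$-spherical, so that every $G$-multiplicity equals $1$. As an $H$-module, $\CC[X]$ is then multiplicity-free precisely when the $H$-types $p(\lambda)$, for $\lambda \in \Lambda^+_{(G,X)}$, are pairwise distinct, i.e.\ exactly when $p|_{\Lambda^+_{(G,X)}}$ is injective: if $p$ identifies two distinct dominant weights then the corresponding $H$-type occurs at least twice, while conversely injectivity forces each $H$-multiplicity down to the corresponding $G$-multiplicity, which is $1$. Because normality of $X$ is a property of the underlying variety and is therefore unaffected by passing from $G$ to $H$, and because an affine variety is multiplicity-free if and only if it has a dense Borel orbit, the variety $X$ is spherical as an $H$-variety exactly when $\CC[X]$ is multiplicity-free as an $H$-module. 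This establishes (\ref{spherrone}) $\Leftrightarrow$ (\ref{spherrtwo}).

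Finally (\ref{spherrtwo}) $\Leftrightarrow$ (\ref{spherrthree}) is elementary. Since $\Lambda_{(G,X)}$ is by definition the subgroup generated by the monoid $\Lambda^+_{(G,X)}$, and the set of differences $\{a - b : a, b \in \Lambda^+_{(G,X)}\}$ is already closed under addition and negation, every element of $\Lambda_{(G,X)}$ has the form $a - b$ with $a, b \in \Lambda^+_{(G,X)}$. Thus if $p|_{\Lambda^+_{(G,X)}}$ is injective and $p(a - b) = 0$, then $p(a) = p(b)$ forces $a = b$, whence $p|_{\Lambda_{(G,X)}}$ is injective; the reverse implication is trivial since $\Lambda^+_{(G,X)} \inn \Lambda_{(G,X)}$. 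The only step that demands genuine care is the multiplicity bookkeeping in the third paragraph: one must be sure that two distinct dominant $G$-weights with equal $H$-restriction really produce a repeated $H$-type rather than being absorbed into one another. This, however, follows at once from the irreducibility of each $V(\lambda)|_H$ established at the outset.
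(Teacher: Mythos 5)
Your proof is correct, and it rests on the same basic fact as the paper's proof --- that each $V(\lambda)$ restricts to the irreducible $H$-module of highest weight $p(\lambda)$, because $G' \inn H$ and $G = G'\cdot Z(G)^{\circ}$ --- but you deploy that fact differently in two places. For the first assertion, the paper extracts from it only the inclusion $p(\Lambda^+_{(G,X)}) \inn \Lambda^+_{(H,X)}$ and proves the reverse inclusion by a separate argument: given an $H$-type $\mu$ occurring in $A = \CC[X]$, the radical $R$ of $G$ stabilizes $A_{(\mu)}^U$, so one picks an $R$-eigenvector there, which is then a $B$-eigenvector of $B_H$-weight $\mu$. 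Your isotypic bookkeeping --- decomposing $\CC[X]$ into $G$-isotypic components, each of which restricts to an $H$-isotypic component of type $V(p(\lambda))$ --- yields both inclusions at once and is arguably cleaner; it is valid because $H$ is reductive, so the set of $H$-types of a rational $H$-module can be read off from any decomposition into semisimple summands. Second, where the paper simply cites Camus for (i) $\Leftrightarrow$ (ii) and Losev for the fact that $\Lambda_{(G,X)}$ is the group generated by $\Lambda^+_{(G,X)}$, you prove both directly: the multiplicity count (the $H$-multiplicity of $\mu$ equals the number of $\lambda \in \Lambda^+_{(G,X)}$ with $p(\lambda)=\mu$, since each $G$-isotypic component of a $G$-spherical $\CC[X]$ is a single $H$-irreducible) and the observation that the differences $a-b$ of monoid elements already form a group, using that $\Lambda^+_{(G,X)}$ is closed under addition because $\CC[X]$ is a domain. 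The trade-off is that the paper's version is shorter by outsourcing these steps, while yours is self-contained; and you correctly isolate the one point needing genuine care --- that two distinct dominant $G$-weights with equal restriction really yield $H$-multiplicity at least two --- which holds because the two copies sit in distinct, hence linearly independent, $G$-isotypic components.
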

\begin{proof}
The basic fact behind both assertions is that if $V=V(\lambda)$ is an irreducible $G$-module of highest weight $\lambda$, then restricting the action to $H$ makes $V$ into an irreducible $H$-module of highest weight $p(\lambda)$.  This immediately implies that $p(\Lambda^+_{(G,X)}) \inn \Lambda^+_{(H,X)}$. For the reverse inclusion, we can argue as follows. Let $R$ be the radical of $G$ and let $\mu$ be dominant weight of $H$ such that the isotypical component $A_{(\mu)}$ of $A = \CC[X]$ is nonzero. Then $R$ stabilizes $A_{(\mu)}^U$, and so  $A_{(\mu)}^U$ contains an eigenvector $f$ for $R$. Then $f$ is a $B$-eigenvector whose $B_H$-weight is $\mu$.  We have proved the first assertion.
 
For the equivalence of  (\ref{spherrone}) and (\ref{spherrtwo}), see for example \cite[Lemma 4.1]{camus}. The equivalence of (\ref{spherrtwo}) and (\ref{spherrthree}) follows from the fact that $\Lambda_{(G,X)}$ is the subgroup of $X(T)$ generated by $\Lambda^+_{(G,X)}$ (see e.g. \cite[Lemma 3.6.3]{losev-uniqueness}). 
\end{proof}

\begin{remark} \label{rem:spherrestr}
\begin{enumerate}[(1)]
\item Theorem 5.1 of \cite{knop-rmks} is a somewhat refined version of Lemma~\ref{lem:spherrestr}.
\item For every saturated indecomposable spherical module $(G,W)$, Knop's List, following \cite{leahy},
gives a basis for $\<\ker p\>_{\CC} \cap \<\Lambda_W\>_{\CC} \inn \ft^*$, where $p$ is as in Lemma~\ref{lem:spherrestr}. In Knop's List, $\<\ker p\>_{\CC}$ is denoted $\fz^*$ and  $\<\Lambda_W\>_{\CC}$ is denoted $\fa^*$.
\item For a simple example, suppose $G=\GL(n)$. Then $G'=\SL(n)$ and  the kernel of the restriction map $X(B) \onto X(B\cap G')$ is $\<\om_n\>_{\ZZ}$. 
\end{enumerate}
\end{remark}

Our first step in the reduction is to verify that our moduli schemes are `invariant' under geometric equivalence. We do this in Proposition~\ref{prop:geomeqv} below. First we prove an auxiliary proposition stating that our moduli schemes behave `as expected' under surjective group homomorphisms (Proposition~\ref{prop:cover}). It will be of use on several occasions and is followed by a few lemmas we need in the proof of Proposition~\ref{prop:geomeqv}. 

Before getting started we recall the following well-known fact (see e.g.\ \cite[Corollary C of \S 21.3]{humphreys-lag}) which will be used in what follows without being explicitly mentioned.
\begin{prop}\label{prop:surjBandT}
Suppose $f\colon G \to H$ is a surjective homomorphism of connected linear algebraic groups. If $T$ is a maximal torus of $G$ and $B$ is a Borel subgroup of $G$, then $f(T)$ is a maximal torus of $H$ and $f(B)$ is a Borel subgroup of $H$.
\end{prop}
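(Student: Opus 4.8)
The plan is to establish the two assertions separately, deducing the statement about maximal tori from the statement about Borel subgroups together with the structure theory of connected solvable groups. For the Borel subgroup, I would first observe that $f(B)$ is a closed, connected, solvable subgroup of $H$: the image of a homomorphism of linear algebraic groups is closed, while connectedness and solvability are inherited by images. It therefore suffices to show that $f(B)$ contains a Borel subgroup of $H$, for then maximality forces equality. To see this I would pass to flag varieties: since $f$ is surjective, the assignment $g \mapsto f(g)f(B)$ descends to a surjective morphism $G/B \onto H/f(B)$ of homogeneous spaces. As $B$ is a Borel subgroup of $G$, the variety $G/B$ is complete, and the image of a complete variety under a morphism is complete; hence $H/f(B)$ is complete. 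By the standard characterization of parabolicity (a closed subgroup $P \inn H$ has $H/P$ complete if and only if $P$ contains a Borel subgroup of $H$), $f(B)$ contains some Borel subgroup $B'$ of $H$. Since $B'$ is a maximal connected solvable subgroup and $f(B)$ is connected and solvable with $B' \inn f(B)$, I would conclude $f(B) = B'$.

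For the maximal torus, I would exploit the decomposition $B = T \ltimes U$, which gives $f(B) = f(T)f(U)$. Here $f(T)$ is a torus (the image of a torus is connected and diagonalizable), and $f(U)$ is a connected unipotent subgroup, hence contained in the unipotent radical $R_{u}(f(B))$ of the Borel subgroup $f(B)$ just identified. Consequently the image of $f(T)$ under the quotient map $f(B) \to f(B)/R_{u}(f(B))$ already equals the image of all of $f(B)$, i.e.\ is everything. Now choose a maximal torus $T'$ of $f(B)$ with $f(T) \inn T'$; the composite $f(T) \into T' \to f(B)/R_{u}(f(B))$, whose second arrow is the canonical isomorphism of $T'$ onto the torus quotient, is then both surjective (by the previous sentence) and injective, which forces $f(T) = T'$. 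Thus $f(T)$ is a maximal torus of $f(B)$, and since the maximal tori of a Borel subgroup are exactly the maximal tori of $H$ contained in it, $f(T)$ is a maximal torus of $H$.

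I expect the only genuine content to lie in the first paragraph, namely the reduction to the completeness of $H/f(B)$ and the appeal to the parabolic characterization; the torus statement should then be routine bookkeeping with the semidirect-product structure of $f(B)$. The one point deserving care at the outset is that $f(B)$ is genuinely \emph{closed}, so that $H/f(B)$ is a variety and the completeness/parabolicity criterion applies, which is why I would invoke the closedness of images of algebraic group homomorphisms before anything else.
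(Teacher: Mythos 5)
Your proof is correct. The paper does not actually prove this proposition --- it cites it as well known, referring to \cite[Corollary C of \S 21.3]{humphreys-lag} --- and your argument (completeness of $H/f(B)$ as the image of the complete variety $G/B$, the parabolicity criterion forcing $f(B)$ to contain and hence equal a Borel subgroup, then the semidirect decomposition $B = T\ltimes U$ with $f(U)$ normal unipotent in $f(B)$ to identify $f(T)$ as a maximal torus) is precisely the standard argument found in that reference.
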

We also recall Yoneda's lemma (in its formulation taken from \cite[Lemma VI-1]{geomschemes}). In the proof of Proposition~\ref{prop:cover} we prove that two functors $\Sch_{\CC} \to \Sets$ are isomorphic and conclude that the representing schemes are isomorphic as schemes (over $\CC$). 
\begin{prop}[Yoneda's lemma] \label{prop:yoneda}
If  $\Cat$ is a category, $X_1, X_2$  are two objects in
$\Cat$ and the two functors $ F_1, F_2 :  \Cat \to \Sets$ , 
 with  $F_i = \Hom_{\Cat}( -, X_i)$ are isomorphic
(as functors)  then  $X_1$ is isomorphic to $X_2$  in $\Cat$.
\end{prop}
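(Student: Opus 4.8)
The plan is to read this off the standard Yoneda argument: a natural isomorphism between the representable functors $\Hom_{\Cat}(-,X_1)$ and $\Hom_{\Cat}(-,X_2)$ should yield mutually inverse morphisms $X_1\to X_2$ and $X_2\to X_1$, obtained by evaluating the natural transformation and its inverse at identity morphisms. So I would start by fixing notation: let $\eta\colon F_1\Rightarrow F_2$ be the given natural isomorphism and $\theta\colon F_2\Rightarrow F_1$ its inverse, so that $\theta_X\circ\eta_X=\Id_{F_1(X)}$ and $\eta_X\circ\theta_X=\Id_{F_2(X)}$ for every object $X$ of $\Cat$.

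First I would extract the two candidate morphisms. Since $\eta_{X_1}\colon \Hom_{\Cat}(X_1,X_1)\to \Hom_{\Cat}(X_1,X_2)$, setting $f:=\eta_{X_1}(\Id_{X_1})$ gives a morphism $f\colon X_1\to X_2$; symmetrically $g:=\theta_{X_2}(\Id_{X_2})$ is a morphism $g\colon X_2\to X_1$. The heart of the proof is then to show $g\circ f=\Id_{X_1}$ and $f\circ g=\Id_{X_2}$ by invoking naturality. Because each $F_i=\Hom_{\Cat}(-,X_i)$ is contravariant, for a morphism $\phi\colon A\to B$ the map $F_i(\phi)$ is precomposition with $\phi$, so naturality of $\theta$ reads $\theta_A(k\circ\phi)=\theta_B(k)\circ\phi$ for every $k\colon B\to X_2$. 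Taking $\phi=f$ and $k=\Id_{X_2}$ yields $g\circ f=\theta_{X_2}(\Id_{X_2})\circ f=\theta_{X_1}(f)=\theta_{X_1}(\eta_{X_1}(\Id_{X_1}))=\Id_{X_1}$, the last step using $\theta_{X_1}\circ\eta_{X_1}=\Id_{F_1(X_1)}$. The mirror computation, using naturality of $\eta$ with $\phi=g$ and $h=\Id_{X_1}$, gives $f\circ g=\eta_{X_2}(g)=\eta_{X_2}(\theta_{X_2}(\Id_{X_2}))=\Id_{X_2}$. Hence $f$ is an isomorphism with inverse $g$, and therefore $X_1\isom X_2$ in $\Cat$.

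I do not expect a genuine obstacle: this is precisely the ``objects with isomorphic representable functors are isomorphic'' corollary of the full Yoneda lemma, which is why the paper is content to cite it rather than prove it. The only point demanding care is the bookkeeping of variance — because the functors in question are the contravariant $\Hom_{\Cat}(-,X_i)$, one must remember that $F_i(\phi)$ is \emph{pre}composition and feed the correct identity morphism into the correct naturality square, so that the cancellations $\theta\circ\eta=\Id$ and $\eta\circ\theta=\Id$ land exactly where needed. Alternatively, one could simply quote the full Yoneda lemma (the bijection between natural transformations out of a representable functor and the values of the target functor), from which the statement is immediate; I would prefer the self-contained computation above since it keeps the argument internal to the stated hypotheses.
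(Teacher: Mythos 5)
Your proof is correct, and it is the standard argument: the paper itself offers no proof of this proposition, simply citing \cite[Lemma VI-1]{geomschemes}, and your computation --- extracting $f=\eta_{X_1}(\Id_{X_1})$ and $g=\theta_{X_2}(\Id_{X_2})$ and cancelling them via the two naturality squares --- is precisely the argument behind that citation. Your variance bookkeeping (treating $F_i(\phi)$ as precomposition for the contravariant functors $\Hom_{\Cat}(-,X_i)$) is handled correctly, so there is nothing to add.
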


\begin{prop}  \label{prop:cover} 
Suppose $f: G \onto  H$  is a surjective group homomorphism between connected reductive groups. Put $T_H := f(T)$ and $B_H=f(B)$ and write $f^*$ for the map $X(T_H) \into X(T)$ given by $\lambda \to \lambda \circ f$.
Let $\wm \inn X(T_H)$ be the weight monoid of an  affine spherical $H$-variety (with respect to the Borel subgroup $B_H$).  Then $\tM_{\wm}^H \isom \tM^G_{f^*(\wm)}$ as schemes.
\end{prop}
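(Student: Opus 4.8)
The plan is to prove $\tM_{\wm}^H \isom \tM^G_{f^*(\wm)}$ by identifying the functors that these two schemes represent and then invoking Yoneda's lemma (Proposition~\ref{prop:yoneda}). Recall that $\tM^H_{\wm} = \tM_Y$ with $Y = \Spec\CC[\wm]$ and that this scheme represents the functor sending a scheme $S$ (over $\CC$) to the set of equivalence classes of pairs $(X \to S, \varphi)$ consisting of a flat family of affine $H$-schemes over $S$ together with a $T_H$-equivariant isomorphism $X \quot U_H \to Y \times S$; similarly for $\tM^G_{f^*(\wm)}$. So the heart of the argument is to construct, for every base scheme $S$, a natural bijection between these two families of data, compatible with base change.

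The key observation that makes this work is that $f\colon G \onto H$ is surjective, so a $G$-action on an affine scheme $X$ together with the requirement that $\ker f$ act trivially is the same thing as an $H$-action, and the restriction map $f^*\colon X(T_H) \into X(T)$ is injective with image equal to the characters of $T$ that are trivial on $\ker(f)\cap T$. First I would check that $f^*(\wm)$ really is the weight monoid of an affine spherical $G$-variety, namely of $Y$ viewed as a $G$-variety via $f$ (here the $T$-action on $Y$ factors through $T_H$); this uses that sphericity and the weight monoid are preserved when we inflate the action along the surjection $f$, because $B = f^{-1}(B_H)^{\circ}$ maps onto $B_H$ and the unipotent radicals correspond, so $X\quot U = X \quot U_H$ as $T$-schemes (with $T$ acting through $T_H$). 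Then I would define the natural transformation in both directions: given a flat family of $G$-schemes over $S$ whose associated $G$-algebra has weight set inside $f^*(\wm)$, the subgroup $\ker f$ acts trivially (because every weight occurring lies in the image of $f^*$, hence is trivial on $\ker f$, by the argument used in Section~\ref{subsec:emb} for $Z(G)$), so the family is canonically a family of $H$-schemes; and the $T$-equivariant isomorphism $X\quot U \to Y\times S$ is exactly a $T_H$-equivariant isomorphism $X\quot U_H \to Y\times S$ under the identifications above. The inverse transformation inflates an $H$-family to a $G$-family along $f$.

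The main obstacle I expect is bookkeeping rather than conceptual: one must verify that $\ker f$ acts trivially on every member $X$ of a $G$-family in $\tM^G_{f^*(\wm)}$, and that this triviality is automatic from the constraint that the invariant Hilbert function is the characteristic function of $f^*(\wm)$. The point is that each $G$-isotypic component appearing in $\CC[X]$ has highest weight in $f^*(\wm) \subseteq \im(f^*)$, and since $\im(f^*)$ consists precisely of the characters of $T$ vanishing on $T \cap \ker f$, the whole module $V(\lambda)$ (not just its highest weight line) is fixed by $\ker f$: indeed every weight of $V(\lambda)$ differs from $\lambda$ by an element of $\<\Pi\>_{\ZZ}$, and the simple roots, being trivial on the central $\ker f$, do not affect triviality on $\ker f$. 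This is the exact analogue of the argument in Section~\ref{subsec:emb} showing $Z(G)$ lies in the kernel of $\gamma$, and it guarantees that the two moduli functors have the same $S$-points naturally in $S$.

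Once the natural isomorphism of functors $\Hom(-,\tM^H_{\wm}) \isom \Hom(-,\tM^G_{f^*(\wm)})$ is in place, Yoneda's lemma gives the desired isomorphism of schemes. I would present the proof at the level of the functor of points, carefully recording the identification $U = f^{-1}(U_H)^{\circ}$ and the resulting identification $X\quot U = X \quot U_H$ as $T$-schemes (with $T$ acting through $f$), and leaving the routine verification of naturality in $S$ and of the two transformations being mutually inverse to the reader. The only genuinely substantive input beyond these identifications is the sphericity and weight-monoid claim for $(G, Y)$, which follows from Lemma~\ref{lem:spherrestr} applied in reverse (inflation along $f$) together with the fact that $f$ induces a bijection between the relevant Borel subgroups and maximal tori.
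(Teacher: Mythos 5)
Your overall strategy coincides with the paper's: both proofs identify the two moduli functors and conclude with Yoneda's lemma (Proposition~\ref{prop:yoneda}), and both rest on the same essential input, namely that inflation along $f$ identifies $H$-modules with the $G$-modules whose highest weights lie in $f^*(\Lambda^+_H)$. (The paper implements the comparison via the multiplication-law description of the functor from \cite[Proposition 2.10]{alexeev&brion-modaff} and the co-induced modules $\Coind^H_{B_H}(A)$ and $\Coind^G_{B}(D)$, matched through the universal property of co-induction; your families formulation is a reasonable equivalent.) However, your justification of the crucial step --- that $\ker f$ acts trivially on every member of a $G$-family with weights in $f^*(\wm)$ --- contains a genuine error: you assert that $\ker f$ is central. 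This is false for a general surjection of connected reductive groups; for the projection $f\colon G_1 \times G_2 \onto G_1$ the kernel is the full factor $G_2$, and the simple roots of $G_2$ are \emph{not} trivial on $T \cap \ker f$, so your weight-by-weight argument (``every weight of $V(\lambda)$ differs from $\lambda$ by an element of $\<\Pi\>_{\ZZ}$, and the simple roots are trivial on $\ker f$'') breaks down as written. The same example refutes your bookkeeping identifications $B = f^{-1}(B_H)^{\circ}$ and $U = f^{-1}(U_H)^{\circ}$: there $f^{-1}(B_1)^{\circ} = B_1 \times G_2$, which is not even solvable. All that is true, and all that you need, is $f(T)=T_H$, $f(B)=B_H$, $f(U)=U_H$ (Proposition~\ref{prop:surjBandT}).

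The conclusion you are after is nevertheless correct, and the repair is short: if $\lambda = f^*(\mu)$ with $\mu \in \Lambda^+_H$, then the inflation $\shE(V_H(\mu))$ is an irreducible $G$-module (irreducibility is preserved because $f$ is surjective) whose $B$-highest weight is $\mu \circ f = \lambda$, since $f(B) = B_H$; hence $V_G(\lambda) = \shE(V_H(\mu))$ and $\ker f$ acts trivially on it --- no centrality needed. This is exactly the ``elementary fact'' the paper invokes at the start of its proof, and once it is in place your descent/inflation dictionary between the two functors goes through, with $\CC[X]^U = \CC[X]^{U_H}$ following from $f(U)=U_H$ and the triviality of the $\ker f$-action. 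One further small slip: the statement that inflation preserves weight monoids is not Lemma~\ref{lem:spherrestr} ``in reverse'' (that lemma concerns restriction to subgroups containing $G'$); the relevant statement is Lemma~\ref{lem:surjandwm}, which the paper proves precisely for this purpose.
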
 
\begin{proof}
Using the description in \cite[Proposition 2.10]{alexeev&brion-modaff} we show that the two functors $\shM_{\wm}^H$ and $\shM^G_{f^*(\wm)}$ are isomorphic. Underlying the proof, which is an exercise in `abstract nonsense,' is the (elementary) fact that the category of $H$-modules is equivalent to the category of $G$-modules with highest weights in $f^*(\Lambda_H^+)$, where $\Lambda_H^+$ is the set of dominant weights in $X(T_H)$ with respect to the Borel subgroup  $B_H:=f(B)$ of $H$.

The aforementioned equivalence is established by (the co-restriction of) the obvious functor $\shE: H\text{-modules} \to G\text{-modules}$ induced by $f$. Further let $\shF: T_H\text{-modules} \to T\text{-modules}$ also be the obvious functor induced by $f$. Let $A$ be the $T_H$-algebra $\CC[\wm]$ and put $R:= \Coind^H_{B_H}(A)$ (For a quick discussion of the co-induced module see \cite[p.104]{alexeev&brion-modaff}). Note that $\shF(A)$ and $A$ have the same underlying set and that $\shF(A)$  is naturally a $T$-algebra for the same multiplication  as that of $A$. 
We also introduce the contravariant functor
$\shN \colon \Sch_{\CC} \to \Sets$ which assigns to a scheme $X$ the set of $G$-multiplication laws on $ \Oh_X \otimes_{\CC} \shE(R)$ that extend the $T$-multiplication law on $\shF(A)$. Note that $R$ and $\shE(R)$ have the same underlying set as well and so, trivially, $\shN(X) = \shM_{\wm}^H(X)$ for any scheme $X$.  What remains is to prove that $\shN \isom \shM^G_{f^*(\wm)}$. 

Let $D$ be the $T$-algebra $\CC[f^*(\wm)]$.  Then there exists an
isomorphism of $T$-algebras $\varphi: D \to \shF(A)$ by the well-known
fact that multiplicity-free $T$-algebras without zero divisors are determined up to $T$-isomorphism by their weight monoid, and Lemma~\ref{lem:surjandwm} below. Furthermore, the inclusion $j\colon\shF(A) \into \shE(R)$ is $T$-equivariant. By the universal property of the map $\imath\colon D \to \Coind^G_{B}(D)$, there exists a unique $G$-equivariant map of modules $\psi\colon \Coind^G_{B}(D) \to \shE(R)$ such that $\psi \circ \imath = j \circ \varphi$.   
Now, $\psi$ is in fact an isomorphism because it restricts to an isomorphism $[\Coind^G_{B}(D)]^U \to \shE(R)^U$.  
Using the commutative diagram
\[\begin{CD} 
D @>{\imath}>> {\Coind^G_{B}(D)}\\ 
@VV{\varphi}V @VV{\psi}V\\ 
\shF(A)@>j>> \shE(R) 
\end{CD}\]
it is straightforward to find the natural transformations that establish the isomorphism $\shN \isom \shM^G_{f^*(\wm)}$.
\end{proof}
\begin{remark}
One can prove that, with the obvious rephrasing, Proposition~\ref{prop:cover} holds for all moduli schemes $\tM_Y$ (as defined in \cite{alexeev&brion-modaff}) with $Y$ a multiplicity-finite affine $T_H$-scheme. 
\end{remark}

For what follows, some temporary notation will be useful. Suppose $T \inn B \inn G$ are a maximal torus and a Borel subgroup of $G$ and let $X$ be a  $G$-scheme, where the action is $\rho\colon G \to \Aut(X)$. Then we denote by $\Lambda(T,B,G,\rho)$ the weight set with respect to $B$ and $T$ of $X$, viewed as a $G$-scheme with action given by $\rho$. 
\begin{lemma} \label{lem:surjandwm}
Let $G,H,f, f^*, T_H$ and $B_H$ be as in proposition~\ref{prop:cover}. Let $X$ be an $H$-scheme and denote the action $\rho\colon H \to \Aut(X)$. Put
\begin{align*}
\cS&:=\Lambda(T_H,B_H,H,\rho) \\
\Gamma &:= \Lambda (T,B,G,\rho \circ f)
\end{align*}
Then $f^*(\cS) = \Gamma.$
\end{lemma}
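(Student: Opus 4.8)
The plan is to read both weight sets off a single object, the algebra $A := \CC[X]$, and to transport its isotypical decomposition along the restriction-of-scalars functor $\shE$ induced by $f$. Concretely, $\cS$ is the weight set of $A$ regarded as a rational $H$-module via $\rho$, while $\Gamma$ is the weight set of $\shE(A)$, the same space $A$ with $G$ acting through $\rho\circ f$ (this action is again rational, since restriction of scalars along a homomorphism of algebraic groups preserves rationality). So it suffices to understand how $\shE$ acts on $G$-isotypical components.

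First I would record the representation-theoretic core, namely that for every $\lambda$ in the set $\Lambda_H^+$ of dominant weights of $(H,T_H,B_H)$ one has $\shE(V(\lambda)) \isom V(f^*(\lambda))$ as $G$-modules. This is the standard fact recalled in the proof of Proposition~\ref{prop:cover}, and it is short to verify directly: the module $V(\lambda)$ is $H$-irreducible, hence $G$-irreducible since $f(G)=H$; if a vector $v$ has $T_H$-weight $\mu$, then $t\in T$ acts through $f$ by the scalar $\mu(f(t))=f^*(\mu)(t)$, so $v$ has $T$-weight $f^*(\mu)$; and a $B_H$-highest weight vector $v_\lambda$ is fixed by $U$, because $f(U)$ is unipotent and contained in $B_H$, hence in its unipotent radical, so $f(U)$ fixes $v_\lambda$. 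Thus $v_\lambda$ is a $B$-highest weight vector of weight $f^*(\lambda)$, giving $\shE(V(\lambda))\isom V(f^*(\lambda))$.

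Next I would apply $\shE$ to the $H$-isotypical decomposition $A = \bigoplus_{\lambda\in\cS} A_{(\lambda)}$. By the previous step each $\shE(A_{(\lambda)})$ is $G$-isotypical of type $V(f^*(\lambda))$, and since $f^*$ is injective (because $f$, and hence $f|_T\colon T\onto T_H$, is surjective) the weights $f^*(\lambda)$ for $\lambda\in\cS$ are pairwise distinct. Hence $\shE(A)=\bigoplus_{\lambda\in\cS}\shE(A_{(\lambda)})$ is exactly the $G$-isotypical decomposition of $\shE(A)$, whose nonzero summands are indexed by $f^*(\cS)$. Reading off weight sets yields $\Gamma = f^*(\cS)$, as claimed.

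There is essentially no serious obstacle here; the one point requiring care is that $\shE$ must not merge two distinct isotypical components into a single one, which is precisely what the injectivity of $f^*$ guarantees.
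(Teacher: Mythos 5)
Your proof is correct, but it takes a genuinely different route from the paper's. The paper verifies the two inclusions directly at the level of $B$-eigenvectors: writing $\cdot$ for the $G$-action via $\rho\circ f$ and $*$ for the $H$-action, the identity $b\cdot P = f(b)*P$ together with $B_H=f(B)$ shows at once that a $B_H$-eigenvector of weight $\lambda$ is a $B$-eigenvector of weight $f^*(\lambda)$, and, conversely, that the weight $\lambda$ of any $B$-eigenvector is trivial on $\ker f\cap B$ and hence descends to a character $\delta\in X(B_H)$ with $\lambda=f^*(\delta)$; no decomposition of $\CC[X]$ is ever invoked. You instead transport the full isotypical decomposition of $A=\CC[X]$ along the restriction functor $\shE$, using that $\shE(V(\lambda))\isom V(f^*(\lambda))$ and that $f^*$ is injective, so that distinct components cannot merge. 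Every step of yours checks out — in particular $f(U)$ consists of unipotent elements of the connected solvable group $B_H$ and therefore lies in its unipotent radical, so $B_H$-highest weight vectors become $B$-highest weight vectors, and injectivity of $f^*$ does follow from $T_H=f(T)$. The trade-off: the paper's eigenvector computation is shorter and needs only the characterization of the weight set by $B$-eigenvectors, while your argument proves slightly more, namely a canonical matching of the $G$-isotypical components of $\shE(A)$ with the $H$-isotypical components of $A$, multiplicities included. In effect you have written out a proof of the "elementary fact" (the equivalence between $H$-modules and $G$-modules with highest weights in $f^*(\Lambda^+_H)$) that the paper cites without proof at the start of its proof of Proposition~\ref{prop:cover}, and then specialized it to read off the lemma.
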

\begin{proof}
This is  a straightforward verification. We will use $\cdot$ for the action of $G$ on $\CC[X]$ induced by $\rho \circ f$ and $*$ for the action of $H$ induced by $\rho$. Consequently, for $g \in G$ and $P \in \CC[X]$, we have $g \cdot P = f(g) * P$. 

First, we prove that $f^*(\cS) \inn \Gamma$. Take $\lambda \in \cS$. Then there exists a nonzero $P \in \CC[X]$ so that $\tilde{b}* P = \lambda(\tilde{b})P$ for every $\tilde{b} \in B_H$. Since $B_H = f(B)$, this means that $b\cdot P = f(b)*P = \lambda(f(b))P$ for every $b \in B$. In other words $f^*(\lambda) = \lambda \circ f$ is in $\Gamma.$ 

For the reverse inclusion, $\Gamma \inn f^*(\cS)$, take $\lambda \in \Gamma$. Then there exists a nonzero $P \in \CC[X]$ so that $b\cdot P = \lambda(b)P$. Since $b \cdot P = f(b)*P$, this implies that $P$ is a $B_H$-eigenvector. It follows that there exists $\delta \in X(B_H)$ so that $b\cdot P = f(b)*P = \delta(f(b))P$. Consequently, $\lambda = \delta \circ f = f^*(\delta) \in f^*(\cS)$.  
\end{proof}

Lemma~\ref{lem:conjandwm} checks, when $X$ is a multiplicity-free
 affine $G$-variety, that, as expected, $\tM^G_{\Lambda^+_X}$ is independent up to isomorphism of schemes (over $\CC$) of the choice of maximal torus $T$ and Borel subgroup $B$. In other words,  $\tM^G_{\Lambda^+_X}$ is determined by the pair $(G,X)$.
\begin{lemma}\label{lem:conjandwm}
Suppose $T_1, T_2$ are maximal tori of $G$ and $B_1,B_2$ are Borel subgroups of $G$ such that  $T_1 \inn B_1$ and $T_2 \inn B_2$. Suppose $X$ is a multiplicity-free  affine $G$-variety with action $\rho\colon G \to \Aut(X)$. For $i=1,2$, put $\Gamma_i := \Lambda(T_i, B_i, G, \rho)$.  Then we have an isomorphism of schemes
\[ \tM^G_{\Gamma_1} \isom \tM^G_{\Gamma_2}\] 
\end{lemma}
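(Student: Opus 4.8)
The plan is to realize the change of Borel--torus pair from $(T_1,B_1)$ to $(T_2,B_2)$ as conjugation by a single element of $G$, and then feed this into Proposition~\ref{prop:cover}. First I would invoke the standard conjugacy theorems for connected groups: since all Borel subgroups of $G$ are conjugate and, within a fixed Borel, all maximal tori are conjugate, there is a $g \in G$ with $g B_1 g^{-1} = B_2$ and $g T_1 g^{-1} = T_2$ (conjugate $B_1$ to $B_2$, then adjust by an element of $B_2$ to match the tori). Let $c_g \colon G \to G$, $x \mapsto gxg^{-1}$, be the corresponding inner automorphism; it is in particular a surjective homomorphism of connected reductive groups with $c_g(T_1) = T_2$ and $c_g(B_1) = B_2$.

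Next I would apply Proposition~\ref{prop:cover} with $H = G$, $f = c_g$, source pair $(T,B) = (T_1,B_1)$, and $\wm = \Gamma_2 = \Lambda(T_2,B_2,G,\rho)$. Since $T_H = c_g(T_1) = T_2$ and $B_H = c_g(B_1) = B_2$, the proposition yields an isomorphism of schemes $\tM^G_{\Gamma_2} \isom \tM^G_{f^*(\Gamma_2)}$, where the right-hand side is formed with respect to $(T_1,B_1)$ and $f^* \colon X(T_2) \to X(T_1)$ is the map $\mu \mapsto \mu\circ c_g$. (The scheme-level conclusion, via Yoneda's lemma, is already part of Proposition~\ref{prop:cover}.)

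It then remains to identify $f^*(\Gamma_2)$ with $\Gamma_1$. By Lemma~\ref{lem:surjandwm} applied to $f = c_g$, one has $f^*(\Gamma_2) = \Lambda(T_1,B_1,G,\rho\circ c_g)$, the weight monoid of $X$ for the action twisted by $c_g$. Because $c_g$ is inner, this twisted action is $G$-equivariantly isomorphic to the original one: the automorphism $\rho(g^{-1})$ of $X$ carries the $(\rho\circ c_g)$-action to the $\rho$-action, as a one-line check shows (both sides send $\rho(c_g(h))x$ to $\rho(hg^{-1})x$). Since $G$-equivariantly isomorphic $G$-varieties have the same weight monoid with respect to a fixed $(T_1,B_1)$, we get $\Lambda(T_1,B_1,G,\rho\circ c_g) = \Lambda(T_1,B_1,G,\rho) = \Gamma_1$. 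Combining, $\tM^G_{\Gamma_2} \isom \tM^G_{f^*(\Gamma_2)} = \tM^G_{\Gamma_1}$, which is the assertion.

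The one point I expect to require care is the hypothesis of Proposition~\ref{prop:cover} that $\Gamma_2$ be the weight monoid of an affine \emph{spherical} $H$-variety, whereas a multiplicity-free affine $G$-variety need not be normal. I expect this to be harmless: the proof of Proposition~\ref{prop:cover} uses sphericity of the source variety only to ensure that $\CC[\Gamma_2]$ is a multiplicity-free $T_2$-algebra that is an integral domain with weight monoid $\Gamma_2$, and this already holds for the weight monoid of any multiplicity-free affine $G$-variety, $\Gamma_2$ being a finitely generated submonoid of the dominant weights. Apart from this, the only genuine bookkeeping is the twisted-action identification of the previous paragraph; everything else is a direct application of the conjugacy theorems together with Proposition~\ref{prop:cover} and Lemma~\ref{lem:surjandwm}.
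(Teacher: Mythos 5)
Your argument is correct and is essentially the paper's own proof: conjugate $(T_1,B_1)$ to $(T_2,B_2)$ by some $g\in G$, apply Proposition~\ref{prop:cover} and Lemma~\ref{lem:surjandwm} to the inner automorphism $f\colon h\mapsto ghg^{-1}$, and identify $\Lambda(T_1,B_1,G,\rho\circ f)$ with $\Gamma_1$ via the $G$-equivariant isomorphism $\rho(g)$ between $\rho$ and $\rho\circ f$ (your check with $\rho(g^{-1})$ is the same identification in the other direction). Your closing caveat about the spherical hypothesis in Proposition~\ref{prop:cover} is handled the same way in the paper, whose remark following that proposition notes that it holds, with the obvious rephrasing, for all multiplicity-finite affine $T_H$-schemes.
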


\begin{proof}
First note that there exists a $g \in G$ so that
\(g B_1 g^{-1} = B_2 \text{ and } gT_1g^{-1} = T_2.\) Let $f$ be conjugation by $g$, that is,
\(f\colon G\to G, h \mapsto ghg^{-1}.\)
Put $\cS = \Lambda(T_1, B_1, G, \rho\circ f)$. Then, by Lemma~\ref{lem:surjandwm}, $f^*(\Gamma_2)=\cS$. Further,  the map 
\[X\to X \colon x \mapsto \rho(g)(x)\] is a $G$-equivariant isomorphism between $\rho$ and $\rho \circ f$. This implies that the two representations of $G$ on $\CC[X]$, induced by $\rho$ and $\rho \circ f$ respectively, are isomorphic too. Consequently, $\Gamma_1 = \cS$, whence $f^*(\Gamma_2) = \Gamma _1$.  Proposition~\ref{prop:cover} then tells us that $\tM^G_{\Gamma_1} \isom \tM^G_{\Gamma_2}$. 
\end{proof}

\begin{lemma} \label{lem:modandmatrix}
Suppose $\rho\colon G \to \GL(W)$ is a spherical module and let $E=\{e_1, \ldots, e_n\}$ be a basis of $W$. 
As usual, this basis defines an isomorphism of algebraic groups
$\Mat_E: \GL(W) \to \GL(n)$, where for $h\in \GL(W)$, $\Mat_E(h)$ is the matrix uniquely specified by the property that   $h (e_i)  =  \sum_p   (\Mat_E(h))_{pi} e_p$ for all $i$. Put $H:=\Mat_E(\rho(G)) \inn \GL(n)$. Then we have an isomorphism of schemes
\[\tM^G_W \isom \tM^H_{\CC^n}.\]
\end{lemma}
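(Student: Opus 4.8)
The plan is to realize both $\tM^H_{\CC^n}$ and $\tM^G_W$ as moduli schemes attached by Proposition~\ref{prop:cover} to a single surjective homomorphism. First I would set $f := \Mat_E \circ \rho \colon G \to \GL(n)$, so that $f(g) = \Mat_E(\rho(g))$. By construction the image of $f$ is exactly $H = \Mat_E(\rho(G))$, so $f\colon G \onto H$ is a surjective homomorphism of algebraic groups. Since $H$ is a quotient of the connected reductive group $G$, it is itself connected and reductive, and by Proposition~\ref{prop:surjBandT} the subgroups $T_H := f(T)$ and $B_H := f(B)$ are a maximal torus and a Borel subgroup of $H$. Thus $f$ satisfies the hypotheses of Proposition~\ref{prop:cover}.

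Next I would record how the two modules match up. Let $\rho_H \colon H \into \GL(n)$ be the inclusion, i.e.\ the tautological action of $H$ on $\CC^n$. Then $\rho_H \circ f = \Mat_E\circ\rho$ as maps $G \to \GL(n)$, and the linear isomorphism $W \to \CC^n$ determined by the basis $E$ is an isomorphism of $G$-modules from $(W,\rho)$ to $(\CC^n, \rho_H\circ f)$; in particular these two $G$-varieties share the same weight monoid. Moreover $(H,\CC^n)$ is spherical: $\CC^n$ is normal, and since the $G$-action on $\CC^n$ factors through $f$, one has $B\cdot v = f(B)\cdot v = B_H\cdot v$ for every $v$, so a dense $B$-orbit in $W$ transports to a dense $B_H$-orbit in $\CC^n$. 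Hence the weight monoid of $(H,\CC^n)$ is a finitely generated submonoid $\wm \inn X(T_H)$ taken with respect to $B_H$. Applying Lemma~\ref{lem:surjandwm} to $X = \CC^n$ with the $H$-action $\rho_H$ gives $f^*(\wm) = \Gamma$, where $\Gamma = \Lambda(T,B,G,\rho_H\circ f)$ is the weight monoid of $(G,W)$.

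Finally I would assemble the isomorphism. Proposition~\ref{prop:cover}, applied to $f\colon G\onto H$ and the spherical $H$-variety $\CC^n$, yields $\tM^H_{\wm} \isom \tM^G_{f^*(\wm)}$. By the preceding paragraph $f^*(\wm) = \Gamma$ is the weight monoid of $(G,W)$, so $\tM^G_{f^*(\wm)} = \tM^G_W$ by definition. On the other side, $\wm$ was computed with respect to the particular choice $T_H = f(T)$, $B_H = f(B)$; Lemma~\ref{lem:conjandwm} shows that $\tM^H_{\wm}$ is, up to isomorphism of schemes, independent of the choice of maximal torus and Borel subgroup of $H$, so $\tM^H_{\wm} \isom \tM^H_{\CC^n}$. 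Chaining these isomorphisms gives $\tM^G_W \isom \tM^H_{\CC^n}$, as claimed.

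The genuine content is carried entirely by Proposition~\ref{prop:cover}, so the remaining work is bookkeeping. I expect the only (minor) obstacle to be keeping the conventions straight: namely ensuring that the weight monoid of $(H,\CC^n)$ fed into Proposition~\ref{prop:cover} is the one relative to $B_H = f(B)$ rather than some other Borel subgroup of $H$, and then invoking Lemma~\ref{lem:conjandwm} to reconcile this choice with whatever fixed maximal torus and Borel subgroup are used to define $\tM^H_{\CC^n}$.
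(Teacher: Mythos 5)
Your proof is correct and takes essentially the same route as the paper's: both apply Proposition~\ref{prop:cover} to the surjection $f=\Mat_E\circ\rho\colon G\onto H$ with $H$ acting tautologically on $\CC^n$, and use the basis-induced $G$-module isomorphism between $(\CC^n,\Mat_E\circ\rho)$ and $(W,\rho)$ to identify $f^*(\wm)$ with the weight monoid of $(G,W)$. Your explicit verifications that $(H,\CC^n)$ is spherical and that the choice of $T_H=f(T)$, $B_H=f(B)$ is harmless by Lemma~\ref{lem:conjandwm} are bookkeeping steps the paper leaves implicit, not a different argument.
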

\begin{proof}
The map \[p\colon:G\to \GL(n), g\mapsto \Mat_E(\rho(g))\] makes $\CC^n$ into a $G$-module and  Proposition~\ref{prop:cover} tells us that
$\tM^G_{\CC^n} \isom \tM^H_{\CC^n}$. It is a straightforward verification that the linear isomorphism 
\[\phi: \CC^n \to W, (a_1, \ldots, a_n) \mapsto \sum_i a_i e_i\] is $G$-equivariant. It follows that $\CC[\CC^n]$ and $\CC[W]$ are isomorphic $G$-modules and therefore that
$\tM^G_{\CC^n} \isom \tM^G_W$. 
\end{proof}

\begin{prop} \label{prop:geomeqv}
Suppose $\rho_1\colon G_1\to \GL(W_1)$ and $\rho_2\colon G_2\to \GL(W_2)$ are geometrically equivalent spherical modules.  Then we have an isomorphism of schemes \[\tM^{G_1}_{W_1}\isom \tM^{G_2}_{W_2}.\]
Consequently, $\dim T_{X_0} \tM^{G_1}_{W_1} =  \dim T_{X_0} \tM^{G_2}_{W_2}$, where by abuse of notation, $X_0$ on each side denotes the unique closed orbit of the corresponding moduli scheme. 
\end{prop}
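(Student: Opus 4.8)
The plan is to reduce both sides to the matrix form supplied by Lemma~\ref{lem:modandmatrix} and to observe that, after choosing compatible bases, geometric equivalence is precisely the statement that the two image subgroups of a general linear group coincide. The real content of the argument is already contained in Lemma~\ref{lem:modandmatrix}; what remains is bookkeeping together with one point about the distinguished orbit $X_0$.

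First I would fix a basis $E_1 = \{e_1,\ldots,e_n\}$ of $W_1$ (note $\dim W_1 = \dim W_2 =: n$, since $\phi\colon W_1 \to W_2$ realizing the geometric equivalence is a linear isomorphism) and transport it to the basis $E_2 := \{\phi(e_1),\ldots,\phi(e_n)\}$ of $W_2$. A one-line computation from the definition of $\Mat_E$ shows that for every $h \in \GL(W_1)$ one has $\Mat_{E_2}(\GL(\phi)(h)) = \Mat_{E_1}(h)$, because $\GL(\phi)(h)(\phi(e_i)) = \phi(h(e_i))$. Applying this to the elements of $\rho_1(G_1)$ and using the defining property $\GL(\phi)(\rho_1(G_1)) = \rho_2(G_2)$ of geometric equivalence yields
\[\Mat_{E_2}(\rho_2(G_2)) = \Mat_{E_1}(\rho_1(G_1)) =: H \inn \GL(n).\]
Thus, read off in $E_1$ and $E_2$ respectively, the two representations have one and the same image subgroup $H$ of $\GL(n)$ acting on one and the same module $\CC^n$.

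Next I would invoke Lemma~\ref{lem:modandmatrix} twice: for $(G_1,W_1)$ with the basis $E_1$ it gives $\tM^{G_1}_{W_1}\isom \tM^{H}_{\CC^n}$, and for $(G_2,W_2)$ with the basis $E_2$ it gives $\tM^{G_2}_{W_2}\isom \tM^{H}_{\CC^n}$ for the same $H$. Composing the first isomorphism with the inverse of the second produces the desired isomorphism of schemes $\tM^{G_1}_{W_1}\isom \tM^{G_2}_{W_2}$. Here Remark~\ref{rem:geomeqv}(\ref{geomeqvxy}) guarantees that $(G_2,W_2)$ is again spherical, so Lemma~\ref{lem:modandmatrix} does apply on both sides.

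Finally, for the statement about tangent spaces I would check that the isomorphism just built carries $X_0$ to $X_0$. The isomorphisms of Lemma~\ref{lem:modandmatrix} and Proposition~\ref{prop:cover} come from natural identifications of the moduli functors, under which the distinguished $\CC$-point $X_0 = \overline{G\cdot x_0}$, with $x_0 = \sum_{\lambda \in E} v_\lambda$, is preserved; equivalently, these isomorphisms are compatible with the $\Tad$-actions through the surjections of tori used to construct them, so that the unique $\Tad$-fixed point of Proposition~\ref{prop:X0uniqueclosed} on one side is sent to the unique $\Tad$-fixed point on the other. An isomorphism of schemes mapping $X_0$ to $X_0$ induces an isomorphism of Zariski tangent spaces $T_{X_0}\tM^{G_1}_{W_1}\isom T_{X_0}\tM^{G_2}_{W_2}$, giving the claimed equality of dimensions. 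I expect this last point, the preservation of $X_0$, to be the only genuine thing to verify: the rest is the basis bookkeeping above, since Lemma~\ref{lem:modandmatrix} carries the weight of the argument.
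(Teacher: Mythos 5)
Your proof of the scheme isomorphism is exactly the paper's: fix $E_1$, transport it by $\phi$ to $E_2$, note $\Mat_{E_1}(\rho_1(G_1)) = \Mat_{E_2}(\rho_2(G_2)) =: H$, and apply Lemma~\ref{lem:modandmatrix} twice. Where you genuinely diverge is the second assertion. You propose to track the distinguished point and show the composite isomorphism carries $X_0$ to $X_0$, via equivariance of the functorial isomorphisms with respect to the torus surjections used to build them. The paper sidesteps this entirely: since $X_0$ lies in the closure of every $\Tad$-orbit of $\tM^{G_i}_{W_i}$ (Proposition~\ref{prop:X0uniqueclosed}), upper semicontinuity of $x \mapsto \dim T_x$ shows that $\dim T_{X_0}\tM^{G_i}_{W_i}$ is the \emph{maximum} of the tangent-space dimensions over all closed points --- an invariant of the abstract scheme --- so any isomorphism of schemes, whether or not it respects $X_0$ or any group action, already forces the equality of dimensions. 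Comparing the two: your route, if completed, gives more (a canonical, indeed equivariant, identification of the tangent spaces themselves), but it rests on an equivariance property that the paper never states or proves --- Proposition~\ref{prop:cover} is established by a Yoneda argument with no torus-equivariance claim --- so you would have to verify, for instance, that under the identification of multiplication laws in its proof the actions of $T$ and $T_H$ are intertwined by $f|_T$, and formulate "compatible with the $\Tad$-actions" carefully, since the tori on the two sides are genuinely different groups related only by a surjection. (This verification does go through, essentially because the functor isomorphism is the identity on underlying multiplication laws, so your sketch is repairable; but it is a real piece of work you have deferred with "I expect... to verify.") The paper's semicontinuity argument buys robustness --- no equivariance or tracking of $X_0$ needed --- at the price of yielding only the numerical equality of dimensions rather than an isomorphism of tangent spaces, which is all the proposition claims anyway.
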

\begin{proof}
Put $n:=\dim W_1$. 
As a straightforward verification shows, $(G_1,W_1)$ and $(G_2,W_2)$ are geometrically equivalent if and only if $\dim W_1 = \dim W_2$ and there exists a basis $E_1$ of $W_1$ and a basis $E_2$ of $W_2$ such that, using notation introduced in Lemma~\ref{lem:modandmatrix}, we have the following equality of subsets of $\GL(n)$: 
\[\Mat_{E_1}(\rho_1(G_1)) = \Mat_{E_2}(\rho_2(G_2)).\]
Put $H = \Mat_{E_1}(\rho_1(G_1)) = \Mat_{E_2}(\rho_2(G_2))$.
Then, by Lemma~\ref{lem:modandmatrix}, we have $\tM^{G_1}_{W_1}\isom \tM^H_{\CC^n} \isom \tM^{G_2}_{W_2}$, which proves the first assertion of the proposition.

Taking into account that, for $i=1,2$,  $X_0$  is in the
closure of all orbits on $\tM^{G_i}_{W_i}$ under the action of the adjoint torus of $G_i$, it follows from
the semicontinuity of the dimension of the Zariski tangent space
that $\dim T_{X_0} \tM^{G_i}_{W_i}$ is the maximum possible dimension
of the tangent space at closed points of $\tM^{G_i}_{W_i}$.
The second assertion follows.
\end{proof}

Lemma~\ref{lem:redsgr}  recalls an elementary fact we use in the proof of Proposition~\ref{prop:cbc}. 
\begin{lemma} \label{lem:redsgr}
Suppose $H$ is a connected reductive algebraic group and $K$ is a closed subgroup of $H$ containing $H'$. Then $K$ is reductive.
\end{lemma}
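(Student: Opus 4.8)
The plan is to show that the unipotent radical $R_u(K)$ of $K$ is trivial. I would use the standard structure of a connected reductive group: the derived group $H'=(H,H)$ is semisimple, and the quotient $A:=H/H'$ is a torus (being the abelianization of a connected reductive group, since $H=H'\cdot Z(H)^{\circ}$). Let $\pi\colon H\to A$ denote the quotient map, so that $\ker\pi=H'$.

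First I would observe that, because $H'\inn K$, the subgroup $K$ is saturated with respect to $\pi$: we have $K=\pi^{-1}(\pi(K))$, and $S:=\pi(K)$ is a closed subgroup of the torus $A$, hence diagonalizable. This exhibits $K$ as an extension of the diagonalizable group $S$ by the semisimple group $H'$, and it is this extension structure that forces reductivity.

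Next I would locate $R_u(K)$ inside this extension. By definition $R_u(K)$ is a connected normal unipotent subgroup of $K$, so its image $\pi(R_u(K))$ is a connected unipotent subgroup of the torus $A$, and is therefore trivial; thus $R_u(K)\inn\ker\pi=H'$. Since $R_u(K)$ is normal in $K$ and $H'\inn K$, it is in particular normalized by $H'$, and being contained in $H'$ it is a connected normal unipotent subgroup of $H'$. As $H'$ is semisimple we have $R_u(H')=\{e\}$, whence $R_u(K)=\{e\}$, which is exactly the assertion that $K$ is reductive.

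I do not expect a serious obstacle here; the only point requiring care is that $K$ need not be connected, since the quotient $S=\pi(K)$ can be finite or otherwise disconnected. I would therefore phrase everything in terms of the unipotent radical, defined as the maximal \emph{connected} normal unipotent subgroup, rather than in terms of connectedness. Equivalently one could work with $K^{\circ}$ throughout and use $R_u(K)=R_u(K^{\circ})$; either way the semisimplicity of $H'$, combined with the fact that $A=H/H'$ is a torus, does all the work.
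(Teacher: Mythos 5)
Your proof is correct, but it takes a different route from the paper's. The paper works inside $H$: writing $H=H'R$ with $R$ the radical of $H$ (which is central, being $Z(H)^{\circ}$), it notes that $H'$ normalizes the unipotent radical $F$ of $K$ because $F\trianglelefteq K\supseteq H'$, and that the central subgroup $R$ trivially normalizes $F$; hence $F$ is a connected normal unipotent subgroup of all of $H$, and reductivity of $H$ kills it. You instead work with the quotient $\pi\colon H\to H/H'$: since $H/H'$ is a torus and $\pi(R_u(K))$ is connected unipotent, you get $R_u(K)\subseteq H'=\ker\pi$, and then $R_u(K)$, being normalized by $H'$ and contained in it, is a connected normal unipotent subgroup of the semisimple group $H'$, hence trivial. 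The two arguments are dual: the paper pushes $R_u(K)$ \emph{up} to be normal in $H$ and invokes $R_u(H)=\{e\}$, while you push it \emph{down} into $H'$ and invoke $R_u(H')=\{e\}$. Both are equally short; the paper's version needs the centrality of the radical of a reductive group, whereas yours needs that images of unipotent groups are unipotent and that $H/H'$ is a torus (which again rests on $H=H'\cdot Z(H)^{\circ}$, so the same structural fact underlies both). Your closing remark on disconnectedness of $K$ — phrasing everything via $R_u(K)=R_u(K^{\circ})$, the maximal connected normal unipotent subgroup — is a point the paper passes over silently, and handling it explicitly is if anything slightly more careful than the original.
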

\begin{proof}
We have to prove that the unipotent radical $F$ of $K$ is trivial.  
As is well known, $H=H'R$, where $R$ is the radical of $H$. 
Since $H' \inn K$ we have that $H'$ normalizes $F$. Clearly $R$ (as a subset of the center of $H$) normalizes $F$. It follows that $F$ is a normal connected unipotent subgroup of $H$. Since $H$ is reductive, $F$ is trivial.
\end{proof}

The next lemma states how the invariant $d_W$ behaves under restriction of center, geometric equivalence and taking products. 
We need two of its assertions in the proof of Proposition~\ref{prop:cbc}. It will also be of use later.

\begin{lemma}\label{lem:dW}
\begin{enumerate}[(a)]
\item Suppose $\overline{G}$ is a connected reductive group and let $(\overline{G},W)$ be a spherical module. Let $G$ be a connected (reductive) subgroup of $\overline{G}$ containing $\overline{G}'$. Assume that the restriction $(G,W)$ of $(\overline{G},W)$ is also spherical. Then both modules have the same invariant $d_W$. \label{dWres}
\item Suppose $(G_1,W_1)$ and $(G_2,W_2)$ are geometrically equivalent spherical modules. Then $d_{W_1} = d_{W_2}$.  \label{dWgeqv}
\item Let $(G_1,W_1), (G_2,W_2), 
\ldots, (G_n,W_n) $ be spherical modules and let $(G, W)$ be their product. Then 
\[d_W= d_{W_1} +\ldots+ d_{W_n}.\] \label{dWprod}
\end{enumerate}
\end{lemma}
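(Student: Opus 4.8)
The plan is to deduce all three parts from the formula $d_W=a-b$ of Lemma~\ref{lem:camusbound}, where $a=\rk\Lambda_W$ is the rank of the weight group and $b$ is the number of simple summands of $W$, reducing each assertion to the separate invariance of $a$ and of $b$.

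Part (\ref{dWres}) I would get directly from Lemma~\ref{lem:psigmax}, applied with $\overline{G}$ in the role of ``$G$'' and $G$ in the role of ``$H$''. Its only hypothesis is that the restriction map $p\colon X(\overline{B})\onto X(B)$ (with $B=\overline{B}\cap G$) is injective on $\Lambda_{(\overline{G},W)}$; this holds because $(G,W)$ is spherical, by the equivalence of (\ref{spherrtwo}) and (\ref{spherrthree}) in Lemma~\ref{lem:spherrestr}. Lemma~\ref{lem:psigmax} then states exactly that $d_W$ is the same for $(\overline{G},W)$ and for $(G,W)$.

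For part (\ref{dWgeqv}) I would use the basis reformulation from the proof of Proposition~\ref{prop:geomeqv}: choose bases $E_1$ of $W_1$ and $E_2$ of $W_2$ with $\Mat_{E_1}(\rho_1(G_1))=\Mat_{E_2}(\rho_2(G_2))=:H\inn\GL(n)$, where $n=\dim W_1=\dim W_2$. The maps $p_i:=\Mat_{E_i}\circ\rho_i\colon G_i\onto H$ are surjective homomorphisms, and the $G_i$-equivariant linear isomorphism $\CC^n\isom W_i$ of Lemma~\ref{lem:modandmatrix} identifies the $G_i$-algebras $\CC[\CC^n]$ and $\CC[W_i]$, so $d_{(G_i,W_i)}=d_{(G_i,\CC^n)}$, where $G_i$ acts on $\CC^n$ through $p_i$. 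It thus suffices to show that $p_i$ preserves $d$, i.e.\ that $d_{(G_i,\CC^n)}=d_{(H,\CC^n)}$, after which $d_{W_1}=d_{(H,\CC^n)}=d_{W_2}$. Here $b$ is unchanged because the $G_i$-action on $\CC^n$ factors through $p_i$, so $G_i$- and $H$-submodules coincide; and $a$ is unchanged because $p_i$ surjective makes $p_i^*\colon X(T_H)\into X(T_{G_i})$ injective, while Lemma~\ref{lem:surjandwm} gives $\Lambda_{(G_i,\CC^n)}=p_i^*(\Lambda_{(H,\CC^n)})$ (pass to the groups generated by the two weight sets), so the ranks agree. (A product of spherical modules is spherical, since $B=\prod B_i$ acts componentwise with a dense orbit; this keeps Lemma~\ref{lem:camusbound} applicable throughout, and $\CC^n$ is spherical for $G_i$ since $B_{G_i}$ surjects onto $B_H$.)

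Part (\ref{dWprod}) I would again treat through $d=a-b$ together with additivity. With $G=\prod_i G_i$, $T=\prod_i T_i$ and $X(T)=\oplus_i X(T_i)$, the simple $G$-summands of $W=\oplus_i W_i$ are precisely the simple $G_i$-summands of the various $W_i$ (with the other factors acting trivially), which are pairwise non-isomorphic; hence $b=\sum_i b_i$. Moreover $\Lambda^+_W=\sum_i\Lambda^+_{W_i}$ with $\Lambda^+_{W_i}\inn X(T_i)$ lying in complementary direct summands of $X(T)$, so $\Lambda_W=\oplus_i\Lambda_{W_i}$ and $a=\sum_i a_i$. Therefore $d_W=a-b=\sum_i(a_i-b_i)=\sum_i d_{W_i}$. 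The step I expect to be the real obstacle is the weight-group invariance in part (\ref{dWgeqv}): geometric equivalence is not a homomorphism of groups but an identification of images, so the crux is the reduction to a common image group $H$ via Proposition~\ref{prop:geomeqv}'s basis description and Lemma~\ref{lem:surjandwm}; parts (\ref{dWres}) and (\ref{dWprod}) are then essentially formal consequences of Lemma~\ref{lem:psigmax} and of the additivity of $a$ and $b$.
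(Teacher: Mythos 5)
Your proofs of parts (a) and (c) coincide with the paper's: for (a) the paper likewise combines Lemma~\ref{lem:spherrestr} with Lemma~\ref{lem:psigmax} (noting Lemma~\ref{lem:camusbound} would also do), and for (c) it invokes Lemma~\ref{lem:camusbound} together with $\Lambda^+_{(G,W)} = \Lambda^+_{(G_1,W_1)} \oplus \ldots \oplus \Lambda^+_{(G_n,W_n)}$, which is exactly your additivity of $a$ and $b$. For part (b), however, you take a genuinely different and equally valid route. The paper argues intrinsically with the isomorphism $\GL(\phi)\colon \rho_1(G_1) \to \rho_2(G_2)$: it transports a maximal unipotent subgroup, setting $U_1 := \rho_1(U)$ and $U_2 := \GL(\phi)(U_1)$, observes that $\phi$ induces isomorphisms $W_1^{U_1} \isom W_2^{U_2}$ and $\CC[W_1]^{U_1} \isom \CC[W_2]^{U_2}$, and concludes via Lemma~\ref{lem:camusbound} since $\dim W_i^{U_i} = b$ and $\dim \Spec(\CC[W_i]^{U_i}) = a$. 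You instead reduce both modules to the common matrix group $H = \Mat_{E_1}(\rho_1(G_1)) = \Mat_{E_2}(\rho_2(G_2))$ acting on $\CC^n$, using the basis reformulation from the proof of Proposition~\ref{prop:geomeqv} and Lemma~\ref{lem:modandmatrix}, and then verify that each surjection $p_i\colon G_i \onto H$ preserves $b$ (the action factors through $p_i$, so submodule structures agree) and $a$ (Lemma~\ref{lem:surjandwm} gives $\Lambda^+_{(G_i,\CC^n)} = p_i^*(\Lambda^+_{(H,\CC^n)})$, and $p_i^*$ is injective because $p_i$ is surjective). The paper's argument is shorter and coordinate-free, avoiding any matrix identification; yours reuses machinery the paper already built for Proposition~\ref{prop:geomeqv} and has the mild added benefit of isolating the statement, parallel to Lemma~\ref{lem:psigmax} for restriction to subgroups, that $d$ is invariant under passing along a surjection through which the action factors. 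Your parenthetical checks (sphericality of $(H,\CC^n)$, and of the product in part (c)) are precisely the hypotheses needed to keep Lemma~\ref{lem:camusbound} applicable, so no gap remains.
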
 
\begin{proof}
For (\ref{dWres}) just combine Lemma~\ref{lem:spherrestr} with Lemma~\ref{lem:psigmax} (or with 
Lemma~\ref{lem:camusbound}). Next, to prove (\ref{dWgeqv}), let $\rho_1:G_1 \to \GL(W_1)$ and $\rho_2:G_2 \to \GL(W_2)$ be the representations. Suppose $\phi: W_1 \to W_2$ is a linear isomorphism establishing the geometric equivalence. Then $\GL(\phi): \rho_1(G_1) \to \rho_2(G_2)$ is an isomorphism of algebraic groups. Let $U$ be a maximal unipotent subgroup of $G_1$. Then $U_1:=\rho_1(U)$ is a maximal unipotent subgroup of $\rho_1(G_1)$ and $U_2:=\GL(\phi)(\rho_1(U))$ is a maximal unipotent subgroup of $\rho_2(G_2)$. Moreover, $\phi$ induces an isomorphism of vector spaces $W_1^{U_1}\isom W_2^{U_2}$ and an isomorphism of algebras
$\CC[W_1]^{U_1} \isom \CC[W_2]^{U_2}$. Since, for $i\in\{1,2\}$, $\dim W_i^{U_i}$ is the number of irreducible components of $W_i$ and $\dim \Spec(\CC[W_i]^{U_i})$ is the rank of the weight group of $W_i$, Lemma~\ref{lem:camusbound} proves assertion (\ref{dWgeqv}).  
We turn to (\ref{dWprod}). This assertion follows by combining  Lemma~\ref{lem:camusbound} with the fact that $\Lambda^+_{(G,W)} = \Lambda^+_{(G_1,W_1)} \oplus \ldots \oplus \Lambda^+_{(G_n,W_n)}$.
\end{proof} 
 
\begin{prop} \label{prop:cbc} 
Let $(G,W)$ be an indecomposable saturated spherical module. Suppose that 
$G = G^{\st}$ (hence $Z(G)^{\circ} = \GL(W)^G$) Êand 
that $H \inn Z(G)^{\circ}$ is a subtorus such that $W$ is spherical 
for $G' \times H$.
Assume that the conclusion of Theorem~\ref{thm:cbc} holds for every pair $(\overline{G},W)$ in Knop's List with $\overline{G}$  of a  type that occurs in the decomposition of $G'$ into almost simple components.
Then $\dim T_{X_0}\tM^{G'\times H}_W= \dim T_{X_0} \tM^{G}_W = d_W$, where by abuse of notation each $X_0$ stands for the unique closed orbit of the corresponding moduli scheme. 
\end{prop}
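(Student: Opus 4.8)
The plan is to transport the entire situation, via geometric equivalence, to a genuine entry of Knop's List, where the assumed form of Theorem~\ref{thm:cbc} applies verbatim. It is cleanest to prove a single statement containing both asserted equalities: for \emph{every} subtorus $H$ of $Z(G)^{\circ}=\GL(W)^G$ with $(G'\times H,W)$ spherical, one has $\dim T_{X_0}\tM^{G'\times H}_W = d_W$. Since $G=G'\times\GL(W)^G=G'\times Z(G)^{\circ}$ and $(G,W)$ is spherical, the choice $H=Z(G)^{\circ}$ recovers $\dim T_{X_0}\tM^{G}_W=d_W$, while the given $H$ yields the other equality; together these are exactly the proposition.

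First I would record that $(G,W)$ is saturated: this is Proposition~\ref{prop:leahy}(\ref{leahy1}) for $G=G^{\st}$. As it is also indecomposable by hypothesis, Proposition~\ref{prop:leahy}(\ref{leahy0}) produces an entry $(\overline{G},\overline{W})$ of Knop's List, with representation $\overline{\rho}\colon\overline{G}\to\GL(\overline{W})$, together with a linear isomorphism $\phi\colon W\to\overline{W}$ realizing the geometric equivalence, i.e.\ $\GL(\phi)(\rho(G))=\overline{\rho}(\overline{G})$. Taking derived groups of these equal subgroups gives $\GL(\phi)(\rho(G'))=\overline{\rho}(\overline{G}')$, so $\overline{G}'$ has the same simple factors as $G'$; hence the hypothesis grants us Theorem~\ref{thm:cbc} for $(\overline{G},\overline{W})$. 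By Lemma~\ref{lem:dW}(\ref{dWgeqv}) we also have $d_W=d_{\overline{W}}$.

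Now fix a subtorus $H\subseteq Z(G)^{\circ}=\GL(W)^G$ with $(G'\times H,W)$ spherical, and set $\tilde{G}:=\overline{\rho}^{-1}\bigl(\GL(\phi)(\rho(G'\times H))\bigr)$, a closed reductive subgroup of $\overline{G}$. The one load-bearing step is the verification $\overline{G}'\subseteq\tilde{G}\subseteq\overline{G}$, which is what makes Theorem~\ref{thm:cbc} applicable to the pair $(\overline{G},\tilde{G})$. The inclusion $\overline{G}'\subseteq\tilde{G}$ holds because $\GL(\phi)(\rho(G'\times H))\supseteq\GL(\phi)(\rho(G'))=\overline{\rho}(\overline{G}')$; for $\tilde{G}\subseteq\overline{G}$ we use precisely the hypothesis $G=G^{\st}$, which gives $H\subseteq\GL(W)^G\subseteq G$, so $\rho(G'\times H)\subseteq\rho(G)$ and thus $\GL(\phi)(\rho(G'\times H))\subseteq\GL(\phi)(\rho(G))=\overline{\rho}(\overline{G})$. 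By construction $(\tilde{G},\overline{W})$ is geometrically equivalent to $(G'\times H,W)$, hence spherical by Remark~\ref{rem:geomeqv}(\ref{geomeqvxy}); so both hypotheses of Theorem~\ref{thm:cbc} are met.

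Applying the assumed Theorem~\ref{thm:cbc} to the Knop entry $(\overline{G},\overline{W})$ with subgroup $\tilde{G}$ yields $\dim T_{X_0}\tM^{\tilde{G}}_{\overline{W}}=d_{\overline{W}}$. Finally, Proposition~\ref{prop:geomeqv}, applied to the geometrically equivalent modules $(G'\times H,W)$ and $(\tilde{G},\overline{W})$, gives $\dim T_{X_0}\tM^{G'\times H}_W=\dim T_{X_0}\tM^{\tilde{G}}_{\overline{W}}$; combined with $d_W=d_{\overline{W}}$ this is $\dim T_{X_0}\tM^{G'\times H}_W=d_W$. Specializing $H=Z(G)^{\circ}$ (so $G'\times H=G$ and $\tilde{G}=\overline{G}$) and $H$ the given subtorus completes the proof. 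I expect no real obstacle beyond bookkeeping: the containment $\overline{G}'\subseteq\tilde{G}\subseteq\overline{G}$ is the only place where the structural hypotheses ($G=G^{\st}$, indecomposability) are genuinely used, and everything else is formal transport along geometric equivalence. If one wishes to sidestep a possibly non-faithful $\overline{\rho}$, the matrix-group reformulation of Lemma~\ref{lem:modandmatrix} lets one replace $G$, $\overline{G}$ by their common image in $\GL(\dim W)$ throughout.
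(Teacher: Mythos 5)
Your proposal is, in outline, exactly the paper's own proof: pass to a Knop entry via Proposition~\ref{prop:leahy}(\ref{leahy0}), transport $G'\times H$ across the geometric equivalence by taking the preimage under the representation of the Knop entry, apply Theorem~\ref{thm:cbc} there, and pull the dimension count back through Proposition~\ref{prop:geomeqv} together with Lemma~\ref{lem:dW}. Your unification --- proving the statement for arbitrary admissible $H$ and then specializing $H=Z(G)^{\circ}$, legitimate because $G=G^{\st}$ is literally a direct product $G'\times\GL(W)^G$ --- is a tidy repackaging of what the paper does in its final paragraph via Lemma~\ref{lem:dW}(\ref{dWres},\ref{dWgeqv}).

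The one step that needs repair is feeding $\tilde{G}=\overline{\rho}^{-1}\bigl(\GL(\phi)(\rho(G'\times H))\bigr)$ directly into Theorem~\ref{thm:cbc}: that theorem is stated for \emph{connected} intermediate groups, and a full preimage under $\overline{\rho}$ need not be connected, since $\ker\overline{\rho}$ is in general a nontrivial finite group (e.g.\ $\{\pm I\}$ for $(\GL(n),\Sym^2\CC^n)$), so $\tilde{G}$ can acquire extra components. The paper's proof handles precisely this point: it sets $\widetilde{K}=\psi^{-1}(F)$, passes to the identity component $K$, notes $K\supseteq\overline{K}'$ (as $\overline{K}'$ is connected), gets reductivity from Lemma~\ref{lem:redsgr}, and checks that $\psi(K)=\psi(\widetilde{K})=F$ because $F$ is connected --- so the geometric equivalence, and hence sphericality, survives the passage to $K$. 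Your closing suggestion of invoking Lemma~\ref{lem:modandmatrix} to work with images in $\GL(\dim W)$ does not by itself sidestep this, because the intermediate-group hypothesis of Theorem~\ref{thm:cbc} lives upstairs between $\overline{G}'$ and $\overline{G}$, not in the image of $\overline{\rho}$; the preimage-plus-identity-component step is genuinely needed. With that one-line patch your argument coincides with the paper's.
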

\begin{proof}
By Proposition~\ref{prop:leahy}(\ref{leahy0}), $(G, W)$ is geometrically equivalent to an entry in Knop's List, say $(\overline{K},E)$. Suppose $\phi: W \to E$ is a map establishing the geometric equivalence (see Definition~\ref{def:geomeqv}) between $(G, W)$ 
and  $(\overline{K},E)$. We first claim that
there exists a connected reductive subgroup $K \inn \overline{K}$ containing $\overline{K}'$ for which $E$ is still spherical and so that $\phi$ also establishes the geometric equivalence of $(G' \times H, W)$ and
$(K,E)$.  Indeed, let $\rho: G \to \GL(W)$ and $\psi: \overline{K} \to
\GL(E)$ be the representations and put $\rho_1 = \rho|_{G' \times
  H}$. Then $F:=\GL(\phi)(\im \rho_1)$ is a connected subgroup of
$\psi(\overline{K})$ containing $\psi(\overline{K})' =
\psi(\overline{K}')$. The reason is that $\GL(\phi)(\im \rho_1)$ contains $\GL(\phi)((\im \rho)') = (\GL(\phi)(\im \rho))' = (\psi(\overline{K}))'$, since $\im \rho_1$ contains $(\im \rho)'$. 
Now set $\widetilde{K}:=\psi^{-1}(F)$ and let $K$ be the identity component of $\widetilde{K}$. Then $\widetilde{K}$ is a subgroup of $\overline{K}$ containing $\overline{K}'$ and therefore so is $K$. Lemma~\ref{lem:redsgr} then yields that $K$ is reductive. Clearly $\psi(\widetilde{K}) = F = \GL(\phi)(\im \rho_1)$ (since $F \inn \im \psi$). Since  $\psi(\widetilde{K})=\psi(K)$ because $\psi(\widetilde{K})$ is connected (see e.g.~\cite[Proposition B of \S7.4]{humphreys-lag}),  $\phi$ establishes the geometric equivalence of $\rho_1$ and $\psi |_K$. It also follows (by Remark~\ref{rem:geomeqv}(\ref{geomeqvxy})) that $E$ is a spherical module for $K$.  This proves the claim.
 
By Lemma~\ref{lem:dW}(\ref{dWres}), $(G,W)$ and $(G' \times H, W)$ have the same invariant $d_W$, and $(\overline{K},E)$ and $(K,E)$ have the same invariant $d_E$. 
By assumption, the conclusion of Theorem~\ref{thm:cbc} holds for $(\overline{K},E)$ and so $\dim T_{X_0} \tM^{\overline{K}}_E=\dim T_{X_0} \tM^K_E = d_E$. Thanks to Lemma~\ref{lem:dW}(\ref{dWgeqv}), $d_E = d_W$. Finally, by  Proposition~\ref{prop:geomeqv}, 
$\dim T_{X_0} \tM^{\overline{K}}_E=\dim T_{X_0} \tM^G_W$ and $\dim T_{X_0} \tM^{K}_E=\dim T_{X_0} \tM^{G' \times H}_W$, and we have proved the proposition.  
\end{proof}

The next proposition reminds us that the normal sheaf behaves as expected with respect to products. 
\begin{prop} \label{prop:nshfprod}
Let $n$ be a positive integer. Suppose that for every positive integer $i\le n$ we have a finite-dimensional $G$-module $V_i$ and a $G$-stable closed subscheme $X_i$ of $V_i$. For every $i$, we put
\begin{align*}
&R_i := \CC[V_i] \\
&I_i := I(X_i) \inn R_i \text{ the ideal of }X_i \text{ in }V_i \\
&N_i := \Hom_{R_i}(I_i,R_i/I_i) 
\end{align*}
We also put
\begin{align*}
& V := \oplus_i V_i 
&& R := \CC[V] \isom \otimes_i R_i \\
&X:= X_1 \times \ldots \times X_n 
&& I:= I(X) \inn \CC[V] \\
&N:= \Hom_R(I,R/I) 
&&\widehat{R}_i:= \otimes_{j\neq i} R_j/I_j
\end{align*}
where all the tensor products are over $\CC$. We then have canonical isomorphisms of $R$-$G$-modules:
\[ N \isom \oplus_i(N_i \otimes_{R_i} R) \isom \oplus_i(N_i \otimes_{\CC} \widehat{R}_i).
\]
\end{prop}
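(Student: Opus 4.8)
The plan is to reduce to the case $n=2$ by induction and then compute the normal module of a product of two affine schemes by hand. For the inductive step I would group $V_1 \oplus \cdots \oplus V_{n-1}$ and $X_1 \times \cdots \times X_{n-1}$ into a single pair $(V',X')$, apply the two-factor case to $(V',X')$ together with $(V_n,X_n)$, and then feed in the induction hypothesis for $N_{X'/V'}$, reorganising the external tensor factors via $\widehat{R}_i \isom (\otimes_{j\neq i,\, j<n} R_j/I_j)\otimes_\CC (R_n/I_n)$ for $i<n$ and $\widehat{R}_n \isom \otimes_{j<n} R_j/I_j$. The base case $n=1$ is vacuous, so the whole content sits in the two-factor computation, which I carry out next.

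For $n=2$, write $R = R_1\otimes_\CC R_2$, $S_i = R_i/I_i$, and $J_1 = I_1\otimes_\CC R_2$, $J_2 = R_1\otimes_\CC I_2$, so that $I = J_1+J_2$ and $R/I \isom S_1\otimes_\CC S_2$. The key computation, which is exactly where working over the field $\CC$ is used, is the identity $J_1\cap J_2 = J_1 J_2 = I_1\otimes_\CC I_2$: choosing vector-space complements $R_i = I_i\oplus W_i$ makes this transparent from the resulting decomposition of $R_1\otimes_\CC R_2$ into four summands. In particular $J_1\cap J_2 \inn I^2$, a fact that drives everything below.

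Next I would decompose $N = \Hom_R(I,R/I)$. Any $\phi\in N$ annihilates $I^2$, hence $J_1\cap J_2$, so restriction $\phi\mapsto(\phi|_{J_1},\phi|_{J_2})$ lands in the pairs $(\psi_1,\psi_2)$ of $R$-homomorphisms $\psi_i\colon J_i\to R/I$ that vanish on $J_1\cap J_2$. This is injective because $I = J_1+J_2$, and surjective because the gluing $\phi(a+b):=\psi_1(a)+\psi_2(b)$ is well defined precisely when the ambiguity, which lies in $J_1\cap J_2$, is killed. Since such a $\psi_i$ automatically kills $J_i^2$ (as $J_i\inn I$) and $J_iI = J_i^2 + (J_1\cap J_2)$, this set of pairs is nothing but $\Hom_R(J_1/J_1I,R/I)\oplus\Hom_R(J_2/J_2I,R/I)$, which already yields the direct-sum shape of the answer.

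It then remains to identify each summand. The conormal computation $J_i/J_iI \isom C_i\otimes_\CC S_{3-i} \isom C_i\otimes_{S_i}(R/I)$, with $C_i = I_i/I_i^2$, follows by checking that the kernel of $I_i\otimes_\CC R_{3-i}\to (I_i/I_i^2)\otimes_\CC S_{3-i}$ is exactly $J_iI$. Tensor--Hom adjunction along $S_i\to R/I$ gives $\Hom_R(J_i/J_iI,R/I)\isom \Hom_{S_i}(C_i,R/I)$, and since $C_i$ is finitely presented over the Noetherian ring $S_i$ while $R/I\isom S_i\otimes_\CC\widehat{R}_i$ is free (hence flat) over $S_i$, I can pull $\Hom$ through the base change to obtain $\Hom_{S_i}(C_i,R/I)\isom \Hom_{S_i}(C_i,S_i)\otimes_\CC\widehat{R}_i = N_i\otimes_\CC\widehat{R}_i$, using $N_i = \Hom_{R_i}(I_i,R_i/I_i)\isom \Hom_{S_i}(C_i,S_i)$. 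This is the second claimed isomorphism; the middle form $N_i\otimes_{R_i}R$ denotes the same object once one remembers that $N_i$ is an $S_i$-module, so that the tensor must be read as $N_i\otimes_{S_i}(R/I)$. Finally, all the maps involved (restriction, gluing, adjunction, base change) are canonical, and $G$ acts diagonally on $V=\oplus_i V_i$ preserving each $J_i$, $I$ and $R/I$; hence every isomorphism above is $R$-linear and $G$-equivariant. I expect the main obstacle to be the two-factor conormal identity $J_1\cap J_2 = J_1J_2\inn I^2$ and the careful bookkeeping of \emph{which} ring one tensors over in the final $\Hom$ computation; everything else is formal.
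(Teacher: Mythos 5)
Your proof is correct, but it follows a genuinely different route from the paper's. The paper does not reduce to $n=2$: for each $i$ it defines the submodule $\widetilde{N}_i \inn N$ of homomorphisms vanishing on $I_j$ for all $j \neq i$, and then cites Northcott \cite[Lemma 9]{northcott-syz} to obtain in one stroke both the decomposition $N = \oplus_i \widetilde{N}_i$ and the identification of each $\widetilde{N}_i$ --- so the entire mathematical content is outsourced to a citation. Your induction plus explicit two-factor computation reproves that lemma from scratch in the situation at hand: your pairs $(\psi_1,\psi_2)$ vanishing on $J_1\cap J_2$ are exactly the paper's $\widetilde{N}_1 \oplus \widetilde{N}_2$, and the chain $J_1\cap J_2 = J_1J_2 \inn I^2$, $J_iI = J_i^2 + (J_1\cap J_2)$, $J_i/J_iI \isom (I_i/I_i^2)\otimes_{\CC} S_{3-i}$ (where $S_i = R_i/I_i$) supplies precisely what the citation hides; I checked each of these identities and they hold, with the field $\CC$ used exactly where you say it is. What your argument buys is a self-contained, verifiable proof (your flat-base-change step needs only that $I_i/I_i^2$ is finitely generated over the Noetherian ring $S_i$, which holds); what the paper's buys is brevity and a decomposition stated uniformly for all $n$ at once rather than assembled by induction.

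Your parenthetical reinterpretation of the middle term is not merely bookkeeping --- it is necessary. Read literally, with $R$ an $R_i$-module via the inclusion $R_i \into R$, one gets $N_i \otimes_{R_i} R \isom N_i \otimes_{\CC} \bigl(\otimes_{j\neq i} R_j\bigr)$, which is strictly larger than $N_i \otimes_{\CC} \widehat{R}_i$ whenever some $I_j \neq 0$: for $n=2$ and $X_1 = X_2 = \{0\} \inn \CC$ one finds $N \isom \CC^2 \isom \oplus_i N_i \otimes_\CC \widehat{R}_i$, while $N_1 \otimes_{R_1} R \isom \CC[x_2]$ is infinite-dimensional. Your reading $N_i \otimes_{R_i} (R/I) = N_i \otimes_{S_i} (R/I) \isom N_i \otimes_{\CC}\widehat{R}_i$ is the one consistent with the right-hand form of the statement and with the way Proposition~\ref{prop:nshfprod} is actually used in Corollary~\ref{cor:product}, where only the $\widehat{R}_i$-form enters.
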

\begin{proof}
It is clear that, for $1 \leq j \leq n$, we can consider $I_j$ as a subset
of $I$.
For $1 \leq i \leq n$ we define the $G$-stable $R$-submodule $\widetilde
{N}_i \inn N$ by
\[
        \widetilde {N}_i  = \{ \phi \in N  \text{ such that 
              $\phi (a) = 0$ when  $a$ is in $I_j$ and  $j \not= i$}  \}.
\]
Using \cite[Lemma 9]{northcott-syz} it follows that
$N = \oplus_{i=1}^n   \widetilde {N}_i$, and that
$\widetilde {N}_i$ is canonically isomorphic to 
$N_i \otimes_{R_i} R$ as an $R$-module
with the isomorphism being $G$-equivariant. In turn, $N_i \otimes_{R_i} R$ is canonically isomorphic to $N_i \otimes_{\CC} \widehat{R}_i$.
\end{proof}

\begin{cor} \label{cor:product}  
Let $n$ be a positive integer and suppose that for every positive integer $i \le n$, $G_i$ is a connected reductive group, $V_i$ is a finite-dimensional $G_i$-module and $X_i$ is a \emph{multiplicity-free} $G_i$-stable closed subscheme of $V_i$. Put $\overline{G}:= G_1\times\ldots\times G_n$. Define $N$ and $N_i$ as in Proposition~\ref{prop:nshfprod}. 
Then we have a canonical isomorphism of $\CC$-vector spaces
\begin{equation}
N^{\overline{G}} \isom \oplus_iN_i^{G_i}.
\end{equation}
\end{cor}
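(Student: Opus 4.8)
The plan is to deduce the corollary directly from Proposition~\ref{prop:nshfprod} by passing to $\overline{G}$-invariants, the only genuine content being the evaluation of the invariants of the coefficient algebras $\widehat{R}_i$.

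First I would apply Proposition~\ref{prop:nshfprod} with its single group $G$ taken to be $\overline{G} = G_1 \times \cdots \times G_n$, viewing each $V_i$ as a $\overline{G}$-module through the projection $\overline{G} \onto G_i$. This is legitimate because each $X_i$ is $G_i$-stable and hence $\overline{G}$-stable. The proposition then supplies a canonical $\overline{G}$-equivariant isomorphism
\[ N \isom \oplus_i (N_i \otimes_{\CC} \widehat{R}_i). \]
Since taking $\overline{G}$-invariants commutes with finite direct sums, it suffices to identify $(N_i \otimes_{\CC} \widehat{R}_i)^{\overline{G}}$ with $N_i^{G_i}$ for each $i$.

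Next I would unwind the $\overline{G}$-module structure on the $i$-th summand. As $V_i$ carries the $\overline{G}$-action through the projection to $G_i$, the factors $G_j$ with $j\neq i$ act trivially on $R_i$, on $I_i$, and therefore on $N_i = \Hom_{R_i}(I_i, R_i/I_i)$; thus $G_i$ acts on $N_i$ by its natural action while $\prod_{j\neq i}G_j$ acts trivially there. Dually, $G_i$ acts trivially on $\widehat{R}_i = \otimes_{j\neq i} R_j/I_j$, whereas each $G_j$ ($j\neq i$) acts on the corresponding tensor factor $R_j/I_j = \CC[X_j]$. Writing $\overline{G} = G_i \times \prod_{j\neq i} G_j$ and using that all the $G_j$ are reductive (so that the invariants of a tensor product over $\CC$ of a module for one factor with a module for the other factor split as the tensor product of the respective invariants), I obtain
\[ (N_i \otimes_{\CC} \widehat{R}_i)^{\overline{G}} \isom N_i^{G_i} \otimes_{\CC} \widehat{R}_i^{\prod_{j\neq i} G_j} \isom N_i^{G_i} \otimes_{\CC} \bigl( \otimes_{j\neq i} \CC[X_j]^{G_j} \bigr). \]

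The step I expect to be the crux is then the evaluation $\CC[X_j]^{G_j} = \CC$. The constant functions provide a copy of the trivial $G_j$-module inside $\CC[X_j]$, and since $X_j$ is \emph{multiplicity-free} the trivial module occurs in $\CC[X_j]$ with multiplicity at most one; hence $\CC[X_j]^{G_j} = \CC$. Substituting this collapses $\widehat{R}_i^{\prod_{j\neq i}G_j}$ to $\CC$, so that $(N_i \otimes_{\CC} \widehat{R}_i)^{\overline{G}} \isom N_i^{G_i}$, and summing over $i$ gives the asserted isomorphism $N^{\overline{G}} \isom \oplus_i N_i^{G_i}$. The only points needing care are the factorization of invariants across the $\CC$-tensor product and this multiplicity-free computation; both are routine consequences of the reductivity of the $G_j$ and of the hypothesis that each $X_j$ is multiplicity-free, and they are precisely where the extra assumptions of the corollary (beyond the bare Proposition~\ref{prop:nshfprod}) enter.
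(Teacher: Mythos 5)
Your proposal is correct and follows essentially the same route as the paper: apply Proposition~\ref{prop:nshfprod} with the product group, split the invariants of each summand $N_i \otimes_{\CC} \widehat{R}_i$ across the factorization $\overline{G} = G_i \times \prod_{j \neq i} G_j$, and collapse $\widehat{R}_i^{\prod_{j\neq i} G_j}$ to $\CC$ via multiplicity-freeness. The paper compresses all of this into a single displayed chain of isomorphisms, but the underlying steps — including the identification $\CC[X_j]^{G_j} = \CC$ as the point where the multiplicity-free hypothesis enters — are exactly the ones you spell out.
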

\begin{proof}
In this proof all the tensor products are over $\CC$. We introduce the
following notation for every $i\in \{1,\ldots,n\}$: $\widehat{G}_i := \times_{j\neq i} G_j$. 
Using Proposition~\ref{prop:nshfprod} (and its notation) 
we have that
\begin{equation} \label{eq:product}
N^{\overline{G}}\isom\oplus_i(N_i \otimes \widehat{R}_i)^{\overline{G}} = \oplus_i(N_i^{G_i} \otimes \widehat{R}_i^{\widehat{G}_i}) = \oplus_i(N_i^{G_i} \otimes \CC),
\end{equation}
where the last equality uses the multiplicity-freeness of $\widehat{R}_i$.
\end{proof}
\begin{remark}
An immediate consequence of this corollary is that if $(G_1,W_1)$ and $(G_2,W_2)$ are spherical modules and $(G,W)$ is their product, then $\dim T_{X_0} \tM_W = \dim T_{X_0}\tM_{W_1} + \dim T_{X_0}\tM_{W_2}$, where by abuse of notation each $X_0$ denotes the unique closed orbit of the corresponding moduli scheme. This is how we will use the corollary (in the proof of Corollary~\ref{cor:mainthm}).  
\end{remark}

\begin{prop}\label{prop:torrestr}
Suppose that for every $i \in \{1, \ldots, n\}$ we have an indecomposable saturated spherical module $(G_i, W_i)$. For every $i$, assume that $G_i = G^{\st}_i$ and that  the conclusion of Theorem~\ref{thm:cbc} holds for every pair $(\overline{G},W)$ in Knop's List with $\overline{G}$  of a type that occurs in the decomposition of $G_i'$ into almost simple components.
For every $i$ we put
\begin{align*}
&Z_i:=Z(G_i)^{\circ} = \GL(W_i)^{G_i};\\
&E_i:=\Lambda^+_{W_i},\ V_i := \oplus_{\lambda_\in E_i}V(\lambda);\\
&X_i = \overline{G_i x_i}, \text{ where } x_i = \sum\nolimits_{\lambda\in E_i}v_{\lambda}.
\end{align*}
Put $\overline{G}: = G_1 \times \ldots \times G_n$. 
We also define $N_i$ and $N$ as in Proposition~\ref{prop:nshfprod}.
Finally suppose that $A$ is a subtorus of $Z_1\times \ldots \times Z_n$ such that $W_1 \oplus \ldots \oplus W_n$ is spherical for $G:= G'_1 \times \ldots \times G'_n \times A$. Then
\begin{equation}
N^{G} =N^{\overline{G}}\label{eq:torrestr}
\end{equation}
\end{prop}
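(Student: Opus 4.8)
The plan is to prove the non-trivial inclusion $N^G \inn N^{\overline{G}}$ (the reverse being immediate since $G \inn \overline{G}$) by reducing to a weight computation on each factor of the product decomposition. First I would invoke Proposition~\ref{prop:nshfprod} to fix a $\overline{G}$-equivariant isomorphism $N \isom \oplus_i (N_i \otimes_{\CC} \widehat{R}_i)$, where $\widehat{R}_i = \otimes_{j\neq i}\CC[X_j]$. Since $G \inn \overline{G}$, taking $G$- and $\overline{G}$-invariants respects this splitting, and Corollary~\ref{cor:product} identifies $N^{\overline{G}} = \oplus_i N_i^{G_i}$ with the ``$\CC$-part'' of each summand. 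Thus it suffices to show, for each $i$, that $(N_i \otimes \widehat{R}_i)^G = N_i^{G_i}\otimes \CC$.

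Next I would take invariants in two stages. Because each $G_i = G_i^{\st} = G_i' \times Z_i$ is a direct product, we have $\overline{G} = G' \times S$ and $G = G' \times A$, where $G' = \prod_j G_j'$, $S = Z(\overline{G})^{\circ} = \prod_j Z_j$, and $A \inn S$ is central and centralizes $G'$. Writing $M_i := (N_i \otimes \widehat{R}_i)^{G'} = N_i^{G_i'} \otimes \bigotimes_{j\neq i}\CC[X_j]^{G_j'}$, which is a rational $S$-module, the target equality becomes $M_i^A = M_i^S$. As this is a statement about a torus acting on a vector space, it is equivalent to showing that every nonzero $S$-weight of $M_i$ restricts to a nonzero character of $A$.

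The heart of the matter is to pin down these $S$-weights. Because the roots of $\overline{G}$ vanish on the central torus $S$, all $S$-weights of $N$ lie in $\mathrm{res}_S(\Lambda_W + \Lambda_R) = \mathrm{res}_S \Lambda_W$, where $\Lambda_W = \<E\>_{\ZZ}$. For the factors $\CC[X_j]^{G_j'}$ the occurring weights are genuine $T_j'$-trivial dominant weights $m \in \Lambda^+_{W_j}\inn \Lambda_{W_j}$, so they lift to $T'$-trivial elements of $\Lambda_W$. The delicate factor is $N_i^{G_i'}$: here one must show that the highest weight of \emph{every} $G_i'$-trivial constituent of the normal module $N_i$ lies in $\Lambda_{W_i}$ itself, and not merely in $\Lambda_{W_i} + \Lambda_{R_i}$. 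I expect this to be the main obstacle. The crude bound on $N_i = \Hom_{R_i}(I_i, R_i/I_i)$ only controls the central ($Z_i$-)character up to $\Lambda_{R_i}$, and eliminating this root-lattice ambiguity is precisely where the standing hypothesis that Theorem~\ref{thm:cbc} holds for the relevant entries of Knop's List should be used, since it fixes the structure of $N_i^{G_i} = T_{X_0}\tM^{G_i}_{W_i}$ and thereby the admissible central weights of the larger space $N_i^{G_i'}$.

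Finally, granting that every $S$-weight $\chi$ of $M_i$ has the form $\chi = \mathrm{res}_S \nu$ with $\nu \in \Lambda_W$ and $\nu|_{T'} = 0$, I would close with sphericity. Since $W$ is spherical for $G = G' \times A$, Lemma~\ref{lem:spherrestr} shows that the restriction $\Lambda_W \to X(T_G)$, with $T_G = T'\cdot A$, is injective. If $\chi|_A = 0$, then $\nu$ is trivial on both $T'$ and $A$, hence on $T_G$, forcing $\nu = 0$ and so $\chi = 0$. This yields $M_i^A = M_i^S$ for every $i$, establishing $(N_i\otimes \widehat{R}_i)^G = N_i^{G_i}\otimes \CC$ and therefore the equality $N^G = N^{\overline{G}}$.
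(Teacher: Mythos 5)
Your opening reduction (via Proposition~\ref{prop:nshfprod} and Corollary~\ref{cor:product}), the two-stage passage to invariants of the central torus $S$ acting on $M_i = N_i^{G'_i}\otimes\bigotimes_{j\neq i}\CC[X_j]^{G'_j}$, and the reformulation ``every nonzero $S$-weight of $M_i$ restricts nontrivially to $A$'' all match the paper's setup, and your closing step is sound as far as it goes. The genuine gap is the step you yourself flag as an expectation: you never prove that an $S$-weight $\chi$ of $M_i$ with $\chi|_A=0$ admits a lift $\nu\in\Lambda_W$ with $\nu|_{T'}=0$. What comes for free is only that the $\overline{T}$-weight $\nu$ of a $G'$-invariant eigenvector lies in $\Lambda_W+\Lambda_R$ and restricts trivially to $T'$; writing $\nu=\nu_W+\rho$ with $\nu_W\in\Lambda_W$ and $\rho\in\Lambda_R$, one has $\mathrm{res}_S\,\nu=\mathrm{res}_S\,\nu_W$ (roots vanish on $S$), but $\nu_W|_{T'}=-\rho|_{T'}$ need not vanish, so the injectivity from Lemma~\ref{lem:spherrestr} cannot be applied to $\nu_W$. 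Your proposed fix --- that Theorem~\ref{thm:cbc} ``fixes the admissible central weights of $N_i^{G'_i}$'' --- does not close this: through Proposition~\ref{prop:cbc}, Theorem~\ref{thm:cbc} only controls $N_i^{G'_i\times H}$ for subtori $H\inn Z_i$ such that $(G'_i\times H,W_i)$ is \emph{spherical} (giving $N_i^{G'_i\times H}=N_i^{G_i}$ by equality of dimensions), and it says nothing about the $Z_i$-weights of the full, generally infinite-dimensional, space $N_i^{G'_i}$. Note moreover that your lattice claim is an $A$-independent statement about $N_i^{G'_i}$ alone, strictly stronger than the proposition itself, and nothing in your outline produces a spherical subtorus to which Proposition~\ref{prop:cbc} could be applied.

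The missing mechanism --- and the way the paper closes exactly this gap --- is Lemma~\ref{lem:torrestr}, a cross-factor transfer of sphericity, which makes the argument contingent on the given $A$ and the given putative invariant rather than on an absolute weight-lattice statement. Given $v\otimes w$ with $v\in N_i^{G'_i}$ of $A$-weight $-\lambda$ and $0\neq w\in\widehat{R}_i^{\widehat{G}'_i}$ of $A$-weight $\lambda$, sphericity of $W_1\oplus\ldots\oplus W_n$ for $G$ together with the mere existence of the eigenfunction $w$ forces $X_i$ to be spherical for $G'_i\times\ker\lambda$, hence for $G'_i\times p(\ker\lambda)$, where $p\colon \times_j Z_j\to Z_i$ is the projection. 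This produces precisely a torus to which Proposition~\ref{prop:cbc} applies, yielding $N_i^{G'_i\times p(\ker\lambda)}=N_i^{G_i}$; since $v$ is fixed by $G'_i\times p(\ker\lambda)$, it lies in $N_i^{G_i}$, which forces $\lambda=0$, and the case $\lambda=0$ is dispatched by one further application of Proposition~\ref{prop:cbc} with $H=p(A)$ (here $(G'_i\times p(A),W_i)$ is spherical because $(G,W)$ is). So the correct use of sphericity is not the injectivity of the restriction map on $\Lambda_W$, but this weight-by-weight transfer from the product to the factor; your outline omits it, and without it the key claim in your third paragraph remains unproved.
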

\begin{proof}
We continue to use the following notation
\begin{align*}
&\widehat{G}_i := \times_{j\neq i} G_j
&&\widehat{G}'_i = \times_{j\neq i} G'_j.
\end{align*}
In this proof all the tensor products are over $\CC$.
To prove (\ref{eq:torrestr}) it is sufficient (by Proposition~\ref{prop:nshfprod}) to prove that $(N_i \otimes \widehat{R}_i)^{G} = (N_i \otimes \widehat{R}_i)^{\overline{G}}$ for every $i$. We clearly have that   
\[(N_i \otimes \widehat{R}_i)^{G} = (N_i^{G'_i} \otimes \widehat{R}_i^{\widehat{G}'_i})^A \]
Recall from equation~(\ref{eq:product}) that $(N_i \otimes \widehat{R}_i)^{\overline{G}} = 
N_i^{G_i} \otimes F_0$, where $F_0:= \widehat{R}_i^{\widehat{G}_i} \isom \CC$ is the set of constants in  $\widehat{R}_i$. 
We will prove that  
\[F:= (N_i^{G'_i} \otimes \widehat{R}_i^{\widehat{G}'_i})^A = N_i^{G_i} \otimes F_0.\]
The inclusion $N_i^{G_i} \otimes F_0 \inn F$ is clear. For the other inclusion, 
assume, by contradiction, that $F$ is not a subspace of $N_i^{G_i} \otimes F_0$.  Then there exist a character $\lambda \in X(A)$,  a nonzero vector $v$ in $N_i^{G'_i}$ of weight $-\lambda$ and a nonzero vector $w$ of weight $\lambda$ in $\widehat{R}_i^{\widehat{G}'_i}$ such that $v\otimes w \notin N_i^{G_i} \otimes F_0$. It follows that $\lambda \neq 0$, for otherwise 
\[ v\otimes w \in N_i^{G'_i \times A} \otimes \widehat{R}_i^{\widehat{G}'_i\times A} = N_i^{G'_i \times A} \otimes F_0 =N_i^{G'_i \times p(A)} \otimes F_0\] where $p: \times_j Z_j \to Z_i$ is the projection, while Proposition~\ref{prop:cbc} tells us that $N_i^{G'_i \times p(A)} = N_i^{G_i}$ (because $W_i$ is spherical for $G_i' \times p(A)$).

Now, by Lemma~\ref{lem:torrestr} below, we have that $X_i$ is spherical for $G'_i \times \ker\lambda$, hence for $G'_i \times p(\ker\lambda)$, since $A$ acts on $X_i$ through the factor $Z_i$.  Again by Proposition~\ref{prop:cbc}, we have that $N_i^{G'_i \times p(\ker\lambda)} = N_i^{G_i}$. We obtain a contradiction: $v \in  N_i^{G'_i \times p(\ker\lambda)}$ since $v$ has $A$-weight $\lambda$, but $v\notin N_i^{G_i}$ since $\lambda$ is nonzero and therefore $p(A)\inn G_i$ does not fix $v$.
\end{proof}

\begin{lemma}\label{lem:torrestr}
Let $G_1$ and $G_2$ be connected reductive groups and let $A_1$ and $A_2$ be tori. Suppose that for every $i \in \{1,2\}$ we have a normal affine $G_i \times A_i$-variety $X_i$.  Let $A \inn A_1 \times A_2$ be a subtorus such that $X_1 \times X_2$ is spherical for the action restricted to $G_1 \times A \times G_2 \inn G_1 \times A_1 \times A_2 \times G_2$. If  $\lambda \in X(A)$ is such that the eigenspace $\CC[X_2]^{G_2}$ contains a nonzero $A$-eigenvector of weight $\lambda$, then $X_1$ is spherical for $G_1\times \ker \lambda$.
\end{lemma}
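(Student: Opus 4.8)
The plan is to translate everything into the language of multiplicity-free coordinate rings and to exploit the sphericity of the \emph{product} $X_1\times X_2$ by a weight-matching argument. Since $X_1$ is normal, proving that $X_1$ is spherical for $G_1\times\ker\lambda$ amounts to proving that $\CC[X_1]$ is multiplicity-free as a $G_1\times\ker\lambda$-module (with $\ker\lambda$ acting on $X_1$ through its image in $A_1$, i.e.\ through the $A$-action on $X_1$). I will first record a consequence of the hypothesis that we will need: $X_1$ is already spherical for the full group $G_1\times A$. Indeed, taking $G_2$-invariants of the multiplicity-free $(G_1\times A\times G_2)$-module $\CC[X_1\times X_2]\isom\CC[X_1]\otimes\CC[X_2]$ yields the multiplicity-free $(G_1\times A)$-module $\CC[X_1]\otimes\CC[X_2]^{G_2}$, and its $(G_1\times A)$-submodule $\CC[X_1]\otimes 1$ is isomorphic to $\CC[X_1]$; hence $\CC[X_1]$ is multiplicity-free for $G_1\times A$, so every $(G_1\times A)$-isotypic component of $\CC[X_1]$ is $0$ or irreducible.

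Next I would reformulate sphericity for $G_1\times\ker\lambda$ as a condition on the $(G_1\times A)$-decomposition of $\CC[X_1]$. Because $\ker\lambda$ acts through $A$, each irreducible $(G_1\times A)$-constituent, determined by a $G_1$-type $\tau$ and an $A$-weight $\chi\in X(A)$, restricts to the $(G_1\times\ker\lambda)$-module of $G_1$-type $\tau$ and $\ker\lambda$-weight $\chi|_{\ker\lambda}$. Two such constituents with the same $G_1$-type $\tau$ become isomorphic over $G_1\times\ker\lambda$ precisely when their $A$-weights have the same restriction to $\ker\lambda$, i.e.\ when their difference lies in $\ZZ\lambda$ (a character of the torus $A$ vanishes on $\ker\lambda$ if and only if it is an integer multiple of $\lambda$). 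Combined with the multiplicity-freeness over $G_1\times A$ just established, $\CC[X_1]$ fails to be multiplicity-free over $G_1\times\ker\lambda$ if and only if there is a $G_1$-type $\tau$ and two distinct $A$-weights $\chi,\chi'$, both occurring in $\CC[X_1]$ with $G_1$-type $\tau$, such that $\chi-\chi'=m\lambda$ for some nonzero integer $m$; we may assume $m>0$ after swapping. (If $\lambda=0$ then $\ker\lambda=A$ and the assertion is exactly the sphericity for $G_1\times A$ already proved.)

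Assume such a failure occurs and fix a maximal unipotent subgroup $U_i$ and maximal torus $T_i$ of $G_i$. Let $f_1,f_2\in\CC[X_1]^{U_1}$ be highest-weight vectors of the two constituents: both have $T_1$-weight equal to the highest weight of $\tau$, their $A$-weights are $\chi$ and $\chi'=\chi-m\lambda$ respectively, and they are linearly independent. Let $f\in\CC[X_2]^{G_2}$ be the given nonzero $A$-eigenvector of weight $\lambda$; note $f$ is $U_2$-invariant with $T_2$-weight $0$, and since $\lambda\neq0$ it is non-constant, so in the domain $\CC[X_2]$ we have $f^m\neq0$. Now I would compare the two elements $f_1\otimes 1$ and $f_2\otimes f^m$ of $\CC[X_1]\otimes\CC[X_2]$. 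Both are $(U_1\times U_2)$-invariant, both have $T_1$-weight the highest weight of $\tau$ and $T_2$-weight $0$, and their $A$-weights are $\chi$ and $\chi'+m\lambda=\chi$ respectively; thus they are two highest-weight vectors for $G_1\times A\times G_2$ of the same dominant weight. They are linearly independent because $f_1,f_2$ are and because $1$ and $f^m$ are (the latter as $f^m$ is non-constant). This forces a multiplicity at least $2$ in $\CC[X_1\times X_2]$, contradicting its sphericity for $G_1\times A\times G_2$. Hence no failure occurs and $X_1$ is spherical for $G_1\times\ker\lambda$.

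The step requiring the most care, and the crux of the argument, is the weight-matching via the eigenvector $f$: one must use the \emph{a priori} sphericity of $X_1$ for the larger torus $A$ to pin the only possible obstruction down to a pair of $A$-weights differing by an integer multiple of $\lambda$, and then cancel that multiple by multiplying the lower highest-weight vector by $f^m\in\CC[X_2]^{G_2}$, which contributes $A$-weight $m\lambda$ while leaving the $G_1$- and $G_2$-structure (and the $T_2$-weight $0$) untouched. The remaining subtleties, namely reducing sphericity to multiplicity-freeness for the possibly disconnected group $G_1\times\ker\lambda$ by restricting characters of the torus $A$, and checking the linear independence of the two constructed highest-weight vectors, are routine once this idea is in place.
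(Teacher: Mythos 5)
Your proposal is correct and is essentially the paper's own proof: both first note that $X_1$ is spherical for $G_1\times A$, reduce a failure of sphericity over $G_1\times\ker\lambda$ to two highest-weight vectors in $\CC[X_1]$ whose $B_1\times A$-weights differ by a positive multiple $m\lambda$ (using that characters of $A$ vanishing on $\ker\lambda$ are exactly $\ZZ\lambda$), and then cancel that difference by tensoring the lower one with the $m$-th power of the given eigenvector in $\CC[X_2]^{G_2}$, producing two independent $B$-eigenvectors of equal weight that contradict the sphericity of $X_1\times X_2$ for $G_1\times A\times G_2$. The only differences are cosmetic: you spell out the $\lambda=0$ case, the preliminary sphericity for $G_1\times A$, and the linear-independence check, all of which the paper leaves implicit.
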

\begin{proof}
Pick Borel subgroups and maximal tori $T_1 \inn B_1 \inn G_1$ and $T_2
\inn B_2 \inn G_2$. In this proof we identify $X(A)$ with its image
under the canonical embeddings into $X(A\times T_i)$ for $i\in \{1,2\}$ and into $X(A\times T_1 \times T_2)$. 

Clearly, $X_1$ is spherical for $G_1 \times A$. If $X_1$ is not spherical for the subgroup $G_1 \times \ker \lambda$, then there are highest weight vectors $f_{\alpha}, f_{\beta} \in \CC[X_1]^{(B_1\times A)}$ of weight $\alpha$ and $\beta$ respectively such that $\alpha \neq \beta$ and
$\alpha = \beta$ on $\ker \lambda \inn T_1 \times A$. This implies that $\alpha - \beta = d\lambda$ for some integer $d$. Reversing the roles of $\alpha$ and $\beta$ if necessary, we assume $d$ nonnegative.

It is given that there is a $g_\lambda$  in $\CC[X_2]^{(A \times B_2)}$ of weight $\lambda$. We then have that the two $(B_1\times A\times B_2)$-eigenvectors $f_{\alpha} \otimes  1$ and $f_{\beta} \otimes g^d_{\lambda}$ in $\CC[X_1] \otimes \CC[X_2]$ have the same weight. This contradicts the sphericality of $X_1\times X_2$ for the action of $G_1\times A \times G_2$. 
\end{proof}

\begin{cor} \label{cor:mainthm}  
Let $(G,W)$ be a spherical module and let $\wm$ be its weight monoid. Assume that the conclusion of Theorem~\ref{thm:cbc} holds for every pair $(\overline{G},W)$ in Knop's List with $\overline{G}$  of a  type that occurs in the decomposition of $G'$ into almost simple components. Then 
\begin{equation}
\dim T_{X_0} \tM_{\wm} = d_W. \label{eq:goal}
\end{equation}

\end{cor}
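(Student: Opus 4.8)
The plan is to reduce the general spherical module $(G,W)$ to a product of indecomposable saturated modules, each covered by Proposition~\ref{prop:cbc}, and then to exploit that both $\dim T_{X_0}$ and $d_W$ are additive over such products. First I would note $\tM_{\wm}=\tM^G_W$ and apply Proposition~\ref{prop:leahy}(\ref{leahy2}) to write $(G^{\st},W)$ as geometrically equivalent to a product $(K,F):=(G_1\times\cdots\times G_n,\,W_1\oplus\cdots\oplus W_n)$ of indecomposable saturated spherical modules. Replacing each factor $(G_i,W_i)$ by $(G_i^{\st},W_i)$ --- which is geometrically equivalent to it (using Remark~\ref{rem:geomeqv} and Proposition~\ref{prop:leahy}(\ref{leahy3}), so that $\im\rho_i^{\st}=\rho_i(G_i)$) and satisfies $(G_i^{\st})^{\st}=G_i^{\st}$ --- I may assume $G_i=G_i^{\st}$ for all $i$, whence $Z_i:=Z(G_i)^{\circ}=\GL(W_i)^{G_i}$. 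Letting $\phi\colon W\to F$ realize the equivalence of $(G^{\st},W)$ and $(K,F)$ and setting $A:=\GL(\phi)(\rho(R))$ with $R=Z(G)^{\circ}$, Proposition~\ref{prop:leahy}(\ref{leahy4}) yields $A\inn \GL(F)^K=Z_1\times\cdots\times Z_n$ and a geometric equivalence $(G,W)\sim (K'\times A,\,F)$, where $K'=G_1'\times\cdots\times G_n'$.

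Next I would run the two reductions. Since $(K'\times A,F)$ is spherical by Remark~\ref{rem:geomeqv}(\ref{geomeqvxy}), Proposition~\ref{prop:geomeqv} gives $\dim T_{X_0}\tM^G_W=\dim T_{X_0}\tM^{K'\times A}_F$, so it suffices to treat $(K'\times A,F)$. Forming $V=\oplus_i V_i$, $x_0=\sum_i x_i$ and $X=X_1\times\cdots\times X_n$ as in Proposition~\ref{prop:torrestr}, I would identify the normal module $N=\Hom_R(I,R/I)$ of $X$ with $H^0(X,\shN_X)$ and check that $X$ serves as the distinguished point $X_0$ for \emph{both} $K$ and $K'\times A$: the two ambient modules $V$ coincide because restricting a $K$-irreducible to $K'\times A$ (with $A$ central) leaves it irreducible on the same space, and $\overline{(K'\times A)\cdot x_0}=\overline{K\cdot x_0}=X$ because $\fg\cdot x_0=\overline{\fg}\cdot x_0$ by Lemma~\ref{lem:gx0}, forcing two irreducible orbit closures of equal dimension to agree. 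Hence $\dim T_{X_0}\tM^{K'\times A}_F=\dim N^{K'\times A}$ and $\dim T_{X_0}\tM^K_F=\dim N^K$ by Proposition~\ref{prop:concrtan}. Proposition~\ref{prop:torrestr} then gives $N^{K'\times A}=N^K$, Corollary~\ref{cor:product} gives $N^K\isom\oplus_i N_i^{G_i}$, and Proposition~\ref{prop:cbc} applied to each $(G_i,W_i)$ (with $H=Z_i$, so $G_i'\times H=G_i$) gives $\dim N_i^{G_i}=\dim T_{X_0}\tM^{G_i}_{W_i}=d_{W_i}$; this last step is legitimate because the almost-simple types occurring in $K'\cong G'$ are precisely those occurring in the $G_i'$, which are covered by the hypothesis.

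Finally I would reconcile the $d$-invariants: Lemma~\ref{lem:dW}(\ref{dWprod}) gives $\sum_i d_{W_i}=d_F$ for $(K,F)$, Lemma~\ref{lem:dW}(\ref{dWres}) gives the same value for the subgroup $K'\times A\inn K$, and Lemma~\ref{lem:dW}(\ref{dWgeqv}) identifies this with $d_W$ through $(G,W)\sim(K'\times A,F)$. Stringing the equalities together,
\[\dim T_{X_0}\tM_{\wm}=\dim T_{X_0}\tM^{K'\times A}_F=\dim N^{K'\times A}=\dim N^K=\sum_i d_{W_i}=d_W.\]
I expect the main obstacle to be bookkeeping rather than a single hard estimate: one must reinterpret Proposition~\ref{prop:torrestr} --- which compares the $\overline{G}$- and $G$-invariants of one \emph{fixed} normal module --- as a statement about two moduli schemes, and this forces a careful check that the ambient $V$ and the point $X_0$ are literally shared by $K$ and $K'\times A$, exactly where Lemma~\ref{lem:gx0} enters. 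One must likewise arrange the normalized decomposition with every $G_i=G_i^{\st}$ that Propositions~\ref{prop:torrestr} and~\ref{prop:cbc} presuppose.
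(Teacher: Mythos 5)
Your proposal is correct and follows essentially the same route as the paper's proof: decompose via Proposition~\ref{prop:leahy}, transfer with Proposition~\ref{prop:geomeqv}, combine Corollary~\ref{cor:product}, Proposition~\ref{prop:torrestr} and Proposition~\ref{prop:cbc}, and reconcile the invariants with Lemma~\ref{lem:dW}. The only difference is that you spell out two steps the paper leaves implicit --- normalizing each factor so that $G_i = G_i^{\st}$ before invoking Proposition~\ref{prop:torrestr}, and verifying (via Lemma~\ref{lem:gx0} and Proposition~\ref{prop:concrtan}) that $K$ and $K'\times A$ share the same ambient module $V$ and distinguished point $X_0$, so that the statement $N^{K'\times A}=N^{K}$ really does compare the two tangent spaces --- which is sound and, if anything, tightens the argument.
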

\begin{proof}  
In this proof, by abuse of notation, $X_0$ will stand for the unique closed orbit of the relevant moduli scheme. 
By Proposition~\ref{prop:leahy} there exist indecomposable saturated spherical modules 
\((G_i,W_i)\) in Knop's List, with $i \in \{1,2,\ldots,n\}$, such that
$(G^{\st},W)$ is geometrically equivalent to the product $(K,E)$ of the  $(G_i,W_i)$, and such that
$(G,W)$ is geometrically equivalent to $(K'\times A, E)$ where $A$ is a subtorus of $\GL(E)^K$.
By assumption, the conclusion of Theorem~\ref{thm:cbc} holds for each $(G_i,W_i)$ and so 
\[\dim T_{X_0}\tM^{G_i}_{W_i} = d_{W_i} \text{ for every $i \in \{1,2,\ldots,n\}$}. \]
As a consequence, Corollary~\ref{cor:product} and Lemma~\ref{lem:dW}(\ref{dWprod}) yield that
\begin{equation}
\dim T_{X_0}\tM^K_E = d_E. \label{eq:dke}
\end{equation}
On the other hand, using that $\GL(E)^K = \times_i\GL(W_i)^{G_i}$, Proposition~\ref{prop:torrestr} tells us that
\begin{equation}
\dim T_{X_0}\tM^{K' \times A}_E = \dim T_{X_0} \tM^K_E, \label{dkae}
\end{equation}
whereas by Proposition~\ref{prop:geomeqv}
\begin{equation*}
\dim T_{X_0}\tM^G_W = \dim T_{X_0} \tM_E^{K' \times A}.
\end{equation*}
With equations (\ref{eq:dke}) and (\ref{dkae}) and Lemma~\ref{lem:dW}(\ref{dWres},\ref{dWgeqv}) this implies equation~(\ref{eq:goal}), as desired.
 \end{proof}

\section{Proof of Theorem~\ref{thm:cbc}} \label{sec:cases}
In this section, we prove Theorem~\ref{thm:cbc} through case-by-case verification. Formally the proof runs as follows. We have to check the theorem for the $8$ families in List~\ref{KnopLTypeA} below. For families (\ref{list.1}), (\ref{list.2}) and (\ref{list.3}), the arguments are given in Sections~\ref{subsec:case1}, \ref{subsec:case2} and \ref{subsec:case3}, respectively. For family (\ref{list.15}), the theorem follows from Proposition~\ref{prop:case15} on page~\pageref{prop:case15}; for family (\ref{list.16}) it follows from Proposition~\ref{prop:case16} on page~\pageref{prop:case16}; for family (\ref{list.17}) from Proposition~\ref{prop:case17} on page~\pageref{prop:case17};   for family (\ref{list.18}) from Proposition~\ref{prop:case18} on page~\pageref{prop:case18}; and for family (\ref{list.21}) from Proposition~\ref{prop:case21} on page~\pageref{prop:case21}. Thus, all cases are covered. 

Each subsection of this section corresponds to one of the eight families given in the following list.  
\begin{listabc} \label{KnopLTypeA}
The $8$ families of saturated indecomposable spherical modules $(\overline{G},W)$ with $\overline{G}$ of type $\ssA$ in Knop's List are
\begin{enumerate}
\item $(\GL(m)\times \GL(n), \CC^m \otimes \CC^n)$ with $1 \leq m\leq n$; \label{list.1}
\item $(\GL(n), \Sym^2\CC^n)$ with $1 \leq n$; \label{list.2}
\item $(\GL(n), \bigwedge^2\CC^n)$ with $2 \leq n$;  \label{list.3}
\item $(\GL(n) \times \GGm, \bigwedge^2\CC^n \oplus \CC^n)$ with $4 \leq n$; \label{list.15}
\item $(\GL(n) \times \GGm, \bigwedge^2\CC^n \oplus (\CC^n)^*)$ with  $4 \leq n$; \label{list.16} 
\item $(\GL(m) \times \GL(n), (\CC^m \otimes \CC^n) \oplus \CC^n)$ with $1\leq m, 2\leq n$; \label{list.17} 
\item $(\GL(m) \times \GL(n), (\CC^m \otimes \CC^n) \oplus (\CC^n)^*)$ with $1\leq m, 2\leq n$; \label{list.18}
\item $(\GL(m) \times \SL(2) \times \GL(n), (\CC^m \otimes \CC^2)\oplus (\CC^2 \otimes \CC^n))$ with 
$2\leq m \leq n$. \label{list.21}
\end{enumerate}
\end{listabc}
\begin{remark}
The  indices $m$ and $n$ in family (\ref{list.17}) and family (\ref{list.18}) run through a larger set than that given in Knop's List. Knop communicated  the revised range of indices for these families to the second author. We remark that these cases do appear in the lists of \cite{leahy} and \cite{benson-ratcliff-mf}. 
\end{remark}
 
\begin{remark}
\begin{enumerate}[(i)]
\item Recall from Lemma~\ref{lem:camusbound} that for a given spherical module $W$ it is easy to compute $d_W$ from the rank of $\Lambda_W$. 
\item Recall that by Corollary~\ref{cor:apriori} it is enough to prove that $\dim T_{X_0}\tM^{G}_{\wm} \le d_W$ for every $(G,W)$ as in Theorem~\ref{thm:cbc} to establish the theorem. 
\item Note that by Lemma~\ref{lem:redsgr} a subgroup $G$ of $\overline{G}$ containing $\overline{G}'$ is automatically reductive. 
\end{enumerate}
\end{remark}

In each subsection, $(\overline{G},W)$ will denote a member of the family from List~\ref{KnopLTypeA} under consideration.
Here is some more  notation we will use for the rest of this section. Given a spherical module $(\overline{G},W)$ from Knop's List,
\begin{itemize}
\item[-] $E$ denotes the basis of the weight monoid $\Lambda^+_{(\overline{G},W^*)}$ of $W^*$ (the elements of $E$ are called the `basic weights' in Knop's List);
\item[-] $V= \oplus_{\lambda \in E} V(\lambda)$;
\item[-] $x_0 =  \sum_{\lambda \in E} v_{\lambda}$.
\end{itemize}
Except if stated otherwise, $G$ will denote  a connected subgroup of $\overline{G}$ containing $\overline{G}'$ such that $(G,W)$ is spherical. To lighten notation, we will use $G'$ for the derived subgroup $\overline{G}'$ of $\overline{G}$. This should not cause confusion since $(\overline{G},\overline{G}) = (G,G) = G'$. 
We will use $\overline{T}$ for a fixed maximal torus in $\overline{G}$ and put $T = \overline{T} \cap G$ and  $T' = \overline{T} \cap G'$. Then $T \inn G$ and $T' \inn G'$ are maximal tori. We will use $p\colon X(T) \onto X(T')$,  $q\colon X(\overline{T}) \onto X(T)$ and $r\colon X(\overline{T}) \onto X(T')$ for the restriction maps. Similarly, $\overline{B}$ is a fixed Borel subgroup of $\overline{G}$ containing $\overline{T}$ and we put $B = \overline{B} \cap G$ and $B' = \overline{B} \cap G'$. Then $B$ and $B$' are Borel subgroups of $G$ and $G'$, respectively. 
Note that the restriction of $p$ to $\Lambda_R$ is injective and we can, and will, identify the root lattices of $\overline{G}, G$ and $G'$. Moreover, our choice of Borel subgroups allows us to identify the sets of positive roots (which we denote $R^+$) and the sets of simple roots (which we denote $\Pi$) of $\overline{G}, G$ and $G'$.
Note also that since $Z(G') = Z(G) \cap T'$, we have that $T' \into T$ induces an isomorphism $T'/Z(G') \isom T/Z(G)$. We therefore can (and will) identify the adjoint torus of $\overline{G}, G$ and of $G'$ and we denote it $\Tad$.  We will use $\om, \om', \om''$ for weights of the first, second and third non-abelian factor of $G$, while $\varepsilon$ will refer to the character $\GGm \to \GGm, z \mapsto z$ of $\GGm$.

Recall our convention that by the $\Tad$-action on $V$ (and on $\Vg$, $\Vgg$, etc.) we mean the action given by $\alpha$ (see Definition~\ref{def:alpha}). The $\Tad$-action on $\tM_{\wm}$ refers to the action given by $\widehat{\psi}$, see page~\pageref{widehatpsi}.  
\begin{remark} \label{rem:VggpG}
We have the following isomorphism of $G$-modules (where $G$ acts on $V$ as a subgroup of $\overline{G}$):
\(
V \isom \oplus_{\lambda\in E} V(q(\lambda)).
\)
Using Lemma~\ref{lem:gx0} it follows that the $\Tad$-module $\Vggp$  only depends on $(\overline{G},W)$ (that is, it does not depend on the particular subgroup $G$).
\end{remark}

We will also use 
\begin{itemize}
\item[-] $\overline{\wm}$ for the weight monoid of $(\overline{G},W)$, that is $\overline{\wm} = \Lambda^+_{(\overline{G},W)}$;
\item[-] $\wm$ for the weight monoid of $(G,W)$; i.e.~$\wm = q(\overline{\wm})$;
\item[-] $\wgo$ for the weight group of $(\overline{G},W^*)$, that is $\overline{\wg} = \Lambda_{(\overline{G},W^*)} = \<E\>_{\ZZ} \inn X(\overline{T})$;
\item[-] $\wg$ for the weight group of $(G,W^*)$; i.e.~$\wg = q(\overline{\wg})$.
\end{itemize}
Note that the weight group of $(G',W^*)$ (which is not necessarily spherical) is $r(\overline{\wg}) = p(\wg)$ and that the weight monoid of $(G',W)$ is $r(\overline{\wm}) = p(\wm)$. 

\begin{remark}
 In proving Theorem~\ref{thm:cbc}  for families (\ref{list.16}), (\ref{list.17}) and (\ref{list.18}) we exclude certain $\Tad$-weight spaces in $\Vgg$ from belonging to the subspace $T_{X_0}\tM^G_{\wm}$. Comparing with the simple reflections of the little Weyl group of $W^*$ computed in Knop's List suggested which $\Tad$-weights we had to exclude. Logically however, that information from Knop's List plays no part in our proof.  In fact,
because  $\dim T_{X_0}\tM^G_{\wm}$ is minimal (by Theorem~\ref{thm:cbc}), the computations of the $\Tad$-weights in $T_{X_0}\tM^G_{\wm}$ we perform in this section confirm Knop's computations of the little Weyl group of the spherical modules under consideration. For the relationship between the $\Tad$-weights in $T_{X_0}\tM^G_{\wm}$ and the little Weyl group of $W^*$,  see Remarks \ref{rem:lwg} and \ref{rem:sigmawstar}.
\end{remark}

\subsection{The modules $(\GL(m)\times \GL(n), \CC^m \otimes \CC^n)$ with $1 \leq m\leq n$}
\label{subsec:case1}
Here 
\begin{align*}
&E = \{\om_1+\om_1', \om_2+\om_2', \ldots, \om_{m}+ \om_m'\};\\
&d_W = m-1.
\end{align*}

When $m<n$ the module $W$ is spherical for $G'=\SL(m) \times \SL(n)$, because $\<\om_m, \om'_n\>_{\ZZ} \cap \overline{\wg} = 0$, and its weight monoid $p(\wm)$ is $G'$-saturated. 
Corollary~\ref{cor:bcfred} therefore takes care of these cases. 
The only case that remains is when $m=n$. Then $W$ is not spherical for $G'$ because $\om_m +\om'_m \in E$.  Moreover, for the same reason, $\wm$ is not $G$-saturated for any intermediate group $G$ for which $W$ is spherical. We prove that in that case too $\Vggp$ has dimension $d_W$.
\begin{prop} \label{prop:case1}
Suppose $m=n$.
Then the $\Tad$-module $\Vggp$ is multiplicity-free and its weight set is 
\[\{\alpha_1 + \alpha_1', \alpha_2 + \alpha_2', \ldots, \alpha_{m-1} + \alpha_{m-1}'\}. \]
In particular, $\dim \Vggp = d_W$. Consequently, $\dim T_{X_0}\tM^G_{\wm} = d_W$. 
\end{prop}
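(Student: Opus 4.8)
The plan is to bound the $\Tad$-module $\Vggp$ from above and to combine this with the a priori lower bound on the tangent space. By Corollary~\ref{cor:apriori} it suffices to prove $\dim T_{X_0}\tM^G_{\wm}\le d_W=m-1$, and since the inclusions $T_{X_0}\tM^G_{\wm}\into\Vgg\inn\Vggp$ of Corollary~\ref{cor:injtan} and Lemma~\ref{lem:Tadweights}(\ref{inclVgs}) are $\Tad$-equivariant, I would instead show $\dim\Vggp\le m-1$. By Remark~\ref{rem:VggpG} (together with Lemma~\ref{lem:gx0}) I may compute the $\Tad$-weights of $\Vggp$ using the $\overline G=\GL(m)\times\GL(m)$-module structure $V=\oplus_{i=1}^m V(\om_i+\om_i')$ with $V(\om_i+\om_i')\isom\bigwedge^i\CC^m\otimes\bigwedge^i\CC^m$ and $x_0=\sum_i v_{\om_i+\om_i'}$. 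Writing $\epsilon_1,\dots,\epsilon_m$ (resp. $\epsilon_1',\dots,\epsilon_m'$) for the standard characters of the two torus factors, a $\Tad$-weight basis of $V(\om_i+\om_i')$ is given by the vectors $e_S\otimes f_T$ with $S,T\inn\{1,\dots,m\}$, $|S|=|T|=i$; by Definition~\ref{def:alpha} (see equation~(\ref{eq:defTadact})) the $\Tad$-weight of $e_S\otimes f_T$ is $(\om_i-\epsilon_S)+(\om_i'-\epsilon_T')$, where $\epsilon_S=\sum_{j\in S}\epsilon_j$ and $\epsilon_T'=\sum_{j\in T}\epsilon_j'$.

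The central step is to determine the possible $\Tad$-weights of a nonzero eigenvector $[v]\in\Vggp$, say of weight $\beta=\sum_{j=1}^{m-1}(a_j\alpha_j+b_j\alpha_j')$ with $a_j,b_j\in\NN$. I would first apply Lemma~\ref{lem:Tadweights}(\ref{weightinvar}) with $H=G'$ and $T_H=T'$: the restriction of $\beta$ to $T'$ then lies in the subgroup $\Gamma$ of $X(T')$ generated by the restrictions of $E$. Since $\om_i+\om_i'$ restricts to $\varpi_i+\varpi_i'$ on $T'$ (with $\varpi_m=0$), the group $\Gamma$ is the diagonal sublattice $\<\varpi_i+\varpi_i'\colon 1\le i\le m-1\>_{\ZZ}$; expanding the simple roots of $\SL(m)$ in the fundamental weights and using that the Cartan matrix is invertible, membership in $\Gamma$ forces $a_j=b_j$ for every $j$, i.e. $\beta=\sum_j a_j(\alpha_j+\alpha_j')$. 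Next I would invoke Lemma~\ref{lem:tangentcrit}: by part~(\ref{BBB}) some simple root $\eta$ has $X_\eta v\neq 0$, and by part~(\ref{AAA}) the weight $\beta-\eta$ of $X_\eta v$ lies in $\{0\}\cup(R^+\setminus E^{\perp})\inn\{0\}\cup R^+$. Taking $\eta=\alpha_k$ (the case $\eta=\alpha_k'$ being symmetric), the second factor part of $\beta-\alpha_k$ equals $\sum_j a_j\alpha_j'$; since a positive root is supported in a single simple factor and $\beta\neq 0$, this forces $\sum_j a_j\alpha_j=\alpha_k$, hence $a_k=1$ and $a_j=0$ for $j\neq k$. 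Thus every $\Tad$-weight of $\Vggp$ has the form $\alpha_k+\alpha_k'$ with $1\le k\le m-1$.

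To finish I would check that each such weight space is at most one-dimensional: solving $(\om_i-\epsilon_S)+(\om_i'-\epsilon_T')=\alpha_k+\alpha_k'$ forces $\om_i-\epsilon_S=\alpha_k$, which has a $0/1$ solution only for $i=k$ and $S=\{1,\dots,k-1,k+1\}$ (and likewise $T$), so the weight space of $V$ of weight $\alpha_k+\alpha_k'$ is spanned by the single vector $e_S\otimes f_T\in V(\om_k+\om_k')$. Hence $\dim\Vggp\le m-1$. Combining this with the a priori inequality $m-1=d_W\le\dim T_{X_0}\tM^G_{\wm}\le\dim\Vgg\le\dim\Vggp$ (Corollary~\ref{cor:apriori}) forces all of these to be equalities; therefore $\Vggp$ is the multiplicity-free $\Tad$-module with weight set $\{\alpha_1+\alpha_1',\dots,\alpha_{m-1}+\alpha_{m-1}'\}$, $\dim\Vggp=d_W$, and $\dim T_{X_0}\tM^G_{\wm}=d_W$.

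I expect the main obstacle to be the weight bound of the second paragraph: neither constraint alone is decisive, since Lemma~\ref{lem:Tadweights}(\ref{weightinvar}) only yields the balanced shape $\sum_j a_j(\alpha_j+\alpha_j')$, while the tangent criterion of Lemma~\ref{lem:tangentcrit} must then be used to collapse this to a single index $k$. The concluding multiplicity-one count is a short explicit computation in $\bigwedge^k\CC^m\otimes\bigwedge^k\CC^m$, and the passage from $\dim\Vggp\le m-1$ to the full statement is purely formal once the a priori bound is in hand.
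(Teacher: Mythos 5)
Your proposal is correct and follows essentially the same route as the paper's proof: both use Lemma~\ref{lem:Tadweights}(\ref{weightinvar}) to force the weight into the diagonal $\<\alpha_j+\alpha_j'\>_{\ZZ}$ of $\Lambda_R$, then Lemma~\ref{lem:tangentcrit} to collapse it to a single $\alpha_k+\alpha_k'$, then a one-dimensionality count of that weight space in $V$, and finally Corollary~\ref{cor:apriori} to upgrade $\dim\Vggp\le d_W$ to the full statement. The only cosmetic difference is that you verify the multiplicity-one step in explicit coordinates $e_S\otimes f_T$, where the paper simply identifies the eigenspace as the line spanned by $X_{-\alpha_i}X_{-\alpha_i'}x_0$ in $V(\om_i+\om_i')$.
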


\begin{proof}
First note that 
\[p(\wg) = \<\om_1 + \om'_1, \ldots, \om_{m-1}+\om'_{m-1}\>_{\ZZ} \inn X(T').\]
Suppose $v$ is a $\Tad$-eigenvector in $V$ of weight $\gamma$ so that  $[v]$ is a nonzero element of $\Vggp$. 
Then \begin{equation} \label{eq:case1gamma1}
\gamma \in p(\wg) \cap \Lambda_R
\end{equation}
by Lemma~\ref{lem:Tadweights}(\ref{weightinvar}). Clearly, 
$p(\wg) \cap \Lambda_R$ is the diagonal of $\Lambda_R$, that is, the group
\[
\<\alpha_1 + \alpha_1', \alpha_2 + \alpha_2', \ldots, \alpha_{m-1} + \alpha'_{m-1}\>_{\ZZ} \inn \Lambda_R.\]
Moreover, Lemma~\ref{lem:tangentcrit}(\ref{BBB}) implies that there exists a simple root $\delta$ of $G'$ so that 
\begin{equation} \label{eq:case1gamma2}
\gamma - \delta \text{ (which is the weight of $X_{\delta}v$) belongs to }R^+ \cup \{0\}. \end{equation}
Equations (\ref{eq:case1gamma1}) and (\ref{eq:case1gamma2}) imply that $\gamma = \alpha_i + \alpha_i'$ for some $i$ with $1 \le i \le m-1$. 

We next claim that the $\Tad$-eigenspace of weight $\alpha_i + \alpha_i'$ in $V$ is one dimensional for every $i$ with $1\le i\le m-1$. Indeed, the only $G'$-submodule of $V$ which contains an eigenvector of that weight is $V(\omega_i + \omega_i')$ and the eigenspace is the line spanned by 
\(X_{-\alpha_i}X_{-\alpha_i'} x_0 =  X_{-\alpha'_i}X_{-\alpha_i} x_0. \)
This finishes the proof. 
\end{proof}

\begin{remark} \label{rem:case1bcf} 
To relate $\tM^G_{\wm}$ to $\tM^{G'}_{p(\wm)}$, put $E'= E \setminus \{\om_m + \om'_m\}$, $V' = \oplus_{\lambda \in E'} V(\lambda)$ and
$x_0' = \sum_{\lambda \in E'} v_{\lambda}$. Then 
\begin{equation}
T_{X_0}\tM^{G'}_{p(\wm)} \isom (V'/\fg'\cdot x_0')^{G'_{x_0'}} \isom \Vggp \isom T_{X_0}\tM^G_{\wm} \label{eq:remcase1bcf}
\end{equation}
as $\Tad$-modules. Indeed, this is straightforward since $p(\wm)$ is $G'$-saturated, and the $G$-module $V(\omega_m+\omega'_m)$ is one-dimensional and therefore a subspace of $\fg\cdot x_0$.  Moreover, with Theorem~\ref{thm:bcf}, the second isomorphism in (\ref{eq:remcase1bcf}) 
provides a second argument for the multiplicity-freeness of $\Vggp$. 
\end{remark}

\begin{example}
We illustrate Proposition~\ref{prop:case1} for $m=n=3$ and $G=\overline{G}=\GL(3) \times \GL(3)$. Consider two copies of $\CC^3$, one with basis $e_1, e_2,e_3$, the other with basis $f_1,f_2,f_3$, and with the first (resp.~second) copy of $\GL(3)$ acting on the first (resp.~second) copy of $\CC^3$ by the defining representation.
Then we can take
\begin{align*}
V &= \CC^3 \otimes \CC^3 \oplus \wedge^2\CC^3 \otimes \wedge^2\CC^3 \oplus \wedge^3 \CC^3 \otimes \wedge^3\CC^3;\\
x_0 &= e_1 \otimes f_1 + e_1 \wedge e_2 \otimes f_1 \wedge f_2 + e_1\wedge e_2 \wedge e_3 \otimes f_1 \wedge f_2 \wedge f_3.
\end{align*}
Consequently,
\begin{multline*}
\fg \cdot x_0 = \< e_1 \otimes f_1, e_1 \wedge e_2 \otimes f_1 \wedge f_2, e_1\wedge e_2 \wedge e_3 \otimes f_1 \wedge f_2 \wedge f_3, \\
e_2 \otimes f_1, e_3 \otimes f_1 - e_2 \wedge e_3 \otimes f_1 \wedge f_2, e_1 \wedge e_3\otimes f_1\wedge f_2, \\
e_1 \otimes f_2, e_1 \otimes f_3 - e_1 \wedge e_2 \otimes f_2 \wedge f_3, e_1 \wedge e_2 \otimes f_1 \wedge f_3\>_{\CC},
\end{multline*}
\begin{equation*}
G'_{x_0} = \{(\begin{pmatrix}a&c_1&c_2 \\
0 & b & c_3 \\
0 & 0 & (ab)^{-1}
 \end{pmatrix},\begin{pmatrix}a^{-1}&c_4&c_5 \\
0 & b^{-1} & c_6 \\
0 & 0 & ab
 \end{pmatrix})\bigm| a,b \in \CC^{\times}, c_i \in \CC\}
\end{equation*}
and
\(
\Vggp = \<[e_2 \otimes f_2], [e_1 \wedge e_3 \otimes f_1 \wedge f_3]\>_{\CC}.
\)
\end{example}

\subsection{The modules $(\GL(n), \Sym^2\CC^n)$ with $1 \leq n$} \label{subsec:case2}
Here 
\begin{align*}
&E =  \{2\omega_1, 2\omega_2, \ldots, 2\omega_n\};\\
&d_W = n-1.
\end{align*}
Because $2 \om_n \in E$, there is no group $G$ with $G' \inn G \subsetneq \overline{G}$ for which $(G,W)$ is spherical. Hence we assume that $G = \overline{G} = \GL(n)$.  For the same reason, $\wm=\wmo$ is not $G$-saturated. 

\begin{prop}\label{prop:case2}
The $\Tad$-module $\Vgg$ is multiplicity-free and has $\Tad$-weight set
\[\{2\alpha_1, 2\alpha_2, \ldots, 2\alpha_{n-1}\}.\]
In particular, its dimension is $d_W$. Consequently, $\dim T_{X_0}\tM^G_{\wm} = d_W$. 
\end{prop}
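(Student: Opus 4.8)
The plan is to follow the template of Proposition~\ref{prop:case1} and to invoke Corollary~\ref{cor:apriori}. Since $T_{X_0}\tM^G_{\wm} \into \Vgg$ is a $\Tad$-equivariant injection (Corollary~\ref{cor:injtan}), and since $\dim T_{X_0}\tM^G_{\wm} \ge \dim \tM^G_{\wm} \ge d_W = n-1$ always holds (Proposition~\ref{prop:apriori}(\ref{clossigmax}), using that $X_0$ lies in the closure of every $\Tad$-orbit by Proposition~\ref{prop:X0uniqueclosed}), it suffices to prove $\dim \Vgg \le n-1$ and to identify the occurring weights. So I would first (a) constrain the $\Tad$-weights of $\Vgg$ to lie in $\{2\alpha_1,\ldots,2\alpha_{n-1}\}$, then (b) show each such weight space of $\Vgg$ is at most one-dimensional; a dimension count then forces equality everywhere.

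For step (a), let $v$ be a $\Tad$-eigenvector of weight $\beta$ with $[v]$ a nonzero element of $\Vgg$. Applying Lemma~\ref{lem:Tadweights}(\ref{weightinvar}) with $H = G = \overline{G} = \GL(n)$ and $T_H = T$ (so the restriction map is the identity and $\Gamma = \wg = \<2\omega_1,\ldots,2\omega_n\>_{\ZZ} = 2\Lambda$) gives $\beta \in \wg$. Since $\beta$ also lies in $\Lambda_R$, a short computation in the standard coordinates $\epsilon_1,\ldots,\epsilon_n$ (the characters of the diagonal torus) shows $\beta \in \wg \cap \Lambda_R = 2\Lambda_R$; equivalently, writing $\beta = \sum_i c_i\alpha_i \in \<\Pi\>_{\NN}$, all the $c_i$ are even and nonnegative, and they are not all zero since $[v]\neq 0$ (Lemma~\ref{lem:tangentcrit}(\ref{BBB})). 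By Lemma~\ref{lem:tangentcrit}(\ref{BBB}) there is a simple root $\alpha_j$ with $X_{\alpha_j}v \neq 0$, and by Lemma~\ref{lem:tangentcrit}(\ref{AAA}) the $\Tad$-weight $\beta - \alpha_j$ of $X_{\alpha_j}v$ lies in $R^+ \cup \{0\}$. In the simple-root basis $\beta - \alpha_j$ has the even coefficients $c_i$ for $i \neq j$ and the coefficient $c_j - 1$ at $\alpha_j$; since in type $\ssA$ every element of $R^+ \cup \{0\}$ has all coefficients in $\{0,1\}$, this forces $c_i = 0$ for $i \neq j$ and $c_j = 2$, i.e.\ $\beta = 2\alpha_j$.

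For step (b), I would show that the $\Tad$-weight space of weight $2\alpha_j$ in $V$ is at most one-dimensional; since $\fg\cdot x_0$ is $\Tad$-stable, the corresponding weight space of the quotient $V/\fg\cdot x_0$, and hence of $\Vgg$, is then also at most one-dimensional. Decomposing such an eigenvector along $V = \oplus_i V(2\omega_i)$, its component in $V(2\omega_i)$ has $T$-weight $2\omega_i - 2\alpha_j$. The key observation is that $2\omega_i - 2\alpha_j$ is a weight of $V(2\omega_i)$ only for $i = j$: for $i \neq j$, writing $2\omega_i - 2\alpha_j = 2\omega_i - 2\epsilon_j + 2\epsilon_{j+1}$ in the standard coordinates produces a coordinate equal to $4$ or to $-2$, which lies outside $[0,2]$ and hence outside the convex hull of the Weyl orbit of the highest weight $2\omega_i = (2^i,0^{n-i})$. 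Thus only the component in $V(2\omega_j)$ survives, and its weight $2\omega_j - 2\alpha_j$ is a permutation of $(2^j,0^{n-j})$, hence an extremal (Weyl-conjugate) weight of multiplicity one. Therefore $\dim \Vgg \le n-1 = d_W$.

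Combining the two steps, $d_W \le \dim T_{X_0}\tM^G_{\wm} \le \dim \Vgg \le n-1 = d_W$ forces all inequalities to be equalities; in particular $T_{X_0}\tM^G_{\wm} = \Vgg$ is multiplicity-free with weight set exactly $\{2\alpha_1,\ldots,2\alpha_{n-1}\}$, and $\dim T_{X_0}\tM^G_{\wm} = d_W$. The only genuinely computational point is the weight-multiplicity analysis of step (b); everything else is a direct application of the lemmas of Section~\ref{sec:tools}. I expect the main obstacle to be verifying cleanly that $2\omega_i - 2\alpha_j$ fails to be a weight of $V(2\omega_i)$ for $i \neq j$ (equivalently, that the relevant weight space of $V$ is one-dimensional), which is the exact analogue of the closing paragraph of the proof of Proposition~\ref{prop:case1}.
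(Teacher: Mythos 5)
Your proof is correct and takes essentially the same route as the paper's: the same applications of Lemma~\ref{lem:Tadweights}(\ref{weightinvar}) (giving $\gamma\in\wg\cap\Lambda_R=2\Lambda_R$) and Lemma~\ref{lem:tangentcrit} pin the weights down to $\{2\alpha_1,\ldots,2\alpha_{n-1}\}$, and your step (b) is exactly the paper's closing claim that the $\Tad$-eigenspace of weight $2\alpha_i$ in $V$ is one-dimensional --- the paper merely asserts this and exhibits the spanning vector $X_{-\alpha_i}X_{-\alpha_i}x_0$, whereas you verify it by the convexity/extremal-weight computation, which is a welcome extra detail. Your explicit dimension chain $d_W\le\dim T_{X_0}\tM^G_{\wm}\le\dim\Vgg\le d_W$ via Corollary~\ref{cor:apriori} is precisely how the paper's implicit final step works.
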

\begin{proof}
This proof is very similar to that of Proposition~\ref{prop:case1}. 
Suppose $v$ is a $\Tad$-eigenvector in $V$ of weight $\gamma$ so that  $[v]$ is a nonzero element of $\Vgg$. 
Then \begin{equation} \label{eq:case2gamma1}
\gamma \in\wg \cap \Lambda_R
\end{equation}
by Lemma~\ref{lem:Tadweights}(\ref{weightinvar}). A straightforward calculation yields that
\(\wg \cap \Lambda_R = 2\Lambda_R.\)
Lemma~\ref{lem:tangentcrit}(\ref{BBB}) implies that there exists a simple root $\delta$ of $G$ so that 
\begin{equation} \label{eq:case2gamma2}
\gamma - \delta \text{ (which is the weight of $X_{\delta}v$) belongs to }R^+ \cup \{0\}. \end{equation}
Equations (\ref{eq:case2gamma1}) and (\ref{eq:case2gamma2}) imply that
$\gamma = 2\alpha_i'$ for some $i\in \{1,\ldots,n-1\}$. 

We next claim that the $\Tad$-eigenspace of weight $2\alpha_i$ in $V$ is one dimensional for every $i$ with $1\le i\le n-1$. Indeed, the only $G$-submodule of $V$ which contains an eigenvector of that weight is $V(2\omega_i)$ and the eigenspace is the line spanned by 
\(X_{-\alpha_i}X_{-\alpha_i}x_0.\)
This finishes the proof. 
\end{proof}

\begin{remark} \label{rem:case2bcf}
As in Remark~\ref{rem:case1bcf}, we relate $\tM^G_{\wm}$ to $\tM^{G'}_{p(\wm)}$. 
Put $E' = E\setminus\{2\om_n\}$ and define $V'$ and $x_0'$ as in Remark~\ref{rem:case1bcf}. Then \[T_{X_0}\tM^{G'}_{p(\wm)} \isom (V/\fg'\cdot x_0')^{G'_{x_0'}}\isom  \Vgg \isom T_{X_0}\tM^G_{\wm}.\]
The first isomorphism holds because $p(\wm)$ is $G'$-saturated. Here is an argument for the second. 
One can check that when $n$ is odd, $p({\wg}) \cap \Lambda_R = 2 \Lambda_R$, but when $n$ is even, $p({\wg}) \cap \Lambda_R \supsetneq 2\Lambda_R$ (e.g. for $n=4$, $\alpha_1+ \alpha_3, \alpha_1 + 2\alpha_2 + \alpha_3 \in p({\wg}) \cap \Lambda_R$). So, for $n$ odd, the argument in the proof of Proposition~\ref{prop:case2} shows that $(V'/\fg'\cdot x_0')^{G'_{x'_0}}$ is a multiplicity-free $\Tad$-module with $\Tad$-weight set $\{2\alpha_1, \ldots, 2\alpha_{n-1}\}$. Of course, the multiplicity-freeness also follows from Theorem~\ref{thm:bcf}. For $n$ even, an argument ruling out possible $\Tad$-weights as listed in Remark~\ref{rem:srA} ---like, e.g., in the proof of Lemma~\ref{lem:consecfund}--- again shows that the $\Tad$-weights in  $(V'/\fg'\cdot x_0')^{G'_{x'_0}}$ belong to the set  $\{2\alpha_1, \ldots, 2\alpha_{n-1}\}$ and it follows as before that $(V'/\fg'\cdot x_0')^{G'_{x'_0}}$ is a multiplicity-free $\Tad$-module with $\Tad$-weight set $\{2\alpha_1, \ldots, 2\alpha_{n-1}\}$. 
\end{remark}

\subsection{The modules $(\GL(n), \bigwedge^2\CC^n)$ with $2 \leq n$} \label{subsec:case3}
Here 
\begin{align*}
&E =  \{\om_{2i}\colon 1\le i\le \lfloor\frac{n}{2}\rfloor\};\\
&d_W = \lfloor\frac{n}{2}\rfloor -1.
\end{align*}

When $n$ is odd this module is spherical for $G'=\SL(n)$, because $\<\om_n\>_{\ZZ} \cap \overline{\wg}=0$, and $p(\wm)$ is $G'$-saturated. Corollary~\ref{cor:bcfred} therefore takes care of these cases. 

On the other hand, when $n$ is even, $\om _n \in E$, and so there is no group $G$ with $G' \inn G \subsetneq \overline{G}$ for which $(G,W)$ is spherical. Moreover, for the same reason,  $\wm=\wmo$ is not $G$-saturated. As it needs no extra work compared to $\Vgg$, we prove that $\Vggp$ has dimension $d_W$. 

\begin{prop} \label{prop:case3}
Suppose $n\ge 2$ is even. 
Then the $\Tad$-module $\Vggp$ is multiplicity-free and has $\Tad$-weight set
\[
\{\alpha_{i}+ 2\alpha_{i+1} + \alpha_{i+2} \colon 1\le i\le n-3 \text{ and $i$ is odd}\}.\]
In particular, $\dim \Vggp = \frac{n}{2}-1 = d_W$. Consequently, $\dim T_{X_0}\tM^G_{\wm} = d_W$. 
\end{prop}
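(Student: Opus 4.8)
The plan is to follow the template of Propositions~\ref{prop:case1} and \ref{prop:case2}, but to import multiplicity-freeness and the list of admissible $\Tad$-weights from Bravi and Cupit-Foutou's Theorem~\ref{thm:bcf} after reducing to a $G'$-saturated situation. Since $n$ is even we have $G=\overline{G}=\GL(n)$ and $G'=\SL(n)$, so $\Vggp=(V/\fg\cdot x_0)^{G'_{x_0}}$. First I would strip off the one-dimensional summand $V(\om_n)=\bigwedge^n\CC^n$ of $V$: writing $E'=E\setminus\{\om_n\}$, $V'=\oplus_{\lambda\in E'}V(\lambda)$ and $x_0'=\sum_{\lambda\in E'}v_\lambda$, the facts that $\fg'$ acts trivially on $V(\om_n)$ and that $V(\om_n)\inn\ft\cdot x_0$ give, exactly as in Remark~\ref{rem:case2bcf}, a $\Tad$-equivariant isomorphism
\[\Vggp\isom (V'/\fg'\cdot x_0')^{G'_{x_0'}}\isom T_{X_0}\tM^{G'}_{\wm'},\]
where $\wm'=p(\wm)=\<p(\om_2),p(\om_4),\ldots,p(\om_{n-2})\>_{\NN}$ (using $p(\om_n)=0$) and the second isomorphism is the one of Remark~\ref{rem:Psatinjtan}.

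Next I would verify that $\wm'$ is freely generated and $G'$-saturated. Freeness is clear because the $p(\om_{2i})$ are linearly independent, and $G'$-saturation follows from Lemma~\ref{lem:bcfcritpsat} applied with the simple roots $\alpha_2,\alpha_4,\ldots,\alpha_{n-2}$, since $\<p(\om_{2i}),\alpha_{2j}^\vee\>=\delta_{ij}$ in $\SL(n)$. As $G'$ is semisimple, Theorem~\ref{thm:bcf}(1) then applies and tells us that $\Vggp$ is a multiplicity-free $\Tad$-module all of whose weights belong to the list (\SRo)--(\SRf) of Remark~\ref{rem:srA}.

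The key step is to intersect that list with the invariance constraint. By Lemma~\ref{lem:Tadweights}(\ref{weightinvar}) (with $H=G'$, $T_H=T'$), every $\Tad$-weight $\gamma$ of a nonzero element of $\Vggp$ lies in $L:=p(\wg)\cap\Lambda_R$; identifying root lattices via $p$, an element $\gamma=\sum_i c_i\alpha_i$ of $\Lambda_R$ lies in $L$ exactly when, for every odd $j$, the coefficient $-c_{j-1}+2c_j-c_{j+1}$ of $\om_j$ in $\gamma$ vanishes (with $\om_0=\om_n=0$). A short parity check then shows that no weight of type (\SRo), (\SRt) or (\SRth) lies in $L$, since each of them has a nonzero $\om$-coefficient at some odd index; on the other hand a weight $\alpha_i+2\alpha_{i+1}+\alpha_{i+2}$ of type (\SRf) has nonzero $\om$-coefficients only at the indices $i-1,i+1,i+3$, so it lies in $L$ precisely when $i$ is odd. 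Together with the admissible range $1\le i\le n-3$ this leaves exactly the weights $\alpha_i+2\alpha_{i+1}+\alpha_{i+2}$ with $i$ odd, a set of cardinality $\frac{n}{2}-1$.

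Finally I would combine the estimates. Multiplicity-freeness yields $\dim\Vggp\le\frac{n}{2}-1=d_W$. Conversely, Proposition~\ref{prop:apriori} gives $\dim T_{X_0}\tM^G_{\wm}\ge\dim\tM^G_{\wm}\ge d_W$, while Corollary~\ref{cor:injtan} and Lemma~\ref{lem:Tadweights}(\ref{inclVgs}) give $T_{X_0}\tM^G_{\wm}\inn\Vgg\inn\Vggp$; hence
\[d_W\le\dim T_{X_0}\tM^G_{\wm}\le\dim\Vggp\le d_W,\]
forcing equality throughout. In particular all $\frac{n}{2}-1$ listed weights actually occur, $\Vggp$ has exactly the asserted weight set, and $\dim T_{X_0}\tM^G_{\wm}=d_W$. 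I expect the only delicate point to be the reduction to $V'$ together with the verification that $\wm'$ is $G'$-saturated, which is precisely what legitimizes invoking Theorem~\ref{thm:bcf}; the remaining parity bookkeeping intersecting the spherical-root list with $L$ is routine. (Alternatively, one could avoid Theorem~\ref{thm:bcf} and argue directly as in Proposition~\ref{prop:case2}, using Lemma~\ref{lem:tangentcrit} to constrain $\gamma$, but the reduction above is shorter.)
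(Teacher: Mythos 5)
Your proof is correct, and its skeleton coincides with the paper's: the same splitting $V=V'\oplus V(\om_n)$ with $\Vggp\isom (V'/\fg'\cdot x_0')^{G'_{x_0'}}$, the same observation that $\<\om_2,\om_4,\ldots,\om_{n-2}\>_{\NN}$ is freely generated and $G'$-saturated (the paper asserts saturation without detail; your check via Lemma~\ref{lem:bcfcritpsat} with $\alpha_2,\alpha_4,\ldots,\alpha_{n-2}$ is a valid way to see it), the same appeal to Theorem~\ref{thm:bcf} for multiplicity-freeness, and the same constraint $\gamma\in p(\wg)\cap\Lambda_R$ from Lemma~\ref{lem:Tadweights}(b). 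Where you genuinely diverge is the final pinning-down of the weights. The paper computes the lattice $p(\wg)\cap\Lambda_R$ exactly, showing it equals $\<\alpha_i+2\alpha_{i+1}+\alpha_{i+2}\colon i \text{ odd}\>_{\ZZ}$, and then invokes Lemma~\ref{lem:tangentcrit}(B) (existence of a simple root $\delta$ with $\gamma-\delta\in R^+\cup\{0\}$) to force $\gamma$ to be a single generator; it never uses the list of Remark~\ref{rem:srA} in this proof. You instead extract from Theorem~\ref{thm:bcf} the full constraint that every weight belongs to the (\SRo)--(\SRf) list and intersect that list with the membership test for $p(\wg)$, which only requires checking that the $\om_j$-coefficients at odd $j$ vanish --- no exact lattice computation and no root-operator argument. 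Both routes are routine and sound (your parity checks for each of the four families are correct, including the boundary conventions $\om_0=\om_n=0$); your variant leans harder on Bravi--Cupit-Foutou's Table 1 and is precisely the style of argument the paper itself deploys elsewhere (Remark~\ref{rem:case2bcf} for even $n$ in family (2), and Lemmas~\ref{lem:consecfund} and \ref{lem:consecfundgap}), while the paper's choice here buys the exact description of $p(\wg)\cap\Lambda_R$ independently of that table. Two small remarks: the intermediate identification $\Vggp\isom T_{X_0}\tM^{G'}_{p(\wm)}$ via Remark~\ref{rem:Psatinjtan} is dispensable for your bound, since Theorem~\ref{thm:bcf} applies directly to $(V'/\fg'\cdot x_0')^{G'_{x_0'}}$ --- though it is true and is recorded by the paper as Remark~\ref{rem:case3bcf}; and your explicit closing sandwich $d_W\le\dim T_{X_0}\tM^G_{\wm}\le\dim\Vgg\le\dim\Vggp\le d_W$ is exactly the paper's standard appeal to Corollary~\ref{cor:apriori}, made explicit, and correctly yields that all $\frac{n}{2}-1$ listed weights actually occur.
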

\begin{proof}
Put $E'=E \setminus\{\om_n\}$ and define $V'$ and $x_0'$ as in Remark~\ref{rem:case1bcf}.
Then $\fg \cdot x_0 = \fg' \cdot x_0' \oplus V(\omega_n)$ and $V= V' \oplus V(\omega_n)$ and so
$\Vg \isom V'/\fg'\cdot x_0'$ as $G'_{x_0} \rtimes \Tad$-modules. 
Since the freely generated submonoid 
\(p(\<E\>_{\NN}) = \<\om_2, \om_4, \ldots, \om_{n-2}\>_{\NN} \inn X(T')\)
of dominant weights of $G'$  is $G'$-saturated, Theorem~\ref{thm:bcf} tells us that 
\(\Vggp \isom (V'/\fg'\cdot x_0')^{G'_{x_0}}\)
is a multiplicity-free $\Tad$-module.

Now, suppose $v$ is a $\Tad$-eigenvector in $V$ of weight $\gamma$ so that  $[v]$ is a nonzero element of $\Vggp$. 
Then \begin{equation} \label{eq:case3gamma1}
\gamma \in p(\wg) \cap \Lambda_R
\end{equation}
by Lemma~\ref{lem:Tadweights}(\ref{weightinvar}).
We next claim that
\[p(\wg) \cap \Lambda_R = \<\alpha_{i}+ 2\alpha_{i+1} + \alpha_{i+2} \colon 1\le i\le n-3 \text{ and $i$ is odd}\>_{\ZZ}.\]
Indeed, the inclusion '$\supseteq$' is clear. For the other inclusion, let $\beta=\sum_{i=1}^{n-1}{u_i} \alpha_i$, with all $u_i\in \ZZ$,  be an element of $\Lambda_R$. Then
$\beta = \sum_{i=1}^{n-1}u_i(-\om_{i-1}+2\om_i -\om_{i+1})$, where $\om_0 = \om_n = 0$. 
Then, after rearranging terms, we see that $\beta \in p(\wg)$ if and
only if for all even $i \in \{0, \ldots, n-2\}$, we have
$2u_{i+1} = u_{i}+u_{i+2}$,
where $u_0 = u_n=0$. Consequently, when $i$ is even, so is $u_i$ and
$$\beta = \sum_{2\le i \le {n-2}, i\text{ even}} \frac{u_{i}}{2}(\alpha_{i-1}+2\alpha_{i} + \alpha_{i+1}),$$
which proves the claim. 

Now, Lemma~\ref{lem:tangentcrit}(\ref{BBB}) implies that there exists a simple root $\delta$ of $G'$ so that 
\begin{equation} \label{eq:case3gamma2}
\gamma - \delta \text{ (which is the weight of $X_{\delta}v$) belongs to }R^+ \cup \{0\}. \end{equation}
Equations (\ref{eq:case3gamma1}) and (\ref{eq:case3gamma2}) imply that $\gamma = \alpha_{i}+ 2\alpha_{i+1} + \alpha_{i+2}$ for some odd $i$ with $1\le i\le n-3$. We have proved the proposition. \end{proof}

\begin{remark}\label{rem:case3bcf}
The proof of Proposition~\ref{prop:case3} implies that when $n$ is even $$T_{X_0}\tM^{G'}_{p(\wm)} \isom (V'/\fg'\cdot x_0')^{G'_{x_0'}} \isom \Vggp \isom T_{X_0}\tM^G_{\wm}$$ as $\Tad$-modules. As said before, when $n$ is odd, $T_{X_0} \tM^{G'}_{p(\wm)} \isom T_{X_0} \tM^{G}_{\wm}$ by Corollary~\ref{cor:bcfred}.
\end{remark}

\subsection{The modules $(\GL(n) \times \GGm, \bigwedge^2\CC^n \oplus \CC^n)$ with $4 \leq n$}
\label{subsec:case15}
We now have
\begin{align*}
&E =  \{\om_{2i-1}+\epsilon\colon 1\le i\le \Bigl\lceil\frac{n}{2}\Bigr\rceil\} \cup \{\om_{2i}\colon 1\le i\le \Bigl\lfloor\frac{n}{2}\Bigr\rfloor\};\\
&d_W = n-2.
\end{align*}
The modules $W$ are not spherical for $G'$ because $\wg \cap \<\om_n, \epsilon\>_{\ZZ} \neq 0$.  
Moreover, for the same reason, $\wm$ is not $G$-saturated for any intermediate group $G$ for which $W$ is spherical. 

\begin{prop} \label{prop:case15}
The $\Tad$-module $\Vggp$ is multiplicity-free with $\Tad$-weight set
\[\{\alpha_i+\alpha_{i+1} \colon 1\le i\le n-2\}.\]
In particular, $\dim \Vggp = d_W$.  Consequently, $\dim T_{X_0}\tM^G_{\wm} = d_W$. 
\end{prop}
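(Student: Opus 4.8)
The plan is to exploit the fact, recorded in Remark~\ref{rem:VggpG}, that $\Vggp$ depends only on $(\overline{G},W)$, and to reduce the computation to a $G'$-saturated situation covered by Theorem~\ref{thm:bcf}. Writing $L_1,\dots,L_n$ for the standard weights of $\GL(n)$, so that $\alpha_j = L_j-L_{j+1}$ and $\om_k = L_1+\dots+L_k$, I first determine the restriction $r(E)$. Since $r$ kills both $\epsilon$ and $\om_n$, one checks $r(E)=\{\om_1,\dots,\om_{n-1}\}$, together with a single $0$ coming from the unique $\lambda_0\in E$ with $r(\lambda_0)=0$ (namely $\om_n$ if $n$ is even, $\om_n+\epsilon$ if $n$ is odd). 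Thus as a $G'=\SL(n)$-module $V\isom\bigoplus_{k=1}^{n-1}V(\om_k)\oplus\CC$, the last summand being $G'$-trivial and spanned by $v_{\lambda_0}\in\ft\cdot x_0\inn\fg\cdot x_0$. Since $\fg'\cdot x_0=\fg'\cdot x_0'$ and $\ft\cdot x_0=\<v_\lambda\>_\CC$, one gets $\fg\cdot x_0=\fg'\cdot x_0'\oplus\CC v_{\lambda_0}$ with $x_0'=\sum_{\lambda\neq\lambda_0}v_\lambda$, whence a $(G'_{x_0}\rtimes\Tad)$-equivariant isomorphism $V/\fg\cdot x_0\isom V'/\fg'\cdot x_0'$, where $V'=\bigoplus_{k=1}^{n-1}V(\om_k)$. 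Therefore $\Vggp\isom(V'/\fg'\cdot x_0')^{G'_{x_0'}}$, and the weight monoid of this $G'$-picture is $\<\om_1,\dots,\om_{n-1}\>_\NN=\Lambda^+$, which is freely generated and $G'$-saturated (Lemma~\ref{lem:bcfcritpsat}). I would also record that here $E^\perp=\emptyset$, so by Lemma~\ref{lem:isotropygroup} we have $\fg'_{x_0}=\fu\oplus\ft'_{x_0}$ and $\fg'\cdot x_0'$ has $\Tad$-weight set $R^+\cup\{0\}$; in particular $G'_{x_0}$-invariance amounts essentially to $\fu$-invariance.

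Next I would apply Theorem~\ref{thm:bcf} to $(V'/\fg'\cdot x_0')^{G'_{x_0'}}$: it is a multiplicity-free $\Tad$-module whose weights lie among the four families (\SRo)--(\SRf) listed in Remark~\ref{rem:srA}. This finiteness is the point of passing through the saturated picture, because the weight-lattice constraint $\gamma\in p(\wg)\cap\Lambda_R$ used in Sections~\ref{subsec:case1}--\ref{subsec:case3} is now \emph{vacuous}: indeed $p(\wg)=\<\om_1,\dots,\om_{n-1}\>_\ZZ$ is the full weight lattice, so $p(\wg)\cap\Lambda_R=\Lambda_R$ gives no information. I would then cut the candidate list down using the elementary observation that $\gamma$ is a $\Tad$-weight of $V(\om_k)$ iff $\om_k-\gamma$ is a weight of $\bigwedge^k\CC^n$, i.e.\ a sum of $k$ distinct $L_j$'s. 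A one-line coefficient check rules out (\SRt), $2\alpha_i$ (it would force coefficient $-1$ on $L_i$), and rules out (\SRo), $\alpha_i+\alpha_j$ with $|i-j|\ge2$ (no $k$ avoids a forbidden coefficient), so neither is a weight of $V'$ at all.

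The genuinely delicate step — and the one I expect to be the main obstacle — is excluding the long connected (\SRth) weights $\alpha_a+\dots+\alpha_{b}=L_a-L_{b+1}$ of length $\ge3$ and the (\SRf) weights $\alpha_i+2\alpha_{i+1}+\alpha_{i+2}$, \emph{both of which do occur as weights of $V'$}, so that a mere weight count is insufficient. Here I would compute inside $V'/\fg'\cdot x_0'$ directly. For a connected (\SRth) weight $\gamma=L_a-L_c\in R^+$ (length $c-a\ge2$) the $\gamma$-weight space of $V'$ is spanned by the $c-a$ vectors $w_k\in V(\om_k)$, $k=a,\dots,c-1$, while that of $\fg'\cdot x_0'$ is the single diagonal line $\CC\,X_{-\gamma}x_0'=\CC\sum_k(\pm)w_k$; applying the operators $X_\eta$ and imposing $\fu$-invariance as in Lemma~\ref{lem:tangentcrit}, I would show the invariant subspace of the $(c-a-1)$-dimensional quotient weight space is $1$-dimensional exactly when $c-a=2$ (giving $\alpha_a+\alpha_{a+1}$) and is $0$ when $c-a>2$. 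An analogous explicit check disposes of (\SRf): the unique weight vector of weight $\alpha_i+2\alpha_{i+1}+\alpha_{i+2}$ sits in $V(\om_{i+1})$, and $X_{\alpha_{i+1}}$ sends it to a vector of weight $L_i-L_{i+3}$ whose only component is in $V(\om_{i+1})$, whereas the corresponding line in $\fg'\cdot x_0'$ spreads across $V(\om_i)\oplus V(\om_{i+1})\oplus V(\om_{i+2})$; hence the image is not in $\fg'\cdot x_0'$ and the class is not $\fu$-invariant. Together these computations leave exactly the weight set $\{\alpha_i+\alpha_{i+1}:1\le i\le n-2\}$, so $\Vggp$ is multiplicity-free with $\dim\Vggp=n-2=d_W$.

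Finally I would conclude as in the other cases: Corollary~\ref{cor:injtan} gives the $\Tad$-equivariant injection $T_{X_0}\tM^G_{\wm}\into\Vggp$, so $\dim T_{X_0}\tM^G_{\wm}\le n-2=d_W$, and combined with the a priori inequality $\dim T_{X_0}\tM^G_{\wm}\ge d_W$ of Proposition~\ref{prop:apriori} (Corollary~\ref{cor:apriori}) this forces $\dim T_{X_0}\tM^G_{\wm}=d_W$, as claimed.
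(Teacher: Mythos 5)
Your proposal is correct and takes essentially the same route as the paper: the identical splitting $V = V' \oplus \CC z$ (with $z$ the highest weight vector of the one-dimensional summand, $v_{\omega_n}$ or $v_{\omega_n+\epsilon}$ according to the parity of $n$) and the reduction $\Vg \isom V'/\fg'\cdot x_0'$ as $G'_{x_0}\rtimes\Tad$-modules, followed by the Bravi--Cupit-Foutou saturated theory applied to the sum of all fundamental representations of $\SL(n)$. The only difference is one of sourcing: the paper concludes by directly citing \cite[Corollary 3.9 and Theorem 3.10]{bravi&cupit} for the weight set, whereas you re-derive that computation from Theorem~\ref{thm:bcf} and the candidate list of Remark~\ref{rem:srA} by explicitly excluding the (\SRth)-weights of length $\ge 3$ and the (\SRf)-weights with root operators --- which is precisely the content (and the proof method) of the paper's own Lemma~\ref{lem:consecfund} in the case $k=m-1$.
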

\begin{proof}
Note that
\(V= V' \oplus \CC z\),
where $
z := 
v_{\omega_n+\epsilon} \in V$ if $n$ is odd and $z:=
v_{\omega_n} \in V$ if $n$ is even,
and that $V'\isom V(\omega_1)\oplus V(\omega_2) \oplus \ldots \oplus V(\omega_{n-1})$ as a $G'$-module.
Since $\fg\cdot x_0 = \fg' \cdot x_0' \oplus \CC z $, where 
\(x_0' = x_0-z\),
it follows that \(\Vg \isom V'/\fg'\cdot x_0' \) as  $G'_{x_0} \rtimes \Tad$-modules.
Because $G'_{x_0} = G'_{x_0'}$ we have $\Vggp \isom (V'/\fg'\cdot x_0' )^{G'_{x_0'}}$, and by \cite[Corollary 3.9 and Theorem 3.10]{bravi&cupit} we know that $(V'/\fg'\cdot x_0' )^{G'_{x_0'}}$ is a multiplicity-free $\Tad$-module whose $\Tad$-weight set is 
\(\{\alpha_i+\alpha_{i+1} \colon 1\le i\le n-2\}.\)
\end{proof}

\begin{remark}\label{rem:case15bcf}
The proof of Proposition~\ref{prop:case15} implies that $$T_{X_0}\tM^{G'}_{p(\wm)} \isom (V'/\fg'\cdot x_0')^{G'_{x_0'}} \isom \Vggp \isom T_{X_0}\tM^G_{\wm}$$ as $\Tad$-modules. 
\end{remark}

\subsection{The modules $(\GL(n) \times \GGm, \bigwedge^2\CC^n \oplus (\CC^n)^*)$ with  $4 \leq n$}
\label{subsec:case16}
For these modules we have
\begin{align*}
&E = \{\lambda_i \colon 1\le i \le n-2, i \text{ odd}\} \cup \{\lambda_j\colon 1\le j \le n,  j \text{ even}\} \cup \{\mu\};\\
&d_W = n-2,
\end{align*}
where
\begin{align*}
\lambda_i &:= \omega_i + \epsilon &&\text{for $1\le i \le n-2$ with $i$ odd}; \\
\lambda_j &:= \omega_j &&\text{for $1\le j \le n$ with $j$ even}; \\
\mu &:= \omega_{n-1} - \omega_{n} + \epsilon.
\end{align*}
These modules are not spherical for $G'$  because $\wg \cap \<\om_n, \epsilon\>_{\ZZ} \neq 0$.  Moreover, for the same reason, $\wm$ is not $G$-saturated for any intermediate group $G$ for which $W$ is spherical.

\begin{prop} \label{prop:case16} Suppose $n\ge 4$. 
The $\Tad$-module $T_{X_0}\tM^G_{\wm}$ is multiplicity-free and has $\Tad$-weight set
\begin{align}
&\{\alpha_i+\alpha_{i+1} \colon 1\le i\le n-2\} &&\text{when $n$ is even};\\
& \{\alpha_i+\alpha_{i+1} \colon 1\le i\le n-3\} \cup \{\alpha_{n-1}\}  &&\text{when $n$ is odd}. \label{eq:case16nodd}
\end{align}
In particular, $\dim T_{X_0}\tM^G_{\wm} = d_W$. 
\end{prop}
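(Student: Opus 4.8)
The overall plan is to exploit the $\Tad$-equivariant chain $T_{X_0}\tM^G_{\wm} \into \Vgg \inn \Vggp$ coming from Corollary~\ref{cor:injtan} and Lemma~\ref{lem:Tadweights}(\ref{inclVgs}), together with Corollary~\ref{cor:apriori}, which reduces the whole statement to the single inequality $\dim T_{X_0}\tM^G_{\wm} \le d_W = n-2$. By Lemma~\ref{lem:gx0} we have $\fg\cdot x_0 = \fg'\cdot x_0$, so $\Vg = V/\fg'\cdot x_0$ and $V \isom \bigoplus_{\lambda\in E}V(p(\lambda))$ as a $G' = \SL(n)$-module (cf.\ Remark~\ref{rem:VggpG}). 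Since $p(\mu) = \om_{n-1}$ and $p(\om_n) = 0$, the multiset $p(E)$ behaves very differently according to the parity of $n$, and I would split the proof accordingly.

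For $n$ even, $p$ is injective on $E$ with $p(E) = \{0,\om_1,\dots,\om_{n-1}\}$. The summand $V(\om_n)$ is the one-dimensional trivial $G'$-module $\CC v_{\om_n}$, and $v_{\om_n}\in\ft\cdot x_0\inn\fg\cdot x_0$, so it drops out entirely: setting $E' = E\setminus\{\om_n\}$, $V' = \oplus_{\lambda\in E'}V(\lambda)$ and $x_0' = x_0 - v_{\om_n}$, one obtains $\Vg \isom V'/\fg'\cdot x_0'$ and hence $\Vggp \isom (V'/\fg'\cdot x_0')^{G'_{x_0'}}$, exactly as in the proof of Proposition~\ref{prop:case15}. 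The monoid $p(\<E'\>_\NN) = \<\om_1,\dots,\om_{n-1}\>_\NN = \Lambda^+$ is freely generated and $G'$-saturated, so Theorem~\ref{thm:bcf} identifies this space as the multiplicity-free $\Tad$-module with weight set $\{\alpha_i+\alpha_{i+1}\colon 1\le i\le n-2\}$. This gives $\dim T_{X_0}\tM^G_{\wm}\le\dim\Vggp = n-2 = d_W$; the reverse inequality from Corollary~\ref{cor:apriori} then forces equality, and the equal-dimensional injection $T_{X_0}\tM^G_{\wm}\into\Vggp$ yields the asserted weight set.

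For $n$ odd the reduction breaks down, and this is where the real work lies. Now $p(\lambda_{n-1}) = p(\mu) = \om_{n-1}$, so $V$ contains $V(\om_{n-1})$ with multiplicity two as a $G'$-module and $(G',W)$ is genuinely non-spherical. Moreover the lattice constraint of Lemma~\ref{lem:Tadweights}(\ref{weightinvar}) is vacuous for $G'$ here (because $p(\wgo) = \Lambda$), so I would instead bound the $\Tad$-weights directly using Lemma~\ref{lem:tangentcrit}: for a $\Tad$-eigenvector $v$ with $0\neq[v]\in\Vggp$, part~(\ref{AAA}) constrains $X_\alpha v$ for every positive root $\alpha$, while part~(\ref{BBB}) forces some simple root to raise $v$. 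Running this through each summand $V(\om_i)$ and across the two copies of $V(\om_{n-1})$, I expect to recover the genuine weights $\{\alpha_i+\alpha_{i+1}\colon 1\le i\le n-3\}\cup\{\alpha_{n-1}\}$ together with the ``extra'' candidates $\alpha_{n-2}+\alpha_{n-1}$ (and possibly a second occurrence of $\alpha_{n-1}$ coming from the doubled summand). The genuine weights are then realized by $G_{x_0}$-invariant sections extending to $X_0$, each with multiplicity one; in particular one copy of $\alpha_{n-1}$ survives, spanned by the class of $X_{-\alpha_{n-1}}v_\mu$.

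The hard part will be excluding the extra weight $\alpha_{n-2}+\alpha_{n-1}$ (and any spurious second copy of $\alpha_{n-1}$) from $T_{X_0}\tM^G_{\wm}$, for which I would invoke the non-extension criterion of Proposition~\ref{prop:excl}. For each extra weight $\beta$ one must produce a $\lambda\in E$ meeting conditions (\ESo)--(\ESf): a positive coefficient of $\lambda$ in the expansion of $\beta$ over $E$, vanishing of the $V(\lambda)$-projection of the chosen eigenvector $v$, the compatibility condition (\ESth) on every simple root $\eta$ with $\<\lambda,\eta^{\vee}\>\neq0$, and---when $\beta\in R^+\setminus E^\perp$---an element $\xi\in E\setminus\{\lambda\}$ as in (\ESf). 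Exhibiting the correct eigenvector $v$ (a combination of contributions from $V(\om_{n-2})$ and the two copies of $V(\om_{n-1})$) and verifying these four conditions is the main technical obstacle; it is precisely the phenomenon flagged in Remark~\ref{rem:saturation}, where $\Vggp$ strictly exceeds $T_{X_0}\tM^G_{\wm}$. Once the extra weights are eliminated, multiplicity-freeness and the count $\dim T_{X_0}\tM^G_{\wm} = n-2 = d_W$ follow, completing the odd case and hence the proposition.
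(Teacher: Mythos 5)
Your even-$n$ argument is correct and is essentially the paper's own proof (Proposition~\ref{prop:case16neven}): split off the one-dimensional summand $V(\lambda_n)$, identify the complement with the sum of all fundamental representations of $G'=\SL(n)$, and quote the $G'$-saturated case. One small imprecision: Theorem~\ref{thm:bcf} by itself only gives multiplicity-freeness and membership of the weights in the list of Remark~\ref{rem:srA}; pinning down the exact set $\{\alpha_i+\alpha_{i+1}\colon 1\le i\le n-2\}$ requires the finer result of Bravi--Cupit-Foutou (their Corollary 3.9 and Theorem 3.10, generalized in Lemma~\ref{lem:consecfund}), which is what the paper invokes via the proof of Proposition~\ref{prop:case15}.

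For odd $n$ your skeleton agrees with the paper's, but the two steps you defer both conceal genuine gaps. First, the bound on $\Vggp$: since $p(\wgo)$ is the full weight lattice of $\SL(n)$, the lattice constraint is vacuous, and Lemma~\ref{lem:tangentcrit} alone leaves a large set of candidate weights (all simple roots, all strings $\alpha_{i+1}+\cdots+\alpha_{i+r}$, etc.) and says nothing about multiplicities; ``running it through each summand'' would amount to redoing the Bravi--Cupit-Foutou analysis by hand. The paper short-circuits this by decomposing $V = V' \oplus Z_{n-1}$ along the diagonal and antidiagonal vectors $v_{\lambda_{n-1}} \pm v_{\mu}$ in the doubled summand, observing $\fg\cdot x_0 = \fg'\cdot x_0 \oplus \CC(v_{\lambda_{n-1}}-v_{\mu})$, and extracting both multiplicity-freeness and the weight list from the resulting exact sequence: the $V'$-part is $G'$-saturated, and the quotient contributes exactly one line, of weight $\alpha_{n-1}$ (Proposition~\ref{prop:case16nodd}); in particular your worry about a spurious second copy of $\alpha_{n-1}$ dissolves, as the two-dimensional weight-$\alpha_{n-1}$ space of $V$ loses one dimension to $X_{-\alpha_{n-1}}x_0 \in \fg\cdot x_0$. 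Second, and more seriously, the exclusion of $\beta = \alpha_{n-2}+\alpha_{n-1}$: you name Proposition~\ref{prop:excl} and its conditions but miss that the whole step depends on the intermediate group $G$. Whether $\beta$ lies in $\wg$ at all is a condition on the torus --- Lemma~\ref{lem:betainlam16} shows it happens precisely when $\ft = \ker[(a+1)\omega_n - (a-1)\epsilon]$ for some integer $a$, and otherwise there is nothing to exclude --- and the expansion~(\ref{eq:beta16}) of $\beta$ over $E$ then involves $a$, so the set of positive coefficients, hence the admissible choices of $\lambda$ in condition~(\ESo), varies with $G$: the paper must take $\lambda = \mu$ when $a<0$ and $\lambda = \lambda_{n-1}$ when $a\ge 0$, always with $v = X_{-\beta}v_{\lambda_{n-2}}$ (Proposition~\ref{prop:exclbeta16}). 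A single uniform choice of $(\lambda,v)$, as your sketch implies, cannot verify (\ESo) for all admissible $G$, so as written the non-extension step is a missing argument, not a routine verification. (Also, proving directly that the ``genuine'' weights extend to $X_0$ is unnecessary: once the upper bound $\dim T_{X_0}\tM^G_{\wm}\le d_W$ is in hand, Corollary~\ref{cor:apriori} and the inclusion into the multiplicity-free module $\Vggp$ give the weight set for free, as you correctly exploit in the even case.)
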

\begin{proof}
When $n$ is even, we are done by Proposition~\ref{prop:case16neven}, because $\dim \Vggp = d_W$. 
On the other hand, when $n$ is odd, let $J$ be the set (\ref{eq:case16nodd}) and put $\beta = \alpha_{n-2}+\alpha_{n-1}$. We prove in Proposition~\ref{prop:case16nodd} that $\Vggp $ is multiplicity-free, and that its $\Tad$-weight set is a subset of $J \cup \{\beta\}$ and contains $\beta$.  In particular, $\dim \Vggp \le d_W + 1$.  When $\beta$ is not a $\Tad$-weight of $\Vgg$, it follows that $\dim \Vgg \le d_W$ and we are done. We show in Proposition~\ref{prop:exclbeta16} that even when $\beta$ is a $\Tad$-weight of $\Vgg$, the corresponding section  in $H^0(G\cdot x_0, \shN_{X_0})^G$ does not extend to $X_0$. Consequently $\dim T_{X_0}\tM^G_{\wm} \le d_W$ and the proposition follows. 
\end{proof}

 \begin{prop} \label{prop:case16neven}
Suppose $n\ge 4$ is even. Then $\Vggp$ is a multiplicity-free $\Tad$-module with $\Tad$-weight set
\[\{\alpha_i+\alpha_{i+1} \colon 1\le i\le n-2\}.\]
In particular, $\dim \Vggp = d_W$. 
\end{prop}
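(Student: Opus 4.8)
The plan is to reduce to the $\SL(n)$-computation already carried out in the proof of Proposition~\ref{prop:case15}, using that, since $n$ is even, the basic weight $\lambda_n = \omega_n$ lies in $E$ and restricts to $0$ on $G' = \SL(n)$.

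First I would split off the summand $V(\omega_n)$. As a $\overline{G}$-module $V(\omega_n) = \bigwedge^n\CC^n$ is one-dimensional with trivial $G'$-action; write $z := v_{\omega_n}$, set $x_0' := x_0 - z$ and $V' := \bigoplus_{\lambda\in E,\ \lambda\neq\omega_n} V(\lambda)$. The restriction map $r\colon X(\overline{T})\onto X(T')$ kills both $\epsilon$ and $\omega_n$, so as a $G'$-module $V'\isom V(\omega_1)\oplus V(\omega_2)\oplus\cdots\oplus V(\omega_{n-1})$ and $x_0' = \sum_{i=1}^{n-1}v_{\omega_i}$. This is exactly the $\SL(n)$-datum occurring in the proof of Proposition~\ref{prop:case15}.

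Next I would check that the extra direction $\CC z$ splits off cleanly. Since $z$ is a highest weight vector, $z\in\ft\cdot x_0\inn\fg\cdot x_0$, and since $V(\omega_n)$ is $G'$-trivial we have $\fg'\cdot x_0 = \fg'\cdot x_0'$. Combining $\fg\cdot x_0 = \ft\cdot x_0 + \fg'\cdot x_0$ with $\ft\cdot x_0 = \ft'\cdot x_0'\oplus\CC z$ (which holds because the elements of $E$, resp. of $r(E\setminus\{\omega_n\}) = \{\omega_1,\ldots,\omega_{n-1}\}$, are linearly independent) yields $\fg\cdot x_0 = \fg'\cdot x_0'\oplus\CC z$; the sum is direct because $\fg'\cdot x_0'\inn V'$ while $z\in V(\omega_n)$. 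Hence $\Vg\isom V'/\fg'\cdot x_0'$ as $(G'_{x_0}\rtimes\Tad)$-modules, and since $z$ is $G'$-fixed we have $G'_{x_0} = G'_{x_0'}$, so $\Vggp\isom (V'/\fg'\cdot x_0')^{G'_{x_0'}}$.

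Finally, because the weight monoid $\<\omega_1,\ldots,\omega_{n-1}\>_{\NN}$ is the full dominant monoid of $\SL(n)$, hence freely generated and $G'$-saturated, I would invoke \cite[Corollary 3.9 and Theorem 3.10]{bravi&cupit} (equivalently Theorem~\ref{thm:bcf}) exactly as in the proof of Proposition~\ref{prop:case15} to conclude that $(V'/\fg'\cdot x_0')^{G'_{x_0'}}$ is a multiplicity-free $\Tad$-module with $\Tad$-weight set $\{\alpha_i+\alpha_{i+1}\colon 1\le i\le n-2\}$, so that $\dim\Vggp = n-2 = d_W$. The argument is essentially routine; the only step needing care is the direct-sum decomposition $\fg\cdot x_0 = \fg'\cdot x_0'\oplus\CC z$ together with the identity $G'_{x_0} = G'_{x_0'}$, which is what lets us transport the Bravi--Cupit-Foutou computation unchanged. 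Note that this clean splitting relies crucially on $n$ being even (so that $\omega_n\in E$ restricts to $0$); the odd case, where no basic weight restricts to $0$, is precisely why Proposition~\ref{prop:case16nodd} is needed instead.
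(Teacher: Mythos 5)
Your proposal is correct and takes essentially the same route as the paper: it splits off the one-dimensional summand $V(\lambda_n)=V(\omega_n)$, identifies $V'$ as the sum of all fundamental $\SL(n)$-representations, and invokes \cite[Corollary 3.9 and Theorem 3.10]{bravi&cupit}, which is exactly what the paper does by declaring the rest of the argument identical to the proof of Proposition~\ref{prop:case15}. The details you spell out --- $\fg\cdot x_0=\fg'\cdot x_0'\oplus\CC z$ via $\ft\cdot x_0=\ft'\cdot x_0'\oplus\CC z$, and $G'_{x_0}=G'_{x_0'}$ --- are precisely the steps carried out in that earlier proof.
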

\begin{proof}
Consider the $G$-submodule $V'$  of $V$ defined as
\[V' := V(\lambda_1) \oplus V(\lambda_2) \oplus \ldots \oplus V(\lambda_{n-2}) \oplus V(\mu).\]
Note that as a $G'$-module, $V'$ is the direct sum of the fundamental representations. Furthermore, $V = V' \oplus V(\lambda_n)$ and $V(\lambda_n)$ is one-dimensional. The rest of the proof is identical to that of Proposition~\ref{prop:case15}. 
\end{proof}

When $n$ is odd, determining $\Vggp$ requires a little more care, because $V(\lambda_{n-1}) \isom V(\mu)$ as $G'$-modules.  
\begin{prop}\label{prop:case16nodd}
Suppose $n\ge 5$ is odd. Then $\Vggp$ is a multiplicity-free $\Tad$-module. Its $\Tad$-weight set is a subset of
\begin{equation} \label{eq:weights16nodd}
\{\alpha_i+\alpha_{i+1} \colon 1\le i\le n-2\} \cup \{\alpha_{n-1}\}. 
\end{equation}
The weight $\beta=\alpha_{n-2} + \alpha_{n-1}$ occurs and its eigenspace  is spanned by the vector 
\[
[X_{-\beta} v_{\lambda_{n-2}}] = - [X_{-\beta} (v_{\lambda_{n-1}} + v_{\mu})].
\]
\end{prop}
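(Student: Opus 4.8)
The plan is to compute $\Vggp = (V/\fg\cdot x_0)^{G'_{x_0}}$ directly, exploiting that here the isotropy group is as small as possible. First I would record, using Lemma~\ref{lem:isotropygroup}, that $G'_{x_0}$ is the maximal unipotent subgroup $U$ of $G' = \SL(n)$: the restricted weights $r(E)$ comprise all fundamental weights $\omega_1,\ldots,\omega_{n-1}$, so $E^{\perp} = \emptyset$, and since $\SL(n)$ is simply connected $T'_{x_0} = \bigcap_{\lambda\in E}\ker r(\lambda) = \{1\}$; hence $\fg'_{x_0} = \fu$ and $G'_{x_0} = U$ is connected. Therefore $\Vggp = (V/\fg\cdot x_0)^{\fu}$, and a class is $\fu$-invariant exactly when it is annihilated by every simple root operator $X_{\alpha_i}$.

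Next I would split off the extra fundamental summand. Write $V = V'\oplus V(\mu)$ with $V' = \bigoplus_{\lambda\in E\setminus\{\mu\}}V(\lambda)$; as a $G'$-module $V'\isom\bigoplus_{k=1}^{n-1}V(\omega_k)$ and $V(\mu)\isom V(\omega_{n-1})$, so that $V(\lambda_{n-1})$ and $V(\mu)$ are the two copies of $\bigwedge^{n-1}\CC^n$. Let $\pi\colon V\to V'$ be the ($G'$-equivariant) projection along $V(\mu)$, and put $x_0' = \sum_{\lambda\in E\setminus\{\mu\}}v_\lambda = \pi(x_0)$. A short computation (a dimension count comparing $\fg\cdot x_0 = \fb^-\cdot x_0$ with $\fg'\cdot x_0' = \fb^-\cdot x_0'$, both having empty $E^{\perp}$) shows $\pi(\fg\cdot x_0) = \fg'\cdot x_0'$ and $V(\mu)\cap\fg\cdot x_0 = \CC v_\mu$. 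Hence $\pi$ induces a $\fu$-equivariant exact sequence
\[0\longrightarrow V(\mu)/\CC v_\mu\longrightarrow V/\fg\cdot x_0\longrightarrow V'/\fg'\cdot x_0'\longrightarrow 0,\]
and taking $\fu$-invariants (a left-exact operation) yields an exact sequence $0\to(V(\mu)/\CC v_\mu)^{\fu}\to\Vggp\to(V'/\fg'\cdot x_0')^{\fu}$.

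Now I would identify the two outer terms. The module $V'$ is precisely the sum of all fundamental representations of $\SL(n)$ analysed in the proof of Proposition~\ref{prop:case15}, whence $(V'/\fg'\cdot x_0')^{\fu} = (V'/\fg'\cdot x_0')^{G'_{x_0'}}$ is multiplicity-free with $\Tad$-weight set $\{\alpha_i+\alpha_{i+1}\colon 1\le i\le n-2\}$. For the kernel, $V(\mu)\isom\bigwedge^{n-1}\CC^n$ is minuscule with weight vectors $f_1,\ldots,f_n$ forming a single string under $\fu$ whose top is $v_\mu = f_n$; in $V(\mu)/\CC v_\mu$ the only $\fu$-invariant is the class of $f_{n-1}$, of $\Tad$-weight $\alpha_{n-1}$. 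Combining the two, the weight $\alpha_{n-1}$ of the kernel does not occur among the $\alpha_i+\alpha_{i+1}$, so $\Vggp$ is multiplicity-free and its $\Tad$-weight set is contained in $\{\alpha_i+\alpha_{i+1}\colon 1\le i\le n-2\}\cup\{\alpha_{n-1}\}$, which is (\ref{eq:weights16nodd}).

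Finally, for the occurrence of $\beta = \alpha_{n-2}+\alpha_{n-1}$ I would produce the eigenvector by hand rather than rely on surjectivity of the last map. Since $X_{-\beta}x_0\in\fg\cdot x_0$ and the only $\lambda\in E$ with $X_{-\beta}v_\lambda\neq 0$ are $\lambda_{n-2},\lambda_{n-1},\mu$, one gets the stated relation $[X_{-\beta}v_{\lambda_{n-2}}] = -[X_{-\beta}(v_{\lambda_{n-1}}+v_\mu)]$ in $V/\fg\cdot x_0$. A direct check that $X_{\alpha_i}X_{-\beta}v_{\lambda_{n-2}}\in\fg\cdot x_0$ for every simple root $\alpha_i$ — the only nonzero case being $i = n-1$, where $X_{\alpha_{n-1}}X_{-\beta}v_{\lambda_{n-2}} = X_{-\alpha_{n-2}}x_0$ — shows this class is $\fu$-invariant, and it is nonzero because $X_{-\beta}v_{\lambda_{n-2}}$ has no component along $V(\lambda_{n-1})$ or $V(\mu)$, whereas the weight-$\beta$ line $\CC X_{-\beta}x_0$ of $\fg\cdot x_0$ does. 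The main obstacle is the bookkeeping forced by the repeated summand $\bigwedge^{n-1}\CC^n$: pinning down $V(\mu)\cap\fg\cdot x_0 = \CC v_\mu$ (so that exactly one extra weight, $\alpha_{n-1}$, appears) and verifying that the candidate $\beta$-section really extends over $\fu$ in the larger space $V/\fg\cdot x_0$, and not merely in $V'/\fg'\cdot x_0'$.
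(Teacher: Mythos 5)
Your proof is correct, and while it shares the paper's skeleton (split off the doubled copy of $V(\omega_{n-1})$ by an exact sequence, then quote \cite[Corollary 3.9 and Theorem 3.10]{bravi&cupit} for the sum of the fundamental representations of $\SL(n)$), it uses a genuinely different decomposition, and the difference matters. The paper splits $V$ into the \emph{diagonal} and \emph{antidiagonal} copies: it sets $V_{n-1} = \<G'\cdot(v_{\lambda_{n-1}}+v_{\mu})\>_{\CC}$ and $Z_{n-1}=\<G'\cdot(v_{\lambda_{n-1}}-v_{\mu})\>_{\CC}$, a choice tuned so that $\fg\cdot x_0 = \fg'\cdot x_0 \oplus \CC(v_{\lambda_{n-1}}-v_{\mu})$ lies in the subobject; the Bravi--Cupit-Foutou module then \emph{injects} into $\Vg$, so all weights $\alpha_i+\alpha_{i+1}$ -- in particular $\beta$ -- occur automatically, and only the possible extra weight $\alpha_{n-1}$, coming from the quotient $Z_{n-1}/\CC(v_{\lambda_{n-1}}-v_{\mu})$, needs a separate argument (its actual occurrence is deferred to Remark~\ref{rem:vggp16nodd}). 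Your coordinate splitting $V=V'\oplus V(\mu)$ reverses the roles: $V(\mu)/\CC v_{\mu}$ is the kernel, so $\alpha_{n-1}$ occurs for free (you prove slightly more than the proposition asks), while the BCF module is only an upper bound via the quotient map, forcing you to verify by hand that $\beta$ occurs -- which is precisely the ``straightforward verification'' the paper leaves implicit, and your version of it is sound: the relation follows from $X_{-\beta}x_0\in\fg\cdot x_0$ together with the fact that $\lambda_{n-2},\lambda_{n-1},\mu$ are the only elements of $E$ not killed by $X_{-\beta}$; the $\fu$-invariance check is right, including that the case $i=n-2$ vanishes because $X_{-\alpha_{n-1}}v_{\lambda_{n-2}}=0$, while for $i=n-1$ one gets $X_{\alpha_{n-1}}X_{-\beta}v_{\lambda_{n-2}}$ proportional to $X_{-\alpha_{n-2}}x_0$ (note the equality you wrote holds only up to a nonzero structure constant, which is all you need); and nonzeroness correctly compares components along $V(\lambda_{n-1})\oplus V(\mu)$ with the weight-$\beta$ line $\CC X_{-\beta}x_0$ of $\fg\cdot x_0$. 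Your preliminary observations also check out and even streamline the bookkeeping: since $r(E)$ contains all fundamental weights of the simply connected $\SL(n)$, indeed $E^{\perp}=\emptyset$, $T'_{x_0}$ is trivial and $G'_{x_0}=U$ by Lemma~\ref{lem:isotropygroup}, so passing to $\fu$-invariants is legitimate; and $\pi(\fg\cdot x_0)=\fg'\cdot x_0'$ and $V(\mu)\cap\fg\cdot x_0 = \CC v_{\mu}$ both hold (the latter because $v_{\mu}\in\ft\cdot x_0$ by linear independence of $E$, while an element of $\fg\cdot x_0$ killed by $\pi$ can have no $\fu^-$-component, as no positive root is orthogonal to all the $\omega_k$). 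In short: same two-term filtration strategy, opposite splitting, with the trade-off that the paper gets $\beta$ for free and defers $\alpha_{n-1}$, whereas you get $\alpha_{n-1}$ for free and supply the $\beta$-computation explicitly.
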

\begin{proof}
Let $V'$ be the following $G'$-submodule of $V$:
\[V' := V(\lambda_1) \oplus V(\lambda_2) \oplus \ldots \oplus V(\lambda_{n-2}) \oplus V_{n-1}\]
where
\(V_{n-1} := \<G'\cdot (v_{\lambda_{n-1}}+v_{\mu})\>_{\CC}.\)
Then
\begin{align}
V &= V' \oplus Z_{n-1} \label{eq:VVprZ},\\
\intertext{where $Z_{n-1} := \<G'\cdot (v_{\lambda_{n-1}}-v_{\mu})\>_{\CC}$, and }
\fg \cdot x_0 &= \fg' \cdot x_0 \oplus \CC(v_{\lambda_{n-1}}-v_{\mu}) \label{eq:gx0gprx0}
\end{align}
Moreover, we have an inclusion of $G'_{x_0} \rtimes \Tad$-modules
\[\fg \cdot x_0 \inn V' \oplus \CC(v_{\lambda_{n-1}}-v_{\mu}) \inn V\]
and so an exact sequence
\[
0 \longrightarrow \frac{V' \oplus \CC(v_{\lambda_{n-1}}-v_{\mu})}{\fg \cdot x_0} \longrightarrow \Vg \longrightarrow \frac{V}{V' \oplus \CC(v_{\lambda_{n-1}}-v_{\mu})} \longrightarrow 0.
\]
Taking $G'_{x_0}$-invariants, we obtain an exact sequence of $\Tad$-modules
\begin{equation} \label{eq:seqcase16nodd}
0 \longrightarrow \Bigl(\frac{V' \oplus \CC(v_{\lambda_{n-1}}-v_{\mu})}{\fg \cdot x_0}\Bigr)^{G'_{x_0}} \longrightarrow \Vggp \longrightarrow \Bigl(\frac{V}{V' \oplus \CC(v_{\lambda_{n-1}}-v_{\mu})}\Bigr)^{G'_{x_0}} 
\end{equation}
From (\ref{eq:gx0gprx0}) we have that
\[\frac{V' \oplus \CC(v_{\lambda_{n-1}}-v_{\mu})}{\fg \cdot x_0} \isom \frac{V'}{\fg'\cdot x_0}\]
as $G'_{x_0} \rtimes \Tad$-modules. 
Clearly, as a $G'$-module, $V'$ is the direct sum of the fundamental representations, and $\fg'\cdot x_0$ is the tangent space to the orbit of the sum of the highest weight vectors in $V'$. Therefore \cite[Cor 3.9 and Thm 3.10] {bravi&cupit} tells us that $\Bigl(\frac{V' \oplus \CC(v_{\lambda_{n-1}}-v_{\mu})}{\fg \cdot x_0}\Bigr)^{G'_{x_0}}$ is a multiplicity-free $\Tad$-module with weight set
\(\{\alpha_1 + \alpha_2, \alpha_2 + \alpha_3, \ldots, \alpha_{n-2}+
\alpha_{n-1} \}.\) 
On the other hand, (\ref{eq:VVprZ}) tells us that 
\[\frac{V}{V' \oplus \CC(v_{\lambda_{n-1}}-v_{\mu})} \isom \frac{Z_{n-1}}{\CC(v_{\lambda_{n-1}}-v_{\mu})}.\]
Furthermore, we claim that \begin{equation} \label{eq:case16znmo}
\Bigl(\frac{Z_{n-1}}{\CC(v_{\lambda_{n-1}}-v_{\mu})}\Bigr)^{G'_{x_0}} = 
\CC[X_{-\alpha_{n-1}}(v_{\lambda_{n-1}}-v_{\mu})].\end{equation} Indeed, if $[v]$ is a nonzero $\Tad$-eigenvector in  
\(\Bigl(\frac{Z_{n-1}}{\CC(v_{\lambda_{n-1}}-v_{\mu})}\Bigr)^{G'_{x_0}}\)
then there exists a simple root $\alpha$ so that $X_{\alpha}v \neq 0$
(because $v$ is not a highest weight vector) and $X_{\alpha}v \in
\CC(v_{\lambda_{n-1}}-v_{\mu}) = Z_{n-1}^U$. Hence $X_{\alpha}v$ has
trivial $\Tad$-weight and therefore $v$ has weight $\alpha$. Since
$Z_{n-1} \isom V(\omega_{n-1})$, this implies that
$\alpha=\alpha_{n-1}$ and the claim (\ref{eq:case16znmo}).  

From the sequence (\ref{eq:seqcase16nodd}) and the description of its first and third term above, we know that the $\Tad$-module $\Vggp$ is multiplicity-free, and that its $\Tad$-weight  set is a subset of (\ref{eq:weights16nodd}) and contains all its weights except possibly $\alpha_{n-1}$. 
The assertion about the eigenspace of weight $\beta$ merely needs a straightforward verification. 
\end{proof}

\begin{remark} \label{rem:vggp16nodd}
In fact, the $\Tad$-weight set of $\Vggp$ in Proposition~\ref{prop:case16nodd} \emph{equals} the set  (\ref{eq:weights16nodd}). Indeed, 
the proof of Proposition~\ref{prop:case16nodd} shows that the $\Tad$-weight set of $\Vggp$ contains all elements of (\ref{eq:weights16nodd}) except possibly $\alpha_{n-1}$. 
Moreover, $\alpha_{n-1}$ belongs to the $\Tad$-weight set because  $[X_{-\alpha_{n-1}} v_{\lambda_{n-1}}] = - [X_{-\alpha_{n-1}} v_{\mu}] \in \Vggp$ by a straightforward verification (or because $s_{n-1\,n}$ is a `simple reflection' in Knop's List; or \emph{a posteriori} by Proposition~\ref{prop:case16} because $T_{X_0}\tM^G_{\wm} \inn \Vggp$).
\end{remark}

The next lemma determines for which groups $G$ the weight $\beta = \alpha_{n-2}+\alpha_{n-1}$ is a $\Tad$-weight of $\Vgg$. 
\begin{lemma} \label{lem:betainlam16}
Suppose $n\ge 5$ is odd and let $\beta$ be defined as in Proposition~\ref{prop:case16nodd}. Then the following are equivalent  (recall that, by assumption, $(G,W)$ is spherical)
\begin{enumerate}
\item  $\beta$ is a $\Tad$-weight of $\Vgg$; \label{betainlam16one}
\item  $\beta \in \wg$;  \label{betainlam16two}
\item $\ft = \ker [(a+1)\omega_n - (a-1)\epsilon] \inn \Lie(\overline{T})$ for some integer $a$. \label{betainlam16three}
\end{enumerate}
For every integer $a$ we have the following equality in $X(\overline{T})$:
\begin{equation} \label{eq:beta16up}
\beta +[(a+1)\omega_n - (a-1)\epsilon] = 
\lambda_{n-2} + (a+1)\lambda_{n-1} - a\mu - \lambda_{n-3}.
\end{equation}
Consequently, if $\ft = \ker [(a+1)\omega_n - (a-1)\epsilon]$ for some integer $a$, restricting (\ref{eq:beta16up}) to $T$ yields the following equality in $\wg$:
\begin{equation} \label{eq:beta16}
\beta = \lambda_{n-2} + (a+1)\lambda_{n-1} - a\mu - \lambda_{n-3}.\end{equation}
\end{lemma}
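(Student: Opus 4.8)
The plan is to prove the two equivalences in the order (\ref{betainlam16one})$\Leftrightarrow$(\ref{betainlam16two}), then (\ref{betainlam16two})$\Leftrightarrow$(\ref{betainlam16three}), after first recording the identity (\ref{eq:beta16up}) and deducing (\ref{eq:beta16}). I would check (\ref{eq:beta16up}) by direct computation: since $n$ is odd, $\lambda_{n-1}=\omega_{n-1}$, $\lambda_{n-2}=\omega_{n-2}+\epsilon$ and $\lambda_{n-3}=\omega_{n-3}$, so the right-hand side of (\ref{eq:beta16up}) expands to $-\omega_{n-3}+\omega_{n-2}+\omega_{n-1}+a\omega_n+(1-a)\epsilon$, while (\ref{eq:simpleroots}) gives $\beta=\alpha_{n-2}+\alpha_{n-1}=-\omega_{n-3}+\omega_{n-2}+\omega_{n-1}-\omega_n$; the identity follows by adding $(a+1)\omega_n-(a-1)\epsilon$. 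Granting (\ref{eq:beta16up}), the identity (\ref{eq:beta16}) is immediate: if $\ft=\ker[(a+1)\omega_n-(a-1)\epsilon]$, then $(a+1)\omega_n-(a-1)\epsilon$ is trivial on the connected group $T$, so applying the restriction map $q$ to (\ref{eq:beta16up}) annihilates that term.

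For (\ref{betainlam16one})$\Leftrightarrow$(\ref{betainlam16two}) I would apply Lemma~\ref{lem:Tadweights}(\ref{liealgandtorinvs}) to $G$ with $\fh=\fg'$, which identifies $\Vgg$ with the span of those $\Tad$-eigenvectors of $(V/\fg\cdot x_0)^{\fg'_{x_0}}$ whose weight lies in $\<E\>_{\ZZ}=\wg$. By Proposition~\ref{prop:case16nodd}, $\beta$ is a $\Tad$-weight of $\Vggp=(V/\fg\cdot x_0)^{G'_{x_0}}$, with eigenvector $[X_{-\beta}v_{\lambda_{n-2}}]$; since $G'_{x_0}$-invariants are in particular $\fg'_{x_0}$-invariant, this eigenvector lies in $(V/\fg\cdot x_0)^{\fg'_{x_0}}$. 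Hence $\beta$ is a $\Tad$-weight of $\Vgg$ exactly when $\beta\in\wg$, which is (\ref{betainlam16two}).

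It remains to prove (\ref{betainlam16two})$\Leftrightarrow$(\ref{betainlam16three}). The implication (\ref{betainlam16three})$\Rightarrow$(\ref{betainlam16two}) is the computation used for (\ref{eq:beta16}): $q[(a+1)\omega_n-(a-1)\epsilon]=0$ together with (\ref{eq:beta16up}) gives $q(\beta)=q(\eta_a)\in q(\overline{\wg})=\wg$, where $\eta_a:=\lambda_{n-2}+(a+1)\lambda_{n-1}-a\mu-\lambda_{n-3}\in\overline{\wg}$. For the converse I would record two facts. First, because $G\supseteq G'$ we have $T\supseteq T'$, hence $\ker q\inn\ker r=\<\omega_n,\epsilon\>_{\ZZ}$. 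Second, comparing $\omega$-coefficients shows $\overline{\wg}\cap\<\omega_n,\epsilon\>_{\ZZ}=\ZZ\nu$, where $\nu:=\omega_n-\epsilon=\lambda_{n-1}-\mu$. In particular $2\epsilon\notin\overline{\wg}$, and since $\beta=-\lambda_{n-3}+\lambda_{n-2}+\mu-2\epsilon$ this gives $\beta\notin\overline{\wg}$. Consequently, if $\beta\in\wg$ then $G\neq\overline{G}$, so $\ker q\neq 0$; as $\ker q$ is a saturated subgroup of $\<\omega_n,\epsilon\>_{\ZZ}$ and sphericity of $(G,W)$ forces $q$ to be injective on $\overline{\wg}$ (Lemma~\ref{lem:spherrestr}), $\ker q$ cannot have rank $2$ (that would put $\nu\in\ker q\cap\overline{\wg}$), so $\ker q=\ZZ\kappa$ with $\kappa\in\<\omega_n,\epsilon\>_{\ZZ}$ primitive.

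Finally, writing $\chi_a:=(a+1)\omega_n-(a-1)\epsilon=\chi_0+a\nu$ with $\chi_0:=\omega_n+\epsilon$, equation (\ref{eq:beta16up}) at $a=0$ reads $\chi_0=\eta_0-\beta$, while $\beta\in\wg$ means $\beta\in\overline{\wg}+\ker q=\overline{\wg}+\ZZ\kappa$; hence $\chi_0\in\overline{\wg}+\ZZ\kappa$. Writing $\chi_0=w+m\kappa$ with $w\in\overline{\wg}$, $m\in\ZZ$, and noting $w=\chi_0-m\kappa\in\<\omega_n,\epsilon\>_{\ZZ}\cap\overline{\wg}=\ZZ\nu$, I get $\chi_0=c\nu+m\kappa$ for some $c\in\ZZ$, with $m\neq0$ since $\chi_0\notin\ZZ\nu$. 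Therefore $\chi_{-c}=\chi_0-c\nu=m\kappa$ is a nonzero multiple of $\kappa$, so $\ft=\ker\kappa=\ker\chi_{-c}$ as hyperplanes in $\Lie(\overline{T})$, which is (\ref{betainlam16three}) with $a=-c$. The main obstacle is exactly this step: one must convert the soft membership $\beta\in\wg$ into the rigid statement that $\ker q$ is spanned by one of the special characters $\chi_a$, and this requires combining the constraint $\ker q\inn\<\omega_n,\epsilon\>_{\ZZ}$ coming from $T\supseteq T'$ with the exact computation $\overline{\wg}\cap\<\omega_n,\epsilon\>_{\ZZ}=\ZZ\nu$.
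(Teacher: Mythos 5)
Your argument is correct and matches the paper's proof in all essentials: the equivalence (\ref{betainlam16one})$\Leftrightarrow$(\ref{betainlam16two}) via Lemma~\ref{lem:Tadweights}(\ref{liealgandtorinvs}) together with the fact that $\beta$ is a $\Tad$-weight of $\Vggp$ (the paper cites Proposition~\ref{prop:case16neven} here, evidently a slip for Proposition~\ref{prop:case16nodd}, which you cite correctly), and (\ref{betainlam16two})$\Leftrightarrow$(\ref{betainlam16three}) from the same two lattice facts, $\ker q \inn \ker r = \<\omega_n,\epsilon\>_{\ZZ}$ and $\wgo\cap\ker r = \<\omega_n-\epsilon\>_{\ZZ}$, combined with sphericality of $(G,W)$. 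The only difference is bookkeeping in the direction (\ref{betainlam16two})$\Rightarrow$(\ref{betainlam16three}): the paper shows directly that some $(a+1)\omega_n-(a-1)\epsilon$ lies in $\ker q$ and then upgrades the inclusion $\ft\inn\ker[(a+1)\omega_n-(a-1)\epsilon]$ to equality using Knop's sphericality criterion $\ft\not\inn\ker(\omega_n-\epsilon)$ and a codimension count, whereas you first pin down $\ker q$ as a rank-one primitive subgroup $\ZZ\kappa$ and then match $\kappa$ against this family of characters --- an equivalent route, since injectivity of $q$ on $\wgo$ (your Lemma~\ref{lem:spherrestr} input) is precisely Knop's criterion in this setting.
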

\begin{remark}
We use $\ft = \Lie(T)$ in Lemma~\ref{lem:betainlam16} instead of $T$ because $\ker [(a+1)\omega_n - (a-1)\epsilon] \inn \overline{T}$ is not necessarily connected (for example, it is disconnected when $a=1$). 
\end{remark}
\begin{proof}
Since $\beta$ is a $\Tad$-weight of $\Vggp$ by Proposition~\ref{prop:case16neven}, the fact that (\ref{betainlam16one}) and (\ref{betainlam16two}) are equivalent follows from Lemma~\ref{lem:Tadweights}(\ref{liealgandtorinvs}). 
We now prove that (\ref{betainlam16two}) and (\ref{betainlam16three}) are equivalent. 
Recall that $r:X(\overline{T}) \onto X(T')$ and $q: X(\overline{T}) \onto X(T)$ are the restriction maps. Recall further that $\wg= q(\overline{\wg})$ and note that $\ker q \inn \ker r = \<\omega_n, \epsilon\>_{\ZZ}.$  Now $\beta = -\omega_{n-3} + \omega_{n-2} + \omega_{n-1} - \omega_{n} \in X(\overline{T})$. So $q(\beta) \in \wg$ if and only if $q(\beta + \lambda_{n-3} - \lambda_{n-2} -\lambda_{n-1}) = q(-\omega_n -\epsilon) \in \wg$. 
In other words, $q(\beta) \in \wg$ if and only if there exists $\gamma \in \overline{\wg}$ so that $q(-\omega_n-\epsilon) = q(\gamma)$, that is, so that $\gamma +\omega_n+\epsilon \in \ker q$. Since $\omega_n + \epsilon \in \ker r$ this is equivalent to the existence of $\gamma \in \overline{\wg} \cap \ker r$ so that $q(\gamma+\omega_n + \epsilon)=0$. 

Next we claim that $\overline{\wg}\cap \ker r = \<\omega_n - \epsilon\>$. The inclusion `$\supseteq$' is immediate: $\omega_n - \epsilon = \lambda_{n-1}-\mu$. The other inclusion follows from a direct calculation, or from Knop's List which tells us that\footnote{In the notation of Knop's List,  $\fa^* \cap \fz^*$ is used for $\<\overline{\wg}\>_{\CC} \cap \<\ker r\>_{\CC}.$} $\<\overline{\wg}\>_{\CC} \cap \<\ker r\>_{\CC} = \<\omega_n - \epsilon\>_{\CC}$ as subspaces of $\Lie(\overline{T})^*$ .

Consequently, $q(\beta) \in \wg$ if and only if there exists an integer $a$ so that 
\[a(\omega_n - \epsilon) + \omega_n + \epsilon = (a+1)\omega_n -(a-1)\epsilon \]
belongs to $\ker q$. Equivalently, $T \inn \ker[(a+1)\omega_n -(a-1)\epsilon]$, or (since $T$ is connected)
\begin{equation}
\ft \inn \ker[(a+1)\omega_n -(a-1)\epsilon]. \label{eq:case16betaint}
\end{equation}
On the other hand, \cite[Theorem 5.1]{knop-rmks} tells us that $W$ is spherical as a $G$-module if and only if 
\begin{equation}\ft \not\inn \ker (\omega_n - \epsilon). \label{eq:case16laspher}
\end{equation} 
Because $\ft' = \<\omega_n, \epsilon\>_{\CC}^{\perp}$ is of codimension $2$ in $\Lie(\overline{T})$,  and  for every integer $a$, the two vectors $(a+1)\omega_n -(a-1)\epsilon$ and $\omega_n-\epsilon$ in $\Lie(\overline{T})^*$ are linearly independent, $\ft$ satisfies (\ref{eq:case16laspher}) and (\ref{eq:case16betaint}) for some integer $a$ if and only if $
\ft = \ker [(a+1)\omega_n - (a-1)\epsilon]$. The equivalence of (\ref{betainlam16two}) and (\ref{betainlam16three}) follows.
The straightforward verification of (\ref{eq:beta16up}) is left to the reader.
\end{proof}

\begin{prop} \label{prop:exclbeta16}
Suppose $n\ge 5$ is odd and let $\beta$ be defined as in Proposition~\ref{prop:case16nodd}. 
Let $a$ be an integer and suppose that the maximal torus $T$ of $G$ satisfies
$\ft = \ker [(a+1)\omega_n - (a-1)\epsilon]$. 
Then the section  $s \in H^0(G \cdot x_0, \shN_{X_0})^G$ defined\footnote{The fact that this formula defines a section of $H^0(G \cdot x_0, \shN_{X_0})^G \isom \Vgg$ uses Lemma~\ref{lem:betainlam16}.} by 
\[s(x_0) = [X_{-\beta} v_{\lambda_{n-2}}] = -[X_{-\beta}(v_{\lambda_{n-1}}+v_{\mu})] \in \Vgg \]
does not extend to $X_0$. 
\end{prop}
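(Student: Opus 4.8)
The plan is to apply the exclusion criterion, Proposition~\ref{prop:excl}, to the $\Tad$-eigenvector $v := X_{-\beta}v_{\lambda_{n-2}} \in V$, which by Proposition~\ref{prop:case16nodd} represents $s(x_0) \in \Vgg$ (the same class being also represented by $-X_{-\beta}(v_{\lambda_{n-1}}+v_{\mu})$). Since $(G,W)$, and hence $(G,W^*)$, is spherical, Lemma~\ref{lem:spherrestr} tells us that $q$ restricts to an injection on $\wgo=\<E\>_{\ZZ}$, so $q(E)$ is a $\ZZ$-basis of $\wg$ and the identity (\ref{eq:beta16}) supplied by Lemma~\ref{lem:betainlam16} (applicable since $T$ satisfies exactly its hypothesis),
\[\beta = \lambda_{n-2} + (a+1)\lambda_{n-1} - a\mu - \lambda_{n-3},\]
is \emph{the} expression of $\beta$ over $E$ referred to in (\ESo).

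First I would choose the distinguished weight $\lambda \in E$. Although $\lambda_{n-2}$ has the positive coefficient $1$ in the expansion above, it is the only element of $E$ whose $\omega_{n-2}$-component is nonzero, so $(E\setminus\{\lambda_{n-2}\})^{\perp}\cap\Pi$ strictly contains $E^{\perp}\cap\Pi$ and condition (\ESth) fails for it (cf.\ Proposition~\ref{prop:codim1stab}). Instead I set $\lambda = \lambda_{n-1}$ when $a\ge 0$ and $\lambda = \mu$ when $a<0$; then the coefficient of $\lambda$ in (\ref{eq:beta16}) is $a+1$, respectively $-a$, which is positive, so (\ESo) holds in each case.

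It then remains to verify (\ESt), (\ESth) and (\ESf) for $v = X_{-\beta}v_{\lambda_{n-2}}$, whose only nonzero projection onto a summand $V(\nu)$ of $V$ is onto $V(\lambda_{n-2})$. In particular its projections onto $V(\lambda_{n-1})$ and onto $V(\mu)$ vanish, which gives (\ESt). Since $\lambda_{n-1}=\omega_{n-1}$ and $\mu = \omega_{n-1}-\omega_n+\epsilon$ both pair nontrivially only with $\alpha_{n-1}^{\vee}$ among the simple coroots, the unique simple root $\eta$ with $\<\lambda,\eta^{\vee}\>\neq 0$ is $\alpha_{n-1}$, and the remaining element of the pair $\{\lambda_{n-1},\mu\}$ witnesses (\ESth) because $\<\lambda_{n-1},\alpha_{n-1}^{\vee}\>=\<\mu,\alpha_{n-1}^{\vee}\>=1$. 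Finally, $\beta = \alpha_{n-2}+\alpha_{n-1}$ is a positive root of $\GL(n)$; a direct computation gives $\<\lambda_{n-2},\beta^{\vee}\>=\<\lambda_{n-1},\beta^{\vee}\>=\<\mu,\beta^{\vee}\>=1$, so $\beta\in R^+\setminus E^{\perp}$, and the element $\xi$ of $\{\lambda_{n-1},\mu\}$ other than $\lambda$ satisfies both $\<\xi,\beta^{\vee}\>\neq 0$ and (as $\xi\neq\lambda_{n-2}$) has zero projection of $v$ onto $V(\xi)$, establishing (\ESf). Proposition~\ref{prop:excl} then yields that $s$ does not extend to $X_0$, as claimed.

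The hard part here, beyond the bookkeeping with the weights of $E$, is the dependence on the integer $a$: one must split on its sign to guarantee a positive coefficient in (\ESo), using the symmetry between $\lambda_{n-1}$ and $\mu$ so that whichever is not chosen as $\lambda$ serves simultaneously as the witness $\widetilde{\lambda}$ for (\ESth) and the witness $\xi$ for (\ESf). The verification of (\ESf) in turn relies on recognizing $\beta$ as an actual root, so that $\beta^{\vee}$ is defined and pairs nontrivially with the candidate weights.
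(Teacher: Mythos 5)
Your proof is correct and takes essentially the same route as the paper's: both apply Proposition~\ref{prop:excl} to $v = X_{-\beta}v_{\lambda_{n-2}}$, split on the sign of $a$ with $\lambda=\mu$ for $a<0$ and $\lambda=\lambda_{n-1}$ for $a\ge 0$ (so that the coefficient $-a$, resp.\ $a+1$, in (\ref{eq:beta16}) is positive for (\ESo)), and use the remaining element of $\{\lambda_{n-1},\mu\}$ as the simultaneous witness for (\ESth) and (\ESf). Your extra remarks --- that $\lambda_{n-2}$ cannot serve as $\lambda$ because (\ESth) fails for it, and the explicit computation $\<\lambda_{n-2},\beta^{\vee}\>=\<\lambda_{n-1},\beta^{\vee}\>=\<\mu,\beta^{\vee}\>=1$ showing $\beta\in R^+\setminus E^{\perp}$ --- are correct details the paper leaves implicit.
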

\begin{proof}
We consider two cases: $a<0$ and $a\ge0$. 

(i) If $a<0$, we apply Proposition~\ref{prop:excl} with $\lambda = \mu$
and \(v = X_{-\beta} v_{\lambda_{n-2}}\). We check the four conditions: (\ESo) follows from equation (\ref{eq:beta16}); (\ESt) is clear from the description of $v$ given above; (\ESth) follows from the equalities $\mu= \omega_{n-1} - \omega_{n} + \epsilon$ and $\<\lambda_{n-1}, \alpha_{n-1}^{\vee}\> = 1$; for (\ESf) take $\delta = \lambda_{n-1}$.

(ii) If $a\ge 0$, we apply Proposition~\ref{prop:excl} with $\lambda = \lambda_{n-1}$
and the same $v$. We check the four conditions: (\ESo) follows from equation (\ref{eq:beta16});
(\ESt) is clear from the description of $v$ given above;
(\ESth) follows from the equalities $\lambda_{n-1}= \omega_{n-1}$ and $\<\mu, \alpha_{n-1}^{\vee}\> = 1$;
for (\ESf) take $\delta = \mu$.
\end{proof}

\begin{remark} \label{rem:vgg16}
We now obtain a description of the module $\Tad$-module $\Vgg$. For $n$ even, this is done in Proposition~\ref{prop:case16neven} since $\Vgg = \Vggp$ (because $\Vggp$ has dimension $d_W$). 
For $n$ odd, the $\Tad$-module $\Vggp$ is described in Remark~\ref{rem:vggp16nodd}. Call its $\Tad$-weight set $F$.  
Now $\Vggo$ is the $\Tad$-submodule of $\Vggp$ with $\Tad$-weight set $F \setminus \{\beta\}$, where $\beta  = \alpha_{n-2} + \alpha_{n-1}$. Indeed, $\beta$ does not belong to the $\Tad$-weight set of $\Vggo$ by Lemma~\ref{lem:betainlam16}, whereas $F\setminus\{\beta\}$ does, as one can prove in at least three ways:  (i) direct verification that $F \setminus \{\beta\} \inn \overline{\wg}$; or (ii) use Knop's information about the little Weyl group of $W^*$ (see Remark~\ref{rem:lwg}); or (iii) note \emph{a posteriori} that by Proposition~\ref{prop:case16} the subspace $T_{X_0}\tM^{\overline{G}}_{\overline{\wm}}$ of $\Vggo$ has dimension $d_W=|F|-1$.
Since $\Vggo \inn \Vgg \inn \Vggp$,  the $\Tad$-module $\Vgg$ is completely determined by our characterization in Lemma~\ref{lem:betainlam16} of those intermediate groups $G$ for which $\beta$ is  a $\Tad$-weight of $\Vgg$. 
\end{remark}

\subsection{The modules $(\GL(m) \times \GL(n), (\CC^m \otimes \CC^n) \oplus \CC^n)$ with $1\leq m, 2\leq n$} \label{subsec:case17}
We begin with some notation. Put
\begin{align*}
K &= \min(m+1, n) \\
L &= \min(m,n).
\end{align*}
Note that $L=K-1$ (when $m+1 \le n$) or $L=K$ (otherwise).
We will also use the following notation:
\begin{align}
\lambda_i &= \omega_{i-1} + \omega_i'  &&i\in\{1, \ldots, K\} \text{ (with $\omega_0 = 0$)}
\label{eq:case17wts1}\\
\lambda'_i &= \omega_i + \omega_i' &&i\in\{1,\ldots, L\} \label{eq:case17wts2}
\end{align}
For the modules under consideration,
\begin{align*}
&E = \{\lambda_i \colon 1\le i \le K\} \cup \{\lambda'_i \colon 1\le i \le L\};\\
&d_W = K+L -2 = \min(2m+1, 2n) -2. 
\end{align*}
These modules are not spherical for $G'$  because $\wg \cap \<\om_m, \om'_n\>_{\ZZ} \neq 0$.  Moreover, for the same reason, $\wm$ is not $G$-saturated for any intermediate group $G$ for which $W$ is spherical.

In this section we will prove the following proposition.
\begin{prop} \label{prop:case17}
The $\Tad$-module $T_{X_0}\tM^G_{\wm}$ is multiplicity-free and has $\Tad$-weight set
\begin{equation} \label{eq:Tadweightscase17}
\{\alpha_i \colon 1\le i \le L-1\} \cup \{\alpha'_j \colon 1\le j \le K-1\}. \end{equation}
In particular, $\dim T_{X_0}\tM^G_{\wm} = d_W$.  
\end{prop}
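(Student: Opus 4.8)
The plan is to establish the upper bound $\dim T_{X_0}\tM^G_{\wm} \le d_W$ and to read off the weight set at the same time; by Corollary~\ref{cor:apriori} (together with Proposition~\ref{prop:apriori}, which already gives $\dim \tM_{\wm} \ge d_W$) this is enough. Write $S := \{\alpha_i \colon 1 \le i \le L-1\} \cup \{\alpha'_j \colon 1 \le j \le K-1\}$ for the claimed weight set, and note $|S| = (L-1)+(K-1) = K+L-2 = d_W$. By Corollary~\ref{cor:injtan} and Lemma~\ref{lem:Tadweights}(\ref{inclVgs}) there are $\Tad$-equivariant inclusions $T_{X_0}\tM^G_{\wm} \into \Vgg \into \Vggp$, so it suffices to prove that every $\Tad$-weight occurring in $T_{X_0}\tM^G_{\wm}$ lies in $S$ and that the corresponding weight spaces are one-dimensional.

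First I would bound the $\Tad$-weights of $\Vggp$. Let $[v]$ be a nonzero $\Tad$-eigenvector of weight $\gamma$ in $(V/\fg\cdot x_0)^{\fg'_{x_0}} \supseteq \Vggp$ (recall $\fg\cdot x_0 = \overline{\fg}\cdot x_0$ by Lemma~\ref{lem:gx0}). Lemma~\ref{lem:Tadweights}(\ref{weightinvar}) forces $\gamma \in p(\wg)\cap\Lambda_R$, while Lemma~\ref{lem:tangentcrit}(\ref{BBB}) produces a simple root $\delta$ with $X_\delta v \ne 0$, and part~(\ref{AAA}) of the same lemma gives $\gamma - \delta \in R^+ \cup \{0\}$. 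Using the expressions (\ref{eq:simpleroots}) for the simple roots together with (\ref{eq:case17wts1}) and (\ref{eq:case17wts2}) I would compute $p(\wg)\cap\Lambda_R$ explicitly; the identities
\[\alpha_i = \lambda'_i + \lambda_{i+1} - \lambda_i - \lambda'_{i+1}, \qquad \alpha'_j = \lambda_j + \lambda'_j - \lambda'_{j-1} - \lambda_{j+1}\]
(with the modification $\alpha'_1 = \lambda_1 + \lambda'_1 - \lambda_2$) exhibit each element of $S$ as a $\ZZ$-combination of elements of $E$, hence show $S \subseteq p(\wg)$, and the range constraints $i \le L-1$, $j \le K-1$ arise precisely because $\lambda'_{L+1}$ and $\lambda_{K+1}$ do not belong to $E$. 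Combining the two constraints on $\gamma$ confines it to $S$ together with a small explicit set $\mathcal{E}$ of ``boundary'' weights of the form $\delta + \theta$ with $\theta$ a nonzero positive root. For each candidate weight I would identify the (few) $G'$-isotypical components $V(\lambda)$ carrying a vector of that weight and check that the corresponding space of $\fg'_{x_0}$-invariants is at most a line, so that $\Vggp$ is multiplicity-free.

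It then remains to remove the weights of $\mathcal{E}$ from $T_{X_0}\tM^G_{\wm}$. Following the pattern of Lemma~\ref{lem:betainlam16} and Proposition~\ref{prop:exclbeta16} of the previous subsection, I would first determine, for each $\beta \in \mathcal{E}$, exactly which intermediate groups $G$ satisfy $\beta \in \wg$ (equivalently, for which $G$ the weight $\beta$ actually occurs in $\Vgg$, by Lemma~\ref{lem:Tadweights}(\ref{liealgandtorinvs})); for the remaining $G$ the weight is already absent from $\Vgg$. For those $G$ where $\beta$ does occur, I would invoke the non-extension criterion Proposition~\ref{prop:excl}: choosing a suitable $\lambda \in E$ (a boundary weight such as $\lambda_K$ or $\lambda'_L$) and the eigenvector $v$ spanning the weight-$\beta$ space, I would verify conditions (\ESo)--(\ESf), so that the associated section of $\shN_{X_0}$ over $G\cdot x_0$ does not extend to $X_0$ and hence does not lie in $T_{X_0}\tM^G_{\wm}$.

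Putting these steps together, every $\Tad$-weight of $T_{X_0}\tM^G_{\wm}$ lies in $S$ and each such weight space is one-dimensional, whence $\dim T_{X_0}\tM^G_{\wm} \le |S| = d_W$; with the a~priori bound this forces equality and pins the weight set down to $S$. I expect the main obstacle to be the weight-and-multiplicity analysis of $\Vggp$: because the $G'$-constituents of $V$ are \emph{tensor products} of fundamental representations of the two factors $\SL(m)$ and $\SL(n)$, rather than fundamental representations themselves, the saturated-case results of Bravi--Cupit-Foutou (Theorem~\ref{thm:bcf}) do not apply directly and the invariants must be controlled by hand. Verifying the exclusion hypotheses, in particular condition (\ESf), for each weight of $\mathcal{E}$ will be the second delicate point.
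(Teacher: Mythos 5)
Your overall architecture coincides with the paper's: the chain $T_{X_0}\tM^G_{\wm} \into \Vgg \into \Vggp$, a determination of the $\Tad$-weights of $\Vggp$, exclusion of the surplus weights via Proposition~\ref{prop:excl} (including the group-dependent analysis of which intermediate $G$ actually see each surplus weight, modeled on Lemma~\ref{lem:betainlam16}), and the final appeal to Corollary~\ref{cor:apriori}. The exclusion half of your plan is essentially the paper's (its Lemmas~\ref{lem:betainlambda}--\ref{lem:exclbetan2}), and your identities expressing the elements of $S$ in terms of $E$ are correct; note however that your guess $\lambda \in \{\lambda_K, \lambda'_L\}$ is not what works in Proposition~\ref{prop:excl}: conditions (\ESo) and (\ESt) force $\lambda = \lambda_{r+1}$ for $\beta_r$ and $\lambda = \lambda'_j$ for $\beta'_j$, and for the special weights $\beta_{m-1}$ (when $n=m-1$) and $\beta'_{n-1}$ (when $m=n-2$) the choice depends on the integer $b$ (resp.\ $a$) cutting out $\ft$ inside $\Lie(\overline{T})$.

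The genuine gap is in the first half. Your claim that the two filters --- $\gamma \in p(\wg)\cap\Lambda_R$ and Lemma~\ref{lem:tangentcrit} --- confine $\gamma$ to $S$ together with a small explicit set $\mathcal{E}$ fails for this family, in contrast to families (\ref{list.1})--(\ref{list.3}) where the lattice filter is decisive. Here $p(\wg)\cap\Lambda_R$ contains the full root lattice of at least one simple factor: when $n \ge m$ one has $L=m$, so $\omega_1,\dots,\omega_m \in \wgo$ by Lemma~\ref{lem:basislambda} and $p(\wg)$ contains the entire weight lattice of $\SL(m)$, hence all of $\Lambda_R^1$ (symmetrically, $\Lambda_R^2 \inn p(\wg)$ when $K=n$). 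Consequently all weights of types (\SRo)--(\SRf) and all mixed weights $\alpha_i + \alpha'_j$ pass both of your filters --- a candidate family growing with $m$ and $n$ --- and eliminating them is the bulk of the proof; this is exactly the content of the paper's Lemma~\ref{lem:nomix} (no mixed weights), Lemmas~\ref{lem:Aquotisinv}--\ref{lem:Vggsplit2}, and Lemmas~\ref{lem:Ainvars} and \ref{lem:Cinvars}. Moreover your closing remark, that Theorem~\ref{thm:bcf} does not apply because the $G'$-constituents of $V$ are tensor products, correctly identifies the obstacle but misses the idea that removes it: the paper passes to $A = V^{U^2}$ and $C = V^{U^1}$, i.e.\ takes $\Tad^2$- (resp.\ $\Tad^1$-) invariants first, after which $\Vggp_{\Lambda_R^1}$ is computed inside $A$, whose relevant $G^1$-submodules are sums of \emph{consecutive fundamental representations} of $\SL(m)$ --- so the saturated-case machinery of Bravi--Cupit-Foutou applies after all, through Lemma~\ref{lem:consecfund}. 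Without this reduction or a substitute for it, your ``check each candidate weight by hand'' step is an open-ended computation rather than a verification, and the multiplicity-one claim for the surviving weights is likewise left unproven.
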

\begin{proof}
Call $F$ the set (\ref{eq:Tadweightscase17}) and let the sets $J_0$ and $J_1$ be defined  as in Proposition~\ref{prop:vgg}. Put
\begin{equation*}
J:= \begin{cases}
J_1 &\text{if $n=m-1$ and $\alpha_{m-2}+\alpha_{m-1} \in \wg$}; \\
J_1 &\text{if $m=n-2$ and $\alpha'_{n-2} + \alpha'_{n-1} \in \wg$};\\
J_0 &\text{otherwise}.
\end{cases}
\end{equation*}
Corollary~\ref{cor:vgg17} says that $\Vgg$ is a multiplicity-free $\Tad$-module, that its $\Tad$-weight set $D$ contains $J$ and that $D \inn J \cup F$. Lemmas~\ref{lem:exclbetar}, \ref{lem:exclbetaprj}, \ref{lem:exclbetam1} and \ref{lem:exclbetan2} prove that the sections of $H^{0}(G\cdot x_0, \shN_{X_0})^G \isom \Vgg$ corresponding to the $\Tad$-weights in $J$ do not extend to $X_0$. This implies that the $\Tad$-weight set of $T_{X_0}\tM^G_{\wm}$ is a subset of $F$. Equality follows, as always, from Corollary~\ref{cor:apriori}.  
\end{proof}

\begin{remark} \label{rem:propcase17}
As the Proposition~\ref{prop:case17} and Corollary~\ref{cor:vgg17} show, except for a few small values of $m$ and $n$, the inclusion $T_{X_0}\tM^G_{\wm} \inn (\Vg)^{\overline{G}_{x_0}}$ is strict. Moreover, for $n=m-1$ and for $m=n-2$ there exist groups $G \inn \overline{G}$, containing $\overline{G}'$, for which $W$ is spherical and for which the inclusion $(\Vg)^{\overline{G}_{x_0}} \inn \Vgg$ is strict (see  Corollary~\ref{cor:vgg17}, Lemmas~\ref{lem:betam1inlambda} and \ref{lem:betan1inlambda}).
\end{remark}

\begin{prop} \label{prop:vgg}
Suppose $m \ge 1, n\ge 2$. 
Let $F$ be the set (\ref{eq:Tadweightscase17}) and put 
\begin{align*}
J_0&:= \{\alpha_{r-1} + \alpha_{r} \colon 2\le r \le L-1\} \cup \{\alpha'_{s-1} + \alpha'_{s} \colon 2\le s \le K-1\}; \\
J_1&:= \begin{cases} J_0 &\text{if $n\neq m-1$ and $m\neq n-2$}; \\
J_0 \cup \{\alpha_{m-2} + \alpha_{m-1}\} &\text{if $n = m-1$};\\
J_0 \cup \{\alpha'_{n-2} + \alpha'_{n-1}\} & \text{if $m=n-2$}. \end{cases}
\end{align*}

The  $\Tad$-module $\Vggp$ is multiplicity-free; its $\Tad$-weight set contains $J_1$ and is a subset of $F \cup J_1$. 

For the $\Tad$-weights in $J_0$,  basis vectors for the corresponding eigenspaces in $\Vggp$ are given in the following table:
\newline
\begin{tabular}{|c|c|}
\hline
$\Tad$-weight & eigenvector in $\Vggp$\\
\hline
$\beta_r:=\alpha_{r-1} + \alpha_{r}$ & $[X_{-\beta_r}( v_{\lambda_{r}}  + v_{\lambda'_{r-1}})] = -[X_{-\beta_r} (v_{\lambda_{r+1}}  + v_{\lambda'_{r}})]$\\
\hline
$\beta'_s:=\alpha'_{s-1} + \alpha'_{s}$ & $[ X_{-\beta'_s}(v_{\lambda_{s-1}} + v_{\lambda'_{s-1}})] = -  
[ X_{-\beta'_s}(v_{\lambda_{s}}+v_{\lambda'_s})] $\\
\hline
\end{tabular}
\newline with  $2\le r \le L-1$, $2\le s \le K-1$.

If $n=m-1$  then  the $\Tad$-eigenspace of weight $\alpha_{m-2}+\alpha_{m-1}$ is spanned by the following eigenvector:
\newline
\begin{tabular}{|c|c|}
\hline
$\Tad$-weight & eigenvector in $\Vggp$ \\
\hline
$\beta_{m-1}:=\alpha_{m-2} + \alpha_{m-1}$ & $[X_{-\beta_{m-1}}  (v_{\lambda_{m-1}}+v_{\lambda'_{m-2}})] = -[X_{-\beta_{m-1}} v_{\lambda'_{m-1}}]$ \\
\hline
\end{tabular}

If $m=n-2$  then  the $\Tad$-eigenspace of weight $\alpha'_{n-2}+\alpha'_{n-1}$ is spanned by the following eigenvector:
\newline
\begin{tabular}{|c|c|}
\hline
$\Tad$-weight & eigenvector in $\Vggp$ \\
\hline
$\beta'_{n-1}:=\alpha'_{n-2} + \alpha'_{n-1}$ & 
$[ X_{-\beta'_{n-1}} (v_{\lambda_{n-2}}+v_{\lambda'_{n-2}})]= -[ X_{-\beta'_{n-1}} v_{\lambda_{n-1}}]$ \\
\hline
\end{tabular}
\end{prop}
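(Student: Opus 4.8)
The plan is to compute $\Vggp = (V/\fg\cdot x_0)^{G'_{x_0}}$ directly, following the pattern of Propositions~\ref{prop:case3} and \ref{prop:case16nodd}. First I would record the $G'$-module structure of $V$. Writing each basic weight as a pair consisting of an $\SL(m)$- and an $\SL(n)$-dominant weight (via the restriction $r$), one gets $r(\lambda_i) = (\om_{i-1},\om'_i)$ and $r(\lambda'_j) = (\om_j,\om'_j)$, where $\om_0 = \om_m = 0$ and $\om'_0 = \om'_n = 0$, so that $V \isom \bigoplus_i V(r(\lambda_i)) \oplus \bigoplus_j V(r(\lambda'_j))$. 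A short check shows these summands are pairwise non-isomorphic when $m\ge 2$ (so $V$ is multiplicity-free as a $G'$-module), while for $m=1$ the factor $\SL(m)$ is trivial and the summands coincide in pairs, exactly the situation of Proposition~\ref{prop:case16nodd}. By Lemma~\ref{lem:gx0}, $\fg\cdot x_0 = \overline{\fg}\cdot x_0 = \fg'\cdot x_0 + \ft\cdot x_0$, and Lemma~\ref{lem:isotropygroup}(\ref{iso3}) supplies $\fg'_{x_0} = \fu \oplus \ft'_{x_0} \oplus \bigoplus_{\alpha \in E^{\perp}}\fg_{-\alpha}$, which is the tool I would use to test $G'_{x_0}$-invariance of candidate sections.

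For the upper bound on the weight set I would argue as in Propositions~\ref{prop:case1}--\ref{prop:case3}. If $v$ is a $\Tad$-eigenvector of weight $\gamma$ with $[v]$ a nonzero element of $\Vggp$, then Lemma~\ref{lem:Tadweights}(\ref{weightinvar}) forces $\gamma \in p(\wg)\cap\Lambda_R$, and Lemma~\ref{lem:tangentcrit}(\ref{BBB}) produces a simple root $\delta$ with $\gamma-\delta \in R^+\cup\{0\}$. The core computation here is to determine the lattice $p(\wg)\cap\Lambda_R$ explicitly, separating the $\SL(m)$- and $\SL(n)$-coordinates and using $\alpha_i = -\om_{i-1}+2\om_i-\om_{i+1}$; once this is done, the second constraint pins $\gamma$ down to a single simple root in $F$ or a consecutive pair in $J_1$, giving the inclusion of the weight set in $F \cup J_1$.

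For the lower bound I would exhibit the eigenvectors listed in the tables and verify, for each, the two assertions: that the displayed vector lies in $(V/\fg\cdot x_0)^{\fg'_{x_0}}$ (equivalently, that it is killed modulo $\fg\cdot x_0$ by $X_\eta$ for $\eta\in\Pi$ and by the $\fg_{-\alpha}$, $\alpha \in E^{\perp}$), and that the two representatives given for each weight agree modulo $\fg\cdot x_0$. These are finite root-operator computations along the two strings $\{v_{\lambda_i}\}$ and $\{v_{\lambda'_j}\}$; the decisive identities have the shape $X_{-\beta_r}(v_{\lambda_r}+v_{\lambda'_{r-1}}) + X_{-\beta_r}(v_{\lambda_{r+1}}+v_{\lambda'_r}) \in \fg\cdot x_0$, reflecting that consecutive basic weights differ by a single $\alpha_i$ or $\alpha'_j$ in the staircase $(0,1),(1,1),(1,2),(2,2),\dots$ traced by the $r(\lambda_i),r(\lambda'_j)$ in the grid of fundamental-weight indices. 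The boundary weights $\beta_{m-1}$ (when $n=m-1$) and $\beta'_{n-1}$ (when $m=n-2$) occur where the two strings meet and one module degenerates to a pure $\SL(m)$- resp. $\SL(n)$-representation ($r(\lambda_{m-1})=(\om_{m-2},0)$, $r(\lambda'_{m-1})=(\om_{m-1},0)$ in the first case), and there I would check the special eigenvectors by hand.

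Finally, for multiplicity-freeness I would organize the staircase into maximal runs of consecutive fundamental representations of a \emph{single} simple factor (where, at a coincidence, one replaces a pair of isomorphic summands by the diagonal copy $\langle G'\cdot(v_\lambda+v_{\lambda'})\rangle$ as in Proposition~\ref{prop:case16nodd}), apply \cite[Corollary 3.9 and Theorem 3.10]{bravi&cupit} to each run to get a multiplicity-free contribution, and splice these together through the exact sequence of $G'_{x_0}$-invariants attached to $\fg\cdot x_0 \inn V' \inn V$, in the style of the sequence~(\ref{eq:seqcase16nodd}); combined with the upper bound this shows every $\Tad$-weight space is at most one-dimensional. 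The main obstacle is precisely this last step: because $G'$ has \emph{two} simple factors, $V$ is not a sum of fundamental representations of $G'$ (each fundamental of a single factor in fact appears twice, in different isotypic pieces of the other factor), so Bravi--Cupit-Foutou's theorem cannot be applied to $V$ all at once. The argument must proceed run-by-run along the staircase, and the junction of the two strings — the boundary cases $n=m-1$ and $m=n-2$, where the extra consecutive-pair weight $\beta_{m-1}$ resp. $\beta'_{n-1}$ appears and must be both produced and shown not to be doubled — requires separate, careful treatment.
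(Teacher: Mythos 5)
Your plan has a genuine gap at the upper-bound step. The lattice-plus-one-root-operator argument that works in Propositions~\ref{prop:case1}--\ref{prop:case3} succeeds there only because $p(\wg)\cap\Lambda_R$ is thin in those cases (the diagonal of $\Lambda_R$, $2\Lambda_R$, or the span of the $\alpha_{i}+2\alpha_{i+1}+\alpha_{i+2}$). Here it is not: by Lemma~\ref{lem:basislambda}, $p(\wg)$ is generated by the restrictions of $\omega_1,\ldots,\omega_L,\omega'_1,\ldots,\omega'_K$, so when $m\le n-1$ it contains $\<\omega_1,\ldots,\omega_{m-1}\>_{\ZZ}$, i.e.\ the \emph{entire} weight lattice of $\SL(m)$ and hence all of $\Lambda^1_R$, and when $m\ge n$ it contains all of $\Lambda^2_R$; it also contains many mixed classes. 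Consequently weights such as $2\alpha_1$, $\alpha_1+\alpha_3$, long strings $\alpha_i+\cdots+\alpha_{i+r}$, $\alpha_i+2\alpha_{i+1}+\alpha_{i+2}$, and mixed weights $\alpha_i+\alpha'_j$ all lie in $p(\wg)\cap\Lambda_R$ and each passes the test of Lemma~\ref{lem:tangentcrit}(\ref{BBB}) (e.g.\ $2\alpha_1-\alpha_1=\alpha_1\in R^+$, $(\alpha_1+\alpha'_1)-\alpha_1=\alpha'_1\in R^+$), so your two constraints do not pin $\gamma$ down to $F\cup J_1$. In the paper, excluding mixed weights requires a separate bespoke argument (Lemma~\ref{lem:nomix}, which exploits the tensor shape of $X_{-\alpha_i}x_0$ and the fact that two distinct fundamental representations of $\SL(n)$ cannot both contain a $\Tad^2$-eigenvector of weight a simple root), and excluding $2\alpha_i$, non-adjacent pairs, and long strings is done not by Lemma~\ref{lem:tangentcrit} but by the finer machinery of \cite{bravi&cupit} (Table 1 and Proposition 3.4) packaged in Lemma~\ref{lem:consecfund}.

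Your final paragraph correctly diagnoses that Bravi--Cupit-Foutou cannot be applied to $V$ at once, but the fix you propose does not reach the problem: the summands of $V$ are tensor products $V(\omega_{i-1})\otimes V(\omega'_i)$, so your ``maximal runs of consecutive fundamental representations of a single simple factor'' are not $G'$-submodules (or subquotients) of $V$ at all for $m\ge 2$; there is no candidate $V'$ with $\fg\cdot x_0\inn V'\inn V$ realizing a run, and the diagonal-copy device of Proposition~\ref{prop:case16nodd} only handles literally isomorphic repeated summands, which occurs here only when $m=1$. What makes the run-by-run idea rigorous in the paper is a reduction you omit: first $\Vggp=\Vggp_{\Lambda^1_R}\oplus\Vggp_{\Lambda^2_R}$ (which itself needs Lemma~\ref{lem:nomix}); then the key identifications that the inclusions $A:=V^{U^2}\into V$ and $C:=V^{U^1}\into V$ induce $\bigl(\frac{A}{A\cap\fg\cdot x_0}\bigr)^{G^1_{x_0}}\isom\Vggp_{\Lambda^1_R}$ and its analogue (Lemmas~\ref{lem:Aquotisinv}, \ref{lem:Vggsplit1}, \ref{lem:Cquotisinv}, \ref{lem:Vggsplit2}, resting on $G'_{x_0}=G^1_{x_0}\times G^2_{x_0}$); and only then, inside the single-factor module $A$, the splitting $Z=Z_0\oplus\overline{Z}\oplus\cdots$ where $\overline{Z}$ is the diagonal string of consecutive fundamentals with highest weight vectors $v_{\lambda_{i+1}}+v_{\lambda'_i}$, handled by Lemma~\ref{lem:consecfund}, while the complement $A/Z$ contributes exactly the simple-root weights $\alpha_i$ with one-dimensional eigenspaces. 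Your eigenvector verifications and the staircase picture of where the boundary weights $\beta_{m-1}$, $\beta'_{n-1}$ arise are sound, but both pillars of your proof (the upper bound and the splicing) must be replaced by this $A$/$C$ reduction, which is the actual content of the paper's argument.
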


\begin{remark} \label{rem:vggp17}
We use the notation of Proposition~\ref{prop:vgg}. The following somewhat stronger statement holds, but we do not need it in what follows. The $\Tad$-weight set of $\Vggp$ is equal to $F \cup J_1$ and the $\Tad$-eigenspaces with weight in $F$ are spanned by the following eigenvectors:
\newline
\begin{tabular}{|c|c|}
\hline
$\Tad$-weight & eigenvector in $\Vggp$ \\
\hline
$\alpha_i$ & $[X_{-\alpha_i} v_{\lambda_{i+1}}] = -[X_{-\alpha_i} v_{\lambda_{i}'}]$ \\
\hline
$\alpha'_j$ & $[ X_{-\alpha'_j}v_{\lambda_j} ] = -[X_{-\alpha'_j}v_{\lambda_j'}]$ \\
\hline
\end{tabular}
\newline
with $1\le i \le L-1$ and $1\le j \le K-1$. The argument runs as follows.
It is a straightforward matter, using properties of root operators and the fact that $F \inn p(\wg)$, to verify that the eigenvectors listed in this remark belong to $\Vggp$. Alternatively, the fact that $F$ belongs to the $\Tad$-weight set of $\Vggp$ is a consequence of the fact that it belongs to the $\Tad$-weight set of $\Vggo$, which, in turn, follows from Proposition~\ref{prop:case17} or from Knop's computation of the little Weyl group of $W^*$ (see Remark~\ref{rem:lwg}).  
\end{remark}

\begin{cor}  \label{cor:vgg17}
We continue to use the notation of Proposition~\ref{prop:vgg}.  For all $m\ge 1, n \ge 2$, we have that $\Vggo$ is the subspace of $\Vggp$ spanned by the eigenvectors with $\Tad$-weights in $F \cup J_0$. 

Depending on $m$ and $n$ we have the following description of $\Vgg$:
\begin{enumerate}
\item If $n\neq m-1$ and $m\neq n-2$, then $\Vgg = \Vggp = \Vggo$;
\item If $n=m-1$ then $\Vgg = \Vggp$ if and only if $\beta_{m-1} \in \wg$. If $\beta_{m-1} \notin \wg$ then $\Vgg = \Vggo$;
\item If $m=n-2$ then $\Vgg = \Vggp$ if and only if $\beta'_{n-1} \in \wg$. If $\beta'_{n-1} \notin \wg$, then $\Vgg = (\Vg)^{\overline{G}_{x_0}}$.
\end{enumerate}
\end{cor}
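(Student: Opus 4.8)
The plan is to read off the corollary from Proposition~\ref{prop:vgg} by understanding how the three invariant spaces $\Vggo \inn \Vgg \inn \Vggp$ are nested. First I would record that the three tangent--space models share a common denominator: by Lemma~\ref{lem:gx0} (together with the linear independence of $E$) one has $\overline{\fg}\cdot x_0 = \fg\cdot x_0 = \fg'\cdot x_0$, so $V/\fg\cdot x_0$ is a single $\Tad$-module. Since $G'_{x_0}\inn G_{x_0}\inn \overline{G}_{x_0}$, taking invariants reverses the inclusions and gives $\Vggo \inn \Vgg \inn \Vggp$ as $\Tad$-submodules (cf.\ Lemma~\ref{lem:Tadweights}(\ref{inclVgs})). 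Because $\Vggp$ is multiplicity-free by Proposition~\ref{prop:vgg}, each of $\Vggo$ and $\Vgg$ is precisely the span of those $\Tad$-weight spaces of $\Vggp$ that survive the extra invariance, so the whole problem reduces to deciding weight by weight which eigenlines of $\Vggp$ lie in $\Vgg$ and in $\Vggo$.

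Next I would establish the weight-filtering criterion. By Lemma~\ref{lem:isotropygroup}(\ref{iso3}) the isotropy subalgebras $\fg'_{x_0}$, $\fg_{x_0}$, $\overline{\fg}_{x_0}$ have identical unipotent and negative-root parts and differ only in their toral summands $\ft'_{x_0}\inn\ft_{x_0}\inn\overline{\ft}_{x_0}$, while by Lemma~\ref{lem:isotropygroup}(\ref{iso1}) the component groups are generated by the corresponding tori. Hence for a $\Tad$-eigenvector $[v]\in\Vggp$ of weight $\gamma\in\Lambda_R$ the additional invariance needed to lie in $\Vgg$ (respectively $\Vggo$) is purely toral, and by Lemma~\ref{lem:Tadweights}(\ref{weightinvar})--(\ref{liealgandtorinvs}) it amounts to the condition that $q(\gamma)$ (respectively $\gamma$) be trivial on the relevant subtorus; this is exactly the membership $\gamma\in\wg$ (respectively $\gamma\in\wgo$) recorded in the statement, in complete analogy with the three-way equivalence proved for family~(\ref{list.16}) in Lemma~\ref{lem:betainlam16}. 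The implications $\gamma\in\wgo \Rightarrow \gamma\in\wg \Rightarrow r(\gamma)\in p(\wg)$ then re-confirm the chain $\Vggo\inn\Vgg\inn\Vggp$.

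It remains to apply this criterion to the weight set $F\cup J_1$ of $\Vggp$ (Proposition~\ref{prop:vgg} and Remark~\ref{rem:vggp17}). A direct computation with $(\ref{eq:simpleroots})$, $(\ref{eq:case17wts1})$ and $(\ref{eq:case17wts2})$ shows that every weight of $F\cup J_0$ already lies in $\wgo$: for instance $\lambda'_i-\lambda_i=\omega_i-\omega_{i-1}$, whence $\alpha_i=(\lambda'_i-\lambda_i)-(\lambda'_{i+1}-\lambda_{i+1})\in\wgo$ for $1\le i\le L-1$, and symmetrically $\alpha'_j\in\wgo$ for $1\le j\le K-1$; the elements of $J_0$, being sums of two such roots, follow at once. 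Therefore all of $F\cup J_0$ survives into $\Vggo$, which gives the first assertion (and case~(1), since there $J_1=J_0$ forces $\Vgg=\Vggp=\Vggo$). The two exceptional weights behave oppositely: when $n=m-1$ one has $K=L=m-1$, so the basic weights $\lambda_m,\lambda'_m$ simply do not occur in $E$ and $\beta_{m-1}=\alpha_{m-2}+\alpha_{m-1}$ cannot be written as an integral combination of $E$, i.e.\ $\beta_{m-1}\notin\wgo$; hence it is excluded from $\Vggo$, while by the criterion it belongs to $\Vgg$ exactly when $\beta_{m-1}\in\wg$, yielding case~(2). The case $m=n-2$ is entirely parallel with $\beta'_{n-1}$, and there $\Vggo=(\Vg)^{\overline{G}_{x_0}}$ by definition of $\Vggo$, giving case~(3).

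The main obstacle I anticipate is the arithmetic at the boundary, namely verifying that $\beta_{m-1}\notin\wgo$ precisely when $n=m-1$ (and $\beta'_{n-1}\notin\wgo$ precisely when $m=n-2$): this is where the interplay between the ranges $K=\min(m+1,n)$ and $L=\min(m,n)$ and the precise list of basic weights $E$ matters, and where one must rule out that the exceptional root-sum is expressible integrally in $E$ once the topmost basic weight is missing. Ensuring the sufficiency direction of the weight criterion (so that membership in $\wg$, rather than merely in its saturation, is the correct condition) must be handled with the same care as in Lemma~\ref{lem:betainlam16}; everything else is routine bookkeeping with the explicit eigenvectors listed in Proposition~\ref{prop:vgg}.
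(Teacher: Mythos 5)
Your proposal is correct and is essentially the paper's own proof: both arguments reduce the corollary, via the multiplicity-freeness of $\Vggp$ from Proposition~\ref{prop:vgg} and the weight criterion of Lemma~\ref{lem:Tadweights}(\ref{liealgandtorinvs}), to deciding weight by weight whether a $\Tad$-weight of $\Vggp$ lies in $\wgo$ resp.\ $\wg$; both verify $F \cup J_0 \inn \wgo$ by the same explicit integral expressions of $\alpha_i$ and $\alpha'_j$ in the basic weights, and both dispose of the boundary weights $\beta_{m-1}$ (when $n=m-1$) and $\beta'_{n-1}$ (when $m=n-2$) by the basis computation of Lemma~\ref{lem:basislambda} together with Lemmas~\ref{lem:betam1inlambda} and \ref{lem:betan1inlambda} --- exactly the references the paper's proof invokes. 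One incidental correction: the last step of your opening equality $\overline{\fg}\cdot x_0 = \fg\cdot x_0 = \fg'\cdot x_0$ is false for this family, since $(G',W)$ is not spherical, so the restrictions of $E$ to $T'$ are linearly dependent and $\ft\cdot x_0 \not\inn \fg'\cdot x_0$ (e.g.\ for $m=n$ the set $E$ has $2n$ elements while $X(T')$ has rank $2n-2$); this is harmless, however, because $\Vggp$ is by definition $(V/\fg\cdot x_0)^{G'_{x_0}}$ with the same denominator as $\Vgg$ and $\Vggo$, and only the equality $\fg\cdot x_0 = \overline{\fg}\cdot x_0$ of Lemma~\ref{lem:gx0} is actually needed.
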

\begin{proof}
From Lemma~\ref{lem:Tadweights}(\ref{liealgandtorinvs})  we know that a $\Tad$-eigenvector in $\Vggp$ belongs to $\Vgg$ if and only if its $\Tad$-weight belongs to $\wg$. For all indices $i,j$ such that $1\le i \le L-1$ and $1\le j \le K-1$, we have
\begin{align*}
&\alpha_i = \lambda_{i+1} + \lambda'_i - \lambda_i - \lambda'_{i+1};\\
&\alpha'_j = \lambda_j + \lambda'_j - \lambda_{j+1} - \lambda'_{j-1} 
\end{align*}
(where $\lambda_0 = 0$ when it occurs) and so $\alpha_i, \alpha'_j \in \overline{\wg}$. Consequently 
$\beta_r, \beta'_s \in \overline{\wg}$ when $2\le r \le L-1$ and $2\le s \le K-1$. This implies that $\alpha_i, \alpha'_j, \beta_r, \beta'_s \in \wg = q(\overline{\wg})$.  On the other hand, straightforward verifications (or Lemmas~\ref{lem:betam1inlambda} and \ref{lem:betan1inlambda}) show that  
$\beta_{m-1} \notin \overline{\wg}$ when $n=m-1$ and that $\beta'_{n-1} \notin \overline{\wg}$ when $m=n-2$. 
All the assertions follow.
\end{proof}

\begin{remark} \label{rem:vgg17}
\begin{enumerate}[(1)]
\item Using Remark~\ref{rem:vggp17}, the first assertion in Corollary~\ref{cor:vgg17} can be improved to the statement that $\Vggo$ is a multiplicity-free $\Tad$-module with $\Tad$-weight set equal to $F\cup J_0$. 
\item When $n=m-1$, Lemma~\ref{lem:betam1inlambda} below tells us for which groups $G$ the eigenvector with weight $\beta_{m-1}$ belongs to $\Vgg$.  For example, $\beta_{m-1} \in \wg=\Lambda_{(G,W^*)}$ for $G=\SL(m) \times \GL(m-1)$, but not for $G=\overline{G}=\GL(m) \times \GL(m-1)$. When $m=n-2$, Lemma~\ref{lem:betan1inlambda} does the same for $\beta'_{n-1}$. 
\end{enumerate}
\end{remark}

We will break the proof of Proposition~\ref{prop:vgg} up into several lemmas (see page~\pageref{proofpropvgg} for the actual proof). We first set up some notation. \label{notation17} First note that $G' = G^1 \times G^2$ with $G^1 = \SL(m)$ and $G^2 = \SL(n)$. Let $T^1$ and $T^2$ be the projection of $T'\inn G^1 \times G^2$ to $G^1$ and to $G^2$, respectively. Then $T'=T^1 \times T^2$. Let $\Tad^1$ be the adjoint torus of $G^1$ and $\Tad^2$ be the adjoint torus of $G^2$. We write $\Lambda_R^1$ and $\Lambda_R^2$ for the corresponding root lattices, and $R^{+}_{1}$ and $R^{+}_{2}$ for the sets of positive roots with respect to the Borel subgroups $B^1 \inn G^1$ and $B^2\inn G^2$ whose product $B^1 \times B^2$ is $B'$.  We will use $U^1$ and $U^2$ for the unipotent radical of $B^1$ and $B^2$, respectively. 
Note that the root lattice of $G$, which is the character group of $\Tad$, is $\Lambda_R = \Lambda_R^1 \oplus \Lambda_R^2$ and that $R^{+} = R^{+}_{1} \cup R^{+}_{2}$. 

The next lemma says that there are no `mixed' $\Tad$-weights in $\Vggp$.
\begin{lemma} \label{lem:nomix}
The $\Tad$-weights occurring in $\Vggp$ belong to $(\Lambda_R^1 \oplus 0) \cup (0 \oplus \Lambda_R^2)$.
\end{lemma}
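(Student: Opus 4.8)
The plan is to argue by contradiction. Suppose $[v]\in\Vggp$ is a nonzero $\Tad$-eigenvector whose weight $\gamma$ decomposes as $\gamma=\gamma_1+\gamma_2$ with $\gamma_1\in\Lambda_R^1$ and $\gamma_2\in\Lambda_R^2$ \emph{both nonzero} (a ``mixed'' weight); I will derive a contradiction. Two facts drive the argument. First, since $[v]$ is $G'_{x_0}$-invariant it is $\fg'_{x_0}$-invariant, and as $\fu=\fu^1\oplus\fu^2\inn\fg'_{x_0}$ (Lemma~\ref{lem:isotropygroup}(\ref{iso3})), we get $X_\eta v\in\fg\cdot x_0$ for every $\eta\in R^+=R^+_1\cup R^+_2$. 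Second, by Lemma~\ref{lem:isotropygroup}(\ref{weightsgx0}) the $\Tad$-weights of $\fg\cdot x_0$ lie in $(R^+\setminus E^\perp)\cup\{0\}$; since no positive root of $G'=G^1\times G^2$ is mixed, \emph{every} $\Tad$-weight of $\fg\cdot x_0$ is unmixed. I will also use that, for the action $\alpha$, a root operator $X_\eta$ lowers $\Tad$-weights by $\eta$.

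First I would show $\gamma_1$ is a simple root of $G^1$. For a simple root $\alpha_i$ of $G^1$, the vector $X_{\alpha_i}v$ has weight $(\gamma_1-\alpha_i)+\gamma_2$; because $\gamma_2\neq0$ this is mixed unless $\gamma_1=\alpha_i$, so the first fact forces $X_{\alpha_i}v=0$ whenever $\alpha_i\neq\gamma_1$. Hence, if $\gamma_1$ were not simple, $v$ would be annihilated by every simple root operator of $G^1$ and therefore by all of $\fu^1$; but every $\fu^1$-invariant in $V$ has $\Tad^1$-weight $0$, forcing $\gamma_1=0$. Thus $\gamma_1=\alpha_c$ for some $c$, and symmetrically $\gamma_2=\alpha'_{c''}$.

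Next I would locate $v$. Restricted to $G'$, each summand $V(\lambda)$ ($\lambda\in E$) is the outer tensor product of the fundamental (minuscule) $\SL(m)$- and $\SL(n)$-modules with highest weights $\mu_1^\lambda$ and $\mu_2^\lambda$; since weight multiplicities in minuscule modules are $1$, the weight $\alpha_c+\alpha'_{c''}$ occurs in $V(\lambda)$, with multiplicity one, precisely when $\mu_1^\lambda=\om_c$ and $\mu_2^\lambda=\om'_{c''}$. Reading off the two families $\lambda_i=\om_{i-1}+\om'_i$ and $\lambda'_i=\om_i+\om'_i$, this happens only for ($c''=c+1$, $\lambda=\lambda_{c+1}$) or ($c''=c$, $\lambda=\lambda'_c$); in either case there is a \emph{unique} such summand, so $v=v_\lambda$ is a single weight vector. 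I expect this bookkeeping to be the main obstacle: one must also check, using $1\le c\le m-1$, $1\le c''\le n-1$ and the definitions of $K,L$, that in both cases \emph{both} $\lambda_{c''}$ and $\lambda'_{c''}$ belong to $E$ (the fact that $\gamma_1,\gamma_2$ are forced to be \emph{simple} keeps the indices away from the extreme values where $K\neq L$, so no genuine boundary case arises).

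Finally I would compare summands. The vector $X_{\alpha_c}v=X_{\alpha_c}v_\lambda$ lies in the single summand $V(\lambda)$ and has $\Tad$-weight $\alpha'_{c''}$, while the weight-$\alpha'_{c''}$ space of $\fg\cdot x_0$ is the line spanned by $X_{-\alpha'_{c''}}x_0$. Since $\<\lambda_{c''},(\alpha'_{c''})^\vee\>=\<\lambda'_{c''},(\alpha'_{c''})^\vee\>=1$, this vector $X_{-\alpha'_{c''}}x_0$ has nonzero components in the two \emph{distinct} summands $V(\lambda_{c''})$ and $V(\lambda'_{c''})$, and $\lambda$ is only one of them; hence $X_{-\alpha'_{c''}}x_0$ has a nonzero component outside $V(\lambda)$, whereas $X_{\alpha_c}v_\lambda$ does not. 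The membership $X_{\alpha_c}v_\lambda\in\CC X_{-\alpha'_{c''}}x_0$ therefore forces $X_{\alpha_c}v_\lambda=0$. But then $v_\lambda$ is annihilated by all of $\fu^1$, so $\gamma_1=0$, contradicting mixedness and completing the proof.
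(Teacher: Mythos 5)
Your proof is correct, and it runs on the same engine as the paper's: by contradiction, use that the raising operators send $v$ into $\fg\cdot x_0$, whose $\Tad$-weights all lie in the unmixed set $(R^+\setminus E^{\perp})\cup\{0\}$, to pin the weight down to $\gamma=\alpha_c+\alpha'_{c''}$, and then get a contradiction by comparing a raising-operator image of $v$ with a root-translate of $x_0$ that has nonzero components in \emph{two} summands of $V$. The execution differs in an interesting way, though. The paper applies the $G^2$-operator $X_{\alpha'_j}$ and compares with $X_{-\alpha_i}x_0 = X_{-\alpha_i}v_{\omega_i}\otimes(v_{\omega'_{i+1}}+v_{\omega'_i})$, deriving impossibility because the $\Tad^2$-weight $\alpha'_j$ cannot occur in both $V(\omega'_i)$ and $V(\omega'_{i+1})$; it must then handle the boundary case $i+1>K$ (forcing $i=K=L=n$, where only $\lambda'_n$ survives and its $\SL(n)$-factor is trivial) separately. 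You instead insert a localization step absent from the paper: since each summand restricts to an outer tensor product of minuscule modules, the mixed weight space is at most one-dimensional and confined to the single summand $\lambda\in\{\lambda_{c+1},\lambda'_c\}$; you then apply the mirror-image operator $X_{\alpha_c}$ and compare with $X_{-\alpha'_{c''}}x_0$. Your index bookkeeping checks out: $1\le c\le m-1$ and $1\le c''\le n-1$ do yield $c''\le L=\min(m,n)$ in both cases, so $\lambda_{c''}$ and $\lambda'_{c''}$ both lie in $E$ and the two-component structure of $X_{-\alpha'_{c''}}x_0$ is always available — which is precisely why the paper's separate boundary case evaporates in your version (the problematic summand $\lambda'_n$ is already ruled out at the localization stage because its second factor carries no nonzero $\Tad^2$-weight). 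Two small points: your step identifying $\gamma_1$ already shows that some simple root operator of $G^1$ does not kill $v$ and that it must be $X_{\alpha_c}$, so once you force $X_{\alpha_c}v=0$ you have an immediate contradiction, making the final detour through $\fu^1$-invariance redundant (though correct); and writing ``$v=v_\lambda$'' clashes with the paper's reserved notation $v_\lambda$ for highest weight vectors — better to say $v$ spans the $\gamma$-weight space of $V(\lambda)$.
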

\begin{proof}
Suppose that $v$ is a $\Tad$-eigenvector in $V$ so that  $[v] \in \Vggp$ contradicts the assertion. Since $v$ has nonzero weight for $\Tad^1$ and for $\Tad^2$, $v \notin V^{\Tad^1} = V^{U^1}$ and $v\notin V^{\Tad^2} = V^{U^2}$.  Therefore, there is a simple root $\alpha_i$ of $G^1$ and a simple root $\alpha_j'$ of $G^2$ such that $X_{\alpha_i} v \neq 0$ and $X_{\alpha_j'}v \neq 0$.
Moreover $X_{\alpha_i} v, X_{\alpha_j'} v \in \fg \cdot x_0$ and by
Lemma~\ref{lem:tangentcrit}(\ref{AAA}) this implies, using that $[v]$
contradicts the assertion and that $R^+=R_1^+\cup R_2^+$, that the $\Tad$-weight of  $X_{\alpha_i} v$ belongs to $R^{+}_{2}$ and that of $X_{\alpha_j'} v$ to $R^{+}_1$. It follows that $v$ has $\Tad$-weight $\alpha_i + \alpha_j'$. Consequently 
\(X_{\alpha'_j} v \in \CC(X_{-\alpha_i}x_0). \)
As long as $i+1 \le K$ (which implies that $i\le L$), 
\[X_{-\alpha_i}x_0 =   X_{-\alpha_i}(v_{\lambda_{i+1}} + v_{\lambda'_{i}}) = X_{-\alpha_i}v_{\omega_i}\otimes(v_{\omega'_{i+1}} + v_{\omega'_i})\]
and so $X_{\alpha'_j} v \in \CC(X_{-\alpha_i}x_0)$ implies that  there exist $u_1 \in V(\omega'_{i+1})$ and $u_2 \in V(\omega'_i)$ so that $X_{\alpha'_j}u_1 = v_{\omega'_{i+1}}$ and $X_{\alpha'_j}u_2 = v_{\omega'_{i}}$. This is impossible because the fundamental representations $V(\omega_{i+1}')$ and $V(\omega_{i}')$ cannot both contain a $\Tad^2$-eigenvector of weight the simple root $\alpha_j'$.

On the other hand, if $i+1 > K$, we still have that $i\le L$ because $X_{-\alpha_i}x_0 \neq 0$ (as $X_{\alpha'_j}v$ is a nonzero element of the line it spans). Therefore $i=K=L=n$. Then 
\[X_{-\alpha_i}x_0 = X_{-\alpha_n}x_0  = X_{-\alpha_n}v_{\lambda'_n} = (X_{-\alpha_n} v_{\omega_n})\otimes v_{\omega'_n}.\]
But now, $X_{\alpha'_j} v \in \CC(X_{-\alpha_i}x_0)$ is impossible, since $V(\omega'_n)$ contains no $\Tad^2$-eigenvectors of nonzero weight. 
\end{proof}

For the next lemma, recall that $\wgo$ stands for $\Lambda_{(\overline{G},W^*)} = \<E\>_{\ZZ}$, 
and that $p(\wg) =  \Lambda_{(G',W^*)}$.
\begin{lemma} \label{lem:basislambda}
We have
\begin{equation} \label{eq:lambdaovg}
\wgo = \<\omega_1,\ldots,\omega_L,\omega_1',\ldots,\omega_K'\>_{\ZZ} \inn X(\overline{T})
\end{equation}
Moreover, for $i=1,\ldots,K $ and $j=1, \ldots, L$ we have the following equalities in $X(\overline{T})$
\begin{align} 
\omega'_i &= \lambda_i -\sum_{k=1}^{i-1} (\lambda'_k - \lambda_k) \label{eq:basisinlambda1}
\\
\omega_j&= \sum_{k=1}^{j}(\lambda'_k-\lambda_k). \label{eq:basisinlambda2}
\end{align}

\end{lemma}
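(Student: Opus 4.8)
The plan is to reduce everything to the single telescoping identity
\[
\lambda'_k - \lambda_k = (\omega_k + \omega'_k) - (\omega_{k-1} + \omega'_k) = \omega_k - \omega_{k-1},
\]
which holds in $X(\overline{T})$ for every $k$ with $1 \le k \le L$ (so that both $\lambda_k$ and $\lambda'_k$ are defined, recalling that $L \le K$) by the defining formulas (\ref{eq:case17wts1}) and (\ref{eq:case17wts2}). From this I would first prove (\ref{eq:basisinlambda2}): summing the identity over $k$ and telescoping, with $\omega_0 = 0$, yields $\sum_{k=1}^j (\lambda'_k - \lambda_k) = \omega_j - \omega_0 = \omega_j$ for $1 \le j \le L$. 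Equation (\ref{eq:basisinlambda1}) then drops out at once, since for $1 \le i \le K$ the same telescoping (now over $k \le i-1 \le K-1 \le L$, so that every $\lambda'_k$ in the sum is defined) gives $\sum_{k=1}^{i-1}(\lambda'_k - \lambda_k) = \omega_{i-1}$, whence $\lambda_i - \omega_{i-1} = (\omega_{i-1} + \omega'_i) - \omega_{i-1} = \omega'_i$.

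With these two identities in hand I would establish (\ref{eq:lambdaovg}) by a double inclusion of sublattices of $X(\overline{T})$. The inclusion $\<\omega_1,\ldots,\omega_L,\omega'_1,\ldots,\omega'_K\>_{\ZZ} \inn \wgo$ is immediate from (\ref{eq:basisinlambda1}) and (\ref{eq:basisinlambda2}), which exhibit each generator $\omega'_i$ ($1 \le i \le K$) and $\omega_j$ ($1 \le j \le L$) of the first lattice as an explicit $\ZZ$-combination of the elements of $E$, hence as an element of $\wgo = \<E\>_{\ZZ}$. For the reverse inclusion I would simply read off from (\ref{eq:case17wts1}) and (\ref{eq:case17wts2}) that each $\lambda_i = \omega_{i-1} + \omega'_i$ and each $\lambda'_i = \omega_i + \omega'_i$ lies in $\<\omega_1,\ldots,\omega_L,\omega'_1,\ldots,\omega'_K\>_{\ZZ}$.

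This last step is where the index bookkeeping matters, and it is the only place I expect a slip to be possible. Since $L \in \{K-1,K\}$, one has $i-1 \le K-1 \le L$ for $1 \le i \le K$, so the weight $\omega_{i-1}$ appearing in $\lambda_i$ is indeed among the listed generators (the value $\omega_0 = 0$ being harmless), and $i \le L \le K$ for $1 \le i \le L$ handles $\lambda'_i$. Beyond confirming these inequalities---which follow from the relation $L \le K \le L+1$ recorded right after the definitions of $K$ and $L$---the argument is a routine computation in the free abelian group $X(\overline{T})$.
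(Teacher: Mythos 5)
Your proof is correct, and the index bookkeeping is handled properly: the telescoping identity $\lambda'_k-\lambda_k=\omega_k-\omega_{k-1}$ is valid exactly for $1\le k\le L$, and the relation $K-1\le L\le K$ guarantees that every $\lambda'_k$ appearing in your sums is defined and that every $\omega_{i-1}$, $\omega'_i$ you invoke is among the listed generators (or zero). Your route, however, reverses the paper's logical order. The paper first proves (\ref{eq:lambdaovg}) by forming the transition matrix $F$ whose columns express the elements of $E$, suitably interleaved, in terms of the weights $\omega_1,\ldots,\omega_L,\omega'_1,\ldots,\omega'_K$; it observes that $F$ is a $(K+L)\times(K+L)$ upper triangular matrix with $1$'s on the diagonal, hence invertible over $\ZZ$, and only afterwards obtains (\ref{eq:basisinlambda1}) and (\ref{eq:basisinlambda2}), ``by inverting the matrix $F$ or by a straightforward recursive argument.'' You instead prove the two identities first --- your telescoping is precisely that recursive argument made explicit, i.e.\ an explicit computation of $F^{-1}$ --- and then deduce (\ref{eq:lambdaovg}) by a double inclusion of sublattices, which requires no determinant, no ordering of the generators, and no matrix at all. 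What each approach buys: yours is more elementary and makes the explicit formulas the engine of the proof rather than an afterthought; the paper's unimodularity argument records in passing that the passage between the two $(K+L)$-element lists is invertible over $\ZZ$, so that each list is in fact a $\ZZ$-basis of $\wgo$ (linear independence included), which the double-inclusion argument does not by itself yield --- though the lemma as stated claims only the equality of generated subgroups, so nothing is missing from your proof.
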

\begin{proof}
We first prove equation (\ref{eq:lambdaovg}).
Consider the matrix $F$ whose columns are the coefficients of
\[ \lambda_1, \lambda_1', \lambda_2, \lambda_2', \ldots, \lambda_{K-1}, \lambda'_{K-1}, \lambda_K, (\lambda'_L) 
\] 
in the basis
\[ \omega'_1, \omega_1, \omega'_2, \omega_2, \ldots,  \omega_{K-1}, \omega'_{K-1}, \omega_K, (\omega'_L). 
\]
The brackets in $(\lambda'_L)$ and $(\omega'_L)$ indicate that these weights might not occur: $L = K-1$ or $L=K$, depending on $m$ and $n$. We have that $F$ is a $(K+L) \times (K+L)$ upper triangular matrix with $1$ on the diagonal. So $F$ is invertible over $\ZZ$ which proves (\ref{eq:lambdaovg}). 

Equations (\ref{eq:basisinlambda1}) and (\ref{eq:basisinlambda2}) are obtained by inverting the matrix $F$ or by a straightforward recursive argument. 
\end{proof}

The following lemma will prove useful too. It is a slight generalization of \cite[Corollary 3.9]{bravi&cupit}.
\begin{lemma} \label{lem:consecfund}
Suppose $m \ge 2$ is an integer and suppose $k \le m-1$ is another positive integer. Define the following $\SL(m)$-module:
\[ M := V(\omega_1) \oplus V(\omega_2) \oplus \ldots \oplus V(\omega_k)\]
Furthermore, call the sum of highest weight vectors $m_0$:
\[ m_0 = v_{\omega_1} + v_{\omega_2} + \ldots + v_{\omega_k} \]

Then $(M/\sl(m)\cdot m_0)^{\SL(m)_{m_0}}$ is the multiplicity-free $\Tad$-module with $\Tad$-weight set
\begin{equation} \label{eq:wsm}
\{ \alpha_1+ \alpha_2, \alpha_2+\alpha_3, \ldots , \alpha_{p-1}+ \alpha_{p}\}
\end{equation}
where
\[p=\begin{cases} 
k-1& \text{if $k < m-1$},\\ 
k & \text{if $k=m-1$}. 
\end{cases} 
\]
\end{lemma}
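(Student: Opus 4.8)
The plan is to feed the data $E = \{\omega_1,\ldots,\omega_k\}$, $M = \oplus_{i=1}^{k} V(\omega_i)$ and $m_0 = \sum_i v_{\omega_i}$ into the machinery of Section~\ref{subsec:bcf}. First I would record that the associated weight monoid is freely generated and $\SL(m)$-saturated: the generating weights are linearly independent, and Lemma~\ref{lem:bcfcritpsat} applies with $t_j = j$ since $\<\omega_i,\alpha_j^{\vee}\> = \delta_{ij}$ (saturation being insensitive to replacing weights by their duals). As $\SL(m)$ is semisimple, Theorem~\ref{thm:bcf} then shows at once that $(M/\sl(m)\cdot m_0)^{\SL(m)_{m_0}}$ is a \emph{multiplicity-free} $\Tad$-module all of whose weights appear in the finite list (\SRo)--(\SRf) of Remark~\ref{rem:srA}. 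Hence only the determination of which of these weights actually occur remains.

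Next I would cut the candidate list down using two necessary conditions on the weight $\gamma$ of a nonzero $\Tad$-eigenvector $[v]$. By Lemma~\ref{lem:Tadweights}(\ref{weightinvar}) (with $H = G = \SL(m)$) one has $\gamma \in \<E\>_{\ZZ} = \<\omega_1,\ldots,\omega_k\>_{\ZZ}$, i.e. $\<\gamma,\alpha_j^{\vee}\> = 0$ for all $j > k$. Writing $v = \sum_l x_l$ with $x_l \in V(\omega_l)$, a nonzero $x_l$ forces $\omega_l - \gamma$ to be a weight of $\bigwedge^l \CC^m$; expressing weights of $\bigwedge^l\CC^m$ as $0/1$-vectors $\mathbf 1_S$ with $|S| = l$, a direct check eliminates types (\SRt) and (\SRo) outright (the required vector has a coordinate outside $\{0,1\}$), shows a type-(\SRf) weight $\alpha_l + 2\alpha_{l+1} + \alpha_{l+2}$ can only be carried by the single module $V(\omega_{l+1})$, and shows a type-(\SRth) weight is carried precisely by the consecutive modules it spans. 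The saturation constraint $\gamma \in \<E\>_{\ZZ}$ then deletes the top weight $\alpha_{k-1}+\alpha_k$ exactly when $k < m-1$ (because then $\alpha_{k+1}$ exists and $\<\alpha_{k-1}+\alpha_k,\alpha_{k+1}^{\vee}\> = -1 \neq 0$), which is the source of the case distinction $p = k-1$ versus $p = k$; equivalently, this is the statement that the set $E^{\perp}$ is empty if and only if $k = m-1$.

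The remaining, and hardest, step is to rule out the \emph{long} survivors --- the type-(\SRf) weights and the type-(\SRth) weights with $r \ge 3$ that still satisfy $\gamma \in \<E\>_{\ZZ}$ and whose carrying modules all lie in $M$ --- leaving only $\gamma = \alpha_i + \alpha_{i+1}$ with $1 \le i \le p-1$. Here the weight constraints above pin the candidate vector $v$ down essentially uniquely, and I would then test invariance \emph{at the level of vectors}: choosing a simple raising operator $X_{\eta}$ permitted by Lemma~\ref{lem:tangentcrit}, one computes $X_{\eta} v$ as an explicit $\mathbf 1_S$ and checks, by comparing weights in $\bigwedge^{\bullet}\CC^m$ against the weights of $\fg\cdot x_0$ inside the relevant $V(\omega_l)$, that $X_{\eta} v \notin \fg\cdot x_0$; hence $[v]$ is not $\SL(m)_{m_0}$-invariant. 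I expect this vector-level verification to be the main obstacle: the long candidates \emph{do} pass the purely weight-theoretic criteria of Lemma~\ref{lem:tangentcrit}, so one genuinely has to examine $\fg\cdot x_0$ itself, and the argument must be carried out uniformly in $m$, $k$, $l$ and $r$. Finally, for the lower bound I would exhibit, for each $1 \le i \le p-1$, an explicit invariant of weight $\alpha_i + \alpha_{i+1}$ built from $v_{\omega_i}$ and $v_{\omega_{i+1}}$ (in the spirit of the eigenvector tables of Proposition~\ref{prop:vgg}) and verify it is nonzero modulo $\fg\cdot m_0$; the multiplicity-freeness already supplied by Theorem~\ref{thm:bcf} then identifies the weight set exactly and yields the asserted value of $p$.
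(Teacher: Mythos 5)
Your proposal reproduces the paper's own proof in all but one step. Like the paper, you obtain multiplicity-freeness and the candidate list (\SRo)--(\SRf) from the $\SL(m)$-saturation of $\<\omega_1,\ldots,\omega_k\>_{\NN}$ together with Theorem~\ref{thm:bcf} and Remark~\ref{rem:srA}; you discard (\SRo) and (\SRt) because no fundamental representation of $\SL(m)$ contains such a $\Tad$-weight; you use membership in $\<\omega_1,\ldots,\omega_k\>_{\ZZ}$ (pairing against $\alpha_j^{\vee}$ for $j>k$) to kill the boundary weights and produce the dichotomy $p=k-1$ versus $p=k$; and your closing lower-bound invariants are exactly the paper's vectors $[X_{-\alpha_{i+1}}X_{-\alpha_{i}}m_0]$ for $1\le i\le p-1$. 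The genuine divergence is the elimination of the surviving long weights, namely (\SRf) with $i<k$ and (\SRth) with $r\ge3$ and $i+2\le k$, which can perfectly well lie in $\<\omega_1,\ldots,\omega_k\>_{\ZZ}$: the paper does no vector computation there but invokes the exclusion criterion of \cite[Proposition 3.4]{bravi&cupit} with a well-chosen simple root $\delta$ ($\delta=\alpha_i$ for (\SRf), $\delta=\alpha_{i+2}$ for long (\SRth)). That citation is precisely the packaged form of the analysis you propose to redo by hand, and you are right that Lemma~\ref{lem:tangentcrit} alone cannot see these weights.

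Your replacement step is fine for (\SRf): there $\gamma=\alpha_i+2\alpha_{i+1}+\alpha_{i+2}$ is not a root, the $\gamma$-eigenspace of $M$ is the single line in $V(\omega_{i+1})$, and $X_{\alpha_{i+1}}v$ has a component only in $V(\omega_{i+1})$ while $X_{-(\alpha_i+\alpha_{i+1}+\alpha_{i+2})}m_0$ has nonzero components in both $V(\omega_i)$ and $V(\omega_{i+1})$, so indeed $X_{\alpha_{i+1}}v\notin\fg\cdot m_0$. But for (\SRth) with $r\ge 3$ your premise that the weight constraints ``pin $v$ down essentially uniquely'' fails: $\gamma=\alpha_{i+1}+\cdots+\alpha_{i+r}$ is itself a positive root and is a $\Tad$-weight of \emph{every} carrier $V(\omega_l)$ with $i+1\le l\le\min(i+r,k)$, so the $\gamma$-eigenspace of $M$ has dimension up to $r$, while $\fg\cdot m_0$ contributes only the line $\CC X_{-\gamma}m_0$. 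A single condition such as $X_{\alpha_{i+1}}v\in\fg\cdot m_0$ (or $X_{\alpha_{i+r}}v\in\fg\cdot m_0$) cuts out a solution space of dimension at least two containing $\CC X_{-\gamma}m_0$, so testing one raising operator on one candidate vector cannot conclude; you must intersect the conditions from several simple roots and, when $i+r+1>k$, also those from the root spaces $\fg_{-\alpha}$ with $\alpha\in E^{\perp}$ inside $\fg'_{m_0}$ (Lemma~\ref{lem:isotropygroup}), and then show the intersection is exactly $\CC X_{-\gamma}m_0$, uniformly in $m,k,i,r$. This is completable, but as written your sketch of the decisive step is under-powered, and it is exactly the bookkeeping the paper avoids by citing \cite[Proposition 3.4]{bravi&cupit}.
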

\begin{proof}
The monoid $\<\omega_1, \omega_2, \ldots, \omega_k\>_{\NN}$ is $\SL(m)$-saturated, whence Theorem~\ref{thm:bcf} tells us that $(M/\fg\cdot m_0)^{\SL(m)_{m_0}}$ is a multiplicity-free $\Tad$-module of which the $\Tad$-weights belong to the set $D$ consisting of the following elements of $\Lambda_R$ (see Remark~\ref{rem:srA}):
\begin{itemize}
\item[(\SRo)] $\alpha_i + \alpha_j$ with $1\le i \le m-3$ and $j-i \ge 2$; 
\item[(\SRt)] $2\alpha_i$ with $1\le i \le m-1$;
\item[(\SRth)] $\alpha_{i+1} +\alpha_{i+2} + \ldots + \alpha_{i+r}$ with $0\le i \le m-3$ and  $2 \le r \le m-i-1$;
\item[(\SRf)] $\alpha_{i} + 2\alpha_{i+1} + \alpha_{i+2}$ with $1\le i \le m-3$.
\end{itemize}  
Using the argument of the proof of \cite[Corollary 3.9]{bravi&cupit} we obtain the $\Tad$-weight set $F$ of $(M/\fg\cdot m_0)^{\SL(m)_{m_0}}$: we first exclude `enough' $\Tad$-weights in $D$ from belonging to $F$ and then show that the remaining elements of $D$ belong to $F$. 

Weights of type (\SRo) and (\SRt)  cannot occur in $F$ because the fundamental representations of $\SL(m)$ do not contain such $\Tad$-weights. Next suppose $\gamma = \alpha_{i} + 2\alpha_{i+1} + \alpha_{i+2}$ is a weight of type (\SRf). If $i<k$, then \cite[Proposition 3.4]{bravi&cupit} with $\delta = \alpha_i$ tells us that $\gamma$ does not belong to $F$. If $i \ge k$, then $\<\gamma, \alpha_{i+1}^{\vee}\> = 2$ implies that $\gamma$ does not belong
to  $ \<\omega_1, \ldots \omega_k\>_{\ZZ}$ and a fortiori not $F$.

Now suppose $\gamma$ is a root of type (\SRth) with $r\ge 3$. If $i+2 \le k$, then \cite[Proposition 3.4]{bravi&cupit} with $\delta = \alpha_{i+2}$ tells us that $\gamma$ is not in $F$. If $i+2 > k$, then $i+r > k$ and since $\<\gamma, \alpha_{i+r}^{\vee}\> = 1$ this tells us that $\gamma \notin \<\omega_1, \ldots \omega_k\>_{\ZZ}$ and so again $\gamma$ is not in $F$.

The final type of $\Tad$-weight in $D$ to rule out from $F$ are those of type (\SRth) with $r=2$ and  $i+2 > p$. Then $p=k-1 < m-1$ and therefore  $i+2 > k-1$. If $i+2 >k$, then $\<\gamma,\alpha_{i+2}^{\vee}\> = 1$ tells us that $\gamma \notin \<\omega_1, \ldots \omega_k\>_{\ZZ}$.  If $i+2 = k$, then the equality $\<\gamma, \alpha_{i+3}^{\vee}\> =-1$ yields the same conclusion. Yet again, $\gamma$ does not belong to $F$.

Finally, that $F$ contains the weights listed in (\ref{eq:wsm}) follows
like in the proof of \cite[Corollary 3.9]{bravi&cupit}: for $i\in\{1,\ldots,p-1\}$, the vector
\[[X_{-\alpha_{i+1}}X_{-\alpha_{i}}m_0] = [X_{-\alpha_{i}}X_{-\alpha_{i+1}}m_0] \in M/\sl(m)\cdot m_0\]
has $\Tad$-weight $\alpha_i+\alpha_{i+1}$ and, as a straightforward verification shows, is fixed by $\SL(m)_{x_0}$. 
\end{proof}

Thanks to  Lemma~\ref{lem:nomix}, the $\Tad$-weight set of $(\Vg)^{G'_{x_0}}$ breaks up into two disjoint sets: its intersection with $\Lambda_R^1$ on the one hand, and it intersection with $\Lambda_R^2$ on the other. We will bound these two sets and show that each eigenspace has dimension one. 
We introduce some more  notation. If $M$ is a representation of $\Tad = \Tad^1 \times \Tad^2$, then we denote $M_{\Lambda^1_R}$ (respectively $M_{\Lambda^2_R}$) the subspace of $M$ spanned by eigenvectors with $\Tad$-weight in $\Lambda^1_R \inn \Lambda_R$ (respectively in $\Lambda^2_R \inn \Lambda_R$). Equivalently, $M_{\Lambda^1_R} = M^{\Tad^2}$ and $M_{\Lambda^2_R} = M^{\Tad^1}$.  
 
Note that by Lemma~\ref{lem:nomix} and because $\Vggp$ has no vectors of $\Tad$-weight $0$,  we have the following decomposition of $\Tad$-modules:
\begin{equation} \label{eq:Vgginto1and2}
\Vggp = \Vggp_{\Lambda_R^1} \oplus  \Vggp_{\Lambda_R^2} .
\end{equation}

Put $A:=V^{\Tad^2}=V^{U^2}=V_{\Lambda^1_R}$. Explicitly,
\begin{multline}
A =\CC(v_{\omega_0} \otimes v_{\omega_1'}) \oplus V(\omega_1)\otimes \CC v_{\omega_2'} \oplus \ldots \oplus V(\omega_{K-1}) \otimes \CC v_{\omega'_K} \oplus \\
V(\omega_1) \otimes \CC v_{\omega'_1} \oplus V(\omega_2) \otimes \CC v_{\omega'_2} \oplus \ldots
\oplus V(\omega_L) \otimes \CC v_{\omega'_L}
\end{multline}

\begin{lemma} \label{lem:Aquotisinv}
The inclusion $A \into V$ induces an isomorphism of $\Tad$-modules 
\[ 
\Bigl(\frac{A}{A\cap \fg\cdot x_0}\Bigr)^{G^1_{x_0}} \isom (\Vg)^{G^1_{x_0}}_{\Lambda_R^1}
\]
where $(\Vg)^{G^1_{x_0}}_{\Lambda_R^1}$ is the subspace of $(\Vg)^{G^1_{x_0}}$ spanned by $\Tad$-eigenvectors with weight in $\Lambda_R^1$.
\end{lemma}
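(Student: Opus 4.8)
The plan is to realize both sides of the asserted isomorphism as the $\Tad^2$-weight-zero component of the common ambient module $\Vg$, so that the statement reduces to two pieces of bookkeeping: that $\fg\cdot x_0$ is $\Tad^2$-stable, and that $G^1_{x_0}$ preserves the $\Tad^2$-grading. Recall that, as noted just before the statement, $A = V^{\Tad^2} = V_{\Lambda^1_R}$ is precisely the $\Tad^2$-weight-zero subspace $V_0$ of $V$, and that since $\Lambda_R = \Lambda^1_R \oplus \Lambda^2_R$ a $\Tad$-weight lies in $\Lambda^1_R$ exactly when its $\Tad^2$-component vanishes.

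First I would record the grading on the quotient. By the exact sequence (\ref{eq:fundseq}), $\fg\cdot x_0$ is a $\Tad$-submodule of $V$, hence in particular $\Tad^2$-stable; therefore the $\Tad^2$-weight decomposition $V = \bigoplus_\chi V_\chi$ descends to $\Vg = \bigoplus_\chi (\Vg)_\chi$ with $(\Vg)_\chi = V_\chi/(V_\chi \cap \fg\cdot x_0)$. Taking $\chi = 0$ and using $V_0 = A$ gives a canonical identification
\[(\Vg)_0 = V_0/(V_0 \cap \fg\cdot x_0) = A/(A\cap \fg\cdot x_0),\]
which is exactly the map induced by the inclusion $A\into V$ in the statement.

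Next I would check that taking $G^1_{x_0}$-invariants respects this grading. Since $T^2 \inn G^2$ commutes with $G^1$, and by (\ref{eq:defTadact}) the $\Tad^2$-action $\alpha(t,\cdot)$ agrees with $\rho(t)^{-1}$ up to a scalar on each $V(\lambda)$, the group $G^1$ commutes with the $\Tad^2$-action on $V$; as $G^1_{x_0} \inn G_{x_0}$ stabilizes $\fg\cdot x_0$, it follows that $G^1_{x_0}$ preserves the $\Tad^2$-grading of $\Vg$. Hence invariants split along the grading and extracting the weight-zero summand commutes with taking invariants:
\[\big((\Vg)^{G^1_{x_0}}\big)_0 = \big((\Vg)_0\big)^{G^1_{x_0}}.\]
The left-hand side is by definition $(\Vg)^{G^1_{x_0}}_{\Lambda^1_R}$, while the right-hand side is $\big(A/(A\cap\fg\cdot x_0)\big)^{G^1_{x_0}}$ by the identification of the previous paragraph; combining the two displays yields the desired $\Tad$-equivariant isomorphism, induced by $A\into V$.

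The genuinely substantive point — the one to write out carefully, as opposed to the formal gradings — is the commutation of $G^1$ with the $\Tad^2$-action, since this is what forces the image of $A$ to capture \emph{all} of $(\Vg)^{G^1_{x_0}}_{\Lambda^1_R}$ rather than merely to inject into it. Everything else is the elementary observation that a $\Tad^2$-stable subspace of a $\Tad^2$-graded module is graded, so that passing to the quotient and to the weight-zero part are harmless.
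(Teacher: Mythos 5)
Your proposal is correct and takes essentially the same route as the paper: extracting the $\Tad^2$-weight-zero component of the graded quotient is exactly the paper's step of taking $\Tad^2$-invariants of the exact sequence $0 \to \fg\cdot x_0 \to V \to \Vg \to 0$ (exact on invariants by reductivity of the torus), which identifies $A/(A\cap\fg\cdot x_0)$ with $(\Vg)_{\Lambda_R^1}$ as a $G^1_{x_0}$-module. Your observation that $G^1$ commutes with the $\Tad^2$-action $\alpha$ on $V$ is the module-level form of the paper's remark that $G^1_{x_0}\times\Tad^2$ sits inside $G'_{x_0}\rtimes\Tad$ as a direct product (conjugation by $\Tad^2$ being trivial on $G^1_{x_0}$), after which both arguments conclude by taking $G^1_{x_0}$-invariants.
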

\begin{proof}
Consider the exact sequence of $G'_{x_0} \rtimes \Tad$-modules
\begin{equation} \label{eq:exactVgg}
0 \longrightarrow \fg \cdot x_0 \longrightarrow V \longrightarrow V/\fg \cdot x_0 \longrightarrow 0
\end{equation}
We can view this as an exact sequence of $G^1_{x_0} \times T^2_{\ad}$-modules because the direct product $G^1_{x_0} \times T^2_{\ad}$ is a subgroup of $G'_{x_0} \rtimes \Tad$: the action of $T^2_{\ad} (\inn T^1_{\ad} \times T^2_{\ad})$ on $G^1_{x_0} (\inn G^1 \times G^2)$ by conjugation is trivial.
Taking invariants of the exact sequence by the reductive group $T^2_{\ad}$ yields an isomorphism of $G^1_{x_0}$-modules
\[\Bigl(\frac{A}{A\cap \fg\cdot x_0}\Bigr) \isom (\Vg)_{\Lambda_R^1}, \]
since $(\Vg)_{\Lambda_R^1} = (\Vg)^{T^2_{\ad}}$, $V^{T^2_{\ad}}=A$ and $(\fg\cdot x_0)^{T^2_{\ad}}= A \cap \fg \cdot x_0$. Taking $G^1_{x_0}$-invariants yields the claim. 
\end{proof}

\begin{lemma} \label{lem:Vggsplit1}
We have
 that $(\Vg)^{G^1_{x_0}}_{\Lambda_R^1} = \Vggp_{\Lambda_R^1}$
\end{lemma}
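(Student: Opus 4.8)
The plan is to prove the two inclusions of the asserted equality separately. For the inclusion $\Vggp_{\Lambda_R^1} \inn (\Vg)^{G^1_{x_0}}_{\Lambda_R^1}$ I would simply observe that $G^1_{x_0} = (G^1)_{x_0}$ sits inside $G'_{x_0}$ via $g\mapsto (g,e)$, so that $\Vggp = (\Vg)^{G'_{x_0}} \inn (\Vg)^{G^1_{x_0}}$; since $\Tad$ fixes $x_0$, both $G^1_{x_0}$ and $G'_{x_0}$ are stable under the $\Tad$-action (\ref{eq:TactonG}) on $G'$, so both invariant subspaces are $\Tad$-graded and the inclusion respects the $\Lambda_R^1$-part.

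The content is the reverse inclusion. Let $[v]$ be a nonzero $\Tad$-eigenvector of weight $\beta\in\Lambda_R^1$ in $(\Vg)^{G^1_{x_0}}$; I would show $[v]\in\Vggp$. By Lemma~\ref{lem:Aquotisinv} I may take the representative $v$ in $A=V^{U^2}$, and then check $G'_{x_0}$-invariance on generators, using $G'_{x_0}=T'_{x_0}\,(G'_{x_0})^{\circ}$ (Lemma~\ref{lem:isotropygroup}(\ref{iso1})) and $\fg'_{x_0}=\fu^1\oplus\fu^2\oplus\ft'_{x_0}\oplus\bigoplus_{\alpha\in E^{\perp}}\fg_{-\alpha}$ (Lemma~\ref{lem:isotropygroup}(\ref{iso3})), where $\fu^1,\fu^2$ are the Lie algebras of $U^1,U^2$ and $E^{\perp}\inn R^+_1\cup R^+_2$. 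Invariance under $\fu^1$, under $\ft^1_{x_0}$, under $T^1_{x_0}$ and under the $\fg_{-\alpha}$ with $\alpha\in E^{\perp}\cap R^+_1$ is free, since these generate $G^1_{x_0}$, which fixes $[v]$; and invariance under $\fu^2$ is automatic because $v\in V^{U^2}$ gives $\fu^2 v=0$.

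The two remaining generator types are the only real work. For a negative root vector $\fg_{-\alpha}$ with $\alpha\in E^{\perp}\cap R^+_2$ I would use that $A$ is spanned by vectors $u\otimes v_{\mu^{(2)}}$ with $v_{\mu^{(2)}}$ a $G^2$-highest weight vector attached to some $\mu\in E$; since $\alpha\in E^{\perp}$ forces $X_{-\alpha}x_0=0$ (Lemma~\ref{lem:isotropygroup}(\ref{iso3})) and hence $X_{-\alpha}v_{\mu^{(2)}}=0$, the operator $X_{-\alpha}$ annihilates all of $A$, so $X_{-\alpha}v=0$. The genuine obstacle is the isotropy torus $T'_{x_0}$, which is strictly larger than $T^1_{x_0}$ and couples the two factors. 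Because $[v]$ is a $\Tad$-eigenvector, $T'_{x_0}$-invariance of $[v]$ is equivalent to $\beta|_{T'_{x_0}}=0$, and I would prove the latter as follows: Lemma~\ref{lem:Tadweights}(\ref{weightinvar}) with $H=G^1$ and $T_H=T^1$ forces the restriction of $\beta$ to $T^1$ into the group generated by the restrictions of $E$, which equals $\<\omega_1,\ldots,\omega_L\>$ restricted to $T^1$ (as $K-1\le L$); by Lemma~\ref{lem:basislambda}, equation~(\ref{eq:basisinlambda2}), each such $\omega_j$ is a $\ZZ$-combination of elements of $E$ and so restricts trivially to $T'_{x_0}$. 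Since $\beta\in\Lambda_R^1$ is a character of the first factor, its restriction to $T'$ factors through $T^1$, whence $\beta|_{T'_{x_0}}=0$. Combining all generator checks gives $[v]\in\Vggp$, and intersecting with the $\Lambda_R^1$-part finishes the reverse inclusion.
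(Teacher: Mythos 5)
Your proof is correct, and it takes a genuinely different route from the paper's. The paper argues structurally: it shows that $A$ equals $V^{G^2_{x_0}}$ (full $G^2_{x_0}$-invariance, not merely $U^2$-invariance), deduces from the exact sequence (\ref{eq:exactVgg}) that $(\Vg)_{\Lambda_R^1}=\varphi(A)\inn (\Vg)^{G^2_{x_0}}$ as a $G^1_{x_0}$-equivariant inclusion, proves via Lemma~\ref{lem:basislambda} that $T'_{x_0}=T^1_{x_0}\times T^2_{x_0}$ and hence $G'_{x_0}=G^1_{x_0}\times G^2_{x_0}$, and concludes formally by taking $G^1_{x_0}$-invariants. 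You instead lift a single eigenvector to $A$ via Lemma~\ref{lem:Aquotisinv} and check $G'_{x_0}$-invariance generator by generator; the two nontrivial steps of your check correspond exactly to the paper's two structural facts. Your observation that $X_{-\alpha}$ with $\alpha\in E^{\perp}\cap R^{+}_{2}$ kills every summand $V(\lambda^{(1)})\otimes\CC v_{\lambda^{(2)}}$ of $A$ (together with $\fu^2 A=0$) is the Lie-algebra content of the paper's identity $A=V^{G^2_{x_0}}$, while your computation $\beta|_{T'_{x_0}}=0$ --- via Lemma~\ref{lem:Tadweights}(\ref{weightinvar}) applied with $T_H=T^1$, the inclusion $K-1\le L$, and the identities $\omega_j=\sum_{k=1}^{j}(\lambda'_k-\lambda_k)\in\<E\>_{\ZZ}$ of Lemma~\ref{lem:basislambda} --- replaces, and entirely sidesteps, the paper's splitting $T'_{x_0}=T^1_{x_0}\times T^2_{x_0}$, which is precisely the point where an a priori ``coupling'' of the two factors would live. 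What the paper's packaging buys is brevity and two reusable structural identities; what yours buys is a proof that works at the level of a single $\Tad$-eigenvector, never needs the product decomposition of the isotropy group, and makes transparent that the only combinatorial input is Lemma~\ref{lem:basislambda}. Two small points of hygiene: for $\fu^1$, $T^1_{x_0}$ and the $\fg_{-\alpha}$ with $\alpha\in E^{\perp}\cap R^{+}_{1}$, what your argument actually uses is that these are \emph{contained in} $G^1_{x_0}$ (respectively $\fg^1_{x_0}$), so that invariance of $[v]$ is inherited --- the claim that they generate $G^1_{x_0}$ is true but logically beside the point; and your equivalence between $T'_{x_0}$-invariance of $[v]$ and $\beta|_{T'_{x_0}}=0$ merits the one-line justification that for $t\in T'_{x_0}$ one has $\lambda(t)=1$ for every $\lambda\in E$, so that $\rho(t,\cdot)=\alpha(t^{-1},\cdot)$ on $V$.
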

\begin{proof}
Since $G^1_{x_0} \inn G'_{x_0}$, we have $\Vggp \inn (\Vg)^{G^1_{x_0}}$ and therefore, by taking $\Tad^2$-invariants, that $\Vggp_{\Lambda_R^1} \inn (\Vg)^{G^1_{x_0}}_{\Lambda_R^1}$. For the other inclusion, 
\begin{equation} \Vggp_{\Lambda_R^1} \supseteq (\Vg)^{G^1_{x_0}}_{\Lambda_R^1}
\end{equation}
it suffices to prove that 
\begin{equation}
(\Vg)^{G^1_{x_0}}_{\Lambda_R^1} \inn \Vggp. \label{eq:Vgg1x0invggp}
\end{equation}
We first note
 that $A = V^{G^2_{x_0}}$ because $G^2_{x_0}$ fixes every highest weight vector for $G^2$  with weight in the image of $E$ under the restriction map $X(\overline{T}) \to X(T') \to X(T^2)$.
Since the quotient map $\varphi:V \to \Vg$ is a map of $G^2_{x_0}$-modules this implies that 
\begin{equation} \label{eq:AVg2x0}
\varphi(A) \inn (\Vg)^{G^2_{x_0}}.\end{equation}
Also, taking $T^2_{\ad}$-invariants of the exact sequence (\ref{eq:exactVgg}) and remembering that $A= V^{T^2_{\ad}}$ we see that 
\begin{equation} \label{eq:varphiA}
\varphi(A) = (\Vg)^{\Tad^2} = (\Vg)_{\Lambda_R^1}.\end{equation} 
Furthermore, the inclusion
\begin{equation} \label{eq:Vgtad2incl}
(\Vg)^{\Tad^2} \inn (\Vg)^{G^2_{x_0}}\end{equation}
we obtain from (\ref{eq:AVg2x0}) and (\ref{eq:varphiA}) is $G^1_{x_0}$-equivariant, because $G^1_{x_0}$ commutes with $G^2_{x_0}$ and with $\Tad^2$. 
Next we claim that $G'_{x_0} = G^1_{x_0} \times G^2_{x_0}$.  Indeed, by Lemma~\ref{lem:basislambda}, $T'_{x_0} = T^1_{x_0} \times T^2_{x_0}$. Moreover,  by Lemma~\ref{lem:isotropygroup}(\ref{iso3}) it follows that $\fg'_{x_0} = \fg^1_{x_0} \oplus \fg^2_{x_0}$ and so by Lemma~\ref{lem:isotropygroup}(\ref{iso1}) we obtain the claim. It implies that 
\[ \bigl((\Vg)^{G^2_{x_0}}\bigr)^{G^1_{x_0}} = \Vggp.\]   
We can now conclude: taking $G^1_{x_0}$-invariants in (\ref{eq:Vgtad2incl}) gives us the desired inclusion (\ref{eq:Vgg1x0invggp}).
\end{proof}
\begin{remark}
In Lemma~\ref{lem:Vggsplit1}, the inclusion $\Vggp_{\Lambda_R^1} \inn (\Vg)^{G^1_{x_0}}_{\Lambda_R^1}$ is immediate and would be sufficient for proving Theorem~\ref{thm:cbc}, since the goal is to bound the dimension of $T_{X_0}\tM^G_{\wm}$. 
The extra information in the lemma allows us to determine $\Vggp$ and $\Vgg \isom H^0(G\cdot x_0,\shN_{X_0})$. 
\end{remark}

The next step in the proof of Proposition~\ref{prop:vgg} is to bound
the $\Tad^1$-weight set of $\Bigl(\frac{A}{A\cap \fg\cdot
  x_0}\Bigr)^{G^1_{x_0}}$. We also describe some of the associated eigenspaces.

\begin{lemma} \label{lem:Ainvars}
Suppose $m\ge 1, n\ge 2$ and put 
\begin{align*}
K&:= \{\alpha_i \colon 1\le i \le L-1\};\\
K_0&:= \{\alpha_{r-1} + \alpha_{r} \colon 2\le r \le L-1\};\\
K_1&:= \begin{cases} K_0 &\text{if $n\neq m-1$}; \\
K_0 \cup \{\alpha_{m-2} + \alpha_{m-1}\} &\text{if $n = m-1$}.
\end{cases}
\end{align*}
The $\Tad^1$-module $\Bigl(\frac{A}{A\cap \fg\cdot x_0}\Bigr)^{G^1_{x_0}}$ is multiplicity-free; its $\Tad^1$-weight set contains $K_1$ and is a subset of $K \cup K_1$. 

For the $\Tad^1$-weights in $K_0$, basis vectors for the corresponding eigenspaces in $\Bigl(\frac{A}{A\cap \fg\cdot x_0}\Bigr)^{G^1_{x_0}}$ are given in the following table:
\newline
\begin{tabular}{|c|c|}
\hline
$\Tad$-weight & eigenvector \\
\hline
$\beta_r:=\alpha_{r-1} + \alpha_{r}$ &$[X_{-\beta_r}( v_{\lambda_{r}}  + v_{\lambda'_{r-1}})] = -[X_{-\beta_r} (v_{\lambda_{r+1}}  + v_{\lambda'_{r}})]$\\
\hline
\end{tabular}
\newline
with $2\le r \le L-1$.

If $n=m-1$  then  the $\Tad^1$-eigenspace of weight $\alpha_{m-2}+\alpha_{m-1}$ is spanned by the following eigenvector:
\newline
\begin{tabular}{|c|c|}
\hline
$\Tad$-weight & eigenvector in $\Vggp$ \\
\hline
$\beta_{m-1}:=\alpha_{m-2} + \alpha_{m-1}$ & $[X_{-\beta_{m-1}}  (v_{\lambda_{m-1}}+v_{\lambda'_{m-2}})] = -[X_{-\beta_{m-1}} v_{\lambda'_{m-1}}]$ \\
\hline
\end{tabular}
\end{lemma}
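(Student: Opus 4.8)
The plan is to exploit that $A$ is, as a module for $G^1 = \SL(m)$, a direct sum of fundamental representations with repetitions, so that the computation reduces to Lemma~\ref{lem:consecfund} together with a controlled correction coming from the repeated summands, exactly as in the proof of Proposition~\ref{prop:case16nodd}. First I would record the $G^1$-structure: from the explicit description of $A$ and from $\lambda_i = \omega_{i-1}+\omega'_i$, $\lambda'_i = \omega_i+\omega'_i$, restriction to $G^1$ gives $A\cap V(\lambda_i)\isom V(\omega_{i-1})$ for $1\le i\le K$ and $A\cap V(\lambda'_i)\isom V(\omega_i)$ for $1\le i\le L$ (here $K=\min(m+1,n)$, $L=\min(m,n)$ are the integers fixed at the start of the subsection). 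Thus $V(\omega_j)$ occurs twice for $1\le j\le K-1$ (once from $\lambda_{j+1}$, once from $\lambda'_j$) and once at the boundary indices $j=0$ and, when $L=K$, $j=L$. Next I would compute $A\cap\fg\cdot x_0$: since $A=V^{\Tad^2}$ and, by Lemma~\ref{lem:gx0}, $\fg\cdot x_0=\overline{\fg}\cdot x_0=\ft\cdot x_0 + \fu^{1,-}\cdot x_0 + \fu^{2,-}\cdot x_0$ is $\Tad^2$-stable (there are no roots mixing the two $\GL$-factors), taking invariants under the torus $\Tad^2$ yields $A\cap\fg\cdot x_0=\ft\cdot x_0 + \fg^1\cdot x_0$.

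For each doubled index $j$ I would split the isotypic component $V(\omega_j)^{(\lambda_{j+1})}\oplus V(\omega_j)^{(\lambda'_j)}$ into its diagonal and antidiagonal copies, via $v_{\lambda_{j+1}}+v_{\lambda'_j}$ and $v_{\lambda_{j+1}}-v_{\lambda'_j}$. The key observation is that $\fg^1\cdot x_0$ is \emph{diagonal}: the $W_j$-component of $X\cdot x_0$ is $(Xv_{\lambda_{j+1}},Xv_{\lambda'_j})$, which under the identification of the two copies reads $(Xw,Xw)$. Letting $A^+$ be the span of the diagonal copies together with the single copies $V(\omega_0)$ and (if $L=K$) $V(\omega_L)$, and $N$ the span of the antidiagonal highest weight vectors, one gets $\ft\cdot x_0+\fg^1\cdot x_0\inn A^+\oplus N$, hence a short exact sequence of $(G^1_{x_0}\rtimes\Tad)$-modules whose $G^1_{x_0}$-invariants give
\[ 0 \longrightarrow \Bigl(\frac{A^+ \oplus N}{A\cap\fg\cdot x_0}\Bigr)^{G^1_{x_0}} \longrightarrow \Bigl(\frac{A}{A\cap\fg\cdot x_0}\Bigr)^{G^1_{x_0}} \longrightarrow \Bigl(\frac{A}{A^+\oplus N}\Bigr)^{G^1_{x_0}}. \]

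The first term I would handle with Lemma~\ref{lem:consecfund}: since $N$ consists of highest weight vectors, it is killed by $\ft\cdot x_0$, and the trivial summands $V(\omega_0)$ (and $V(\omega_m)$ when it occurs) drop out, so the quotient is $G^1_{x_0}\rtimes\Tad$-isomorphic to $M/\sl(m)\cdot m_0$ with $M=\bigoplus_{j=1}^{\min(L,m-1)}V(\omega_j)$ and $m_0$ the sum of highest weight vectors; its isotropy in $\SL(m)$ coincides with $G^1_{x_0}$ by Lemma~\ref{lem:isotropygroup}. Applying Lemma~\ref{lem:consecfund} with $k=\min(L,m-1)$ then produces the weight set $K_0$ in general, and $K_0\cup\{\alpha_{m-2}+\alpha_{m-1}\}$ precisely in the boundary case $k=m-1$ with $L=m-1$, i.e. $n=m-1$ — that is, $K_1$ — together with the stated eigenvectors $[X_{-\beta_r}(v_{\lambda_r}+v_{\lambda'_{r-1}})]$ and $[X_{-\beta_{m-1}}(v_{\lambda_{m-1}}+v_{\lambda'_{m-2}})]$.

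The third term is the \textbf{main obstacle} and must be shown to contribute exactly the single simple roots $\{\alpha_j:1\le j\le L-1\}$. Here $A/(A^+\oplus N)\isom\bigoplus_{j=1}^{K-1}V(\omega_j)/\CC w_j$, a sum of \emph{pairwise non-isomorphic} $G^1$-modules, so its $G^1_{x_0}$-invariants split along the summands and each can be treated exactly as equation~(\ref{eq:case16znmo}): because $\fu^1\inn\fg^1_{x_0}$ (Lemma~\ref{lem:isotropygroup}(\ref{iso3})), a nonzero $\Tad$-eigenvector $[v]$ in $V(\omega_j)/\CC w_j$ that is $G^1_{x_0}$-invariant but not a highest weight vector must satisfy $X_\eta v\in\CC w_j$ for some simple root $\eta$; comparing weights in $V(\omega_j)=\bigwedge^j\CC^m$ forces $v$ to be the weight-$(\omega_j-\alpha_j)$ vector, so $[v]$ has $\Tad$-weight $\alpha_j$ and exists only when $\alpha_j$ is a simple root of $G^1$, i.e. $j\le m-1$; with $j\le K-1$ this yields $j\le L-1$. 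A short verification (using Lemma~\ref{lem:isotropygroup}(\ref{iso1})–(\ref{iso3}) and $\alpha_j\in\Lambda_R^1$) that this vector is fixed by all of $G^1_{x_0}$, not merely by $U^1$, finishes this term. Finally, multiplicity-freeness and the asserted inclusions follow by left-exactness of invariants: the middle weight set contains $K_1$ and lies in $K_1\cup\{\alpha_j:1\le j\le L-1\}=K\cup K_1$, and the two families of weights — sums $\alpha_{r-1}+\alpha_r$ versus single roots $\alpha_j$ — are pairwise distinct.
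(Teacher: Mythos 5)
Your proposal is correct and follows essentially the same route as the paper's proof: your $A^+\oplus N$ is exactly the paper's module $Z=Z_0\oplus Z_1\oplus\ldots\oplus Z_L$ (diagonal copies $\langle G^1\cdot(v_{\lambda_{j+1}}+v_{\lambda'_j})\rangle$ plus antidiagonal highest weight lines and the boundary summands), the sub in your exact sequence is handled via Lemma~\ref{lem:consecfund} applied to $\overline{Z}/\fg^1\cdot y_0$ just as in the paper, and your treatment of the quotient term reproduces the paper's argument bounding $(A/Z)^{G^1_{x_0}}$ inside the $\fu^1$-invariants of $\bigoplus_i A_i/(\bigoplus_i A_i)^{U^1}$, forcing weight $\alpha_j$ with $j\le L-1$. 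The only deviations are cosmetic (indexing the antidiagonal summands up to $K-1$, where the extra term vanishes, and the final verification that the weight-$\alpha_j$ vectors are $G^1_{x_0}$-fixed, which is not needed for the stated upper bound).
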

\begin{proof}
We begin by defining $G^1$-submodules of $A$ which are isomorphic to fundamental representations.
For $i=1,\ldots,L-1$, put 
\[
Z_i := \text{the simple $G^1$-submodule of $A$ with highest weight vector
 $z_i:=v_{\lambda_{i+1}} + v_{\lambda'_i}$ }
\]
Next, put
\[ 
Z_L := \begin{cases}
 \CC v_{\lambda_{L+1}}  \oplus  \CC v_{\lambda'_L}  &\text{if $L=K-1 (=m)$} \\
\text{the simple $G^1$-module with highest weight vector $z_L:= v_{\lambda'_L}$}  &\text{if $L=K$}
\end{cases}
\]
We also define the following
trivial
$G^1_{x_0} \rtimes T^1_{\ad}$-module (the action of $G^1_{x_0}$ is trivial since $G^1_{x_0} = \cap_{\lambda \in E}G^1_{v_{\lambda}}$): 
\[ Z_0 := \CC v_{\lambda_1} \oplus \CC(v_{\lambda_2}-v_{\lambda'_1}) + \ldots + \CC(v_{\lambda_L}-v_{\lambda'_{L-1}}). \]
Next, we define the $G^1_{x_0}\rtimes T^1_{\ad}$-module
\[ Z:= Z_0 \oplus Z_1 \oplus \ldots \oplus Z_L \]
Then
 $A \cap \fg \cdot x_0 \inn Z \inn A$ and we obtain the following exact sequence of $G^1_{x_0} \rtimes T^1_{\ad}$-modules:
\begin{equation} \label{eq:exactseqA}
0 \longrightarrow \frac{Z}{A\cap \fg\cdot x_0} \longrightarrow \frac{A}{A \cap \fg \cdot x_0} \longrightarrow \frac{A}{Z} \longrightarrow 0.
\end{equation}
Taking $G^1_{x_0}$-invariants we obtain the exact sequence of $T^1_{\ad}$-modules
\begin{equation} \label{eq:seqAinv}
 0 \longrightarrow \Bigl(\frac{Z}{A\cap \fg\cdot x_0}\Bigr)^{G^1_{x_0}} \longrightarrow \Bigl(\frac{A}{A \cap \fg \cdot x_0}\Bigr)^{G^1_{x_0}} \longrightarrow \Bigl(\frac{A}{Z}\Bigr)^{G^1_{x_0}}.
\end{equation}

We now want to determine the $\Tad^1$-modules $\Bigl(\frac{Z}{A\cap \fg\cdot x_0}\Bigr)^{G^1_{x_0}}$ and $\Bigl(\frac{A}{Z}\Bigr)^{G^1_{x_0}}$. We begin with the first. To do so, put
\begin{align*}
y_0 &:= \begin{cases}
z_1 + z_2 +\ldots z_L &\text{if $L < m$};\\
z_1 + z_2 + \ldots z_{L-1} &\text{if $L=m$};
\end{cases}\\
\overline{Z_0}&:=
\begin{cases}
 Z_0 &\text{if $L<m$}' \\
 Z_0 \oplus Z_L &\text{if $L=m$};
\end{cases}\\
\overline{Z} &:= 
\begin{cases}
Z_1 \oplus \ldots \oplus Z_L &\text{if $L<m$};\\
Z_1 \oplus \ldots \oplus Z_{L-1} &\text{if $L=m$}.
\end{cases}
\end{align*}
Then we have the following decompositions as $G^1_{x_0} \rtimes T^1_{\ad}$-modules:
\begin{align*}
Z &= \overline{Z} \oplus \overline{Z_0}; \\
A\cap \fg\cdot x_0 & = \fg^1 \cdot y_0 \oplus \overline{Z_0}.
\end{align*}
It follows that as $G^1_{x_0} \rtimes T^1_{\ad}$-modules
\[
\frac{Z}{A\cap \fg \cdot x_0} \isom \frac{\overline{Z}}{\fg^1\cdot y_0}
\]
After remarking that 
 $G^1_{y_0} = G^1_{x_0}$ and that $\overline{Z}$ is a sum of
 consecutive fundamental representations of $G^1$,
 Lemma~\ref{lem:consecfund} describes  $\Bigl(\frac{Z}{A\cap \fg\cdot
   x_0}\Bigr)^{G^1_{x_0}}$ as a $\Tad^1$-module. It is multiplicity-free and its $\Tad^1$-weight set is
\begin{align} 
&\{\alpha_1 + \alpha_2, \alpha_2 + \alpha_3,\ldots, \alpha_{L-2}+\alpha_{L-1}\} &&\text{if $L < m-1$ or $L=m$} \label{eq:T1modwe1}
\\
&\{\alpha_1 + \alpha_2, \alpha_2 + \alpha_3, \ldots, \alpha_{m-2}+\alpha_{m-1}\} &&\text{if $L=m-1 (=n)$} \label{eq:T1modwe2}
\end{align} 

We now turn to $\Bigl(\frac{A}{Z}\Bigr)^{G^1_{x_0}}$. We define 
\[
A_i := \text{the simple $G^1$-module with highest weight vector $(v_{\lambda_{i+1}}-v_{\lambda_i'})$}\] 
for $i=1,\ldots, L-1$
and then
\[
\overline{A} :=
\begin{cases}
\CC v_{\lambda_1} \oplus A_1 \oplus \ldots \oplus A_{L-1} &\text{if $L<m$;} \\
\CC v_{\lambda_1} \oplus A_1 \oplus \ldots \oplus A_{L-1} \oplus Z_L &\text{if $ L = m$.}
\end{cases}
\]
Then we have the following decomposition as $G^1_{x_0} \rtimes \Tad^1$-modules: 
\[A = \overline{A} \oplus \overline{Z}\]
and therefore the isomorphism of $G^1_{x_0}\rtimes \Tad^1$-modules
\[\frac{A}{Z} \isom \frac{\overline{A}}{\overline{Z_0}} \]
Now note that for $i=1,\ldots L-1$, $A_i \isom V(\omega_i)$ as a $G^1$-module, and that
\begin{equation*}
\overline{Z_0} = \overline{A}^{U_1}=
\begin{cases} \CC v_{\lambda_1} \oplus
(A_1\oplus \ldots \oplus A_{L-1})^{U^1} &\text{if $L <m$;} \\
\CC v_{\lambda_1} \oplus
(A_1\oplus \ldots \oplus A_{L-1})^{U^1} \oplus Z_L &\text{if $L =m$.}
\end{cases}
\end{equation*}
It follows that we have the following isomorphism of $G^1_{x_0} \rtimes T^1_{\ad}$-modules:
\[\frac{\overline{A}}{\overline{Z_0}} \isom \frac{A_1 \oplus \ldots \oplus A_{L-1}}{(A_1 \oplus \ldots \oplus A_{L-1})^{U_1}}\]
Therefore 
\[\Bigl(\frac{A}{Z}\Bigr)^{G^1_{x_0}} \isom \Bigl(\frac{A_1 \oplus \ldots \oplus A_{L-1}}{(A_1 \oplus \ldots \oplus A_{L-1})^{U_1}}\Bigr)^{G^1_{x_0}} \inn  \Bigl(\frac{A_1 \oplus \ldots \oplus A_{L-1}}{(A_1 \oplus \ldots \oplus A_{L-1})^{U_1}}\Bigr)^{\fu_1}\]
as $T^1_{\ad}$-modules. 
Let $v$ be a $T^1_{\ad}$-eigenvector in $ A_1 \oplus \ldots \oplus A_{L-1}$ such that $[v] \in  \Bigl(\frac{A_1 \oplus \ldots \oplus A_{L-1}}{(A_1 \oplus \ldots \oplus A_{L-1})^{U_1}}\Bigr)^{\fu_1}$ is nonzero. Then $v$ is not a highest weight vector and so there is a simple root $\alpha_i$ of $G^1$ such that $X_{\alpha_i} v \neq 0$ and $X_{\alpha_i} v \in (A_1 \oplus \ldots \oplus A_{L-1})^{U_1}$. It follows that $v$ has $T^1_{\ad}$ weight $\alpha_i$ and, since all the $A_k$ are fundamental representations,  that $v \in \CC[ X_{-\alpha_i} (v_{\lambda_{i+1}} - v_{\lambda'_i})]$.

We have proved the lemma's claim that   $\Bigl(\frac{A}{A \cap
  \fg \cdot x_0}\Bigr)^{G^1_{x_0}}$ is multiplicity-free and its claim
about the module's $\Tad^1$-weight set. What remains is to prove that
the listed eigenvectors belong to $\Bigl(\frac{A}{A \cap
  \fg \cdot x_0}\Bigr)^{G^1_{x_0}}$.  This is straightforward.
\end{proof}

We will proceed in exactly the same way to determine the $\Tad^2$-module $\Vggp_{\Lambda_R^2}$.
We now put $C := V^{U^1} = V^{T^1_{\ad}}=V_{\Lambda^2_R}$. 
With the same proofs
 we obtain the following analogs of Lemmas \ref{lem:Aquotisinv} and \ref{lem:Vggsplit1}.

\begin{lemma} \label{lem:Cquotisinv}
The inclusion $C \into V$ induces an isomorphism of $\Tad$-modules 
\[ 
\Bigl(\frac{C}{C\cap \fg\cdot x_0}\Bigr)^{G^2_{x_0}} \isom (\Vg)^{G^2_{x_0}}_{\Lambda_R^2}
\]
where $(\Vg)^{G^2_{x_0}}_{\Lambda_R^2}$ is the subspace of $(\Vg)^{G^2_{x_0}}$ spanned by $\Tad$-eigenvectors with weight in $\Lambda_R^2$.
\end{lemma}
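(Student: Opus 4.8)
The plan is to transcribe the proof of Lemma~\ref{lem:Aquotisinv} verbatim, interchanging throughout the roles of the two almost simple factors of $G'$; that is, replacing $G^1 = \SL(m)$ by $G^2 = \SL(n)$, the torus $T^1_{\ad}$ by $T^2_{\ad}$, the subspace $A = V^{U^2}$ by $C = V^{U^1}$, and the root lattice $\Lambda_R^1$ by $\Lambda_R^2$. Since the two factors enter the construction of $V$, of $x_0$, and of $\fg\cdot x_0$ on a completely symmetric footing, no step of the original argument is sensitive to which factor is singled out, so the proof carries over unchanged.

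Concretely, first I would start from the fundamental exact sequence
\[
0 \longrightarrow \fg\cdot x_0 \longrightarrow V \longrightarrow \Vg \longrightarrow 0
\]
of $G'_{x_0} \rtimes \Tad$-modules from (\ref{eq:fundseq}). Next I would observe that $G^2_{x_0} \times T^1_{\ad}$ is a subgroup of $G'_{x_0} \rtimes \Tad$: since $G^1$ and $G^2$ commute, the conjugation action (\ref{eq:TactonG}) of $T^1_{\ad}$ on $G^2_{x_0} (\inn G^1 \times G^2)$ is trivial, so the semidirect product restricts to a direct product here. This lets me regard the sequence above as an exact sequence of $G^2_{x_0} \times T^1_{\ad}$-modules.

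Then I would take invariants under the reductive group $T^1_{\ad}$. Because taking invariants of a torus is exact, this yields an isomorphism of $G^2_{x_0}$-modules
\[
\Bigl(\frac{C}{C\cap \fg\cdot x_0}\Bigr) \isom (\Vg)_{\Lambda_R^2},
\]
using the three identities $V^{T^1_{\ad}} = C$, $(\fg\cdot x_0)^{T^1_{\ad}} = C \cap \fg\cdot x_0$, and $(\Vg)^{T^1_{\ad}} = (\Vg)_{\Lambda_R^2}$. Finally, taking $G^2_{x_0}$-invariants gives the claimed isomorphism $\bigl(C/(C\cap \fg\cdot x_0)\bigr)^{G^2_{x_0}} \isom (\Vg)^{G^2_{x_0}}_{\Lambda_R^2}$.

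The only points requiring verification — and hence the only possible obstacle, though a routine one — are the three weight-space identities invoked when passing to $T^1_{\ad}$-invariants. These are immediate from the definition $C = V^{U^1} = V^{T^1_{\ad}}$ together with the fact, already used in (\ref{eq:Vgginto1and2}), that the $\Tad$-weights of $V$ split according to $\Lambda_R = \Lambda_R^1 \oplus \Lambda_R^2$, so that $(\Vg)^{T^1_{\ad}}$ is exactly the part of $\Vg$ with weights in $\Lambda_R^2$. I do not expect any genuine difficulty, precisely because the whole construction is symmetric in the two factors.
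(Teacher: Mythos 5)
Your proposal is correct and is precisely the paper's argument: the paper proves Lemma~\ref{lem:Cquotisinv} by stating that it follows ``with the same proofs'' as Lemma~\ref{lem:Aquotisinv}, which is exactly the symmetric transcription you carry out --- view the sequence (\ref{eq:fundseq}) as one of $G^2_{x_0}\times T^1_{\ad}$-modules (the conjugation action of $T^1_{\ad}$ on $G^2_{x_0}$ being trivial), take $T^1_{\ad}$-invariants using $V^{T^1_{\ad}}=C$, $(\fg\cdot x_0)^{T^1_{\ad}}=C\cap\fg\cdot x_0$ and $(\Vg)^{T^1_{\ad}}=(\Vg)_{\Lambda_R^2}$, then take $G^2_{x_0}$-invariants. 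The three weight-space identities you flag are indeed the only verifications needed, and they hold as you say (note that $(\Vg)^{T^1_{\ad}}=(\Vg)_{\Lambda_R^2}$ uses only the splitting $\Lambda_R=\Lambda_R^1\oplus\Lambda_R^2$, not Lemma~\ref{lem:nomix}).
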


\begin{lemma} \label{lem:Vggsplit2}
We have that $(\Vg)^{G^2_{x_0}}_{\Lambda_R^2} = \Vggp_{\Lambda_R^2}$
\end{lemma}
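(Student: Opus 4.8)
The plan is to transcribe the proof of Lemma~\ref{lem:Vggsplit1} verbatim, interchanging the roles of the two almost-simple factors: everything indexed by $1$ becomes indexed by $2$ and vice versa, so $A$ is replaced by $C$, the torus $\Tad^1$ by $\Tad^2$, and the lattice $\Lambda_R^1$ by $\Lambda_R^2$. I would begin with the easy inclusion. Since $G^2_{x_0} \inn G'_{x_0}$, we have $\Vggp \inn (\Vg)^{G^2_{x_0}}$; taking $\Tad^1$-invariants of both sides (equivalently, restricting to $\Tad$-eigenvectors with weight in $\Lambda_R^2$) gives $\Vggp_{\Lambda_R^2} \inn (\Vg)^{G^2_{x_0}}_{\Lambda_R^2}$.

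For the reverse inclusion it suffices to show $(\Vg)^{G^2_{x_0}}_{\Lambda_R^2} \inn \Vggp$. The key point is the symmetric analogue of the observation used for $A$, namely $C = V^{U^1} = V^{G^1_{x_0}}$: indeed $C$ is spanned by the $G^1$-highest weight vectors in $V$, and $G^1_{x_0}$ fixes each such vector since its $G^1$-weight lies in the image of $E$ under restriction to $T^1$. Because the quotient map $\varphi\colon V \to \Vg$ is $G^1_{x_0}$-equivariant, this yields $\varphi(C) \inn (\Vg)^{G^1_{x_0}}$. On the other hand, taking $\Tad^1$-invariants of the exact sequence (\ref{eq:exactVgg}) and using $C = V^{\Tad^1}$ together with $(\fg\cdot x_0)^{\Tad^1} = C\cap\fg\cdot x_0$ shows $\varphi(C) = (\Vg)^{\Tad^1} = (\Vg)_{\Lambda_R^2}$. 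Thus I obtain an inclusion $(\Vg)^{\Tad^1} \inn (\Vg)^{G^1_{x_0}}$ which is $G^2_{x_0}$-equivariant, since $G^2_{x_0}$ commutes with both $G^1_{x_0}$ and $\Tad^1$.

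To finish, I would invoke the product decomposition $G'_{x_0} = G^1_{x_0} \times G^2_{x_0}$ already established (from Lemmas~\ref{lem:basislambda} and \ref{lem:isotropygroup}) in the proof of Lemma~\ref{lem:Vggsplit1}, which gives $\bigl((\Vg)^{G^1_{x_0}}\bigr)^{G^2_{x_0}} = \Vggp$. Taking $G^2_{x_0}$-invariants in the $G^2_{x_0}$-equivariant inclusion above then produces $(\Vg)^{G^2_{x_0}}_{\Lambda_R^2} = \bigl((\Vg)^{\Tad^1}\bigr)^{G^2_{x_0}} \inn \Vggp$, completing the argument. I do not expect a genuine obstacle here: the proof is structurally identical to that of Lemma~\ref{lem:Vggsplit1}, and the only thing really worth checking is that the asymmetric definition of $x_0$ does not break the symmetry between the two factors. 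It does not, because the two facts the argument relies on—$C = V^{G^1_{x_0}}$ and the splitting $G'_{x_0} = G^1_{x_0}\times G^2_{x_0}$—hold equally for either factor.
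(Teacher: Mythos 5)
Your proposal is correct and is essentially the paper's own proof: the paper establishes Lemma~\ref{lem:Vggsplit2} precisely by declaring that it follows ``with the same proofs'' as Lemmas~\ref{lem:Aquotisinv} and~\ref{lem:Vggsplit1}, and your transcription with the factors interchanged ($A \leftrightarrow C$, $\Tad^1 \leftrightarrow \Tad^2$, $\Lambda_R^1 \leftrightarrow \Lambda_R^2$) is exactly that argument. You also correctly identify and verify the only two inputs whose symmetry needs checking, namely $C = V^{U^1} = V^{G^1_{x_0}}$ and the splitting $G'_{x_0} = G^1_{x_0}\times G^2_{x_0}$, so nothing is missing.
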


Here is an analogue to Lemma~\ref{lem:Ainvars}.

\begin{lemma}  \label{lem:Cinvars}
Suppose $m\ge 1, n\ge 2$ and put 
\begin{align*}
K&:= \{\alpha'_j \colon 1\le j \le K-1\};\\
K_0&:= \{\alpha'_{s-1} + \alpha'_{s} \colon 2\le s \le K-1\};\\
K_1&:= \begin{cases} K_0 &\text{if $m\neq n-2$}; \\
K_0 \cup \{\alpha'_{n-2} + \alpha'_{n-1}\} &\text{if $m = n-2$}.
\end{cases}
\end{align*}
The $\Tad^2$-module $\Bigl(\frac{C}{C\cap \fg\cdot x_0}\Bigr)^{G^2_{x_0}}$ is multiplicity-free; its $\Tad^2$-weight set contains $K_1$ and is a subset of $K \cup K_1$. 

For the $\Tad^2$-weights in $K_0$, basis vectors for the corresponding
eigenspaces in $\Bigl(\frac{C}{C\cap \fg\cdot x_0}\Bigr)^{G^2_{x_0}}$
are given in the following table:
\newline
\begin{tabular}{|c|c|}
\hline
$\Tad$-weight & eigenvector \\
\hline
$\beta'_s:=\alpha'_{s-1} + \alpha'_{s}$ & $[X_{-\beta'_s}(v_{\lambda_{s-1}}+v_{\lambda'_{s-1}})] = -  
[X_{-\beta'_s}(v_{\lambda_{s}}+v_{\lambda'_{s}})] $\\
\hline
\end{tabular}
\newline
with $2\le s \le K-1$.

If $m = n-2$ then the $T^2_{\ad}$-eigenspace of weight $\alpha'_{n-2}+\alpha'_{n-1}$ in
$\Bigl(\frac{C}{C\cap \fg\cdot x_0}\Bigr)^{G^2_{x_0}}$is spanned by
the following eigenvector: 
\newline
\begin{tabular}{|c|c|}
\hline
$\Tad$-weight & eigenvector \\
\hline
$\beta'_{n-1}:=\alpha'_{n-2} + \alpha'_{n-1}$ & 
$[X_{-\beta'_{n-1}}(v_{\lambda_{n-2}}+v_{\lambda'_{n-2}})]= -[X_{-\beta'_{n-1}} v_{\lambda_{n-1}}]$\\
\hline
\end{tabular}
\end{lemma}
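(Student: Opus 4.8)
The plan is to run the argument of Lemma~\ref{lem:Ainvars} essentially verbatim, with the roles of the two factors $G^1 = \SL(m)$ and $G^2 = \SL(n)$ interchanged and with $C = V^{U^1} = V_{\Lambda_R^2}$ playing the role of $A$. By Lemma~\ref{lem:Cquotisinv} and Lemma~\ref{lem:Vggsplit2} it is enough to analyze $\bigl(C/(C\cap \fg\cdot x_0)\bigr)^{G^2_{x_0}}$ as a $\Tad^2$-module. First I would record the $G^2$-module structure of $C$: taking $U^1$-invariants of each $V(q(\lambda))$ turns the summand coming from $\lambda_i$ (resp.\ $\lambda'_i$) into a copy of the fundamental representation $V(\om'_i)$ of $G^2$, with highest weight vector $v_{\lambda_i}$ (resp.\ $v_{\lambda'_i}$). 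Hence $C$ contains a \emph{pair} of copies of $V(\om'_i)$ for $1\le i\le L$ and a single copy for $L<i\le K$, and $x_0 = \sum_{i=1}^{K}v_{\lambda_i}+\sum_{i=1}^{L}v_{\lambda'_i}$ is the sum of all their highest weight vectors.

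Next I would separate the ``diagonal'' and ``anti-diagonal'' directions exactly as in Lemma~\ref{lem:Ainvars}. For $1\le i\le L$ set $w_i := v_{\lambda_i}+v_{\lambda'_i}$ and let $Z'_i$ be the simple $G^2$-submodule it generates; together with the single copies $V(\om'_i)$ for $L<i\le K$ these form a block $\overline{Z'}$ of consecutive fundamental representations of $G^2$ whose highest weight vectors sum to a point $y'_0$ with $G^2_{y'_0}=G^2_{x_0}$. Letting $Z'$ be $\overline{Z'}$ together with the $\Tad^2$-trivial summand spanned by the anti-diagonals $v_{\lambda_i}-v_{\lambda'_i}$ ($1\le i\le L$) and, when $K=n$, the trivial copy $\CC v_{\lambda_n}$, I obtain a short exact sequence of $G^2_{x_0}\rtimes\Tad^2$-modules relating $Z'/(C\cap\fg\cdot x_0)$, $C/(C\cap\fg\cdot x_0)$ and $C/Z'$, as in~(\ref{eq:exactseqA}). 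Taking $G^2_{x_0}$-invariants, Lemma~\ref{lem:consecfund} computes the first term (a sum of consecutive fundamental representations of $\SL(n)$), while the third term is handled by the same $\fu$-fixed vector argument as in Lemma~\ref{lem:Ainvars}, producing the single-root weights $\alpha'_j$ with eigenvectors $[X_{-\alpha'_j}(v_{\lambda_j}-v_{\lambda'_j})]$. Multiplicity-freeness, and the verification that the eigenvectors listed in the tables (built from the diagonals $w_{s-1}$) are $G^2_{x_0}$-fixed and have the asserted weights, then follow by the same straightforward computations, using that the relevant weights lie in $p(\wg)$.

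The one point demanding genuine care---and the step I expect to be the main obstacle---is pinning down the top index of the fundamental-representation block and deciding when the special case $p=k$ of Lemma~\ref{lem:consecfund} fires, since this is exactly what separates $K_1$ from $K_0$. Here the asymmetry with Lemma~\ref{lem:Ainvars} appears: because $C=V^{U^1}$ records the \emph{second}-factor weights $\om'_i$ directly, rather than the shifted weights $\om_{i-1}$, the block reaches index $K=\min(m+1,n)$ rather than $L$. Since $\om'_n=0$ for $\SL(n)$, the top copy $V(\om'_K)$ is nontrivial precisely when $K<n$, i.e.\ when $m\le n-2$; in that regime $k=K$, and the special case $k=n-1$ occurs exactly for $K=n-1$, that is $m=n-2$, which produces the extra weight $\alpha'_{n-2}+\alpha'_{n-1}=\beta'_{n-1}$ lying beyond $K_0$. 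When instead $K=n$ (i.e.\ $m\ge n-1$) the trivial top copy $V(\om'_n)$ is absorbed into the $\Tad^2$-trivial part, the block stops at $V(\om'_{n-1})$, and the resulting top weight $\alpha'_{n-2}+\alpha'_{n-1}$ already lies in $K_0$, so no extra weight appears. This is the exact analogue of the dichotomy $n=m-1$ versus $n\ge m$ in Lemma~\ref{lem:Ainvars}, and it is what yields the condition $m=n-2$ in the statement.
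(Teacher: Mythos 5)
Your proposal is correct and follows essentially the same route as the paper's proof: the same splitting of $C$ into diagonal simple submodules forming a block $\overline{Z}$ of consecutive fundamental representations of $G^2$ plus a $G^2_{x_0}\rtimes\Tad^2$-trivial complement built from the anti-diagonals (and the trivial level-$n$ vectors when $K=n$), the same exact sequence with $G^2_{x_0}$-invariants, Lemma~\ref{lem:consecfund} for the first term, and the same $\fu$-fixed-vector argument for the quotient. You also correctly isolate the one genuinely asymmetric point — since both $\lambda_i$ and $\lambda'_i$ restrict to $\om'_i$, the block reaches index $K$ rather than $L$, so the boundary case $p=k$ of Lemma~\ref{lem:consecfund} fires exactly when $K=n-1$, i.e.\ $m=n-2$ — which is precisely how the paper arrives at the set $K_1$.
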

\begin{proof}
We will proceed as in the proof of Lemma~\ref{lem:Ainvars}. We begin by defining $G^2$-submodules of $C$ which are isomorphic to fundamental representations.
For $i=1,\ldots,K-1$, put
\[
Z_i := \text{the simple $G^2$-submodule of $C$ with highest weight vector
 $z_i:=v_{\lambda_{i}} + v_{\lambda'_i}$.}
\]
Next, put
\[ 
Z_K:= \begin{cases}
 \CC v_{\lambda_{K}}  \oplus  \CC v_{\lambda'_K}  &\text{if $K=L (=n)$} \\
\text{the simple $G^2$-module with highest weight vector $z_K:= v_{\lambda_K}$}  &\text{if $K=L+1$}
\end{cases}
\]
We also define a trivial $G^2_{x_0}\rtimes T^2_{\ad}$-module $Z_0$ to
`account' for the highest weight vectors in $C$ missing from $Z_1 \oplus \ldots \oplus Z_K$:
\[ Z_0 := \CC (v_{\lambda_1}-v_{\lambda'_1}) \oplus \CC(v_{\lambda_2}-v_{\lambda'_2}) + \ldots + \CC(v_{\lambda_{K-1}}-v_{\lambda'_{K-1}}) \]
Next, we define the $G^2_{x_0} \rtimes T^2_{\ad}$-module 
\[Z:= Z_0 \oplus Z_1 \oplus \ldots \oplus Z_K.\]
Then $C \cap \fg \cdot x_0 \inn Z \inn C$ and we obtain the following exact sequence of $G^2_{x_0} \rtimes T^2_{\ad}$-modules:
 \[ 0 \longrightarrow \frac{Z}{C\cap \fg\cdot x_0} \longrightarrow \frac{C}{C \cap \fg \cdot x_0} \longrightarrow \frac{C}{Z} \longrightarrow 0
\]
Taking $G^2_{x_0}$-invariants we obtain the following exact sequence of $T^2_{\ad}$-modules
\begin{equation} \label{eq:seqCinv}
 0 \longrightarrow \Bigl(\frac{Z}{C\cap \fg\cdot x_0}\Bigr)^{G^2_{x_0}} \longrightarrow \Bigl(\frac{C}{C \cap \fg \cdot x_0}\Bigr)^{G^2_{x_0}} \longrightarrow \Bigl(\frac{C}{Z}\Bigr)^{G^2_{x_0}}
\end{equation}

We now want to determine $\Bigl(\frac{Z}{C\cap \fg\cdot x_0}\Bigr)^{G^2_{x_0}}$ and $\Bigl(\frac{C}{Z}\Bigr)^{G^2_{x_0}}$. We begin with the first. To do so, put
\begin{align*}
y_0 &:= \begin{cases}
z_1 + z_2 +\ldots z_K &\text{if $K < n$};\\
z_1 + z_2 + \ldots z_{K-1} &\text{if $K=n$};
\end{cases}\\
\overline{Z_0}&:=
\begin{cases}
 Z_0 &\text{if $K<n$}; \\
 Z_0 \oplus Z_K &\text{if $K=n$};
\end{cases}\\
\overline{Z} &:= 
\begin{cases}
Z_1 \oplus \ldots \oplus Z_K &\text{if $K<n$};\\
Z_1 \oplus \ldots \oplus Z_{K-1} &\text{if $K=n$}.
\end{cases}
\end{align*}
Then we have the following decompositions as $G^2_{x_0} \rtimes T^2_{\ad}$-modules:
\begin{align*}
Z &= \overline{Z} \oplus \overline{Z_0}; \\
C\cap \fg\cdot x_0 & = \fg_2 \cdot y_0 \oplus \overline{Z_0}.
\end{align*}
It follows that as $G^2_{x_0} \rtimes T^2_{\ad}$-modules
\[
\frac{Z}{C\cap \fg \cdot x_0} \isom \frac{\overline{Z}}{\fg^2\cdot y_0}.
\]
After remarking that $G^2_{y_0} = G^2_{x_0}$  and that
$\overline{Z}$ the a sum of consecutive fundamental representations of
$G^2$, Lemma~\ref{lem:consecfund} describes  $\Bigl(\frac{Z}{C\cap
  \fg\cdot x_0}\Bigr)^{G^2_{x_0}}$ as a $\Tad^2$-module. It is
multiplicity-free and its $\Tad^2$-weight set is
\begin{align} 
&\{\alpha'_1 + \alpha'_2, \alpha'_2 + \alpha'_3,\ldots, \alpha'_{K-2}+\alpha'_{K-1}\} &&\text{if $K < n-1$ or $K=n$}; \label{eq:T2modwe1}
\\
&\{\alpha'_1 + \alpha'_2, \alpha'_2 + \alpha'_3, \ldots, \alpha'_{K-1}+\alpha'_{K}\} &&\text{if $K=n-1 (\Leftrightarrow m=n-2)$}. \label{eq:T2modwe2}
\end{align} 

We now turn to $\Bigl(\frac{C}{Z}\Bigr)^{G^2_{x_0}}$. We define 
\[
C_i := \text{the simple $G^2$-module with highest weight vector $(v_{\lambda_{i}}-v_{\lambda_i'})$}\] 
for $i=1,\ldots, K-1$
and then
\[
\overline{C} :=
\begin{cases}
 C_1 \oplus \ldots \oplus C_{K-1} &\text{if $K<n$;} \\
C_1 \oplus \ldots \oplus C_{K-1} \oplus Z_K &\text{if $ K = n$.}
\end{cases}
\]
Then we have the following decomposition as $G^2_{x_0} \rtimes \Tad^2$-modules: 
\[C = \overline{C} \oplus \overline{Z}\]
and therefore an isomorphism of $G^2_{x_0}\rtimes \Tad^2$-modules
\[\frac{C}{Z} \isom \frac{\overline{C}}{\overline{Z_0}}. \]
Now note that for $i=1,\ldots K-1$, $C_i \isom V(\omega'_i)$ as a $G_2$-module, and that
\begin{equation*}
\overline{Z_0}  = \overline{C}^{U_2}=
\begin{cases} 
(C_1\oplus \ldots \oplus C_{K-1})^{U_2} &\text{if $K <n$}; \\
(C_1\oplus \ldots \oplus C_{K-1})^{U_2} \oplus Z_K &\text{if $K =n$}.
\end{cases}
\end{equation*}
It follows that we have the following isomorphism of $G^2_{x_0} \rtimes T^2_{\ad}$-modules:
\[\frac{\overline{C}}{\overline{Z_0}} \isom \frac{C_1 \oplus \ldots \oplus C_{K-1}}{(C_1 \oplus \ldots \oplus C_{K-1})^{U_2}}\]

The rest of the proof runs exactly as in the proof of Lemma~\ref{lem:Ainvars}.
\end{proof}

Here is the formal proof of Proposition~\ref{prop:vgg}.
\begin{proof}[Proof of Proposition~\ref{prop:vgg}]
\label{proofpropvgg}
The proof consists of equation~(\ref{eq:Vgginto1and2}); Lemma~\ref{lem:Vggsplit1} and Lemma~\ref{lem:Vggsplit2}; Lemma~\ref{lem:Aquotisinv} and Lemma~\ref{lem:Cquotisinv}; Lemma~\ref{lem:Ainvars} and Lemma~\ref{lem:Cinvars}.
\end{proof}

We next prove that the sections in $H^0(G \cdot x_0, \shN_{X_0})^G$
corresponding to the invariants in $\Vgg$ with $\Tad$-weights
belonging to the set
$J_1$  of Proposition~\ref{prop:vgg} do \emph{not} belong to $H^0(X_0,
\shN_{X_0})^G \isom T_{X_0}\tM^G_{\wm}$. We begin by expressing the $\Tad$-weights in terms of the basis $E$ of $\wgo$.

\begin{lemma} \label{lem:betainlambda}
Using the notation introduced in equations (\ref{eq:case17wts1}) and
(\ref{eq:case17wts2}) on page~\pageref{eq:case17wts1} and that of
Proposition~\ref{prop:vgg} we have the following equalities in $\wgo$:
\begin{align*}
\beta_r &= \lambda'_{r-1} + \lambda_{r+1} - \lambda_{r-1} - \lambda'_{r+1} \quad\text{ when $2\le r \le L-1$}; \\
\beta_s' &= \lambda_{s-1} + \lambda'_s - \lambda'_{s-2} - \lambda_{s+1} \quad\text{ when  $2 \le s \le K-1$},
\end{align*}
where $\lambda'_0 = 0$. 
\end{lemma}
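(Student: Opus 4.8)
The plan is to prove both identities by a direct computation in the character lattice $X(\overline{T})$ of $\overline{G} = \GL(m) \times \GL(n)$, in which the fundamental weights $\omega_1, \ldots, \omega_m, \omega'_1, \ldots, \omega'_n$ form a $\ZZ$-basis (as recalled in the preliminaries). Each side of each claimed equality is a $\ZZ$-linear combination of the $\lambda_i, \lambda'_i$ (on the right) and of simple roots (on the left), so all quantities lie in $X(\overline{T})$; it therefore suffices to compare coordinates in the fundamental-weight basis, and the fact that the common value lies in $\wgo = \<E\>_{\ZZ}$ is then witnessed by the right-hand side. The only inputs needed are the expansion~(\ref{eq:simpleroots}) of the simple roots and the definitions~(\ref{eq:case17wts1}) and~(\ref{eq:case17wts2}) of the basic weights.

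First I would expand the left-hand sides. For $\beta_r = \alpha_{r-1} + \alpha_r$, applying~(\ref{eq:simpleroots}) to each summand and letting the common $\omega_{r-1}$ and $\omega_r$ terms combine gives
\[\beta_r = -\omega_{r-2} + \omega_{r-1} + \omega_r - \omega_{r+1},\]
an expression involving only the $\omega_i$. The identical computation in the second factor yields
\[\beta'_s = -\omega'_{s-2} + \omega'_{s-1} + \omega'_s - \omega'_{s+1},\]
an expression involving only the $\omega'_j$.

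Next I would expand the right-hand sides using~(\ref{eq:case17wts1}) and~(\ref{eq:case17wts2}). For the first identity, substituting $\lambda'_{r-1} = \omega_{r-1} + \omega'_{r-1}$, $\lambda_{r+1} = \omega_r + \omega'_{r+1}$, $\lambda_{r-1} = \omega_{r-2} + \omega'_{r-1}$ and $\lambda'_{r+1} = \omega_{r+1} + \omega'_{r+1}$ makes the four $\omega'$-contributions cancel in pairs, leaving exactly $-\omega_{r-2} + \omega_{r-1} + \omega_r - \omega_{r+1}$, which matches the expansion of $\beta_r$. For the second identity, substituting the analogous expressions for $\lambda_{s-1}, \lambda'_s, \lambda'_{s-2}, \lambda_{s+1}$ makes the $\omega$-contributions cancel in pairs and leaves $-\omega'_{s-2} + \omega'_{s-1} + \omega'_s - \omega'_{s+1}$, matching $\beta'_s$.

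The computation is routine, so the only point requiring any attention is the behaviour at the low end of the index ranges, where the conventions $\omega_0 = 0$ (from~(\ref{eq:case17wts1})) and $\lambda'_0 = 0$ come into play: for $r = 2$ one has $\lambda_{r-1} = \lambda_1 = \omega'_1$, and for $s = 2$ the term $\lambda'_{s-2} = \lambda'_0$ must be read as $0$, which is exactly what one obtains by formally setting $\omega_0 = \omega'_0 = 0$ in the defining formula for $\lambda'_i$. I would check these two boundary cases explicitly to confirm that the pairwise cancellations above are unaffected. This is therefore the only ``obstacle,'' and it is bookkeeping rather than a genuine difficulty.
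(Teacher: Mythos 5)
Your proof is correct and is exactly what the paper intends: the paper's own proof of this lemma is simply ``Straightforward verification,'' and your expansion of $\beta_r$, $\beta'_s$ and of the $\lambda_i$, $\lambda'_i$ in the fundamental-weight basis of $X(\overline{T})$ via~(\ref{eq:simpleroots}), (\ref{eq:case17wts1}) and (\ref{eq:case17wts2}) is that verification, with the boundary conventions $\omega_0=0$ and $\lambda'_0=0$ handled properly. Nothing further is needed.
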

\begin{proof}
Straightforward verification.
\end{proof}

\begin{lemma} \label{lem:betam1inlambda} We use the notation of Proposition~\ref{prop:vgg} and suppose $n=m-1$. Then the following are equivalent (recall that, by assumption, $(G,W)$ is spherical):
\begin{enumerate}
\item  $\beta_{m-1} \in \wg$;
 \item $T = \ker(\omega_m - b\omega'_{n}) \inn \overline{T}$ for some integer $b$. 
  \end{enumerate}
For every integer $b$ we have the following equality in $X(\overline{T})$:
\begin{multline} \label{eq:betaminlambdaup}
\beta_{m-1} +(\omega_m - b\omega'_n) = \lambda'_{m-1} +(-1-b)\lambda_{m-1} +\\ (b+2)(\lambda'_{m-2}-\lambda_{m-2})
+  (b+1) \sum_{k=1}^{m-3}(\lambda'_k - \lambda_k) 
\end{multline}  
 Consequently, if $T=\ker(\omega_m - b \omega'_n)$ for some integer $b$, restricting (\ref{eq:betaminlambdaup}) to $T$ yields the following equality in
 $\wg$:
\begin{equation} \label{eq:betaminlambda}
\beta_{m-1} = \lambda'_{m-1} +(-1-b)\lambda_{m-1} +(b+2)(\lambda'_{m-2}-\lambda_{m-2})
+  (b+1) \sum_{k=1}^{m-3}(\lambda'_k - \lambda_k).
\end{equation}
\end{lemma}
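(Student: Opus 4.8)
The plan is to follow the template of Lemma~\ref{lem:betainlam16}, reducing the membership $\beta_{m-1}\in\wg$ to a condition on $\ft$ by working in $X(\overline{T})$ with the restriction maps $q$ and $r$. First I would record the expansion of $\beta_{m-1}$ in the fundamental weights of the first factor: combining $\beta_{m-1}=\alpha_{m-2}+\alpha_{m-1}$ with (\ref{eq:simpleroots}) gives $\beta_{m-1}=-\omega_{m-3}+\omega_{m-2}+\omega_{m-1}-\omega_m$ in $X(\overline{T})$ (with the convention $\omega_0=0$). Since here $L=K=m-1$, Lemma~\ref{lem:basislambda} shows $-\omega_{m-3}+\omega_{m-2}+\omega_{m-1}\in\wgo$, so that $\beta_{m-1}\equiv-\omega_m\pmod{\wgo}$ and hence $q(\beta_{m-1})\in\wg$ if and only if $q(\omega_m)\in\wg=q(\wgo)$.

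Next I would compute the two lattices that govern the reduction. Because $G'=\SL(m)\times\SL(n)$, the kernel of $r$ is $\ker r=\<\omega_m,\omega'_n\>_{\ZZ}$; and since $\{\omega_1,\dots,\omega_m\}$ and $\{\omega'_1,\dots,\omega'_n\}$ are bases, $\omega_m\notin\wgo$, whence $\wgo\cap\ker r=\<\omega'_n\>_{\ZZ}$. Using $\ker q\inn\ker r$, the condition $q(\omega_m)\in q(\wgo)$ unwinds to: there exists $\gamma'\in\wgo$ with $\omega_m+\gamma'\in\ker q\inn\<\omega_m,\omega'_n\>_{\ZZ}$; this forces $\gamma'\in\wgo\cap\<\omega_m,\omega'_n\>_{\ZZ}=\<\omega'_n\>_{\ZZ}$, i.e.\ $\gamma'=c\,\omega'_n$ for some integer $c$. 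Setting $b=-c$, this says exactly $T\inn\ker(\omega_m-b\omega'_n)$. Conversely, if $T=\ker(\omega_m-b\omega'_n)$ then $q(\omega_m)=b\,q(\omega'_n)\in\wg$, giving $q(\beta_{m-1})\in\wg$ and hence the implication (2)$\Rightarrow$(1).

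To upgrade the inclusion $T\inn\ker(\omega_m-b\omega'_n)$ to an equality I would invoke sphericality. By \cite[Theorem 5.1]{knop-rmks}, $(G,W)$ is spherical if and only if $\ft\not\inn\ker\omega'_n$, the kernel of the generator of $\<\wgo\>_{\CC}\cap\<\ker r\>_{\CC}=\<\omega'_n\>_{\CC}$, exactly as in (\ref{eq:case16laspher}). Now $\ft'=\<\omega_m,\omega'_n\>_{\CC}^{\perp}$ has codimension $2$ in $\Lie(\overline{T})$, while $\ker(\omega_m-b\omega'_n)$ is a hyperplane (codimension $1$) containing both $\ft'$ and $\ft$. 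The only subspaces between $\ft'$ and that hyperplane are $\ft'$ itself and the hyperplane, so $\ft\in\{\ft',\ker(\omega_m-b\omega'_n)\}$; since $\ft'\inn\ker\omega'_n$, sphericality excludes $\ft=\ft'$ and forces $\ft=\ker(\omega_m-b\omega'_n)$. Finally, $\omega_m-b\omega'_n$ has coefficient $1$ on $\omega_m$, hence is primitive in $X(\overline{T})$, so its kernel in $\overline{T}$ is connected and $\ft=\ker(\omega_m-b\omega'_n)$ is equivalent to $T=\ker(\omega_m-b\omega'_n)$. This gives (1)$\Leftrightarrow$(2). Identity (\ref{eq:betaminlambdaup}) is then a direct verification, substituting $\lambda'_k-\lambda_k=\omega_k-\omega_{k-1}$ and telescoping $\sum_{k=1}^{m-3}(\omega_k-\omega_{k-1})=\omega_{m-3}$; restricting to $T$ and using $q(\omega_m-b\omega'_n)=0$ yields (\ref{eq:betaminlambda}).

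The main obstacle I anticipate is pinning down the two lattice intersections correctly, namely $\ker r=\<\omega_m,\omega'_n\>_{\ZZ}$ and $\wgo\cap\ker r=\<\omega'_n\>_{\ZZ}$, together with identifying the exact sphericality criterion from Knop's List needed in the codimension argument; once these are in hand, the equivalence and the two displayed identities reduce to routine bookkeeping in the character lattice.
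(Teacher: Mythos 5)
Your proposal is correct and matches the paper's own proof in all essentials: both reduce (1) to $q(\omega_m)\in\wg$ via $\beta_{m-1}=-\omega_{m-3}+\omega_{m-2}+\omega_{m-1}-\omega_m$ and Lemma~\ref{lem:basislambda}, compute $\wgo\cap\ker r=\ZZ\omega'_n$ from the linear independence of $\{\omega_1,\ldots,\omega_m,\omega'_1,\ldots,\omega'_n\}$, translate membership to the existence of an integer $b$ with $\omega_m-b\omega'_n\in\ker q$, and use sphericality (the paper's $\ker q\cap\ZZ\omega'_n=0$, equivalently your $\ft\not\inn\ker\omega'_n$) to upgrade the inclusion to an equality. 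Your only variation is stylistic: where the paper concludes lattice-theoretically that $\ker q=\<\omega_m-b\omega'_n\>_{\ZZ}$, you run the codimension argument in $\Lie(\overline{T})$ borrowed from Lemma~\ref{lem:betainlam16} and add the correct observation that $\omega_m-b\omega'_n$ is primitive, hence has connected kernel, which justifies stating the condition with $T$ rather than $\ft$.
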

\begin{proof}
 We consider $\beta_{m-1}$ as an element of $X(\overline{T})$ and  first determine when $q(\beta_{m-1}) \in \wg$. 
Recall that  $p\colon X(T) \onto X(T')$, $q\colon X(\overline{T}) \onto X(T)$ and $r\colon X(\overline{T}) \onto X(T')$ are the restriction maps.  Since $r = p \circ q$,
\[
\ker q \inn \ker r = \<\omega_m, \omega_n'\>_{\ZZ}.\]
Next, note that 
\begin{equation} \label{eq:betamm1}
\beta_{m-1} = \alpha_{m-2} + \alpha_{m-1} = -\omega_{m-3} + \omega_{m-2} +\omega_{m-1} -\omega_m \in X(\overline{T}).
\end{equation}
where $\omega_{m-3} = 0$ if $m=3$. 
Since by equation (\ref{eq:lambdaovg})
\[ \overline{\wg} = \<\omega_1,\ldots,\omega_{m-1},\omega_1',\ldots,\omega_n'\>_{\ZZ}\]
it follows that $q(\beta_{m-1}) \in \wg = q(\overline{\wg})$ if and only if $q(\omega_m) \in  \wg$. 
This means there exists $\gamma \in  \overline{\wg}$ such that $q(\omega_m) = q(\gamma)$, that is $\omega_m - \gamma \in \ker q$. We claim $\gamma$ belongs to $\ZZ \om_n'$. Indeed, $\omega_m - \gamma \in \ker r$ and therefore $\gamma \in \ker r$. Using the linear independence of the set $\{\omega_1,\ldots, \omega_m, \omega_1',\ldots,\omega_n'\}$ in $X(\overline{T})$ we have that $\overline{\wg} \cap \ker r = \ZZ\omega_n'$. This proves the claim, and we have proved (identifying $\beta_{m-1}=q(\beta_{m-1})$ since we have identified the root lattices of $G$ and $\overline{G}$) that $\beta_{m-1} \in  \wg$ if and only if there exists an integer $b$ such that $\omega_m - b \omega'_n \in \ker q$. 

Now $W$ is spherical as a $G$-module if and only if the restriction of $q$ to $\overline{\wg}$ is injective. 
That is, if and only if $\ker q \cap \overline{\wg} = 0$. Since $\ker r \cap   \overline{\wg} = \ZZ\omega_n'$, this is equivalent to $\ker q \cap \ZZ\omega_n' = 0$. 
Using that $\ker q \inn \ker r = \<\omega_{m}, \omega_n'\>_{\ZZ}$, it follows
that ($(G,W)$ is spherical and) $\beta_{m-1} \in \wg$ if and only if there exists an integer $b$ such that $\ker q = \<\omega_m - b \omega'_n\>_{\ZZ}$. This is equivalent to the first assertion. 
 
The straightforward verification of (\ref{eq:betaminlambdaup}) is left to the reader.
\end{proof}

\begin{lemma} \label{lem:betan1inlambda}
We use the notation of Proposition~\ref{prop:vgg} and suppose $m=n-2$. Then 
the following are equivalent (recall that, by assumption, $(G,W)$ is spherical):
\begin{enumerate}
\item   $\beta'_{n-1} \in \wg$;
\item $T = \ker(a\omega_m - \omega'_{n}) \inn \overline{T}$ for some integer $a$. 
\end{enumerate}
For every integer $a$ we have the following equality in $X(\overline{T})$:
\begin{multline} \label{eq:betanm2up}
\beta'_{n-1} -(a\omega_m-\omega'_n)  = \lambda_{n-1} + (2+a)\lambda_{n-2} + (-2-a) \lambda'_{n-3}  + \\
(1+a)\lambda_{n-3} +  (-1-a) \lambda'_{n-2} + (1+a) \sum_{k=1}^{n-4}(\lambda_k- \lambda'_k)
\end{multline}
Consequently, if $T=\ker(a\omega_m-\omega'_n)$ for some integer $a$, restricting (\ref{eq:betanm2up}) to $T$ yields the following equality in $\wg$: 
\begin{multline} \label{eq:betanm2}
\beta'_{n-1} = \lambda_{n-1} + (2+a)\lambda_{n-2} + (-2-a) \lambda'_{n-3}  + \\
(1+a)\lambda_{n-3} +  (-1-a) \lambda'_{n-2} + (1+a) \sum_{k=1}^{n-4}(\lambda_k- \lambda'_k).
\end{multline}
\end{lemma}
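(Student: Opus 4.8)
The plan is to mirror the proof of Lemma~\ref{lem:betam1inlambda}, interchanging the roles of the two $\GL$-factors. First I would record that in the case $m=n-2$ we have $L=n-2=m$ and $K=n-1$, so that by equation~(\ref{eq:lambdaovg}) the weight group of $W^*$ is $\wgo = \<\om_1, \ldots, \om_m, \om'_1, \ldots, \om'_{n-1}\>_{\ZZ} \inn X(\overline{T})$. Using the simple-root formula~(\ref{eq:simpleroots}) for the second factor I would compute
\[\beta'_{n-1} = \alpha'_{n-2} + \alpha'_{n-1} = -\om'_{n-3} + \om'_{n-2} + \om'_{n-1} - \om'_n \in X(\overline{T}),\]
(with $\om'_{n-3}=0$ when $n=3$) and observe that the only summand not already lying in $\wgo$ is $-\om'_n$.

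Next I would determine when $q(\beta'_{n-1}) \in \wg = q(\wgo)$. Since $\om'_{n-3}, \om'_{n-2}, \om'_{n-1} \in \wgo$, this holds if and only if $q(\om'_n) \in \wg$, i.e.\ if and only if there exists $\gamma \in \wgo$ with $\om'_n - \gamma \in \ker q$. Because $\ker q \inn \ker r = \<\om_m, \om'_n\>_{\ZZ}$ and $\om'_n \in \ker r$, such a $\gamma$ must lie in $\wgo \cap \ker r$; and the linear independence of $\{\om_1,\ldots,\om_m,\om'_1,\ldots,\om'_n\}$ in $X(\overline{T})$ forces $\wgo \cap \ker r = \ZZ\om_m$ (this is the swapped analogue of the equality $\wgo \cap \ker r = \ZZ\om'_n$ from Lemma~\ref{lem:betam1inlambda}). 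Hence $\beta'_{n-1} \in \wg$ if and only if $a\om_m - \om'_n \in \ker q$ for some integer $a$.

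Then I would bring in sphericality exactly as in Lemma~\ref{lem:betam1inlambda}: $(G,W)$ is spherical iff $q|_{\wgo}$ is injective, i.e.\ $\ker q \cap \wgo = 0$, and since $\ker q \inn \ker r$ this amounts to $\ker q \cap \ZZ\om_m = 0$. Combining this with the previous step and with $\ker q \inn \<\om_m, \om'_n\>_{\ZZ}$ shows that ($(G,W)$ spherical and $\beta'_{n-1} \in \wg$) holds iff $\ker q = \<a\om_m - \om'_n\>_{\ZZ}$ for some integer $a$. Since $a\om_m - \om'_n$ is primitive (its $\om'_n$-coefficient is $-1$), its kernel is the connected subtorus $T = \overline{T} \cap G$, which yields the equivalence with the second assertion.

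Finally, I would verify the identity~(\ref{eq:betanm2up}) by substituting the expressions~(\ref{eq:basisinlambda1}) and~(\ref{eq:basisinlambda2}) of Lemma~\ref{lem:basislambda} for $\om'_{n-3}, \om'_{n-2}, \om'_{n-1}$ and for $\om_m = \om_{n-2}$ into $\beta'_{n-1} - a\om_m + \om'_n = -\om'_{n-3} + \om'_{n-2} + \om'_{n-1} - a\om_{n-2}$, and collecting the telescoping sums $\sum_{k=1}^{j}(\lambda'_k - \lambda_k)$; restricting to $T$ then gives~(\ref{eq:betanm2}). The only steps needing genuine care are the primitivity observation that identifies $T$ with the kernel of a single character (so that, unlike in Lemma~\ref{lem:betainlam16}, one may work with $T$ rather than $\ft$) and the bookkeeping in this last computation; the rest is a direct transcription of the argument for $\beta_{m-1}$ with the two factors swapped.
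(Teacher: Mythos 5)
Your proposal is correct and takes essentially the same route as the paper, whose proof simply states that the argument of Lemma~\ref{lem:betam1inlambda} carries over with the two factors interchanged — now with $\wgo = \<\om_1,\ldots,\om_m,\om'_1,\ldots,\om'_{n-1}\>_{\ZZ}$ and $\beta'_{n-1} = -\om'_{n-3}+\om'_{n-2}+\om'_{n-1}-\om'_n$ — and leaves the verification of (\ref{eq:betanm2up}) to the reader. The details you supply (the identity $\wgo\cap\ker r = \ZZ\om_m$, the reduction of sphericality to $\ker q\cap\ZZ\om_m = 0$, and the primitivity of $a\om_m-\om'_n$ justifying the formulation with $T$ rather than $\ft$) are all accurate and implicit in the referenced proof.
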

\begin{proof}
This proof is very similar to that of Lemma~\ref{lem:betam1inlambda}. For the first assertion, 
the arguments are identical except that now equation~(\ref{eq:lambdaovg}) gives
\[\overline{\wg} = \<\omega_1,\ldots,\omega_m,\omega_1',\ldots,\omega_{n-1}'\>_{\ZZ}\]
and that
\[\beta_{n-1}' = \alpha_{n-2}' + \alpha_{n-1}'= -\omega_{n-3}'+\omega_{n-2}' + \omega_{n-1}' -\omega_{n}' \in X(\overline{T}).\]

The straightforward verification of equation~(\ref{eq:betanm2up}) is left to the reader.
\end{proof}

We now apply Proposition~\ref{prop:excl} a few times to exclude the sections in
$H^0(G \cdot x_0, \shN_{X_0})^G \isom \Vgg$ with $\Tad$-weight in
$J_1$ (of Proposition~\ref{prop:vgg}) from belonging to $H^0(X_0,
\shN_{X_0})^G$.  We begin with the weights in $J_0 \inn J_1$.
  
\begin{lemma} \label{lem:exclbetar}
Suppose $2\le r \le L-1$. The section $s \in H^0(G \cdot x_0, \shN_{X_0})^G$ defined by
\[s(x_0) =  [X_{-\beta_r} (v_{\lambda_{r}} + v_{\lambda'_{r-1}})] = -[X_{-\beta_r} (v_{\lambda_{r+1}}+v_{\lambda'_r})] \in \Vgg \]
does not extend to $X_0$.
\end{lemma}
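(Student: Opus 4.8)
The plan is to apply the non-extension criterion of Proposition~\ref{prop:excl} to the representative
\[
v := X_{-\beta_r}(v_{\lambda_r} + v_{\lambda'_{r-1}}),
\]
which is a $\Tad$-eigenvector of weight $\beta_r = \alpha_{r-1}+\alpha_r \in \Lambda_R$ with $[v]\in\Vgg$ (the membership $[v]\in\Vgg$ follows from Proposition~\ref{prop:vgg} together with $\beta_r\in\wg$, which is recorded in Corollary~\ref{cor:vgg17} for $2\le r\le L-1$). Note that $v$ lies in $V(\lambda_r)\oplus V(\lambda'_{r-1})$, so its only nonzero projections onto the simple summands of $V$ are those onto $V(\lambda_r)$ and $V(\lambda'_{r-1})$. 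The element of $E$ I would feed into the criterion is $\lambda = \lambda_{r+1}$, which lies in $E$ since $r+1\le L\le K$. Conditions (\ESo) and (\ESt) are then immediate: by Lemma~\ref{lem:betainlambda} one has $\beta_r = \lambda'_{r-1} + \lambda_{r+1} - \lambda_{r-1} - \lambda'_{r+1}$, so the coefficient of $\lambda_{r+1}$ equals $+1>0$, while the projection of $v$ onto $V(\lambda_{r+1})$ vanishes by the support description above.

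For (\ESth) I would observe that, since $\lambda_{r+1} = \omega_r + \omega'_{r+1}$, the only simple roots $\eta$ with $\<\lambda_{r+1},\eta^\vee\>\neq 0$ are $\alpha_r$ and $\alpha'_{r+1}$. For $\eta=\alpha_r$ the weight $\widetilde\lambda = \lambda'_r$ works, and for $\eta=\alpha'_{r+1}$ the weight $\widetilde\lambda=\lambda'_{r+1}$ works; the hypothesis $r\le L-1$ is exactly what guarantees $\lambda'_{r+1}\in E$, and this is the sole place where that bound is essential. For (\ESf) I would first record that $\beta_r$ is a positive root of the $\SL(m)$-factor with $\<\lambda_r,\beta_r^\vee\>=1$, so that $\beta_r\in R^+\setminus E^\perp$ and the condition is genuinely needed. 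A short computation of the pairings $\<\mu,\beta_r^\vee\>$ for $\mu\in E$ shows that exactly the four basic weights $\lambda_r,\lambda_{r+1},\lambda'_{r-1},\lambda'_r$ are non-orthogonal to $\beta_r^\vee$; discarding $\lambda_{r+1}=\lambda$ and the support weights $\lambda_r,\lambda'_{r-1}$ of $v$, the remaining candidate $\xi=\lambda'_r\in E$ satisfies $\<\lambda'_r,\beta_r^\vee\>=1\neq 0$ and has zero $v$-projection, so (\ESf) holds with this $\xi$.

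Once (\ESo)--(\ESf) are checked, Proposition~\ref{prop:excl} yields at once that $s$ does not extend to $X_0$. The only genuinely delicate step is the verification of (\ESf): it requires the precise bookkeeping of which fundamental-weight combinations in $E$ are non-orthogonal to $\beta_r^\vee$, and the observation that the unique surviving index $\lambda'_r$ is disjoint from the two-element support $\{\lambda_r,\lambda'_{r-1}\}$ of the chosen representative $v$. Everything else is a direct substitution into the criterion.
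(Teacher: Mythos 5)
Your proof is correct and is essentially identical to the paper's: the paper also applies Proposition~\ref{prop:excl} with $v = X_{-\beta_r}(v_{\lambda_r}+v_{\lambda'_{r-1}})$ and $\lambda = \lambda_{r+1}$, verifies (\ESo) via Lemma~\ref{lem:betainlambda} and (\ESt) from the support of $v$, uses the same witnesses $\lambda'_r$ and $\lambda'_{r+1}$ for (\ESth), and takes the same $\xi = \lambda'_r$ for (\ESf). You merely spell out the pairing bookkeeping (e.g.\ $\<\lambda'_r,\beta_r^{\vee}\>=1$) that the paper leaves implicit, which is fine.
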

\begin{proof}
We apply Proposition~\ref{prop:excl} with $\lambda = \lambda_{r+1}$ and 
\begin{equation}v=  X_{-\beta_r}(v_{\lambda_r} + v_{\lambda'_{r-1}}) \in V. \label{eq:vexclbetar}
\end{equation}
Recall that $\beta_r = \alpha_{r-1} + \alpha_r$. 
We check the four conditions of Proposition~\ref{prop:excl}:
(\ESo) follows from Lemma~\ref{lem:betainlambda};
(\ESt) is clear from (\ref{eq:vexclbetar});
(\ESth) follows from the equalities $\lambda_{r+1} = \omega_r + \omega_{r+1}'$, $\<\lambda'_{r}, \alpha_r^{\vee}\> =1$ and $\<\lambda'_{r+1},(\alpha'_{r+1})^{\vee}\>=1$;
for (\ESf) take $\delta = \lambda'_r = \omega_r + \omega'_r$.
\end{proof}

\begin{lemma} \label{lem:exclbetaprj}
Suppose $2\le j \le K-1$. The section $s \in H^0(G \cdot x_0, \shN_{X_0})^G$ defined by
\[s(x_0) =  [X_{-\beta'_j}(v_{\lambda_{j-1}}+v_{\lambda'_{j-1}})] = -  
[ X_{-\beta'_j}(v_{\lambda_{j}}+v_{\lambda'_j})] \in \Vgg \]
does not extend to $X_0$.
\end{lemma}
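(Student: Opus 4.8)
The plan is to prove this lemma by the same mechanism as Lemma~\ref{lem:exclbetar}, namely a single application of the exclusion criterion Proposition~\ref{prop:excl} to a well-chosen pair $(\lambda, v)$. The section is already known to define an element of $\Vgg$ by Proposition~\ref{prop:vgg} and Corollary~\ref{cor:vgg17}, so no separate nonvanishing check is needed: I would simply fix the representative $v = X_{-\beta'_j}(v_{\lambda_{j-1}} + v_{\lambda'_{j-1}}) \in V$, a $\Tad$-eigenvector of weight $\beta'_j = \alpha'_{j-1} + \alpha'_j$ with $[v] \in \Vgg$. The candidate element of $E$ is $\lambda = \lambda'_j = \omega_j + \omega'_j$. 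This is the mirror, under the interchange of the two factors $\SL(m)$ and $\SL(n)$ (equivalently of primed and unprimed weights), of the choice $\lambda = \lambda_{r+1}$ made in Lemma~\ref{lem:exclbetar}: there one selected the unprimed positive-coefficient weight while treating the unprimed root $\beta_r$, so here I select the primed positive-coefficient weight while treating the primed root $\beta'_j$, and I use the expression of $v$ that omits $V(\lambda'_j)$.

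Next I would verify the four hypotheses (\ESo)--(\ESf) in turn. Condition (\ESo) is immediate from Lemma~\ref{lem:betainlambda}, which gives $\beta'_j = \lambda_{j-1} + \lambda'_j - \lambda'_{j-2} - \lambda_{j+1}$, so the coefficient of $\lambda'_j$ equals $1 > 0$. Condition (\ESt) holds because $v$ has nonzero components only in $V(\lambda_{j-1})$ and $V(\lambda'_{j-1})$, whence its projection onto $V(\lambda'_j)$ vanishes. For (\ESth) the simple roots $\eta$ with $\<\lambda'_j, \eta^{\vee}\> \neq 0$ are $\alpha_j$ and $\alpha'_j$ (the former only when $j \le m-1$, i.e.\ when $\alpha_j$ is a root of $\SL(m)$): for $\eta = \alpha_j$ take $\widetilde{\lambda} = \lambda_{j+1} = \omega_j + \omega'_{j+1}$, which lies in $E$ since $j+1 \le K$ and satisfies $\<\lambda_{j+1}, \alpha_j^{\vee}\> = 1$; for $\eta = \alpha'_j$ take $\widetilde{\lambda} = \lambda_j = \omega_{j-1} + \omega'_j \in E$ with $\<\lambda_j, (\alpha'_j)^{\vee}\> = 1$. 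Finally, for (\ESf), since $\beta'_j$ is a positive root with coroot $(\alpha'_{j-1})^{\vee} + (\alpha'_j)^{\vee}$, I would take $\xi = \lambda_j = \omega_{j-1} + \omega'_j$: then $\<\lambda_j, (\beta'_j)^{\vee}\> = 1 \neq 0$, and the projection of $v$ onto $V(\lambda_j)$ is again zero. Granting these checks, Proposition~\ref{prop:excl} yields the conclusion directly.

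I do not expect a genuine obstacle, since the argument is entirely parallel to Lemma~\ref{lem:exclbetar}. The one point requiring mild care is the index bookkeeping in (\ESth) and (\ESf): one must confirm that the auxiliary weights $\lambda_{j+1}$ and $\lambda_j$ really belong to $E$ throughout the range $2 \le j \le K-1$, which follows from $j+1 \le K$ and $j \le K \le L+1$, and one must track the boundary case $j = m$ (possible when $K = m+1$), where $\alpha_j$ is no longer a simple root and only $\eta = \alpha'_j$ survives in (\ESth). Because the basic weights $\lambda_i = \omega_{i-1} + \omega'_i$ and $\lambda'_i = \omega_i + \omega'_i$ are \emph{not} symmetric under the primed/unprimed interchange, I cannot invoke the symmetry with Lemma~\ref{lem:exclbetar} formally; instead I would re-derive the four conditions by hand, as sketched above.
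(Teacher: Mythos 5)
Your proposal is correct and coincides with the paper's own proof: it applies Proposition~\ref{prop:excl} with exactly the same choices $\lambda = \lambda'_j$ and $v = X_{-\beta'_j}(v_{\lambda_{j-1}} + v_{\lambda'_{j-1}})$, using Lemma~\ref{lem:betainlambda} for (\ESo), the pairings $\<\lambda_{j+1},\alpha_j^{\vee}\>=1$ and $\<\lambda_j,(\alpha'_j)^{\vee}\>=1$ for (\ESth), and $\xi=\lambda_j$ for (\ESf). Your extra care about the index range and the boundary case $j=m$ (when $K=m+1$, so that $\alpha_j$ is not a simple root and only $\eta=\alpha'_j$ needs checking) is sound, though the paper leaves it implicit.
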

\begin{proof}
We again check the conditions in  Proposition~\ref{prop:excl}, this time with $\lambda= \lambda'_{j}$, 
 and 
\begin{equation} \label{eq:vexclbetaprj}
v= X_{-\beta_j'}(v_{\lambda_{j-1}} + v_{\lambda'_{j-1}}) \in V.\end{equation}
Recall that $\beta'_j = \alpha'_{j-1} + \alpha'_j$. 
(\ESo) follows from Lemma~\ref{lem:betainlambda};
(\ESt) is clear from (\ref{eq:vexclbetaprj});
(\ESth) follows from the equalities $\lambda'_{j} = \omega_j + \omega_{j}'$, $\<\lambda_{j+1}, \alpha_j^{\vee}\> =1$ and $\<\lambda_{j},(\alpha'_{j})^{\vee}\>=1$;
for (\ESf) take $\delta = \lambda_j= \omega_{j-1} + \omega'_j$.
\end{proof}

We now deal with the $\Tad$-weight in $J_1\setminus J_0$, first when
$n=m-1$, then when $m=n-2$. Note that the eigenspace of $H^0(G\cdot
x_0, \shN_{X_0})^G$ with this weight is only nontrivial for certain
intermediate subgroups $G$, see  Lemma~\ref{lem:betam1inlambda} and Lemma~\ref{lem:betan1inlambda}, which also prove that the formula for $s$ in Lemmas \ref{lem:exclbetam1} and \ref{lem:exclbetan2} actually defines a section of $H^0(G \cdot x_0, \shN_{X_0})^G \isom \Vgg$.

\begin{lemma} \label{lem:exclbetam1}
Suppose $n=m-1$ and let $b$ be an integer. Suppose that the maximal torus $T$ of $G$ satisfies
$T=\ker(\omega_m - b \omega'_n)$. Then the section  $s \in H^0(G \cdot x_0, \shN_{X_0})^G$ defined by 
\[s(x_0) = [X_{-\beta_{m-1}}(v_{\lambda_{m-1}}+v_{\lambda'_{m-2}})] = -[X_{-\beta_{m-1}} v_{\lambda'_{m-1}}] \in \Vgg \]
does not extend to $X_0$. 
\end{lemma}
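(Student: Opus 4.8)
The plan is to apply the non-extension criterion of Proposition~\ref{prop:excl}. Of the two representatives of $s(x_0)$ given in the statement I would work with $v := -X_{-\beta_{m-1}}v_{\lambda'_{m-1}}$, where $\beta_{m-1}=\alpha_{m-2}+\alpha_{m-1}$. The point of this choice is that $v$ lies entirely in the single summand $V(\lambda'_{m-1})$ of $V$, so its projection onto $V(\xi)$ vanishes for \emph{every} $\xi\in E\setminus\{\lambda'_{m-1}\}$. Under the hypothesis $T=\ker(\omega_m-b\omega'_n)$, Lemma~\ref{lem:betam1inlambda} guarantees $\beta_{m-1}\in\wg$, so the formula really does define a section $s\in H^0(G\cdot x_0,\shN_{X_0})^G\isom\Vgg$ with $s(x_0)=[v]$, and $v$ is a $\Tad$-eigenvector of weight $\beta_{m-1}\in\Lambda_R$, as required by Proposition~\ref{prop:excl}.

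Because $v\in V(\lambda'_{m-1})$, condition (\ESt) holds for every $\lambda\in E\setminus\{\lambda'_{m-1}\}$. Recording that $\beta_{m-1}^\vee=\alpha_{m-2}^\vee+\alpha_{m-1}^\vee$ yields $\<\lambda_{m-1},\beta_{m-1}^\vee\>=\<\lambda'_{m-2},\beta_{m-1}^\vee\>=\<\lambda'_{m-1},\beta_{m-1}^\vee\>=1$ while $\<\xi,\beta_{m-1}^\vee\>=0$ for all other $\xi\in E$; in particular $\beta_{m-1}\in R^+\setminus E^\perp$, so (\ESf) is not vacuous, but it will be satisfiable by taking $\xi$ to be whichever of $\lambda_{m-1},\lambda'_{m-2}$ is not used as $\lambda$. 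The coefficient of $\lambda'_{m-1}$ in the expression~(\ref{eq:betaminlambda}) of $\beta_{m-1}$ over $E$ is $+1$, so the naive choice $\lambda=\lambda'_{m-1}$ satisfies (\ESo) and (\ESt); however it \emph{fails} (\ESth), since $\lambda'_{m-1}=\omega_{m-1}+\omega'_n$ is the unique element of $E$ pairing nontrivially with the simple root $\alpha_{m-1}$ (here $n=m-1$, so $\omega'_{m-1}=\omega'_n$ is orthogonal to the coroots of the second factor). This is precisely the obstruction forcing a split on the sign of $b$.

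I would therefore split into the cases $b\ge -1$ and $b\le -2$, which together exhaust $\ZZ$. For $b\ge -1$ take $\lambda=\lambda'_{m-2}$: its coefficient $b+2\ge 1$ in~(\ref{eq:betaminlambda}) gives (\ESo); the simple roots pairing nontrivially with $\lambda'_{m-2}=\omega_{m-2}+\omega'_{m-2}$ are $\alpha_{m-2}$ and $\alpha'_{m-2}$, and (\ESth) holds because $\<\lambda_{m-1},\alpha_{m-2}^\vee\>\neq 0$ and $\<\lambda_{m-2},(\alpha'_{m-2})^\vee\>\neq 0$; then (\ESf) holds with $\xi=\lambda_{m-1}$. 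For $b\le -2$ take $\lambda=\lambda_{m-1}$: its coefficient $-1-b\ge 1$ gives (\ESo); the only simple root pairing nontrivially with $\lambda_{m-1}=\omega_{m-2}+\omega'_n$ is $\alpha_{m-2}$, and (\ESth) holds since $\<\lambda'_{m-2},\alpha_{m-2}^\vee\>\neq 0$; then (\ESf) holds with $\xi=\lambda'_{m-2}$. In each case all four hypotheses of Proposition~\ref{prop:excl} are met, so $s$ does not extend to $X_0$.

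The only genuinely delicate point is the one flagged above: the basic weight $\lambda'_{m-1}$, whose $E$-coefficient is positive for every $b$, cannot serve as the distinguished weight $\lambda$ of Proposition~\ref{prop:excl} because (\ESth) fails for it. The real content of the proof is thus to locate, for each sign of $b$, a \emph{second} basic weight with positive $E$-coefficient that does satisfy (\ESth); working with the representative $v\in V(\lambda'_{m-1})$ (which makes (\ESt) and (\ESf) automatic for any admissible $\lambda$ and $\xi$) reduces everything else to routine pairings of fundamental weights against coroots.
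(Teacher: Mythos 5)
Your proof is correct and is essentially the paper's own argument: the same representative $v = X_{-\beta_{m-1}}v_{\lambda'_{m-1}}$ (up to sign), the same case split $b\le -2$ versus $b\ge -1$, the same distinguished weights $\lambda=\lambda_{m-1}$ resp.\ $\lambda=\lambda'_{m-2}$, and the same witnesses $\xi=\lambda'_{m-2}$ resp.\ $\xi=\lambda_{m-1}$ for (\ESf). One cosmetic slip in your motivating aside: with your fixed representative $v\in V(\lambda'_{m-1})$, the naive choice $\lambda=\lambda'_{m-1}$ violates (\ESt) as well, since the projection of $v$ to $V(\lambda'_{m-1})$ is $v\neq 0$ (you would need the other representative $X_{-\beta_{m-1}}(v_{\lambda_{m-1}}+v_{\lambda'_{m-2}})$ for that), though as you observe (\ESth) fails for $\lambda'_{m-1}$ regardless of representative, so the aside's conclusion --- and the proof itself --- stands.
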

\begin{proof}
We consider two cases: $b\le -2$ and $b> -2$. 

(i)If $b\le -2$, then $(-1-b) \ge 1$. We apply Proposition~\ref{prop:excl} with $\lambda = \lambda_{m-1}$
and \[v = X_{-\beta_{m-1}} v_{\lambda'_{m-1}} \in V.\] We check the four conditions:
(\ESo) follows from equation (\ref{eq:betaminlambda});
(\ESt) is clear from the description of $v$ given above;
(\ESth) follows from the equalities $\lambda_{m-1} = \omega_{m-2} + \omega'_{n}$ and $\<\lambda'_{m-2}, \alpha_{m-2}^{\vee}\> = 1$;
for (\ESf) take $\delta = \lambda'_{m-2} = \omega_{m-2} + \omega'_{m-2}$.

(ii)If $b>-2$, then we apply Proposition~\ref{prop:excl} with $\lambda = \lambda'_{m-2}$ 
and the same $v$ as in part (i). We again check the four conditions:
(\ESo) follows from equation (\ref{eq:betaminlambda});
(\ESt) is clear from the description of $v$ given above;
(\ESth) follows from the equalities $\lambda'_{m-2}= \omega_{m-2} + \omega'_{m-2}$,  $\<\lambda_{m-1}, \alpha_{m-2}^{\vee}\>=1$ and $\<\lambda_{m-2}, (\alpha'_{m-2})^{\vee}\>=1$;
for (\ESf) take $\delta = \lambda_{m-1} = \omega_{m-2} + \omega'_{m-1}$.
\end{proof}

\begin{lemma} \label{lem:exclbetan2}
Suppose $m=n-2$ and let $a$ be an integer. Suppose that the maximal torus $T$ of $G$ satisfies
$T=\ker(a\omega_m - \omega'_n)$. Then the section  $s \in H^0(G \cdot x_0, \shN_{X_0})^G$ defined by 
\[s(x_0) = [X_{-\beta'_{n-1}} (v_{\lambda_{n-2}}+v_{\lambda'_{n-2}})]= -[X_{-\beta'_{n-1}} v_{\lambda_{n-1}}] \in \Vgg \]
does not extend to $X_0$. 
\end{lemma}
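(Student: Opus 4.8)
The plan is to mirror the proof of Lemma~\ref{lem:exclbetam1}, with the roles of the two factors $G^1=\SL(m)$ and $G^2=\SL(n)$ interchanged, and to apply the non-extension criterion of Proposition~\ref{prop:excl}. In every case I would use the single-term representative
\[v := X_{-\beta'_{n-1}} v_{\lambda_{n-1}} \in V,\]
which represents $s(x_0)$. Since $v$ lies in $V(\lambda_{n-1})$, its projection onto $V(\xi)$ vanishes for every $\xi \in E\setminus\{\lambda_{n-1}\}$; hence condition (\ESt) holds automatically for any choice of $\lambda$ distinct from $\lambda_{n-1}$. The decisive input is the expansion (\ref{eq:betanm2}) of $\beta'_{n-1}$ in the basis $E$ of $\wg$: the coefficient of $\lambda_{n-2}$ there is $2+a$ and that of $\lambda'_{n-2}$ is $-1-a$, so for each integer $a$ exactly one of them is positive. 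This motivates splitting into the cases $a\ge -1$ and $a\le -2$, which together exhaust all integers.

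In the case $a\ge -1$ I would apply Proposition~\ref{prop:excl} with $\lambda=\lambda_{n-2}$, so that (\ESo) is immediate from (\ref{eq:betanm2}) since $2+a\ge 1$. For (\ESth) I would use $\lambda_{n-2}=\omega_{n-3}+\omega'_{n-2}$: the simple roots pairing nontrivially with it are $\alpha_{n-3}$ (coming from $\omega_{n-3}=\omega_{m-1}$, the top weight of $\SL(m)$, and present only when $m\ge 2$) and $\alpha'_{n-2}$, and one checks $\<\lambda'_{n-3},\alpha_{n-3}^\vee\>=1$ and $\<\lambda'_{n-2},(\alpha'_{n-2})^\vee\>=1$, with $\lambda'_{n-3},\lambda'_{n-2}\in E$. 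In the case $a\le -2$ I would instead take $\lambda=\lambda'_{n-2}$, so that (\ESo) follows from $-1-a\ge 1$; here $\lambda'_{n-2}=\omega_{n-2}+\omega'_{n-2}$ with $\omega_{n-2}=\omega_m$ trivial on $\SL(m)$, so only $\alpha'_{n-2}$ pairs nontrivially with $\lambda'_{n-2}$, and $\<\lambda_{n-2},(\alpha'_{n-2})^\vee\>=1$ settles (\ESth).

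The step demanding the most care is condition (\ESf), since $\beta'_{n-1}=\alpha'_{n-2}+\alpha'_{n-1}$ is a genuine positive root of $G^2$ that does not lie in $E^\perp$ (indeed $\<\lambda'_{n-2},(\beta'_{n-1})^\vee\>=1$), so the criterion really requires a witness. Exploiting the standard type-$\ssA$ identities $\<\omega'_{n-2},(\beta'_{n-1})^\vee\>=\<\omega'_{n-1},(\beta'_{n-1})^\vee\>=1$, I would take $\delta=\lambda'_{n-2}$ in the first case and $\delta=\lambda_{n-2}$ in the second; in both cases $\<\delta,(\beta'_{n-1})^\vee\>=1$ and the $V(\delta)$-projection of $v$ vanishes because $\delta\neq\lambda_{n-1}$. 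With (\ESo)--(\ESf) verified in each case, Proposition~\ref{prop:excl} yields that $s$ does not extend to $X_0$. All the displayed pairings are routine rank-one or rank-two verifications in $\SL(m)\times\SL(n)$, exactly as in Lemma~\ref{lem:betan1inlambda}.
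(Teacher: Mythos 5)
Your proof is correct and takes essentially the same route as the paper's: the paper likewise applies Proposition~\ref{prop:excl} with $v = X_{-\beta'_{n-1}}v_{\lambda_{n-1}}$, splits into the identical cases ($a>-2$, i.e.\ $a\ge -1$, with $\lambda=\lambda_{n-2}$, and $a\le -2$ with $\lambda=\lambda'_{n-2}$), derives (\ESo) from equation~(\ref{eq:betanm2}), and chooses the same witnesses $\delta=\lambda'_{n-2}$ resp.\ $\delta=\lambda_{n-2}$ for (\ESf). Your observation that the simple root $\alpha_{n-3}$ only exists when $m\ge 2$ even handles (in cleaner form) the paper's parenthetical caveat about small $n$ in the verification of (\ESth}).
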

\begin{proof}
We break the proof up into two cases ($a\le -2$ and $a>-2$), each of which is treated by an application of   
Proposition~\ref{prop:excl} with \(v = X_{-\beta'_{n-1}}v_{\lambda_{n-1}} \in V,\) but the dominant weight $\lambda \in E$ depends on $a$. 

(i)When $a\le -2$, we put $\lambda = \lambda'_{n-2}$. We check the
four conditions of Proposition~\ref{prop:excl}:
(\ESo) follows from equation~(\ref{eq:betanm2});
(\ESt) is clear from the description of $v$ given above;
(\ESth) follows from the equalities $\lambda'_{n-2}= \omega_{n-2} + \omega'_{n-2}$,  $\<\lambda_{n-1}, \alpha_{n-2}^{\vee}\>=1$ and $\<\lambda_{n-2}, (\alpha'_{n-2})^{\vee}\>=1$;
for (\ESf) take $\delta = \lambda_{n-2} = \omega_{n-3} + \omega'_{n-2}$.

(ii)When $a>-2$, we apply Proposition~\ref{prop:excl} with $\lambda = \lambda_{n-2}$. We check the four conditions: 
(\ESo) follows from equation~(\ref{eq:betanm2});
(\ESt) is clear from the description of $v$ given above;
(\ESth) follows from the equalities $\lambda_{n-2}= \omega_{n-3} + \omega'_{n-2}$,  $\<\lambda'_{n-3}, \alpha_{n-3}^{\vee}\>=1$ (if $n > 4$, otherwise $\lambda_{n-2} = \omega'_{n-2}$) and $\<\lambda'_{n-2}, (\alpha'_{n-2})^{\vee}\>=1$;
for (\ESf) take $\delta = \lambda'_{n-2} = \omega_{n-2} + \omega'_{n-2}$.
\end{proof}

\subsection{The modules $(\GL(m) \times \GL(n), (\CC^m \otimes \CC^n) \oplus (\CC^n)^*)$ with $1\leq m, 2\leq n$} \label{subsec:case18}

We begin with some notation. Put 
\begin{align*}
K &= \min(m, n-1) \\
L &= \min(m,n).
\end{align*}
Note that $K=L-1$ (when $m>n-1$) or $K=L$ (otherwise). We will also use the following notation:
\begin{align*}
\lambda_i &= \omega_i + \omega_{i-1}'  &&\text{for }i\in\{1, \ldots, K\} \text{ (with $\omega'_0 = 0$)}
\\ 
\mu&=\omega'_{n-1} - \omega'_n \\ 
\lambda'_i &= \omega_i + \omega_i' &&\text{for }i\in\{1,\ldots, L\}. 
\end{align*}
For the modules under consideration,
\begin{align*}
&E=\{\lambda_i\colon 1\le i \le K\} \cup \{\lambda'_i\colon 1\le i \le L\} \cup \{\mu\} ;\\
&d_W=K+L-1= \min(2m+1,2n)-2.
\end{align*}
These modules are not spherical for $G'$  because $\wg \cap \<\om_m, \om'_n\>_{\ZZ} \neq 0$.  Moreover, for the same reason, $\wm$ is not $G$-saturated for any intermediate group $G$ for which $W$ is spherical.

In this section we will prove the following proposition.
\begin{prop} \label{prop:case18}
The $\Tad$-module $T_{X_0}\tM^G_{\wm}$ is multiplicity-free. Its $\Tad$-weight set is
\begin{equation} \label{eq:Tadweightscase18}
\{\alpha_i \colon 1\le i \le L-1\} \cup \{\alpha'_j \colon 1\le j \le K-1\} \cup \{\alpha'_K+\alpha'_{K+1}+\ldots+\alpha'_{n-1}\}.
\end{equation}
In particular, $\dim T_{X_0}\tM^G_{\wm} = d_W$.  
\end{prop}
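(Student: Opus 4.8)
The plan is to closely follow the treatment of family (\ref{list.17}) given in Section~\ref{subsec:case17}. By Corollary~\ref{cor:apriori} it suffices to prove $\dim T_{X_0}\tM^G_{\wm} \le d_W$; since the set (\ref{eq:Tadweightscase18}) has exactly $K+L-1 = d_W$ elements, the proposition (including multiplicity-freeness) then follows once I show that the $\Tad$-weight set of $T_{X_0}\tM^G_{\wm}$ is contained in (\ref{eq:Tadweightscase18}). As in the previous case, the strategy is first to bound $\Vggp$ (which bounds $\Vgg$ by Lemma~\ref{lem:Tadweights}(\ref{liealgandtorinvs})), and then to invoke the exclusion criterion Proposition~\ref{prop:excl} to discard from $T_{X_0}\tM^G_{\wm}$ those $\Tad$-weights of $\Vgg$ that do not belong to (\ref{eq:Tadweightscase18}).

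First I would prove an analogue of Proposition~\ref{prop:vgg} describing $\Vggp$. Writing $G' = G^1 \times G^2$ with $G^1 = \SL(m)$ and $G^2 = \SL(n)$, a no-mixing lemma as in Lemma~\ref{lem:nomix} yields $\Vggp = \Vggp_{\Lambda_R^1} \oplus \Vggp_{\Lambda_R^2}$, and the product decomposition $G'_{x_0} = G^1_{x_0} \times G^2_{x_0}$ lets me analyze each summand by the method of Lemmas~\ref{lem:Aquotisinv}--\ref{lem:Ainvars}. The $\Lambda_R^1$-summand behaves exactly as in family (\ref{list.17}): restricted to $G^1$, the relevant basic weights give the consecutive fundamental representations $V(\omega_1), \ldots, V(\omega_L)$, so Lemma~\ref{lem:consecfund} produces the weights $\{\alpha_i : 1 \le i \le L-1\}$ together with superfluous weights of the form $\alpha_{r-1}+\alpha_r$ and one possible boundary weight.

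The $\Lambda_R^2$-summand is the genuinely new ingredient and the main obstacle. Restricted to $G^2 = \SL(n)$, the basic weights in $E$ produce the consecutive block $V(\omega'_1), \ldots, V(\omega'_K)$ together with a copy of $V(\omega'_{n-1})$ coming from $\mu = \omega'_{n-1} - \omega'_n$. When $m \ge n-1$ one has $K = n-1$, so $V(\omega'_{n-1})$ occurs with multiplicity, and the situation is handled by an exact-sequence argument identical to the one in the proof of Proposition~\ref{prop:case16nodd} (splitting off the line spanned by $v_{\lambda'_K} - v_\mu$); here the ``long'' weight $\alpha'_K+\cdots+\alpha'_{n-1}$ degenerates to the single simple root $\alpha'_{n-1}$. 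When $m < n-1$ the block $V(\omega'_1),\ldots,V(\omega'_K)$ and the $\mu$-summand $V(\omega'_{n-1})$ are separated by a gap; I would then run a similar exact sequence (or a suitable generalization of Lemma~\ref{lem:consecfund} allowing non-consecutive fundamental representations), the essential new phenomenon being the connecting weight $\alpha'_K+\alpha'_{K+1}+\cdots+\alpha'_{n-1}$ of type (\SRth), whose eigenvector bridges $v_{\lambda'_K}$ and $v_\mu$ across the gap. A direct computation (as in Lemma~\ref{lem:betainlambda}) shows this weight lies in $\wgo$, hence in $\wg$, so its eigenvector belongs to $\Vgg$ and will survive into $T_{X_0}\tM^G_{\wm}$; this is why it appears in (\ref{eq:Tadweightscase18}). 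The conclusion of this step is that $\Vggp$ is multiplicity-free with $\Tad$-weight set contained in the union of (\ref{eq:Tadweightscase18}) and a set $J$ of superfluous weights of the shapes $\alpha_{r-1}+\alpha_r$, $\alpha'_{s-1}+\alpha'_s$, together with boundary weights that occur only for special intermediate groups $G$.

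Finally I would discard every weight of $J$ using Proposition~\ref{prop:excl}. For each such weight I would first write it as an integral combination of the basic weights in $E$ (straightforward computations parallel to Lemma~\ref{lem:betainlambda}, Lemma~\ref{lem:betam1inlambda} and Lemma~\ref{lem:betan1inlambda}), and then verify the four hypotheses (\ESo)--(\ESf), choosing the dominant weight $\lambda \in E$ whose coefficient is positive and whose component of the eigenvector $v$ vanishes; this is exactly the bookkeeping of Lemmas~\ref{lem:exclbetar}--\ref{lem:exclbetan2}, and I expect the boundary weights to again require splitting into cases according to the sign of an integer parameter describing the torus $T$. Once all weights in $J$ are shown not to extend to $X_0$, the $\Tad$-weight set of $T_{X_0}\tM^G_{\wm}$ is contained in (\ref{eq:Tadweightscase18}), whence $\dim T_{X_0}\tM^G_{\wm} \le d_W$, and Corollary~\ref{cor:apriori} promotes this to the asserted equality and multiplicity-freeness.
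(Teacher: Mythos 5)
Your proposal is correct and follows essentially the same route as the paper: the paper's Proposition~\ref{prop:vgg18} and Corollary~\ref{cor:vgg18} carry out exactly your two-summand analysis of $\Vggp$ (with Lemma~\ref{lem:consecfundgap} playing the role of your ``gap'' generalization of Lemma~\ref{lem:consecfund}, and the doubled $V(\omega'_{n-1})$ for $K=n-1$ absorbed by the same sum/difference splitting you cite from Proposition~\ref{prop:case16nodd}), after which the superfluous weights are excluded via Proposition~\ref{prop:excl} with the same choice-of-$\lambda$ and sign-of-parameter bookkeeping. One small point: your intermediate claim that the long weight's eigenvector ``will survive'' into $T_{X_0}\tM^G_{\wm}$ merely because it lies in $\Vgg$ is not justified at that stage (a section over $G\cdot x_0$ need not extend to $X_0$), but it is also not needed, since---as you and the paper both conclude---containment of the weight set in (\ref{eq:Tadweightscase18}) together with Corollary~\ref{cor:apriori} forces the asserted equality and multiplicity-freeness.
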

\begin{proof}
Call $F$ the set (\ref{eq:Tadweightscase18}). Let $J_1$ be the set defined in Proposition~\ref{prop:vgg18} and $J_2$ the set defined in Corollary~\ref{cor:vgg18}. Now, put
\[J:=
\begin{cases}
J_1 &\text{if $n=m-1$ and $\alpha_{m-2}+\alpha_{m-1} \in \wg$};\\
J_1 & \text{if $m=n-2>1$ and $\alpha'_{n-3} + \alpha'_{n-2} \in \wg$};\\
J_2 &\text{otherwise}.
\end{cases}
\]
Corollary~\ref{cor:vgg18} proves that $\Vgg$ is a multiplicity-free $\Tad$-module, that its $\Tad$-weight set $D$ contains $J$ and that $D \inn J \cup F$. 
Applying Proposition~\ref{prop:excl} with $v$ and $\lambda$ given in the table below, one then proves that the sections of $H^{0}(G\cdot x_0, \shN_{X_0})^G \isom \Vgg$ corresponding to the $\Tad$-weights in $J$ do not extend to $X_0$. We omit the straightforward verifications that the four conditions of Proposition~\ref{prop:excl} are met in every case (they are similar to the proofs of 
Lemmas~\ref{lem:exclbetar}, \ref{lem:exclbetaprj}, \ref{lem:exclbetam1} and \ref{lem:exclbetan2}).  
In this table, the integers $b$ and $a$ are as in Lemmas \ref{lem:betam1inlambda18} and \ref{lem:betan1inlambda18}.  

\noindent
\begin{tabular}{|c|c|c|c|}
\hline
conditions&$\Tad$-weight & $v$ & $\lambda$ \\
\hline
$2\le r \le K-1$&$\beta_r:= \alpha_{r-1}+\alpha_r$ & $X_{-\beta_r}( v_{\lambda_{r}}  + v_{\lambda'_{r}})$ &$\lambda_{r-1}$ \\
\hline
$3\le n\le m$&$\beta_{n-1}:=\alpha_{n-2}+\alpha_{n-1}$ & $X_{-\beta_{n-1}}( v_{\lambda_{n-1}}  + v_{\lambda'_{n-1}})$ &$\lambda_{n-2}$ \\
\hline
$2\le s\le K-1$&$\beta'_s:=\alpha'_{s-1}+\alpha'_s$ & $X_{-\beta'_s}(v_{\lambda_{s+1}}+v_{\lambda'_s})$ & $\lambda'_{s-1}$\\
\hline
$1<n-1\le m$&$\beta'_{n-1}:= \alpha'_{n-2}+\alpha'_{n-1}$ & $X_{-\beta'_{n-1}}(v_{\lambda_{n-1}}+v_{\lambda'_{n-2}})$ & $\mu$\\
\hline
$n=m-1\ge 2$&$\beta_{m-1}:=\alpha_{m-2}+\alpha_{m-1}$ & $X_{-\beta_{m-1}}v_{\lambda'_{m-1}} $ & $\begin{cases}
\lambda_{m-2} \text{ if $b \ge -1$} \\
\lambda'_{m-2} \text{ if $b<-1$}
\end{cases}$\\
\hline
$m=n-2>1$&$\beta'_{n-2}:=\alpha'_{n-3}+\alpha'_{n-2}$ & $X_{-\beta'_{n-2}}v_{\lambda'_{n-2}} $ & $\begin{cases}
\lambda'_{n-3} \text{ if $a \ge -1$} \\
\lambda_{n-2} \text{ if $a<-1$}
\end{cases}$\\
\hline
\end{tabular}

This shows that the $\Tad$-weight set of $T_{X_0}\tM^G_{\wm}$ is a subset of $F$. Equality follows, as always, from Corollary~\ref{cor:apriori}.
\end{proof}

As the arguments in this section are adaptations of those of Section~\ref{subsec:case17}, we do not provide all the proofs.  

\begin{prop} \label{prop:vgg18}
Suppose $m\ge 1, n\ge 2$. Let $F$ be the set (\ref{eq:Tadweightscase18}) and put
\begin{align*}
J_0&:=\{\alpha_{r-1}+\alpha_r \colon 2 \le r \le L-1 \} \cup \{\alpha'_{s-1} + \alpha'_s \colon 2 \le s \le K-1\}; \\
J_1&:= \begin{cases} 
J_0 \cup \{\alpha'_{n-2}+\alpha'_{n-1}\} &\text{if $m\ge n-1>1$ and $n \neq m-1$};\\
J_0 \cup \{\alpha_{m-2}+\alpha_{m-1}\} \cup \{\alpha'_{n-2}+\alpha'_{n-1} \} &\text{if $n=m-1>2$}; \\
J_0 \cup \{\alpha_1 + \alpha_2\} &\text{if $n=m-1=2$}; \\
J_0 \cup \{\alpha'_{n-3}+\alpha'_{n-2}\} &\text{if $m=n-2 > 1$}; \\
J_0 &\text{otherwise}.
\end{cases}
\end{align*}
The $\Tad$-module $\Vggp$ is multiplicity-free; its $\Tad$-weight set contains $J_1$ and is a subset of $F \cup J_1$. 

For the $\Tad$-weights in $J_0$, basis vectors for the corresponding eigenspaces in $\Vggp$ are given in the following table:

\begin{tabular}{|c|c|}
\hline
$\Tad$-weight & eigenvector \\
\hline
$\beta_r:=\alpha_{r-1} + \alpha_{r}$ & $[X_{-\beta_r} (v_{\lambda_{r-1}}+ v_{\lambda'_{r-1}})] = -[X_{-\beta_r}( v_{\lambda_{r}}  + v_{\lambda'_{r}})]$\\
\hline
$\beta'_s:=\alpha'_{s-1} + \alpha'_{s}$ & $[X_{-\beta'_s}(v_{\lambda_{s}}+v_{\lambda'_{s-1}})] = -  
[X_{-\beta'_s}(v_{\lambda_{s+1}}+v_{\lambda'_s})] $\\
\hline
\end{tabular}

\noindent with $2\le r \le L-1$, $2\le s \le K-1$.

If $m\ge n-1>1$ then the $\Tad$-weight space of $\Vggp$ of weight  $\alpha'_{n-2}+\alpha'_{n-1}$ is spanned by the following eigenvector: 

\begin{tabular}{|c|c|}
\hline
$\Tad$-weight & eigenvector \\
\hline
$\beta'_{n-1}:=\alpha'_{n-2} + \alpha'_{n-1}$ & $[X_{-\beta'_{n-1}}(v_{\lambda_{n-1}}+v_{\lambda'_{n-2}})] = -  
[X_{-\beta'_{n-1}}(v_{\lambda'_{n-1}}+v_{\mu})] $\\
\hline
\end{tabular}

If $n = m-1$ then the $\Tad$-weight space of $\Vggp$ of weight $\alpha_{m-2} + \alpha_{m-1}$ is spanned by the following eigenvector:

\begin{tabular}{|c|c|}
\hline
$\Tad$-weight & eigenvector \\
\hline
$\beta_{m-1}:= \alpha_{m-2} + \alpha_{m-1}$ & $[X_{-\beta_{m-1}}(v_{\lambda_{m-2}} + v_{\lambda'_{m-2}}) ] = -[X_{-\beta_{m-1}}v_{\lambda'_{m-1}}]$ \\
\hline
\end{tabular}

If $m=n-2>1$ then the $\Tad$-weight space of $\Vggp$ of weight $\alpha'_{n-3}+\alpha'_{n-2}$ is spanned by the following eigenvector:

\begin{tabular}{|c|c|}
\hline
$\Tad$-weight & eigenvector \\
\hline
$\beta'_{n-2}:=\alpha'_{n-3} + \alpha'_{n-2}$ & $[X_{-\beta'_{n-2}} (v_{\lambda_{n-2}} + v_{\lambda'_{n-3}})] = -[X_{-\beta_{n-2}} v_{\lambda'_{n-2}}]$ \\
\hline
\end{tabular}

\end{prop}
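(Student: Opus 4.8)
The plan is to mirror the proof of Proposition~\ref{prop:vgg} for family~(\ref{list.17}) step by step, establishing the analogues of Lemmas~\ref{lem:nomix}--\ref{lem:Cinvars} for the present family. Write $G' = G^1 \times G^2$ with $G^1 = \SL(m)$, $G^2 = \SL(n)$, let $\Tad^1, \Tad^2$ be the corresponding adjoint tori with root lattices $\Lambda_R^1, \Lambda_R^2$, and record the $G'$-module structure $V(\lambda_i) \isom V(\omega_i) \otimes V(\omega'_{i-1})$, $V(\lambda'_i) \isom V(\omega_i) \otimes V(\omega'_i)$ and $V(\mu) \isom V(\omega'_{n-1})$. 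First I would prove the analogue of Lemma~\ref{lem:nomix}, namely that every $\Tad$-weight of $\Vggp$ lies in $(\Lambda_R^1 \oplus 0) \cup (0 \oplus \Lambda_R^2)$: a hypothetical mixed eigenvector $v$ would satisfy $X_{\alpha_i}v, X_{\alpha'_j}v \in \fg\cdot x_0$ for simple roots of each factor, forcing the weight $\alpha_i + \alpha'_j$ and $X_{\alpha'_j}v \in \CC(X_{-\alpha_i}x_0)$, which is incompatible with the fundamental-representation structure of $V$ exactly as before. This yields the decomposition $\Vggp = \Vggp_{\Lambda_R^1} \oplus \Vggp_{\Lambda_R^2}$, the analogue of~(\ref{eq:Vgginto1and2}).

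Next I would record the analogue of Lemma~\ref{lem:basislambda}, obtaining a basis of $\wgo$ in terms of the $\omega_i, \omega'_j$ and the expressions of each $\omega_i$ and $\omega'_j$ as $\ZZ$-combinations of the elements of $E$ by inverting a unipotent integer matrix assembled from the definitions of $\lambda_i, \lambda'_i, \mu$. I would then transport the two pieces of the computation to $A := V^{U^2} = V^{\Tad^2}$ and $C := V^{U^1} = V^{\Tad^1}$, proving the analogues of Lemmas~\ref{lem:Aquotisinv}, \ref{lem:Vggsplit1}, \ref{lem:Cquotisinv} and~\ref{lem:Vggsplit2} verbatim: take $\Tad^2$- (resp.\ $\Tad^1$-) invariants of the fundamental sequence~(\ref{eq:fundseq}), then $G^1_{x_0}$- (resp.\ $G^2_{x_0}$-) invariants, using $G'_{x_0} = G^1_{x_0} \times G^2_{x_0}$, which follows from Lemma~\ref{lem:isotropygroup} as in the proof of Lemma~\ref{lem:Vggsplit1}. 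This reduces the whole statement to computing $\bigl(A/(A\cap \fg\cdot x_0)\bigr)^{G^1_{x_0}}$ and $\bigl(C/(C\cap \fg\cdot x_0)\bigr)^{G^2_{x_0}}$ as $\Tad^1$- and $\Tad^2$-modules, and to exhibiting the tabulated eigenvectors.

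The $\SL(m)$-side is essentially identical to Lemma~\ref{lem:Ainvars}: up to trivial highest-weight lines, $A$ is a sum of consecutive fundamentals $V(\omega_1), \ldots, V(\omega_L)$ whose distinguished invariant is the orbit of a sum of the $v_{\lambda_i}$ and $v_{\lambda'_i}$, so Lemma~\ref{lem:consecfund} yields the weights $\{\alpha_i \colon 1 \le i \le L-1\}$ together with the adjacent sums $\beta_r$, plus the extra weight $\beta_{m-1}$ precisely when $n=m-1$ (the $k=m-1$ boundary case of Lemma~\ref{lem:consecfund}). I expect the $\SL(n)$-side to be the main obstacle, because $V(\mu) \isom V(\omega'_{n-1})$ as a $G^2$-module contributes, on top of the consecutive fundamentals $V(\omega'_1), \ldots, V(\omega'_{K})$ coming from the $\lambda_i, \lambda'_i$, an \emph{extra} copy of $V(\omega'_{n-1})$. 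When $m \ge n$ this copy \emph{collides} with the one from $\lambda'_{n-1}$ and must be separated exactly as in Proposition~\ref{prop:case16nodd}, by writing $C$ as the direct sum of the span of $G^2 \cdot (v_{\lambda'_{n-1}} + v_\mu)$ and a complementary submodule and taking invariants of the resulting short exact sequence; when $m < n$ the same copy is instead separated from the block $V(\omega'_1), \ldots, V(\omega'_m)$ by a gap in the Dynkin diagram. In either case the surviving invariant attached to $\mu$ is not an adjacent-root sum but the long weight $\alpha'_K + \alpha'_{K+1} + \cdots + \alpha'_{n-1}$ of~(\ref{eq:Tadweightscase18}) (reducing to the single root $\alpha'_{n-1}$ when $m \ge n$, compare Remark~\ref{rem:srA}), while Lemma~\ref{lem:consecfund} accounts for the weights $\{\alpha'_j \colon 1 \le j \le K-1\}$ and the adjacent sums $\beta'_s$, with the boundary weights $\beta'_{n-1}$ and $\beta'_{n-2}$ appearing exactly in the stated small-index cases. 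Assembling the two sides through the decomposition $\Vggp = \Vggp_{\Lambda_R^1} \oplus \Vggp_{\Lambda_R^2}$ then gives the multiplicity-freeness, the containment of the $\Tad$-weight set between $J_1$ and $F \cup J_1$, and the explicit eigenvectors listed in the proposition.
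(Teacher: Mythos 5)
Your overall strategy coincides with the paper's: the no-mixed-weights lemma (the analogue of Lemma~\ref{lem:nomix}), the reduction to $A=V^{U^2}$ and $C=V^{U^1}$ via $\Tad^2$- and $\Tad^1$-invariants of the fundamental sequence, and on each side a diagonal submodule $Z$ built from sums of highest weight vectors, with the consecutive-fundamentals computation applied to $\overline{Z}/\fg\cdot y_0$ and the complementary quotient supplying the simple-root weights. But two points in your treatment of the $\SL(n)$ side are genuinely wrong or missing.

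First, your collision/gap dichotomy for the extra copy of $V(\omega'_{n-1})\isom V(\mu)$ is drawn at the wrong boundary. The copy of $V(\omega'_{n-1})$ coming from $\lambda'_{n-1}$ exists whenever $L=\min(m,n)\ge n-1$, i.e.\ already when $m=n-1$, so $m=n-1$ is a \emph{collision} case, not a gap case as your split ``$m\ge n$ collision / $m<n$ gap'' would have it. The paper accordingly takes $z_K=v_{\lambda'_{n-1}}+v_{\mu}$ whenever $K=n-1$ (i.e.\ $m\ge n-1$), and it is exactly the antidiagonal $v_{\lambda'_{n-1}}-v_{\mu}$ in $C/Z$ that produces the $F$-weight $\alpha'_{n-1}$ there (compare the eigenvector $[X_{-\alpha'_{n-1}}v_{\lambda'_{n-1}}]=-[X_{-\alpha'_{n-1}}v_{\mu}]$ in Remark~\ref{rem:vggp18}). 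Your gap treatment cannot even start at $m=n-1$: the module $V(\omega'_1)\oplus\cdots\oplus V(\omega'_{n-1})$ together with a second copy of $V(\omega'_{n-1})$ is not multiplicity-free, so no Lemma~\ref{lem:consecfund}-type argument applies to it. Moreover at $m=n-2$ there is no gap at all ($\omega'_{n-1}$ is adjacent to $\omega'_{K}=\omega'_{n-2}$, and Lemma~\ref{lem:consecfund} applies with $k=n-1$), and the ``long weight'' degenerates to the adjacent sum $\alpha'_{n-2}+\alpha'_{n-1}$, contradicting your blanket claim that the invariant attached to $\mu$ ``is not an adjacent-root sum''. The correct trichotomy, which the paper uses, is: $m\ge n-1$ collision; $m=n-2$ consecutive; $m\le n-3$ genuine gap.

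Second, in the genuine gap case $K=m\le n-3$ you simply assert the outcome, namely that the surviving weights are the adjacent sums $\beta'_s$ together with $\alpha'_K+\alpha'_{K+1}+\cdots+\alpha'_{n-1}$. This is not covered by Lemma~\ref{lem:consecfund}: the paper has to prove a separate variant, Lemma~\ref{lem:consecfundgap}, which redoes the Bravi--Cupit-Foutou exclusion of the candidate weights (\SRo)--(\SRf) for the configuration $\omega'_1,\ldots,\omega'_K,\omega'_{n-1}$ and exhibits the eigenvector $[X_{-\alpha'_K}X_{-\gamma+\alpha'_K}m_0]$ for the long weight. Without this lemma (or an equivalent computation) your outline does not establish that the $\Tad$-weight set is contained in $F\cup J_1$ on the $\SL(n)$ side. (A small attribution slip, harmless by comparison: the simple roots $\alpha'_j$ arise from the quotient $C/Z$, not from Lemma~\ref{lem:consecfund}.) The $\SL(m)$ side of your plan and all the reduction steps do match the paper's proof.
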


\begin{remark} \label{rem:vggp18}
We use the notation of Proposition~\ref{prop:vgg18}. The following somewhat stronger statement holds, but we do not need it in what follows. The $\Tad$-weight set of $\Vggp$ is equal to $F \cup J_1$ and below are basis vectors for the eigenspaces with weight in $F$. The argument is the same as that of Remark~\ref{rem:vggp17}. 

\begin{tabular}{|c|c|c|}
\hline
conditions & $\Tad$-weight & eigenvector \\
\hline
$1\le i \le L-1$ &$\alpha_i$ & $[X_{-\alpha_i} v_{\lambda_i}] = -[X_{-\alpha_i} v_{\lambda'_i}]$ \\
\hline
$1\le j \le K-1$ &$\alpha'_j$ & $[X_{-\alpha'_j}v_{\lambda_{j+1}} ] = -[X_{-\alpha'_j}v_{\lambda_j'}]$ \\
\hline
$n-1 \le m$&$\alpha'_{n-1}$ & $[X_{-\alpha'_{n-1}}v_{\lambda'_{n-1}} ] = -[X_{-\alpha'_{n-1}}v_{\mu}]$ \\
\hline
 $m \le n-2$ &$\alpha'_{m}+ \alpha'_{m+1} + \ldots + \alpha'_{n-1}$ & $[X_{-\gamma} v_{\lambda'_m}] = -[X_{-\gamma}v_{\mu}]$\\
\hline
\end{tabular}
\end{remark}

With a proof like that of Corollary~\ref{cor:vgg17} we have the following consequence of Proposition~\ref{prop:vgg18}. 
\begin{cor}  \label{cor:vgg18} We use the notation of Proposition~\ref{prop:vgg18}. 
Put  
\begin{equation*}
J_2 := \begin{cases}
J_0 \cup \{\alpha'_{n-2} + \alpha'_{n-1}\} &\text{if $1<n-1\le m$};\\
J_0 &\text{otherwise}.
\end{cases}
\end{equation*}
For all $m\ge 1, n\ge 2$, we have that $\Vggo$ is the subspace of $\Vggp$ spanned by the eigenvectors with $\Tad$-weight in $F \cup J_2$. 
Depending on $m$ and $n$, we have the following description of $\Vgg$:
\begin{enumerate}
\item Unless $n=m-1$ or $m=n-2>1$, we have that $\Vgg= \Vggo = \Vggp$;
\item If $n = m-1$, then $\Vgg = \Vggp$ if and only if $\beta_{m-1} \in \wg$. If $\beta_{m-1} \notin \wg$ then $\Vgg = \Vggo$;
\item  If $m=n-2>1$ then $\Vgg = \Vggp$ if and only if $\beta'_{n-2} \in \wg$. If $\beta'_{n-2} \notin \wg$ then $\Vgg = \Vggo$.
\end{enumerate}
\end{cor}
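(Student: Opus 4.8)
The plan is to follow the template of the proof of Corollary~\ref{cor:vgg17} essentially verbatim, the real work having already been done in Proposition~\ref{prop:vgg18}. The one tool I need is the following principle, immediate from Lemma~\ref{lem:Tadweights}(\ref{liealgandtorinvs}) together with the inclusions of $\Tad$-modules $\Vggo \inn \Vgg \inn \Vggp \inn (V/\fg\cdot x_0)^{\fg'_{x_0}}$ of Lemma~\ref{lem:Tadweights}(\ref{inclVgs}): if $[v]$ is a $\Tad$-eigenvector lying in $\Vggp$, then $[v]$ belongs to $\Vgg$ if and only if its $\Tad$-weight lies in $\wg$, and $[v]$ belongs to $\Vggo$ if and only if its $\Tad$-weight lies in $\wgo$. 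Indeed, applying Lemma~\ref{lem:Tadweights}(\ref{liealgandtorinvs}) with $\fh = \fg'$ to $G$ (resp.\ to $\overline{G}$, using $\fg\cdot x_0 = \overline{\fg}\cdot x_0$ from Lemma~\ref{lem:gx0}) identifies $\Vgg$ (resp.\ $\Vggo$) with the span of those $\fg'_{x_0}$-invariant $\Tad$-eigenvectors whose weight lies in $\wg$ (resp.\ in $\wgo$), and every element of $\Vggp$ is $\fg'_{x_0}$-invariant. Since Proposition~\ref{prop:vgg18} tells us that $\Vggp$ is multiplicity-free with $\Tad$-weight set $F \cup J_1$, the whole problem reduces to deciding, weight by weight, membership in $\wgo$ and in $\wg$.

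First I would check that every weight in $F \cup J_2$ lies in $\wgo$. For the weights in $F$ and for the weights $\beta_r, \beta'_s$ in $J_0$ this is a direct computation expressing each as a $\ZZ$-linear combination of the basic weights $E = \{\lambda_i, \lambda'_i, \mu\}$, entirely parallel to the expressions $\alpha_i = \lambda_{i+1}+\lambda'_i - \lambda_i - \lambda'_{i+1}$ and $\alpha'_j = \lambda_j + \lambda'_j - \lambda_{j+1}-\lambda'_{j-1}$ used in Corollary~\ref{cor:vgg17} (here one also uses the relation expressing the long weight $\alpha'_K + \cdots + \alpha'_{n-1}$ through $\lambda'_K$ and $\mu$). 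The remaining weight of $J_2$, namely $\beta'_{n-1} = \alpha'_{n-2}+\alpha'_{n-1}$ occurring when $1 < n-1 \le m$, is read off from the equality of its two eigenvector representatives in Proposition~\ref{prop:vgg18} and lies in $\wgo$ by the same kind of straightforward verification (the analogue of Lemma~\ref{lem:betainlambda}). Because $\wg = q(\wgo)$ and $q$ restricts to the identity on the root lattice, a root-lattice weight lying in $\wgo$ automatically lies in $\wg$; hence the eigenvectors with weight in $F \cup J_2$ always belong to $\Vggo$, and a fortiori to $\Vgg$.

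It remains to treat the weights of $J_1 \setminus J_2$. A short check of the definitions of $J_1$ and $J_2$ shows that this set is empty unless $n=m-1$, in which case it is $\{\beta_{m-1}\}$ with $\beta_{m-1}=\alpha_{m-2}+\alpha_{m-1}$, or $m=n-2>1$, in which case it is $\{\beta'_{n-2}\}$ with $\beta'_{n-2}=\alpha'_{n-3}+\alpha'_{n-2}$. By Lemmas~\ref{lem:betam1inlambda18} and~\ref{lem:betan1inlambda18} (the analogues of Lemmas~\ref{lem:betam1inlambda} and~\ref{lem:betan1inlambda}), neither $\beta_{m-1}$ nor $\beta'_{n-2}$ lies in $\wgo$; this excludes them from $\Vggo$ and, combined with the previous paragraph, proves that $\Vggo$ is spanned by the eigenvectors with weight in $F \cup J_2$. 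For the description of $\Vgg$: when neither $n=m-1$ nor $m=n-2>1$ we have $J_1 = J_2$, so the $\Tad$-weight set $F\cup J_1$ of $\Vggp$ is contained in $\wg$ and $\Vgg = \Vggp = \Vggo$, giving part~(1). When $n=m-1$, the same Lemma~\ref{lem:betam1inlambda18} characterizes exactly when $\beta_{m-1}\in\wg$: if it holds, the $\beta_{m-1}$-eigenvector survives and $\Vgg=\Vggp$; if it fails, that eigenvector is the unique one excluded and $\Vgg = \Vggo$ (since $(F\cup J_1)\setminus\{\beta_{m-1}\} = F \cup J_2$), giving part~(2). The case $m=n-2>1$ is identical with $\beta'_{n-2}$ and Lemma~\ref{lem:betan1inlambda18}, giving part~(3).

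The main obstacle is not conceptual but organizational: one must match the combinatorially defined set $J_1\setminus J_2$ with the single ``problematic'' weight in each of the boundary cases $n=m-1>2$, $n=m-1=2$ and $m=n-2>1$, and keep straight that $\beta'_{n-1}$ (when $1<n-1\le m$) does lie in $\wgo$ while $\beta_{m-1}$ and $\beta'_{n-2}$ do not. The only genuinely nontrivial inputs are Lemmas~\ref{lem:betam1inlambda18} and~\ref{lem:betan1inlambda18}, which pin down the exact maximal tori $T$ for which the problematic weight lands in $\wg$; everything else is the weight bookkeeping of Proposition~\ref{prop:vgg18} fed through the principle of the first paragraph.
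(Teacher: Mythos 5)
Your proposal is correct and takes essentially the same approach as the paper: the paper's own proof is just the indication ``with a proof like that of Corollary~\ref{cor:vgg17}'', i.e., the membership criterion of Lemma~\ref{lem:Tadweights}(\ref{liealgandtorinvs}) applied weight-by-weight to the multiplicity-free module of Proposition~\ref{prop:vgg18}, with straightforward verifications (or Lemmas~\ref{lem:betam1inlambda18} and~\ref{lem:betan1inlambda18}) showing $\beta_{m-1},\beta'_{n-2}\notin\wgo$ while the weights in $F\cup J_2$ lie in $\wgo$. Your bookkeeping for $J_1\setminus J_2$ (including the case $n=m-1=2$, where $\beta_{m-1}=\alpha_1+\alpha_2$) and the check that $\beta'_{n-1}\in\wgo$ when $1<n-1\le m$ reproduce exactly what the paper's template requires.
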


\begin{remark} \begin{enumerate}[(1)]
\item Using Remark~\ref{rem:vggp18}, the first assertion of Corollary~\ref{cor:vgg18} can be improved to the statement that $\Vggo$ is a multiplicity-free $\Tad$-module with $\Tad$-weight set $F\cup J_2$. 
\item For $n=m-1$, Lemma~\ref{lem:betam1inlambda18} below tells us for which intermediate groups $G$ the eigenvector in $\Vggp$ with weight $\beta_{m-1}$ belongs to $\Vgg$. When $m=n-2>1$, Lemma~\ref{lem:betan1inlambda18} does the same for $\beta'_{n-2}$. 
\end{enumerate}
\end{remark}

Since the proof of Proposition~\ref{prop:vgg18} is very similar to that of Proposition~\ref{prop:vgg}, we will not provide all details. We begin with a few lemmas, and then outline the rest of the proof on page~\pageref{proofofvgg18}.
 We will make use of the notation introduced in Section~\ref{subsec:case17} on page~\pageref{notation17}. 

\begin{lemma} \label{lem:nomix18}
The $\Tad$-weights occurring in $\Vggp$ belong to $(\Lambda_R^1 \oplus 0) \cup (0 \oplus \Lambda_R^2)$.
\end{lemma}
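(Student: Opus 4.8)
The plan is to transcribe, with the weights of the present family substituted in, the proof of Lemma~\ref{lem:nomix}. I would argue by contradiction: suppose $v \in V$ is a $\Tad$-eigenvector of weight $\beta$ whose class $[v]$ is a nonzero element of $\Vggp$, and such that $\beta$ lies in neither $\Lambda_R^1 \oplus 0$ nor $0 \oplus \Lambda_R^2$. Then $v$ has nonzero weight for both $\Tad^1$ and $\Tad^2$, so $v \notin V^{\Tad^1} = V^{U^1}$ and $v \notin V^{\Tad^2} = V^{U^2}$; hence there are simple roots $\alpha_i$ of $G^1$ and $\alpha'_j$ of $G^2$ with $X_{\alpha_i} v \neq 0$ and $X_{\alpha'_j} v \neq 0$. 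Since the Lie algebras of $U^1$ and $U^2$ lie in $\fg'_{x_0}$, both vectors belong to $\fg \cdot x_0$, and Lemma~\ref{lem:tangentcrit}(\ref{AAA}) together with $R^+ = R^+_1 \cup R^+_2$ forces (exactly as in Lemma~\ref{lem:nomix}) the weight of $X_{\alpha_i} v$ into $R^+_2$ and that of $X_{\alpha'_j} v$ into $R^+_1$. Consequently $\beta = \alpha_i + \alpha'_j$ and $X_{\alpha'_j} v$ spans the $\Tad$-weight space of weight $\alpha_i$ inside $\fg \cdot x_0$ (cf.\ Lemma~\ref{lem:isotropygroup}(\ref{weightsgx0})); that is, $X_{\alpha'_j} v \in \CC(X_{-\alpha_i} x_0)$.

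The second step is to compute $X_{-\alpha_i} x_0$ from the list $E$. Because $\alpha_i$ is a root of $G^1 = \SL(m)$, and the first-factor component of $\mu = \omega'_{n-1} - \omega'_n$ is zero while that of $\lambda_k$ (resp.\ $\lambda'_k$) is $\omega_k$, the operator $X_{-\alpha_i}$ annihilates every $v_{\lambda}$ except $v_{\lambda_i}$ and $v_{\lambda'_i}$. Thus, for $i \le K$, using $\lambda_i = \omega_i + \omega'_{i-1}$ and $\lambda'_i = \omega_i + \omega'_i$, one gets $X_{-\alpha_i} x_0 = (X_{-\alpha_i} v_{\omega_i}) \otimes (v_{\omega'_{i-1}} + v_{\omega'_i})$. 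Writing $v = \sum_{\lambda \in E} v^{(\lambda)}$ with $v^{(\lambda)} \in V(\lambda)$ and comparing the $V(\lambda_i)$- and $V(\lambda'_i)$-components of the identity $X_{\alpha'_j} v = c\, X_{-\alpha_i} x_0$, I would deduce that $v_{\omega'_{i-1}}$ and $v_{\omega'_i}$ each lie in the image of $X_{\alpha'_j}$ acting on the corresponding fundamental representation of $G^2$. The former requires $\langle \omega'_{i-1}, (\alpha'_j)^{\vee} \rangle \neq 0$, i.e.\ $j = i-1$, the latter $j = i$; these cannot both hold, which is the desired contradiction.

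Finally I would dispose of the boundary case $i > K$. Since $X_{-\alpha_i} x_0 \neq 0$ forces $i \le L$, and $L - K \in \{0,1\}$, this can occur only when $K = L-1$ and $i = L$ (so $m \ge n+1$ and $i = n$). Then $\lambda_i \notin E$ and $X_{-\alpha_n} x_0 = (X_{-\alpha_n} v_{\omega_n}) \otimes v_{\omega'_n}$; as $V(\omega'_n)$ is the trivial $\SL(n)$-module, it contains no $\Tad^2$-eigenvector of nonzero weight, so $X_{\alpha'_j} v \in \CC(X_{-\alpha_i} x_0)$ is again impossible (a degenerate sub-case $v_{\omega'_0} \in \im X_{\alpha'_j}$ when $i=1$ is ruled out identically, since $\omega'_0 = 0$). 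The only genuine obstacle is the bookkeeping of precisely which $v_{\lambda}$ survive under $X_{-\alpha_i}$ and the verification of these boundary indices; everything else is a direct transcription of Lemma~\ref{lem:nomix}, with $\mu$ playing no role because its $G^1$-component vanishes.
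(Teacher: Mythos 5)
Your proof is correct and follows essentially the same route as the paper's: both reduce a putative mixed weight to $\beta = \alpha_i + \alpha'_j$ via Lemma~\ref{lem:tangentcrit}(\ref{AAA}), deduce $X_{\alpha'_j} v \in \CC X_{-\alpha_i} x_0$, and derive the contradiction for $i \le K$ from the fact that $V(\omega'_{i-1})$ and $V(\omega'_i)$ cannot both contain a $\Tad^2$-eigenvector of weight $\alpha'_j$, with the boundary case $i = L = n$ handled by the triviality of $V(\omega'_n)$ as a $G^2$-module. Your additional bookkeeping (the degenerate sub-case $i=1$ with $\omega'_0 = 0$, the remark that $\mu$ plays no role since its $G^1$-component vanishes, and the sharper observation that the boundary case forces $m \ge n+1$) only makes explicit details the paper's proof leaves implicit.
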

\begin{proof}
As in the proof of Lemma~\ref{lem:nomix}, we have to rule out $\Tad$-eigenvectors in $\Vggp$ of weight $\alpha_i + \alpha'_j$ where $\alpha_i$ is a simple root of $G^1$ and $\alpha'_j$ is a simple root of $G^2$.  Suppose, by contradiction, that $[v] \in \Vggp$ is such an eigenvector. 
Then $X_{\alpha'_j}v \in \CC X_{-\alpha_i} x_0$. 
As long as $i \le K$, we have that $X_{-\alpha_i} x_0 = X_{-\alpha_i}(v_{\lambda_i} + v_{\lambda'_i})$, which yields a contradiction because the $G^2$-modules $V( \omega'_{i-1})$ and $V(\omega'_i)$ cannot both contain a nonzero $\Tad^2$-eigenvector of weight the simple root $\alpha'_j$.

When $i > K$ we still have $i \le L$ because $X_{-\alpha_i}x_0 \neq 0$. So $i > K$ implies that $K=n-1, L=n$ and $i=n$. Then $X_{-\alpha_i} x_0 = X_{-\alpha_n}v_{\lambda'_n}$ which again yields a contradiction: $V(\omega'_n)$ contains no $\Tad^2$-eigenvectors of nonzero weight. 
\end{proof}

\begin{lemma} \label{lem:basislambda18}
We have 
\begin{equation*}  
\wgo = \<\omega_1,\ldots,\omega_L,\omega_1',\ldots,\omega_K', \mu\>_{\ZZ}
\end{equation*}
or, equivalently,
\begin{equation*}
\wgo = 
\begin{cases}
\<\omega_1,\ldots,\omega_n,\omega_1',\ldots, \omega'_n\>_{\ZZ} &\text{if $m>n-1$};\\
\<\omega_1,\ldots,\omega_m,\omega_1',\ldots, \omega'_n\>_{\ZZ} &\text{if $m=n-1$};\\
\<\omega_1,\ldots,\omega_m,\omega_1',\ldots,\omega_m', \omega'_{n-1}-\omega'_n\>_{\ZZ} &\text{if $m<n-1$}.
\end{cases} 
\end{equation*}
Moreover, for $i\in\{1,\ldots,K\}$ we have the following equalities in $X(\overline{T})$:
\begin{align*} 
\omega_i &= \lambda_i -\sum_{k=1}^{i-1} (\lambda_k' - \lambda_k);
\\
\omega'_i&= \sum_{k=1}^{i}(\lambda'_k-\lambda_k).\\
\intertext{When $m \ge n-1$ we have}
\omega'_{n} &= \sum_{k=1}^{n-1}(\lambda'_k-\lambda_k) - \mu\\
\intertext{
as well,
and when $m>n-1$ there is also}
\omega_n &= \lambda'_n + \mu - \sum_{k=1}^{n-1}(\lambda'_k-\lambda_k).
\end{align*}
\end{lemma}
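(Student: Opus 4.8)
The plan is to reduce everything to the single telescoping identity
\[
\lambda'_k - \lambda_k = (\omega_k + \omega'_k) - (\omega_k + \omega'_{k-1}) = \omega'_k - \omega'_{k-1},
\]
valid for every $k$ with $1 \le k \le K$, so that both $\lambda_k$ and $\lambda'_k$ are defined (recall $K = \min(m,n-1) \le \min(m,n) = L$). Summing from $k=1$ to $i$ and using $\omega'_0 = 0$ gives at once the formula $\omega'_i = \sum_{k=1}^i(\lambda'_k - \lambda_k)$ for $1\le i \le K$. Substituting $\omega'_{i-1} = \sum_{k=1}^{i-1}(\lambda'_k-\lambda_k)$ into $\lambda_i = \omega_i + \omega'_{i-1}$ then yields $\omega_i = \lambda_i - \sum_{k=1}^{i-1}(\lambda'_k-\lambda_k)$. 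The two boundary formulas follow the same way: when $m \ge n-1$ we have $K = n-1$, so $\sum_{k=1}^{n-1}(\lambda'_k-\lambda_k) = \omega'_{n-1}$, and subtracting $\mu = \omega'_{n-1} - \omega'_n$ recovers $\omega'_n$; when moreover $m > n-1$ the weight $\lambda'_n = \omega_n + \omega'_n$ exists, and adding $\mu$ and subtracting $\omega'_{n-1}$ recovers $\omega_n$. I would present these four identities as a direct computation, each being a one-line consequence of the telescoping identity, leaving the purely mechanical verification to the reader as is done for Lemma~\ref{lem:basislambda}.

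For the lattice equality I would argue by mutual inclusion. The inclusion of the claimed lattice in $\wgo = \<E\>_{\ZZ}$ is precisely what the inversion formulas provide: they express each $\omega_i, \omega'_i$ with $i \le K$ as a $\ZZ$-combination of elements of $E$, and the boundary formulas together with $\mu \in E$ and (when $m > n-1$) $\lambda'_n \in E$ supply the remaining generators. The reverse inclusion is the trivial observation that each of $\lambda_i$, $\lambda'_i$ and $\mu$ is, by definition, a $\ZZ$-combination of the $\omega_j, \omega'_j$ appearing in the claimed lattice. The equivalent case-by-case description then follows by reading off, in each of the regimes $m > n-1$, $m = n-1$ and $m < n-1$, which $\omega_j, \omega'_j$ and which combination $\omega'_{n-1}-\omega'_n$ occur among the generators, and checking that the number of generators matches the rank $|E|$ in each case. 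As an alternative more in the spirit of Lemma~\ref{lem:basislambda}, I could arrange the elements of $E$ and the target generators into a square change-of-basis matrix $F$ over $\ZZ$, ordered so that $F$ is triangular with $\pm1$ on the diagonal, conclude $\det F = \pm 1$, and obtain the inversion formulas by inverting $F$; either route is bookkeeping.

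The only point requiring genuine care — and hence the main (mild) obstacle — is the case analysis governing the boundary weights. Because $K$ and $L$ jump at $m = n-1$, it is the sign of $m-(n-1)$ that decides whether $\omega_n$ and $\omega'_n$ belong to $\wgo$ at all: when $m > n-1$ both are present and recoverable from $E$ via $\lambda'_n$ and $\mu$; when $m = n-1$ only $\omega'_n$ is present; and when $m < n-1$ neither $\omega'_{n-1}$ nor $\omega'_n$ lies in $\wgo$ individually, only their difference $\mu$. Keeping track of exactly which of $\lambda'_n$, $\omega_n$, $\omega'_n$ survives in each regime, and matching the generator count against $|E|$, is where the argument must be done attentively; everything else is a mechanical telescoping computation.
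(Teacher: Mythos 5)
Your proof is correct and takes essentially the paper's approach: the paper omits the proof of this lemma, declaring it an adaptation of Lemma~\ref{lem:basislambda}, whose proof proceeds either by inverting a triangular unimodular change-of-basis matrix $F$ or by ``a straightforward recursive argument'' --- and your telescoping identity $\lambda'_k-\lambda_k=\omega'_k-\omega'_{k-1}$ is exactly that recursive argument, with the boundary regimes $m>n-1$, $m=n-1$, $m<n-1$ sorted out correctly. One phrasing in your reverse inclusion is slightly loose (when $m\ge n$ the generator $\lambda'_n=\omega_n+\omega'_n$ is not literally a combination of the listed $\omega_j,\omega'_j$ alone, since $\omega'_n$ is absent; one needs $\lambda'_n=\omega_n+\omega'_{n-1}-\mu$), but your final paragraph's bookkeeping of which of $\lambda'_n$, $\omega_n$, $\omega'_n$ survives in each regime already supplies this, so there is no gap.
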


We will make use of Lemma~\ref{lem:consecfund} but also of the following variant. Again, its proof is an adaptation of that of \cite[Corollary 3.9]{bravi&cupit}.
\begin{lemma} \label{lem:consecfundgap}
Suppose $m \ge 4$ is an integer and suppose $k \le m-3$ is another positive integer. Define the following $\SL(m)$-module:
\[ M := V(\omega_1) \oplus V(\omega_2) \oplus \ldots + V(\omega_k) \oplus V(\omega_{m-1})\]
Furthermore, call the sum of highest weight vectors $m_0$:
\[ m_0: = v_{\omega_1} + v_{\omega_2} + \ldots + v_{\omega_k} + v_{\omega_{m-1}} \]

Then $(M/\fg\cdot m_0)^{\SL(m)_{m_0}}$ is the multiplicity-free $\Tad$-module with weight set
\begin{equation} \label{eq:wsmg}
\{ \alpha_1+ \alpha_2, \alpha_2+\alpha_3, \ldots , \alpha_{k-2}+ \alpha_{k-1}, \alpha_k+\alpha_{k+1} + \ldots +\alpha_{m-1}\}
\end{equation}
\end{lemma}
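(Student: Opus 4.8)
The plan is to adapt the proof of Lemma~\ref{lem:consecfund} essentially verbatim, the only new feature being the ``gap'' between the block of consecutive fundamental weights $\omega_1,\ldots,\omega_k$ and the single extra fundamental weight $\omega_{m-1}$. Since the monoid $\<\omega_1,\ldots,\omega_k,\omega_{m-1}\>_{\NN}$ is generated by distinct fundamental weights, it is $\SL(m)$-saturated (one verifies condition (b) of Lemma~\ref{lem:bcfcritpsat}: for each generator $\omega_j$ there is a simple root $\alpha_j$ pairing nontrivially with it and with no other generator). Hence Theorem~\ref{thm:bcf} applies and tells us at once that $(M/\fg\cdot m_0)^{\SL(m)_{m_0}}$ is a \emph{multiplicity-free} $\Tad$-module whose $\Tad$-weights all lie in the list (\SRo)--(\SRf) of Remark~\ref{rem:srA}. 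This reduces the problem to a combinatorial sieve: rule out all such weights except those in (\ref{eq:wsmg}), and then exhibit eigenvectors realizing the weights in (\ref{eq:wsmg}).

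For the sieve, I would first record that any weight in the $\Tad$-weight set must lie in $\<\omega_1,\ldots,\omega_k,\omega_{m-1}\>_{\ZZ}$, and second that it must be a positive root plus a simple root (by the argument behind Lemma~\ref{lem:tangentcrit}(\ref{BBB}), or directly as in \cite[Prop 3.4]{bravi&cupit}). As in the proof of Lemma~\ref{lem:consecfund}, weights of type (\SRo) and (\SRt) are excluded because fundamental representations contain no such $\Tad$-weights. For a weight $\gamma$ of type (\SRf), either the index $i<k$ and I apply \cite[Proposition 3.4]{bravi&cupit} with $\delta=\alpha_i$, or $\<\gamma,\alpha_{i+1}^{\vee}\>=2$ forces $\gamma\notin\<\omega_1,\ldots,\omega_k,\omega_{m-1}\>_{\ZZ}$; the only subtlety is checking that the coefficient test still works near the gap, i.e.\ that $2\alpha_{i+1}$ cannot be ``absorbed'' by $\omega_{m-1}$, which holds because $\<\gamma,\alpha_{i+1}^\vee\>=2$ while every generator pairs to $0$ or $1$ with each coroot.

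For weights of type (\SRth), i.e.\ strings $\alpha_{i+1}+\cdots+\alpha_{i+r}$, I distinguish according to whether the string lies inside the initial block, straddles the gap, or reaches $\alpha_{m-1}$. A string contained in $\{1,\ldots,k\}$ is handled exactly as in Lemma~\ref{lem:consecfund} (using \cite[Prop 3.4]{bravi&cupit} with $\delta=\alpha_{i+r}$ or $\delta=\alpha_{i+2}$ to kill all but the length-$2$ consecutive ones $\alpha_{j-1}+\alpha_j$ with $j\le k-1$). A string whose endpoints fall strictly between $k+1$ and $m-2$, or that ends before $\alpha_{m-1}$ but starts after $\alpha_k$, pairs to $1$ or $-1$ with a coroot $\alpha_t^\vee$ for some $t$ with $\omega_t\notin\{\omega_1,\ldots,\omega_k,\omega_{m-1}\}$, hence is not in the weight lattice $\<\omega_1,\ldots,\omega_k,\omega_{m-1}\>_{\ZZ}$. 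The one genuinely new surviving weight is the long string $\alpha_k+\alpha_{k+1}+\cdots+\alpha_{m-1}$ bridging the gap: one checks it pairs to $0$ with every $\alpha_t^\vee$ for $t\notin\{k,m-1\}$ and to nonzero integers compatible with membership in the lattice, so it survives the lattice test, and no shorter or longer bridging string does. This case analysis, verifying the lattice condition weight-by-weight across the gap, is where I expect the bookkeeping to be most delicate and is the main obstacle.

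Finally, to show each weight in (\ref{eq:wsmg}) actually occurs, I produce explicit $\SL(m)_{m_0}$-fixed eigenvectors as in the closing paragraph of the proof of Lemma~\ref{lem:consecfund}: for $i\in\{1,\ldots,k-2\}$ the vector $[X_{-\alpha_{i+1}}X_{-\alpha_i}m_0]=[X_{-\alpha_i}X_{-\alpha_{i+1}}m_0]$ has weight $\alpha_i+\alpha_{i+1}$, and for the bridging weight the vector $[X_{-\alpha_{m-1}}X_{-\alpha_{m-2}}\cdots X_{-\alpha_k}\,v_{\omega_{m-1}}]$ (equivalently, acting on $v_{\omega_k}$ along the string) has weight $\alpha_k+\cdots+\alpha_{m-1}$; in each case a straightforward verification using the commutation of the relevant root operators and Lemma~\ref{lem:isotropygroup} shows the vector is fixed by $\SL(m)_{m_0}=\SL(m)_{x_0}$. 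Combined with the sieve and the multiplicity-freeness from Theorem~\ref{thm:bcf}, this identifies the $\Tad$-weight set exactly with (\ref{eq:wsmg}) and completes the proof.
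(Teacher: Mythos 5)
Your overall strategy coincides with the paper's: saturation of $\<\omega_1,\ldots,\omega_k,\omega_{m-1}\>_{\NN}$, multiplicity-freeness and the weight constraints (\SRo)--(\SRf) from Theorem~\ref{thm:bcf}, then a sieve combining the lattice test ($\<\gamma,\alpha_t^{\vee}\>$ must vanish for $t\notin\{1,\ldots,k,m-1\}$) with \cite[Proposition 3.4]{bravi&cupit}, and finally explicit eigenvectors. However, there is a genuine gap in your treatment of the type (\SRth) strings: you claim that among strings reaching $\alpha_{m-1}$, only $\alpha_k+\cdots+\alpha_{m-1}$ survives the lattice test, and that ``no shorter or longer bridging string does.'' The longer ones do survive it. For any $j<k$, the string $\gamma=\alpha_j+\alpha_{j+1}+\cdots+\alpha_{m-1}$ satisfies $\<\gamma,\alpha_t^{\vee}\>=0$ for every $t\notin\{j-1,j,m-1\}$; explicitly $\gamma=-\omega_{j-1}+\omega_j+\omega_{m-1}$, so $\gamma\in\<\omega_1,\ldots,\omega_k,\omega_{m-1}\>_{\ZZ}$. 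None of your case clauses covers these weights (they are not contained in $\{1,\ldots,k\}$, do not end before $\alpha_{m-1}$, and do not have endpoints strictly between $k+1$ and $m-2$), so as written your sieve leaves them alive. The paper excludes exactly this family representation-theoretically, by \cite[Proposition 3.4]{bravi&cupit} with $\delta=\alpha_{i+2}$ (in the paper's indexing $i+r=m-1$, $i+1<k$). Since that lemma is already in your toolbox, the fix is local, but the step as proposed would fail and the bookkeeping you flagged as ``delicate'' is precisely where it fails.

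A smaller but real slip: your bridging eigenvector $X_{-\alpha_{m-1}}X_{-\alpha_{m-2}}\cdots X_{-\alpha_k}\,v_{\omega_{m-1}}$ is zero as written, because the rightmost operator acts first and $X_{-\alpha_k}v_{\omega_{m-1}}=0$ since $\<\omega_{m-1},\alpha_k^{\vee}\>=0$; you need the opposite order of composition. The paper sidesteps iterated simple root operators altogether by using that $\gamma-\alpha_k=\alpha_{k+1}+\cdots+\alpha_{m-1}$ is itself a root, taking $[X_{-\alpha_k}X_{-\gamma+\alpha_k}m_0]=[X_{-\gamma+\alpha_k}X_{-\alpha_k}m_0]$ in $M/\fg\cdot m_0$: the commutator is proportional to $X_{-\gamma}m_0\in\fg\cdot m_0$, and the two presentations (one supported in $V(\omega_{m-1})$, one in $V(\omega_k)$) make the $\SL(m)_{m_0}$-invariance verification immediate, which a long product of simple root operators would not.
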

\begin{proof} 
First, note that the monoid $\<\omega_1, \omega_2, \ldots, \omega_k, \omega_{m-1}\>_{\NN}$ is $\SL(m)$-saturated so  that the assumptions of Theorem~\ref{thm:bcf} are satisfied. 
Theorem 3.10 in~\cite{bravi&cupit} tells us that $(M/\fg\cdot m_0)^{\SL(m)_{m_0}}$ is a multiplicity-free $\Tad$-module. Therefore, $(M/\sl(m)\cdot m_0)^{\SL(m)_{m_0}}$ is a multiplicity-free $\Tad$-module whose weight set $F$ is a subset of the set $D$ in the proof of Lemma~\ref{lem:consecfund}. Just like in that proof we use the argument of \cite[Corollary 3.9]{bravi&cupit} to show that $F$ is the set (\ref{eq:wsmg}). 

Weights of type (\SRo) and (\SRt) do not occur in $F$ because the fundamental representations of $\SL(m)$ do not contain such $\Tad$-weights. 

Next suppose $\gamma = \alpha_{i} + 2\alpha_{i+1} + \alpha_{i+2}$ is a weight of the type (\SRf). Then $\<\gamma, \alpha_{i+1}^{\vee}\> = 2$ and so when $i > k-1$, we have $\gamma \notin  \<\omega_1, \ldots \omega_k, \omega_{m-1}\>_{\ZZ}$. If $i\le k-1$, then \cite[Proposition 3.4]{bravi&cupit} with $\delta = \alpha_i$ tells us that $\gamma$ does not belong to $F$. 

Now suppose $\gamma$ is a root of type (\SRth):
$\gamma=\alpha_{i+1} +\alpha_{i+2} + \ldots + \alpha_{i+r}$. 
First, let us assume $ r=2$ and $i+1\ge k-1$.  Then $\<\gamma, \alpha_{i+2}^{\vee}\> = 1$ and so $\gamma \notin  \<\omega_1, \ldots \omega_k, \omega_{m-1}\>_{\ZZ}$ for $k-1<i+1< m-2$. When $i+1 = k-1$ we can use that $\<\gamma, \alpha_{i+3}^{\vee}\> = -1$ to reach the same conclusion. When $i+1 = m-2$, the fact that $\<\gamma, \alpha_{i+1}^{\vee}\>=1$ does the trick.

Next we assume $r \ge 3$. If $k < i+r < m-1$, then $\<\gamma, \alpha_{i+r}^{\vee}\> = 1$ tells us that $\gamma \notin  \<\omega_1, \ldots \omega_k, \omega_{m-1}\>_{\ZZ}$. When $i+r \le k$, then \cite[Proposition 3.4]{bravi&cupit} with $\delta = \alpha_{i+r-1}$ tells us that $\gamma$ is not $F$. When $i+r = m-1$ and $i+1>k$, then $\<\gamma, \alpha_{i+1}^{\vee}\> = 1$ implies that $\gamma \notin  \<\omega_1, \ldots \omega_k, \omega_{m-1}\>_{\ZZ}$. When  $i+r = m-1$ and $i+1<k$ then  \cite[Proposition 3.4]{bravi&cupit} with $\delta = \alpha_{i+2}$ tells us that $\gamma$ is not in $F$.

Finally, that the weight set $F$ contains the weights of the form $\alpha_i + \alpha_{i+1}$  listed in (\ref{eq:wsmg}) follows exactly like in the proof of \cite[Corollary 3.9]{bravi&cupit}. For the weight $\gamma=\alpha_k+\ldots \alpha_{m-1}$, a weight vector is $[X_{-\alpha_k}X_{-\gamma+\alpha_{k}} m_0] =  [X_{-\gamma+\alpha_{k}}X_{-\alpha_k} m_0] 
\in M/\fg\cdot m_0$.
\end{proof}

\begin{proof}[Outline of proof of Proposition~\ref{prop:vgg18}] \label{proofofvgg18}
By the same arguments as in Section~\ref{subsec:case17} we have a decomposition
\begin{equation}
\Vggp = \Vggp_{\Lambda^1_R} \oplus \Vggp_{\Lambda^2_R}
\end{equation}
compatible with the action of $\Tad = \Tad^1 \times \Tad^2$, that the injection $A:=V^{U^2} \into V$ induces an isomorphism of $\Tad^1$-modules
\begin{align*}
\Bigl(\frac{A}{A\cap \fg\cdot x_0}\Bigr)^{G^1_{x_0}} &\isom \Vggp_{\Lambda^1_R}\\
\intertext{and that the injection $C:=V^{U^1}  \into V$ induces an isomorphism of $\Tad^2$-modules}
\Bigl(\frac{C}{C\cap \fg\cdot x_0}\Bigr)^{G^2_{x_0}} &\isom \Vggp_{\Lambda^2_R}.
\end{align*}
We therefore first determine the $\Tad^1$-module $\bigl(\frac{A}{A\cap \fg\cdot x_0}\bigr)^{G^1_{x_0}}$ and then the $\Tad^2$-module $\bigl(\frac{C}{C\cap \fg\cdot x_0}\bigr)^{G^2_{x_0}}$.

To do so, we begin by introducing certain $G^1$-submodules of $A$: for $i\in\{1,\ldots, K\}$, put
\[Z_i := \text{the simple $G^1$-submodule of $A$ with highest weight vector $z_i:=v_{\lambda_i} + v_{\lambda'_i}$}.\]
When $L = K+1$, that is, if $m\ge n$, also put
\[Z_L:= \text{the simple $G^1$-submodule of $A$ with highest weight vector $z_L:=v_{\lambda'_L}$}.\]
We also define the following trivial $G^1_{x_0} \rtimes \Tad^1$-submodule of $A$:
\[Z_0 := \CC(v_{\lambda_1}-v_{\lambda'_1}) \oplus \ldots \oplus \CC(v_{\lambda_K}-v_{\lambda'_K}) + \CC v_{\mu} \]
and the $G^1_{x_0} \rtimes \Tad^1$-submodule of $A$:
\[Z:=Z_0 \oplus Z_1 \oplus \ldots \oplus Z_L.\]
Then $A \cap \fg \cdot x_0 \inn Z \inn A$ and therefore, we again obtain an exact sequence of $G^1_{x_0} \rtimes \Tad^1$-modules like (\ref{eq:exactseqA}) on page~\pageref{eq:exactseqA}, and consequently an exact sequence of $\Tad^1$-modules like (\ref{eq:seqAinv}). 

To determine  the $\Tad^1$-module $\Bigl(\frac{Z}{A\cap \fg\cdot x_0}\Bigr)^{G^1_{x_0}}$ we introduce
\begin{align*}
y_0&:=\begin{cases}
z_1 + \ldots + z_L &\text{if $L<m$ (i.e.~$n<m$)};\\
z_1 + \ldots + z_{L-1} &\text{if $L=m$ (i.e.~$n\ge m$)};
\end{cases}\\
\overline{Z_0}&: =\begin{cases}
Z_0 &\text{if $L<m$ (i.e.~$n<m$)}; \\
Z_0 + Z_L &\text{if $L=m$ (i.e.~$n\ge m$)};
\end{cases}\\
\overline{Z}&:=\begin{cases}
Z_1 \oplus \ldots \oplus Z_L &\text{if $L<m$ (i.e.~$n<m$)};\\
Z_1 \oplus \ldots \oplus Z_{L-1} &\text{if $L=m$ (i.e.~$n\ge m$)}.
\end{cases}
\end{align*}
We then obtain that $Z=\overline{Z} \oplus \overline{Z_0}$ and $A\cap \fg\cdot x_0 = \fg^1\cdot y_0 \oplus \overline{Z_0}$ and so the inclusion $\overline{Z} \into Z$ induces an isomorphism of $\Tad^1$-modules
\[\Bigl(\frac{\overline{Z}}{\fg^1\cdot y_0}\Bigr)^{G^1_{x_0}} \isom \Bigl(\frac{Z}{A\cap \fg\cdot x_0}\Bigr)^{G^1_{x_0}}.\]
The $\Tad^1$-module on the left is determined by Lemma~\ref{lem:consecfund}. It is multiplicity-free and its $\Tad^1$-weights are
\begin{align*}
&\alpha_1+\alpha_2, \alpha_2+\alpha_3,\ldots,\alpha_{L-2} + \alpha_{L-1} &&\text{if $L \neq m-1$};\\
&\alpha_1+\alpha_2, \alpha_2+\alpha_3,\ldots,\alpha_{m-2} + \alpha_{m-1} &&\text{if $L= m-1$ (i.e.~ if $n=m-1)$}.
\end{align*}

Next we determine the $\Tad^1$-module $\Bigl(\frac{A}{Z}\Bigr)^{G^1_{x_0}}$. To do so, we introduce
\begin{align*}
A_i&:= \<G^1\cdot(v_{\lambda_i} -v_{\lambda'_i})\>_{\CC} \quad \quad \text{for $i\in\{1,2,\ldots,L-1\}$};\\
\overline{A}&:= \begin{cases}
A_1 \oplus\ldots\oplus A_{L-1} &\text{if $n<m$};\\
A_1\oplus\ldots\oplus A_{L-1} \oplus Z_m &\text{if $n=m$};\\
A_1 \oplus\ldots\oplus A_{L-1} \oplus \<G^1\cdot v_{\lambda_m}\>_{\CC} \oplus  \<G^1\cdot v_{\lambda'_m}\>_{\CC} &\text{if $n>m$}.
\end{cases}
\end{align*}
Then $A = \overline{A} \oplus \overline{Z}$ and $\overline{Z_0} = \overline{A}^{U^1}$. It follows that 
\[\frac{A}{Z} \isom  \frac{\overline{A}}{\overline{Z_0}} \isom \frac{A_1 \oplus \ldots \oplus A_{L-1}}{(A_1 \oplus \ldots \oplus A_{L-1})^{U^1}}\]
and so $\Bigl(\frac{A}{Z}\Bigr)^{G^1_{x_0}}$ is a multiplicity-free $\Tad^1$-module whose weights  are 
\[\alpha_1, \alpha_2, \ldots, \alpha_{L-1}.\]

We now move to the  the $\Tad^2$-module $\bigl(\frac{C}{C\cap \fg\cdot x_0}\bigr)^{G^2_{x_0}}$,
where $C=V^{U^1}$.  We put
\begin{align*}
z_i &:= v_{\lambda_{i+1}}+ v_{\lambda_i} \quad \quad \text{for $i=1,\ldots, K-1$};\\
z_K &:= \begin{cases}v_{\lambda'_K} + v_{\mu} &\text{if $K=n-1$ (i.e.~$n-1 \le m$)};\\
v_{\lambda'_k} &\text{if $K<n-1$ (i.e.~$n-1>m$)};\end{cases}\\
Z_i &:= \<G^2 \cdot z_i\>_{\CC}  \quad \quad \text{for $i=1,\ldots, K$};\\
Z_0 &:=\begin{cases}
\CC v_{\lambda_1} \oplus \CC(v_{\lambda_2} - v_{\lambda_1}) \oplus \ldots \oplus \CC(v_{\lambda_K}-v_{\lambda'_{K-1}}) \oplus  \\ \quad \quad \oplus \CC(v_{\lambda'_K} - v_{\mu}) \oplus \CC v_{\lambda'_{K+1}}  &\text{if $n \le m$};\\
\CC v_{\lambda_1} \oplus \CC(v_{\lambda_2} - v_{\lambda_1}) \oplus \ldots \oplus \CC(v_{\lambda_K}-v_{\lambda'_{K-1}}) \oplus \CC(v_{\lambda'_K} - v_{\mu}) &\text{if $n=m-1$};\\
\CC v_{\lambda_1} \oplus \CC(v_{\lambda_2} - v_{\lambda_1}) \oplus \ldots \oplus \CC(v_{\lambda_K}-v_{\lambda'_{K-1}}) &\text{if $m<n-1$};
\end{cases}\\
Z &:=\begin{cases}
Z_0 \oplus Z_1 \oplus \ldots \oplus Z_K &\text{if $K=n-1$};\\
Z_0 \oplus Z_1 \oplus \ldots \oplus Z_K \oplus V(\mu) &\text{if $K<n-1$}.
\end{cases}
\end{align*}
Because $C \cap \fg \cdot x_0 \inn Z \inn C$, we again obtain an exact sequence of $\Tad^2$-modules like (\ref{eq:seqCinv}) on page~\pageref{eq:seqCinv}, and so we determine the $\Tad^2$-modules $\Bigl(\frac{Z}{C\cap \fg\cdot x_0}\Bigr)^{G^2_{x_0}}$ and $\Bigl(\frac{C}{Z}\Bigr)^{G^2_{x_0}}$. 

For the first, put 
\begin{align*}
y_0&:= \begin{cases}
z_1 + z_2 + \ldots + z_K &\text{if $K=n-1$};\\
z_1 + z_2 + \ldots +z_K + v_{\mu} &\text{if $K<n-1$};
\end{cases}\\
\overline{Z} &:= \begin{cases}
Z_1 \oplus \ldots \oplus Z_K &\text{if $K=n-1$}; \\
Z_1 \oplus \ldots \oplus Z_K \oplus V(\mu) &\text{if $K<n-1$}.
\end{cases}
\end{align*}
Then 
\[\Bigl(\frac{Z}{C\cap \fg\cdot x_0}\Bigr)^{G^2_{x_0}} \isom \Bigl(\frac{\overline{Z}}{\fg^2\cdot y_0}\Bigr)^{G^2_{y_0}}\]
and the latter $\Tad^2$-module is described by Lemma~\ref{lem:consecfund} or Lemma~\ref{lem:consecfundgap} depending on $K$. It is multiplicity-free and its $\Tad^2$-weights are
\begin{align*}
&\alpha'_1+\alpha'_2,\alpha'_2 +\alpha'_3, \ldots, \alpha'_{n-2}+\alpha'_{n-1} &&\text{if $K=n-1$ or $K=n-2$};\\
&\alpha'_1+\alpha'_2, \ldots, \alpha'_{K-2}+\alpha'_{K-1}, \alpha'_K+\alpha'_{K+1}+\ldots+\alpha'_{n-1}
&&\text{if $K<n-2$}.
\end{align*}
Finally, for $\Bigl(\frac{C}{Z}\Bigr)^{G^2_{x_0}}$ we put
\begin{align*}
C_i &:= \<G^2 \cdot(v_{\lambda_{i+1}} - v_{\lambda'_i})\>_{\CC} &&\text{for $i\in\{1,\ldots,K-1\}$};\\
C_{n-1}&:=\<G^2 \cdot (v_{\lambda'_{n-1}}-v_{\mu})\>_{\CC} &&\text{if $K=n-1$ (i.e. $n-1 \le m$)}
\end{align*}
and 
\begin{align*}
\overline{C}&:= \<G^2 \cdot Z_0\>_{\CC};\\
\widetilde{C} &:= \begin{cases}
C_1 \oplus C_2 \oplus \ldots \oplus C_{L-1} &\text{if $m \neq n-1$};\\
C_1 \oplus C_2 \oplus \ldots \oplus C_{L-1} \oplus C_{n-1} &\text{if $m=n-1$ (and $L=n-1$)}.
\end{cases}
\end{align*}
Then $C = \overline{C} \oplus \overline{Z}$, and because $Z = Z_0 \oplus \overline{Z}$
\[\frac{C}{Z} \isom \frac{\overline{C}}{Z_0} \text{ as $G^2_{x_0} \rtimes \Tad^2$-modules.}\]
Therefore 
\[\frac{C}{Z} \isom \frac{\overline{C}}{Z_0} \isom \frac{\widetilde{C}}{\widetilde{C}^{U^2}}\]
and so the $\Tad^2$-weights in the multiplicity-free $\Tad^2$-module
\[\Bigl(\frac{C}{Z}\Bigr)^{G^2_{x_0}} \isom \Bigl(\frac{\widetilde{C}}{\widetilde{C}^{U^2}}\Bigr)^{G^2_{x_0}}\]
are
\begin{align*}
&\alpha'_1, \ldots, \alpha'_{L-1} &&\text{if $m\neq n-1$}; \\
&\alpha'_1, \ldots, \alpha'_{L-1}, \alpha'_{n-1} &&\text{if $m=n-1$ (then $L=n-1$)}. 
\end{align*}
It is a straightforward matter to verify that the vectors listed in the proposition indeed belong to the eigenspaces. 
\end{proof}

Finally, in the next three lemmas, we express the $\Tad$-weights to which we apply Proposition~\ref{prop:excl} (in the proof of Proposition~\ref{prop:case18}) in terms of the basis $E$ of $\wgo$. The proofs are omitted as they are very similar to the proofs of Lemmas \ref{lem:betainlambda}, \ref{lem:betam1inlambda} and \ref{lem:betan1inlambda} in Section~\ref{subsec:case17}.
\begin{lemma} Using the notation introduced in Proposition~\ref{prop:vgg18}, we have that
\begin{align*}
\beta_r &= -\lambda'_{r-2} + \lambda_{r-1} + \lambda'_r - \lambda_{r+1} \quad \text{when $2\le r \le K-1$};
\label{eq:betarcase18}\\
\beta'_s &=  -\lambda_{s-1} + \lambda'_{s-1} + \lambda_{s+1} - \lambda'_{s+1} \quad \text{when $2\le s\le K-1$},
\end{align*}
where $\lambda'_0=0$. 
If $3\le n\le m$ (then $L=n>K$) and
\begin{equation*} \label{eq:betanm1case18}
\beta_{L-1}= \beta_{n-1} = \lambda_{n-2} + \lambda'_{n-1} - \mu - \lambda'_n - \lambda'_{n-3}
\end{equation*}
where $\lambda'_0 := 0$ if $n=3$. 
If $n-1 \le m$ and $n\neq 2$, then
\begin{equation*}
\beta'_{n-1} = -\lambda_{n-2} + \lambda'_{n-2} + \mu.
\end{equation*}
\end{lemma}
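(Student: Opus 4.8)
The plan is to verify each of the four identities by a direct expansion in the weight lattice $X(\overline{T})$, using nothing more than the standard expression (\ref{eq:simpleroots}) of the simple roots in terms of the fundamental weights, together with the defining formulas $\lambda_i = \omega_i + \omega'_{i-1}$, $\lambda'_i = \omega_i + \omega'_i$ and $\mu = \omega'_{n-1} - \omega'_n$ for the elements of $E$. Since the right-hand side of each asserted equality is by construction a $\ZZ$-linear combination of elements of $E$, both sides manifestly lie in $\wgo = \<E\>_{\ZZ}$, so it suffices to check equality in $X(\overline{T})$. Recall that $G' = G^1 \times G^2$ with $G^1 = \SL(m)$ and $G^2 = \SL(n)$; accordingly $\beta_r = \alpha_{r-1}+\alpha_r$ and $\beta_{n-1} = \alpha_{n-2}+\alpha_{n-1}$ involve only simple roots of $G^1$ (hence only the $\omega_i$), while $\beta'_s = \alpha'_{s-1}+\alpha'_s$ and $\beta'_{n-1} = \alpha'_{n-2}+\alpha'_{n-1}$ involve only simple roots of $G^2$ (hence only the $\omega'_j$).

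First I would record the telescoped form of each weight. Applying (\ref{eq:simpleroots}) to the factor $G^1$ gives $\beta_r = -\omega_{r-2} + \omega_{r-1} + \omega_r - \omega_{r+1}$ and $\beta_{n-1} = -\omega_{n-3} + \omega_{n-2} + \omega_{n-1} - \omega_n$, and the identical computation for $G^2$ yields $\beta'_s = -\omega'_{s-2} + \omega'_{s-1} + \omega'_s - \omega'_{s+1}$ and $\beta'_{n-1} = -\omega'_{n-3} + \omega'_{n-2} + \omega'_{n-1} - \omega'_n$.

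Next I would expand each proposed right-hand side using the definitions of $\lambda_i$, $\lambda'_i$ and $\mu$, and check that the $\omega$-part and the $\omega'$-part separately reproduce the telescoped form above. For instance, in $-\lambda'_{r-2} + \lambda_{r-1} + \lambda'_r - \lambda_{r+1}$ the $\omega'$-contributions cancel in the pattern $-\omega'_{r-2} + \omega'_{r-2} + \omega'_r - \omega'_r = 0$, leaving exactly the $\omega$-part of $\beta_r$; the identity for $\beta'_s$ is the mirror image, with the $\omega$-parts cancelling. The two remaining cases are checked the same way, the only subtlety being that the $\mu = \omega'_{n-1}-\omega'_n$ term is what supplies the isolated $\omega'_n$ and the compensating $\omega'_{n-1}$ needed to match $\beta_{n-1}$ and $\beta'_{n-1}$.

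There is no genuine obstacle here; the only point requiring care is the bookkeeping of the index conventions at the boundary, namely $\omega_0 = \omega'_0 = 0$, the convention $\lambda'_0 = 0$ that appears explicitly in the statement, the degenerate case $n = 3$ in the formula for $\beta_{n-1}$ (where $\lambda'_{n-3} = \lambda'_0 = 0$), and the exclusion $n \neq 2$ in the formula for $\beta'_{n-1}$. Keeping these straight is precisely what makes the statement a matter of verification rather than of computation, exactly as in the proofs of Lemmas~\ref{lem:betainlambda}, \ref{lem:betam1inlambda} and \ref{lem:betan1inlambda} in Section~\ref{subsec:case17}, which we would follow verbatim.
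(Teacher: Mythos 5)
Your proposal is correct and is exactly the argument the paper intends: the paper omits the proof, noting it is a ``straightforward verification'' in the style of Lemmas~\ref{lem:betainlambda}, \ref{lem:betam1inlambda} and \ref{lem:betan1inlambda}, which is precisely your direct expansion in $X(\overline{T})$ via $\alpha_i = -\omega_{i-1}+2\omega_i-\omega_{i+1}$ and the definitions $\lambda_i = \omega_i + \omega'_{i-1}$, $\lambda'_i = \omega_i+\omega'_i$, $\mu = \omega'_{n-1}-\omega'_n$. Your bookkeeping of the boundary conventions ($\lambda'_0 = 0$, the case $n=3$, the exclusion $n\neq 2$) and of the index ranges ensuring each $\lambda_i$, $\lambda'_i$ actually lies in $E$ is the only delicate point, and you handle it correctly.
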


We now come to the two $\Tad$-weights that only occur in $\Vgg$ for certain groups $G$ between $G'$ and $\overline{G}$. 
\begin{lemma} \label{lem:betam1inlambda18} We use the notation of Proposition~\ref{prop:vgg18} and suppose $n=m-1$. Then the following are equivalent (recall that, by assumption, $(G,W)$ is spherical):
\begin{enumerate}
\item $\beta_{m-1} \in \wg$;
 \item 
  $T = \ker(\omega_m - b\omega'_{n}) \inn \overline{T}$ for some integer $b$. 
  \end{enumerate}
For every integer $b$ we have the following equality in $X(\overline{T})$
 \begin{multline} \label{eq:betaminlambda18up}
\beta_{m-1} +(\omega_m-b\omega'_n)= \lambda'_{m-1} +(1+b)\mu +(b+2)(\lambda_{m-2}-\lambda'_{m-3}) \\
- (b+1)[\lambda'_{m-2}-\lambda_{m-3} + \sum_{k=1}^{m-4}(\lambda'_k - \lambda_k) ],
\end{multline} 
where $\lambda'_{m-3} = \lambda_{m-3} = 0$ when $m=3$. 
 Consequently, if $T=\ker(\omega_m - b \omega'_n)$ for some integer $b$, restricting (\ref{eq:betaminlambda18up}) to $T$ yields the following equality in $\wg$:
\begin{multline*}
\beta_{m-1} = \lambda'_{m-1} +(1+b)\mu +(b+2)(\lambda_{m-2}-\lambda'_{m-3}) \\
- (b+1)[\lambda'_{m-2}-\lambda_{m-3} + \sum_{k=1}^{m-4}(\lambda'_k - \lambda_k) ],
\end{multline*}
where $\lambda'_{m-3} = \lambda_{m-3} = 0$ when $m=3$. 
\end{lemma}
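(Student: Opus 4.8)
The plan is to follow the proof of Lemma~\ref{lem:betam1inlambda} almost verbatim, the only changes being the weights involved. First I would regard $\beta_{m-1} = \alpha_{m-2}+\alpha_{m-1}$ as an element of $X(\overline{T})$ and, using $\alpha_i = -\omega_{i-1}+2\omega_i-\omega_{i+1}$, rewrite it as $\beta_{m-1} = -\omega_{m-3}+\omega_{m-2}+\omega_{m-1}-\omega_m$ (with $\omega_{m-3}=0$ when $m=3$). Since $n=m-1$ satisfies $m>n-1$, we are in the first case of Lemma~\ref{lem:basislambda18}, so $\wgo = \<\omega_1,\ldots,\omega_{m-1},\omega'_1,\ldots,\omega'_n\>_{\ZZ}$, which contains every summand of $\beta_{m-1}$ except $\omega_m$. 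Identifying the root lattices of $G$ and $\overline{G}$, it follows that $q(\beta_{m-1}) \in \wg = q(\wgo)$ if and only if $q(\omega_m) \in \wg$.

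Next I would analyse $q(\omega_m) \in q(\wgo)$ exactly as in family~(\ref{list.17}). Recall that $q\colon X(\overline{T}) \onto X(T)$ and $r\colon X(\overline{T}) \onto X(T')$ satisfy $r = p\circ q$, so $\ker q \inn \ker r = \<\omega_m,\omega'_n\>_{\ZZ}$. The condition $q(\omega_m)=q(\gamma)$ for some $\gamma \in \wgo$ means $\omega_m-\gamma \in \ker q \inn \ker r$; as $\omega_m \in \ker r$ this forces $\gamma \in \wgo \cap \ker r$. Because $\{\omega_1,\ldots,\omega_m,\omega'_1,\ldots,\omega'_n\}$ is a basis of $X(\overline{T})$ and $\omega_m \notin \wgo$, a short computation gives $\wgo \cap \ker r = \ZZ\omega'_n$. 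Hence $\beta_{m-1}\in\wg$ if and only if $\omega_m - b\omega'_n \in \ker q$ for some integer $b$. Now, by Lemma~\ref{lem:spherrestr}, the sphericality of $(G,W)$ is equivalent to $q|_{\wgo}$ being injective, i.e. $\ker q \cap \wgo = 0$; combined with $\wgo \cap \ker r = \ZZ\omega'_n$ this says $\ker q \cap \ZZ\omega'_n = 0$, so $\ker q$ has rank at most one. Since $\ker q$ is a saturated subgroup of $X(\overline{T})$ (the quotient $X(\overline{T})/\ker q \isom X(T)$ is free) and $\omega_m - b\omega'_n$ is primitive (its $\omega_m$-coefficient being $1$), the inclusion $\omega_m - b\omega'_n \in \ker q$ holds for some $b$ precisely when $\ker q = \<\omega_m - b\omega'_n\>_{\ZZ}$; as this generator is primitive its kernel in $\overline{T}$ is connected, so this is in turn equivalent to $T = \ker(\omega_m - b\omega'_n)$. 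This establishes the equivalence of the two assertions.

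Finally, the displayed identity (\ref{eq:betaminlambda18up}) is a direct calculation in $X(\overline{T})$: substituting $\lambda_i = \omega_i + \omega'_{i-1}$, $\lambda'_i = \omega_i + \omega'_i$ and $\mu = \omega'_{n-1}-\omega'_n$ into the right-hand side, the differences $\lambda'_k - \lambda_k = \omega'_k - \omega'_{k-1}$ telescope to $\sum_{k=1}^{m-4}(\lambda'_k-\lambda_k)=\omega'_{m-4}$, and collecting coefficients (using $\omega'_n = \omega'_{m-1}$) reproduces $-\omega_{m-3}+\omega_{m-2}+\omega_{m-1}-b\omega'_n = \beta_{m-1} + (\omega_m - b\omega'_n)$; restricting to $T$ under the hypothesis $T=\ker(\omega_m-b\omega'_n)$ then yields the stated equality in $\wg$, with the conventions $\lambda_{m-3}=\lambda'_{m-3}=0$ handling $m=3$. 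I expect no genuine obstacle here: the substantive steps are all parallel to the family~(\ref{list.17}) argument, and the only points requiring care are selecting the correct case of Lemma~\ref{lem:basislambda18}, checking $\wgo \cap \ker r = \ZZ\omega'_n$, and the bookkeeping of the telescoping sum, none of which is difficult.
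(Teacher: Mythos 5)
Your proof is correct and is essentially the argument the paper intends: the paper omits this proof, stating it is ``very similar'' to that of Lemma~\ref{lem:betam1inlambda}, and you have carried out precisely that adaptation, correctly selecting the case $m>n-1$ of Lemma~\ref{lem:basislambda18} so that $\wgo \cap \ker r = \ZZ\omega'_n$, reducing $\beta_{m-1}=-\omega_{m-3}+\omega_{m-2}+\omega_{m-1}-\omega_m \in \wg$ to $q(\omega_m)\in\wg$, and using sphericality ($\ker q \cap \ZZ\omega'_n=0$) together with saturation and primitivity of $\omega_m-b\omega'_n$ to get $\ker q = \<\omega_m - b\omega'_n\>_{\ZZ}$, i.e.\ $T=\ker(\omega_m-b\omega'_n)$. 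The telescoping computation $\sum_{k=1}^{m-4}(\lambda'_k-\lambda_k)=\omega'_{m-4}$ and the resulting verification of (\ref{eq:betaminlambda18up}), including the $m=3$ convention, check out exactly.
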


\begin{lemma} \label{lem:betan1inlambda18}
We use the notation of Proposition~\ref{prop:vgg18} and suppose $m=n-2>1$. Then 
the following are equivalent (recall that, by assumption, $(G,W)$ is spherical):
\begin{enumerate}
\item $\beta'_{n-2} \in \Lambda_{(G,W)}$;
\item $T = \ker(a\omega_m - \omega'_{n}) \inn \overline{T}$ for some integer $a$. 
\end{enumerate}
For every integer $a$ we have the following equality in $X(\overline{T})$:
\begin{multline} \label{eq:betanm218up}
\beta'_{n-2} -(a\omega_m-\omega'_n) = \lambda'_{n-2}-\mu -(1+a) [\lambda_{n-2}-\sum_{k=1}^{n-4}(\lambda'_k-\lambda_k)]\\
+ (2+a)(\lambda'_{n-3}-\lambda_{n-3}).
\end{multline}
Consequently, if $T=\ker(a\omega_m-\omega'_n)$ for some integer $a$, restricting (\ref{eq:betanm218up}) to $T$ yields the following equality in $\wg$: 
\begin{equation*} \label{eq:betanm218}
\beta'_{n-2} = \lambda'_{n-2}-\mu -(1+a) [\lambda_{n-2}-\sum_{k=1}^{n-4}(\lambda'_k-\lambda_k)]
+ (2+a)(\lambda'_{n-3}-\lambda_{n-3}).
\end{equation*}
\end{lemma}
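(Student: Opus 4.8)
The plan is to follow the proofs of Lemmas~\ref{lem:betam1inlambda} and~\ref{lem:betan1inlambda} almost verbatim, only substituting the weight data of family~(\ref{list.18}) with $m=n-2$. As there, write $p\colon X(T)\onto X(T')$, $q\colon X(\overline{T})\onto X(T)$ and $r\colon X(\overline{T})\onto X(T')$ for the restriction maps, so that $r=p\circ q$ and $\ker q \inn \ker r = \<\omega_m,\omega'_n\>_{\ZZ}$. Using (\ref{eq:simpleroots}) I would first record the expansion of $\beta'_{n-2}$ in fundamental weights,
\[
\beta'_{n-2}=\alpha'_{n-3}+\alpha'_{n-2}=-\omega'_{n-4}+\omega'_{n-3}+\omega'_{n-2}-\omega'_{n-1}\in X(\overline{T}),
\]
with the convention $\omega'_{n-4}=0$ when $n=4$. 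Since $m=n-2$, Lemma~\ref{lem:basislambda18} gives $\wgo=\<\omega_1,\dots,\omega_m,\omega'_1,\dots,\omega'_m,\mu\>_{\ZZ}$ with $\mu=\omega'_{n-1}-\omega'_n$, so every $\omega'_j$ with $j\le n-2$, as well as $\mu$, lies in $\wgo$. Assertion (1) asks exactly when $\beta'_{n-2}\in\wg=q(\wgo)$.

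The key reduction is the congruence $\beta'_{n-2}\equiv-\omega'_n\pmod{\wgo}$, which follows from the expansion above together with $\omega'_{n-1}=\mu+\omega'_n$: indeed $\beta'_{n-2}=(-\omega'_{n-4}+\omega'_{n-3}+\omega'_{n-2}-\mu)-\omega'_n$ and the bracketed term lies in $\wgo$. Hence $\beta'_{n-2}\in\wg$ if and only if $q(\omega'_n)\in\wg$, i.e.\ if and only if there is a $\gamma\in\wgo$ with $\omega'_n-\gamma\in\ker q\inn\ker r$. Because $\omega'_n\in\ker r$, such a $\gamma$ must lie in $\wgo\cap\ker r$. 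I would compute this intersection directly from the linear independence of $\{\omega_1,\dots,\omega_m,\omega'_1,\dots,\omega'_n\}$: an element $c\omega_m+b\omega'_n$ of $\ker r$ lies in $\wgo$ only when $b=0$, since the coefficients of $\omega'_{n-1}$ and $\omega'_n$ in any element of $\wgo$ are opposite (forced by $\mu$, as the remaining $\omega'$-generators stop at index $m=n-2$), whereas $c\omega_m+b\omega'_n$ has $\omega'_{n-1}$-coefficient $0$. Thus $\wgo\cap\ker r=\ZZ\omega_m$, and consequently $\beta'_{n-2}\in\wg$ if and only if $a\omega_m-\omega'_n\in\ker q$ for some integer $a$.

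Finally I would bring in sphericality. By Lemma~\ref{lem:spherrestr}, $(G,W)$ is spherical precisely when $q|_{\wgo}$ is injective, i.e.\ $\ker q\cap\wgo=0$, which by the previous paragraph is equivalent to $\ker q\cap\ZZ\omega_m=0$. Now $\ker q$ is a \emph{saturated} subgroup of $X(\overline{T})$ (because $X(T)=X(\overline{T})/\ker q$ is free) contained in the rank-two lattice $\ker r=\<\omega_m,\omega'_n\>_{\ZZ}$. If $\ker q$ had rank two it would equal $\ker r$ and hence meet $\ZZ\omega_m$ nontrivially, contradicting sphericality; so $\ker q$ has rank at most one, and containing the nonzero primitive vector $a\omega_m-\omega'_n$ forces $\ker q=\ZZ(a\omega_m-\omega'_n)$. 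As this generator is primitive, its kernel in $\overline{T}$ is connected, so the equality $\ker q=\ZZ(a\omega_m-\omega'_n)$ is equivalent to $T=\ker(a\omega_m-\omega'_n)$, giving the equivalence of the two assertions. The displayed identity (\ref{eq:betanm218up}) is a routine expansion of $\beta'_{n-2}-(a\omega_m-\omega'_n)$ into the basis $E$ of $\wgo$ via the formulas of Lemma~\ref{lem:basislambda18}, and applying $q$ restricts it to the asserted identity in $\wg$. I expect the only genuinely delicate point to be the determination $\wgo\cap\ker r=\ZZ\omega_m$ together with the saturation/rank argument: in contrast to the $n=m-1$ case of Lemma~\ref{lem:betam1inlambda18}, here it is the first-factor character $\omega_m$, rather than $\omega'_n$, that survives in the intersection, which is precisely what produces the coefficient pattern $a\omega_m-\omega'_n$.
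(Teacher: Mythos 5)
Your proposal is correct and takes essentially the same approach as the paper: the paper omits this proof, declaring it ``very similar'' to Lemmas~\ref{lem:betainlambda}, \ref{lem:betam1inlambda} and~\ref{lem:betan1inlambda}, and your adaptation follows exactly that template, correctly handling the one genuinely new point for family~(\ref{list.18}) --- that the generator $\mu=\omega'_{n-1}-\omega'_n$ forces $\wgo\cap\ker r=\ZZ\omega_m$ --- and the displayed identity (\ref{eq:betanm218up}) indeed checks out against Lemma~\ref{lem:basislambda18}. Your saturation/rank argument identifying $\ker q=\ZZ(a\omega_m-\omega'_n)$ is a harmless variant of the paper's direct computation inside $\<\omega_m,\omega'_n\>_{\ZZ}$, so nothing essentially different is happening there.
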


\subsection{The modules $(\GL(m) \times \SL(2) \times \GL(n), (\CC^m \otimes \CC^2)\oplus (\CC^2 \otimes \CC^n))$ with 
$2\leq m \leq n$}
Here 
\begin{align*}
&E = \{\omega_1 + \omega', \omega' + \omega''_1, \omega_1 + \omega''_1, \omega_2, \omega''_2\};\\
&d_W =3 .
\end{align*} 
In this case $\wm$ is not $G$-saturated for any group $G$ for which $W$ is spherical as one easily checks using Lemma~\ref{lem:bcfcritpsat}. The module $W$ is spherical for $G'$ if and only if $m>2$.  

In this section, we prove the following proposition.

\begin{prop}\label{prop:case21}
The $\Tad$-module $\Vggp$ is multiplicity-free and its $\Tad$-weight set is $\{\alpha_1, \alpha', \alpha_1''\}$.  In particular, $\dim \Vggp = d_W$. Consequently, $\dim T_{X_0}\tM^G_{\wm} = d_W$.
\end{prop}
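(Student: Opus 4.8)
The plan is to show that $\Vggp$ is multiplicity-free with $\Tad$-weight set exactly $\{\alpha_1,\alpha',\alpha''_1\}$, so that $\dim\Vggp = 3 = d_W$; the proposition then follows formally, since $T_{X_0}\tM^G_{\wm}\into\Vggp$ by Corollary~\ref{cor:injtan} (together with Lemma~\ref{lem:Tadweights}(\ref{inclVgs})) and Corollary~\ref{cor:apriori} upgrades the resulting inequality $\dim T_{X_0}\tM^G_{\wm}\le d_W$ to equality. Recall that by Remark~\ref{rem:VggpG} the $\Tad$-module $\Vggp=(\Vg)^{G'_{x_0}}$ depends only on $(\overline{G},W)$, so I may compute it once and for all using $G'=\SL(m)\times\SL(2)\times\SL(n)$. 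As a $G'$-module, $V$ is the sum of the five pieces $V(\om_1+\om')\isom\CC^m\otimes\CC^2$, $V(\om'+\om''_1)\isom\CC^2\otimes\CC^n$, $V(\om_1+\om''_1)\isom\CC^m\otimes\CC^n$, $V(\om_2)\isom\bigwedge^2\CC^m$ and $V(\om''_2)\isom\bigwedge^2\CC^n$; note that each summand is nontrivial for at most two of the three factors and has $\CC^2$ as its $\SL(2)$-content, so the coefficient of $\alpha'$ in every $\Tad$-weight of $V$ is at most $1$, and no $\Tad$-weight of $V$ involves all three factors.

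First I would cut the list of candidate weights down to a short finite set. Let $[v]\in\Vggp$ be a nonzero $\Tad$-eigenvector of weight $\gamma$. By Lemma~\ref{lem:Tadweights}(\ref{weightinvar}) applied to $H=G'$, $\gamma$ lies in $p(\wg)$. The crucial observation is that $p(\wg)$ is generated by the restrictions to $G'$ of the elements of $E$, hence by weights supported on $\{\om_1,\om_2,\om',\om''_1,\om''_2\}$ only; consequently every element of $p(\wg)$ has vanishing coefficient on all other fundamental weights. This support condition, combined with the $\alpha'$- and three-factor restrictions noted above and with Lemma~\ref{lem:tangentcrit}(\ref{BBB}) (there is a simple root $\delta$ with $X_\delta v\neq 0$ and $\gamma-\delta\in R^+\cup\{0\}$), forces $\gamma$ into the list
\[
\{\alpha_1,\ \alpha',\ \alpha''_1,\ \alpha_1+\alpha',\ \alpha'+\alpha''_1,\ \alpha_1+\alpha''_1\}.
\]
Here one checks directly that chains such as $\alpha_1+\alpha_2$ or $\alpha''_1+\alpha''_2$ are ruled out because they involve $\om_3$ or $\om''_3$ (with the harmless degenerations when $m$ or $n$ is small verified separately), that $2\alpha'$, $2\alpha_1$, $2\alpha''_1$ are not $\Tad$-weights of $V$ at all, and that $\alpha_1+\alpha'+\alpha''_1$ is excluded since no summand of $V$ couples all three factors.

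Next I would exclude the three ``mixed'' sums $\alpha_1+\alpha'$, $\alpha'+\alpha''_1$, $\alpha_1+\alpha''_1$, which is the main obstacle: each of these does lie in $p(\wg)$ and survives the necessary conditions of Lemma~\ref{lem:tangentcrit}, so only an explicit computation of $\fg'_{x_0}$-invariance disposes of them. For instance, for $\gamma=\alpha_1+\alpha'$ the corresponding weight space of $V$ is the line spanned by $v=X_{-\alpha_1}X_{-\alpha'}v_{\om_1+\om'}$; applying the raising operator $X_{\alpha'}\in\fu\inn\fg'_{x_0}$ gives $X_{\alpha'}v\in\CC\,X_{-\alpha_1}v_{\om_1+\om'}$, a vector of $\Tad$-weight $\alpha_1$. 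But the weight-$\alpha_1$ line of $\fg\cdot x_0$ is $\CC\,X_{-\alpha_1}x_0=\CC\,(X_{-\alpha_1}v_{\om_1+\om'}+X_{-\alpha_1}v_{\om_1+\om''_1})$, because $X_{-\alpha_1}$ annihilates $v_{\om_2}$, $v_{\om'+\om''_1}$ and $v_{\om''_2}$; since $X_{-\alpha_1}v_{\om_1+\om'}$ has zero component in the summand $V(\om_1+\om''_1)$ whereas $X_{-\alpha_1}x_0$ does not, we get $X_{\alpha'}v\notin\fg\cdot x_0$, so $[v]$ is not $\fg'_{x_0}$-invariant. The same coupling argument, using $X_{\alpha_1}$ (respectively $X_{\alpha'}$) and the weight-$\alpha''_1$ line $\CC\,X_{-\alpha''_1}x_0$, eliminates $\alpha_1+\alpha''_1$ and $\alpha'+\alpha''_1$. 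Finally, for each surviving weight I would produce the invariant and check it is one-dimensional: for $\alpha_1$ the space $(\Vg)_{\alpha_1}$ is the line $[X_{-\alpha_1}v_{\om_1+\om'}]=-[X_{-\alpha_1}v_{\om_1+\om''_1}]$, and a routine verification (all positive-root operators carry it into $\ft\cdot x_0$ or to $0$, and the operators $X_{-\eta}$ with $\eta\in E^\perp$ annihilate it) shows it lies in $\Vggp$; symmetrically for $\alpha'$ and $\alpha''_1$. This yields that $\Vggp$ is multiplicity-free with weight set $\{\alpha_1,\alpha',\alpha''_1\}$, completing the argument.
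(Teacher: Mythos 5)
Your overall architecture coincides with the paper's: reduce to computing $\Vggp$ once and for all via Remark~\ref{rem:VggpG}, bound the possible $\Tad$-weights by membership in $p(\wg)$ (Lemma~\ref{lem:Tadweights}(\ref{weightinvar})) together with Lemma~\ref{lem:tangentcrit}, check one-dimensionality modulo $\fg\cdot x_0$, and close with Corollary~\ref{cor:apriori}. Your explicit exclusions of the mixed weights $\alpha_1+\alpha'$, $\alpha'+\alpha''_1$, $\alpha_1+\alpha''_1$ are correct, and the coupling mechanism you use (the relevant weight line of $\fg\cdot x_0$ is $\CC X_{-\sigma}x_0$, which has nonzero components in two summands, while the image of your eigenvector lives in one) is exactly the mechanism of the paper's Lemma~\ref{lem:nomix21}; the paper just runs it once for all mixed sums of two simple roots instead of only for the three that survive the lattice test. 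The invariance verifications for the surviving weights $\alpha_1,\alpha',\alpha''_1$ are fine (and strictly unnecessary, since Corollary~\ref{cor:apriori} already forces the weight set to have size $d_W=3$ once the upper bound is in place).

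There is, however, a genuine gap in your reduction to the six-element candidate list, and it sits precisely where you wrote ``with the harmless degenerations when $m$ or $n$ is small verified separately.'' These degenerations are not harmless. For $m=3$ one has $\omega_3=0$, so $\alpha_1+\alpha_2=\omega_1+\omega_2$ passes your support test and passes Lemma~\ref{lem:tangentcrit} (take $\delta=\alpha_1$, of image weight $\alpha_2\in R^+\setminus E^{\perp}$); worse, writing $e_i,f_j,g_k$ for the standard bases, the class of $v=e_3\otimes f_1+e_3\otimes g_1+c\,e_2\wedge e_3$ (suitable $c$) is invariant under the whole Lie algebra $\fg'_{x_0}$, because every raising operator carries it into $\fg\cdot x_0$ and $\alpha_1+\alpha_2$ lies in the $\QQ$-span of $p(E)$, hence vanishes on $\ft'_{x_0}$. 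So no ``routine'' infinitesimal check can exclude it: what kills it is the integral statement $\alpha_1+\alpha_2\notin p(\wg)$, equivalently $\omega_1\notin\<\omega_1+\om',\,\om'+\om''_1,\,\omega_1+\om''_1\>_{\ZZ}$, which the paper isolates as Lemma~\ref{lem:noomone} (an index-two computation, via $\det A=2$) and feeds into the group-level Lemma~\ref{lem:Tadweights}(\ref{weightinvar}) --- it is the component group of $T'_{x_0}$ that moves the vector. Your proposal never states or proves this fact, and the same lemma is needed for $\alpha_2$ and $\alpha_1+\alpha_2+\alpha'$ when $m=3$. In addition, for $m=4$ the weight $\alpha_1+2\alpha_2+\alpha_3=2\omega_2$ of $V(\omega_2)$ (eigenvector $e_3\wedge e_4$) passes the support test, passes Lemma~\ref{lem:tangentcrit}, \emph{and} lies in $p(\wg)$ since $\omega_2\in p(E)$, so it is a seventh candidate that even Lemma~\ref{lem:noomone} does not remove; it must be excluded by a direct computation, e.g. $X_{\alpha_2}(e_3\wedge e_4)=e_2\wedge e_4$ is not proportional to $X_{-\alpha_1-\alpha_2-\alpha_3}x_0$, the weight line of $\fg\cdot x_0$. (Symmetric remarks apply for $n\in\{3,4\}$.) All of this is repairable with your own coupling technique plus the missing lattice lemma, but as written the candidate-reduction step fails at these ranks, so the proof is incomplete exactly where the paper's dedicated Lemma~\ref{lem:noomone} does its work.
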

The proof will be given after a few lemmas we need.
We introduce some notation. $G^1 := \SL(m)$, $G^2 := \SL(2)$ and $G^3:= \SL(n)$, so that $G'=G^1 \times G^2 \times G^3$ is the semisimple part of $\overline{G}$. For $i=1,2,3$ we denote  $T^i$ the projection of  the maximal torus $T' = T\cap G$ of  $G'$ to $G^i$. Then $T^i$ is a maximal torus of $G^i$. Note that $\Tad = \Tad^1 \times \Tad^2 \times \Tad^3$ for the adjoint tori $\Tad^i$ of $G_i$ ($i=1,2,3$). Let  $\Lambda^i_R$ be the root lattice of $G_i$, and note that $\Lambda_R = \Lambda_R^1 \oplus \Lambda_R^2 \oplus \Lambda_R^3$. 

\begin{lemma} \label{lem:nomix21}
The $\Tad$-weights occurring in $\Vggp$ belong to $(\Lambda_R^1 \oplus 0 \oplus 0) \cup (0 \oplus \Lambda_R^2 \oplus 0) \cup (0 \oplus 0 \oplus \Lambda^2_R)$.
\end{lemma}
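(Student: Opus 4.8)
The plan is to rule out ``mixed'' $\Tad$-weights exactly as in the proofs of Lemma~\ref{lem:nomix} and Lemma~\ref{lem:nomix18}, adapting the argument to the three-factor situation $G' = G^1 \times G^2 \times G^3$. Suppose $v \in V$ is a $\Tad$-eigenvector such that $[v] \in \Vggp$ is nonzero and has a weight $\gamma$ lying outside $(\Lambda_R^1 \oplus 0 \oplus 0) \cup (0 \oplus \Lambda_R^2 \oplus 0) \cup (0 \oplus 0 \oplus \Lambda_R^3)$. Then $\gamma$ has nonzero component in at least two of the three summands $\Lambda_R^1, \Lambda_R^2, \Lambda_R^3$. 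For each factor $G^k$ in which the component of $\gamma$ is nonzero, the vector $v$ is not $U^k$-invariant, so there is a simple root $\eta_k$ of $G^k$ with $X_{\eta_k} v \neq 0$; moreover $X_{\eta_k} v \in \fg\cdot x_0$ by $[v] \in \Vggp$ and $\eta_k \in \fg'_{x_0}$ (recall $\fu \inn \fg'_{x_0}$ from Lemma~\ref{lem:tangentcrit}).

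The key step is then the same local analysis as in Lemma~\ref{lem:nomix18}: by Lemma~\ref{lem:tangentcrit}(\ref{AAA}) applied to $X_{\eta_k} v$, using that $R^+ = R^+_1 \cup R^+_2 \cup R^+_3$ is a disjoint union over the three factors and that $\gamma$ has a nonzero component in a factor different from $G^k$, one forces $\gamma = \eta_k + \eta_l$ for two simple roots $\eta_k, \eta_l$ belonging to distinct factors (the $\Tad$-weight of $X_{\eta_k}v$ must be a positive root lying in a factor other than that of $\eta_k$). With $\gamma$ of this mixed shape, the relation $X_{\eta_l} v \in \CC\, X_{-\eta_k} x_0$ holds. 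First I would compute $X_{-\eta_k} x_0$ explicitly using $x_0 = \sum_{\lambda \in E} v_\lambda$ and the list $E = \{\omega_1+\omega', \omega'+\omega''_1, \omega_1+\omega''_1, \omega_2, \omega''_2\}$, keeping track of which simple-root lowering operators act nontrivially on which summands $V(\lambda)$.

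The main obstacle will be the bookkeeping: because the three representations $\CC^m \otimes \CC^2$, $\CC^2 \otimes \CC^n$ overlap in the $\SL(2)$-factor, the weights in $E$ couple all three factors, so I must check each possible mixed pair $(\eta_k, \eta_l)$ (types $\{1,2\}$, $\{1,3\}$, $\{2,3\}$, up to symmetry) and derive a contradiction in each. In every case the contradiction should come from the same principle used in Lemma~\ref{lem:nomix} and Lemma~\ref{lem:nomix18}: if $X_{\eta_l} v$ is a nonzero multiple of $X_{-\eta_k} x_0$, then some fundamental representation of the factor containing $\eta_l$ would have to contain a weight vector that maps under $X_{\eta_l}$ onto a highest-weight vector of a summand where no such preimage exists, or $\gamma$ would fail to lie in $\<E\>_{\ZZ}$ (which it must, by Lemma~\ref{lem:Tadweights}(\ref{weightinvar})). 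The cleanest route is probably to verify that $\gamma = \eta_k + \eta_l \notin p(\wg) = \Lambda_{(G',W^*)}$ for each mixed pair, which immediately contradicts $[v] \in \Vggp$ via Lemma~\ref{lem:Tadweights}(\ref{weightinvar}); this reduces the whole lemma to a finite, explicit check on the group $\<E\>_{\ZZ}$ and its intersection with the mixed sublattices of $\Lambda_R$.

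Once the mixed weights are excluded, the lemma follows directly, and (as in Sections~\ref{subsec:case17} and~\ref{subsec:case18}) this decomposition $\Vggp = \Vggp_{\Lambda^1_R} \oplus \Vggp_{\Lambda^2_R} \oplus \Vggp_{\Lambda^3_R}$ sets up the factor-by-factor computation of $\Vggp$ needed to prove Proposition~\ref{prop:case21}.
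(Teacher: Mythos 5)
Your reduction is sound and matches the paper's first step: a hypothetical mixed weight must have the form $\gamma = \eta_k + \eta_l$ with $\eta_k,\eta_l$ simple roots of two distinct factors, and then $X_{\eta_l}v$ is a nonzero element of the line $\CC X_{-\eta_k}x_0$. The genuine gap is in the route you designate as cleanest. The lattice test via Lemma~\ref{lem:Tadweights}(\ref{weightinvar}) fails for precisely the mixed pairs that matter: writing $\lambda_1 = \omega_1+\omega'$, $\lambda_2 = \omega'+\omega''_1$, $\lambda_3 = \omega_1+\omega''_1$, one has in $X(T')$
\[
\alpha_1 + \alpha' = 2\lambda_1 - \omega_2, \qquad \alpha' + \alpha''_1 = 2\lambda_2 - \omega''_2, \qquad \alpha_1 + \alpha''_1 = 2\lambda_3 - \omega_2 - \omega''_2
\]
(with $\omega_2 = 0$ if $m=2$ and $\omega''_2 = 0$ if $n=2$), and since $\omega_2, \omega''_2 \in p(E)$, all three of these mixed weights \emph{do} lie in $p(\wg)$. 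So the ``finite explicit check on $\<E\>_{\ZZ}$'' comes back empty exactly where it is needed. (It does dispose of mixed pairs involving $\alpha_i$ with $i\ge 2$ or $\alpha''_j$ with $j\ge 2$, but those are excluded more directly anyway, since all $\Tad$-weights of $V(\omega_2)$ lie in $\Lambda_R^1$ and all those of $V(\omega''_2)$ lie in $\Lambda_R^3$.)

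The fallback you mention only in passing is in fact the necessary argument, and it is what the paper does: enumerate the simple roots $\sigma'$ with $X_{-\sigma'}x_0 \neq 0$ (namely $\alpha_1,\alpha_2,\alpha',\alpha''_1,\alpha''_2$) and inspect the line $\CC X_{-\sigma'}x_0$ in each case. For instance, for $\sigma' = \alpha'$ one has
\[
X_{-\alpha'}x_0 = v_{\omega_1}\otimes(X_{-\alpha'}v_{\omega'}) + (X_{-\alpha'}v_{\omega'})\otimes v_{\omega''_1},
\]
with nonzero components in the summands $V(\omega_1+\omega')$ and $V(\omega'+\omega''_1)$; if $X_{\sigma}v$ spanned this line with $\sigma$ a simple root of $G^1$ or $G^3$, then $\sigma$ would have to occur as a $\Tad$-weight in both $V(\omega_1)$ and $V(\omega''_1)$, which is impossible since those weights lie in $\Lambda_R^1$ and $\Lambda_R^3$ respectively; the cases $\sigma' = \alpha_1, \alpha''_1$ are analogous, and $\sigma' = \alpha_2, \alpha''_2$ are ruled out as above. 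So your plan is salvageable, but only by carrying out the representation-theoretic case analysis you deferred, not by the lattice computation you chose as the main route.
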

\begin{proof}
By the same argument as in Lemma~\ref{lem:nomix}, if $[v] \in \Vggp$ is a $\Tad$-eigenvector contradicting the lemma, then its weight is $\sigma + \sigma'$, where $\sigma$ is a simple root for $G_i$ and $\sigma'$ is a simple root of $G_j$ with $1\le i< j \le 3$ and moreover 
$X_{-\sigma} v$ is a nonzero element of  the line spanned by $X_{\sigma'} x_0$. 

Looking at the set $E$, the only simple roots $\sigma'$ so that $X_{\sigma'} x_0 \neq 0$ are $\alpha_1, \alpha_2, \alpha', \alpha_1'', \alpha_2''$. 
We can immediately rule out $\sigma'= \alpha_2$ and $\sigma'=\alpha_2''$ since all the $\Tad$-weights in $V(\omega_2)$ belong to $\Lambda^1_R$ and those in $V(\omega''_2)$ belong to $\Lambda^2_R$,
We can also assume $\sigma' \neq \alpha_1$ because $j>1$. 
 
Next, $\sigma' = \alpha'$ also leads to a contradiction. Indeed, 
\[X_{\alpha'} x_0 =  v_{\omega_1} \otimes (X_{\alpha'} v_{\omega'}) + (X_{\alpha'} v_{\omega'})  \otimes v_{\omega_1''}\]
and there is no simple root (which would be $\sigma$) that occurs as a $\Tad$-weight in both $V(\omega_1)$ and $V(\omega_1'')$. An analogous argument excludes $\sigma' = \alpha_1''$.
\end{proof}

\begin{lemma} \label{lem:noomone}
None of $\omega_1, \omega'$ and $\omega_1''$ belong to
\[ \<\omega_1 + \omega', \omega'+\omega_1'', \omega_1+\omega_1''\>_{\ZZ} \inn X(T').\]
\end{lemma}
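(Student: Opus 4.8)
The plan is to exhibit a single $\ZZ$-linear functional on $X(T')$ that takes an even value on each of the three generators of the lattice $L := \<\omega_1+\omega', \omega'+\omega_1'', \omega_1+\omega_1''\>_{\ZZ}$ but an odd value on each of $\omega_1$, $\omega'$, $\omega_1''$. The structural fact that makes this work is that $\omega_1$, $\omega'$ and $\omega_1''$ are fundamental weights of the three \emph{distinct} almost simple factors $G^1 = \SL(m)$, $G^2 = \SL(2)$, $G^3 = \SL(n)$ of $G'$. Since $T' = T^1 \times T^2 \times T^3$, we have $X(T') = X(T^1) \oplus X(T^2) \oplus X(T^3)$, so these three weights are part of a $\ZZ$-basis of $X(T')$ and in particular $\ZZ$-linearly independent.

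Concretely, I would set $v := \alpha_1^{\vee} + (\alpha')^{\vee} + (\alpha_1'')^{\vee}$, an element of the coweight lattice $\Hom_{\ZZ}(X(T'),\ZZ)$, and record the pairings using $\<\omega_i, \alpha_j^{\vee}\> = \delta_{ij}$ together with the fact that a coroot of one factor annihilates the weights of the other factors. This yields $\<\omega_1, v\> = \<\omega', v\> = \<\omega_1'', v\> = 1$, while $\<\omega_1+\omega', v\> = \<\omega'+\omega_1'', v\> = \<\omega_1+\omega_1'', v\> = 2$. Hence the homomorphism $\<\cdot, v\>\colon X(T') \to \ZZ$ sends every generator of $L$ into $2\ZZ$, and therefore sends all of $L$ into $2\ZZ$.

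Finally, since $\<\omega_1, v\> = \<\omega', v\> = \<\omega_1'', v\> = 1$ is odd, none of $\omega_1$, $\omega'$, $\omega_1''$ can lie in $L$, which is exactly the assertion. There is no genuine obstacle here: the statement reduces to a parity count, and the only thing to get right is the choice of separating coroot, which is forced by the requirement that it pair to $1$ with each of the three target weights. Equivalently, writing an element of $L$ as $x(\omega_1+\omega') + y(\omega'+\omega_1'') + z(\omega_1+\omega_1'')$, the coefficients of $\omega_1, \omega', \omega_1''$ are $x+z$, $x+y$, $y+z$, whose sum $2(x+y+z)$ is even, whereas each of $\omega_1, \omega', \omega_1''$ has coefficient-sum $1$.
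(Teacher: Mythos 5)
Your proof is correct, but it takes a genuinely different route from the paper's. The paper expresses the three generators $\lambda_1 = \omega_1+\omega'$, $\lambda_2 = \omega'+\omega_1''$, $\lambda_3 = \omega_1+\omega_1''$ in the basis $(\omega_1, \omega', \omega_1'')$ of the relevant rank-$3$ sublattice of $X(T')$, computes that the coefficient matrix has determinant $2$, concludes that $\Gamma := \<\lambda_1,\lambda_2,\lambda_3\>_{\ZZ}$ is a \emph{strict} subgroup of $\<\omega_1,\omega',\omega_1''\>_{\ZZ}$, and then uses a symmetry step: if $\omega_1$ were in $\Gamma$, then so would $\omega' = \lambda_1 - \omega_1$ and $\omega_1'' = \lambda_3 - \omega_1$ (the paper writes $\lambda_2 - \omega_1$ here, a harmless typo), whence $\Gamma$ would be the whole lattice, contradicting strictness. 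You instead exhibit the obstruction directly: the coweight $v = \alpha_1^{\vee} + (\alpha')^{\vee} + (\alpha_1'')^{\vee}$ pairs to $2$ with each generator, hence evenly with all of $L$, but to $1$ with each of $\omega_1, \omega', \omega_1''$. The two arguments exploit the same underlying fact, namely that $L$ has index $2$ in the rank-$3$ lattice; your functional is essentially the mod-$2$ character cutting out $L$, which the paper's determinant computation detects only implicitly. What your version buys is directness: each of the three exclusions is certified independently by a one-line parity count, with no need for the strictness-plus-symmetry detour, and your closing coefficient-sum reformulation ($x+z$, $x+y$, $y+z$ summing to the even number $2(x+y+z)$, versus coefficient-sum $1$ for each target weight) makes the parity completely transparent. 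Both proofs rest on the $\ZZ$-linear independence of $\omega_1, \omega', \omega_1''$ in $X(T') = X(T^1)\oplus X(T^2)\oplus X(T^3)$, which you correctly justify from the fact that these are fundamental weights of the three distinct almost simple factors; this also validates your implicit use of $\<\omega_i, \alpha_j^{\vee}\> = \delta_{ij}$ within each factor and the vanishing of pairings across factors.
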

\begin{proof}
Put $\lambda_1 := \omega_1 + \omega'$,  $\lambda_2 := \omega'+\omega_1''$, $\lambda_3 :=  \omega_1+\omega_1''$ and $\Gamma := \<\lambda_1, \lambda_2, \lambda_3\>_{\ZZ}$.

Put \[
A:= \begin{pmatrix}1&0&1 \\
1&1&0 \\
0&1&1
  \end{pmatrix}\]
so that $\begin{pmatrix}\lambda_1 & \lambda_2 & \lambda_3 \end{pmatrix} = 
\begin{pmatrix} 
\omega_1 & \omega' & \omega_1''
\end{pmatrix}
\cdot A.
$
Since $\det(A)=2$, $\Gamma$ is a strict subgroup of $\<\omega_1, \omega', \omega_2''\>_{\ZZ}$.

Now, if $\omega_1$ were an element of $\Gamma$, then so would $\omega' = \lambda_2-\omega_1$ and $\omega_1''= \lambda_3-\omega_1$, contradicting that the inclusion $\Gamma \inn \<\omega_1,\omega', \omega_1''\>_{\ZZ}$ is strict. By the same argument, $\omega'$ and $\omega_1''$ do not belong to $\Gamma$. 
\end{proof}

\begin{proof}[Proof of proposition~\ref{prop:case21}]
We have that
\[p(\wg) = \<\omega_1+\omega', \omega'+\omega_1'', \omega_1+\omega_1'', \omega_2, \omega_2''\>_{\ZZ} \inn X(T'),\]
where $\omega_2 = 0$ if $m=2$ and $\omega_2''=0$ if $n=2$. 
By Lemma~\ref{lem:nomix21} we know that the $\Tad$-weights in $\Vggp$ belong to $\Lambda^1_R$, $\Lambda^2_R$ or $\Lambda^3_R$. 

We start by considering the $\Tad$-weights in $V$ that belong to $\Lambda^1_R$. They are
\begin{align*}
&\alpha_1, \alpha_1+\alpha_2, \ldots, \alpha_1 + \alpha_2 + \ldots \alpha_{m-1},\\
&\alpha_2+\alpha_3,\alpha_2+ \alpha_3+\alpha_4, \ldots ,\alpha_2 + \ldots + \alpha_{m-1}
\end{align*}
Using that the image of $p(\wg)$ under $X(T') \onto X(T^1)$ is a subgroup of $\<\omega_1, \omega_2\>_{\ZZ}$ we see that among these $\Tad^1$-weights in $V$ only the following \emph{can} belong to $p(\wg)$:
\begin{align*}
&\alpha_1 && \text{if $m\neq 3$};\\
&\alpha_1, \alpha_1+ \alpha_2 &&\text{if $m=3$}. 
\end{align*}
Furthermore, even when $m=3$, $\alpha_1 + \alpha_2 \notin p(\wg)$. Indeed, since
\[\alpha_1 = (\omega_1+\omega') + (\omega_1+\omega_2'') - (\omega'+\omega_1'')-\omega_2\] 
belongs to $p(\wg)$, since $\omega_2 \in p(\wg)$, and since we know from Lemma~\ref{lem:noomone} that $\omega_1 \notin p(\wg)$ and so 
 $\alpha_2=-\omega_1+2\omega_2 \notin p(\wg)$, 
it follows that $\alpha_1+ \alpha_2 \notin p(\wg)$. 
This proves that for all $m$ and $n$, the only possible $\Tad^1$-weight in $\Vggp$ is $\alpha_1$. Since the eigenspace of $V$ of weight $\alpha_1$ has dimension $2$ and the eigenspace of $\fg\cdot x_0$ of that weight has dimension $1$, $\alpha_1$ occurs with multiplicity at most $1$ in $\Vggp$. 

The argument for the $\Tad$-weights in $\Lambda_R^3$ is identical. For those in $\Lambda_R^2$ it is even simpler. This proves that the $\Tad$-module $\Vggp$ is multiplicity-free and that its $\Tad$-weight set is a subset of $\{\alpha_1, \alpha', \alpha_1''\}$. As always, equality follows from  
Corollary~\ref{cor:apriori}. 
\end{proof}
\begin{remark}
The $\Tad$-eigenspace of $\Vggp$ of weight $\alpha_1$ is spanned by the vector
\[[X_{-\alpha_1}v_{\omega_1+\omega'}] = -[X_{-\alpha_1}v_{\omega_1+\omega_1''}] \in \Vg.\]
Indeed, it is not hard to verify that the vector is fixed by $\fg'_{x_0}$. Clearly, it has $\Tad$-weight $\alpha_1$. The other two eigenspaces have similar descriptions.  
\end{remark}

\section{Acknowledgements}
The authors thank S\'ebastien Jansou for introducing them to this beautiful subject, and Morgan Sherman for useful discussions at the start of this project. We thank Michel Brion for  several informative conversations and in particular for suggesting the strategy to prove that certain sections of the normal sheaf do not extend (Section~\ref{sec:Brionstrat}). Finally, we thank the referee for several valuable comments and suggestions. We also benefited from experiments with the computer algebra program {\it Macaulay2}~\cite{M2}. 

S. P. was supported by the Portuguese Funda\c{c}\~ao para a Ci\^encia e a
Tecno\-lo\-gia through Grant
SFRH/BPD/22846/2005 of POCI2010/FEDER and through Project 
\newline PTDC/MAT/099275/2008.

B.~V.~S. received support from the Portuguese Funda\c{c}\~ao para a Ci\^encia e a
Tecno\-lo\-gia through Grant SFRH/BPD/21923/2005 and through Project POCTI/FEDER, as well as from The City University of New York PSC-CUNY Research Award Program.


\def\cprime{$'$}
\providecommand{\bysame}{\leavevmode\hbox to3em{\hrulefill}\thinspace}
\providecommand{\MR}{\relax\ifhmode\unskip\space\fi MR }
\providecommand{\MRhref}[2]{%
  \href{http://www.ams.org/mathscinet-getitem?mr=#1}{#2}
}
\providecommand{\href}[2]{#2}


\begin{thebibliography}{{Cup}10}

\bibitem[AB05]{alexeev&brion-modaff}
Valery Alexeev and Michel Brion, \emph{Moduli of affine schemes with reductive
  group action}, J. Algebraic Geom. \textbf{14} (2005), no.~1, 83--117.
  \MR{MR2092127 (2006a:14017)}

\bibitem[BCF08]{bravi&cupit}
P.~Bravi and S.~Cupit-Foutou, \emph{Equivariant deformations of the affine
  multicone over a flag variety}, Adv. Math. \textbf{217} (2008), no.~6,
  2800--2821. \MR{MR2397467 (2009a:14061)}

\bibitem[BL11]{bravi-luna-f4}
Paolo Bravi and Domingo Luna, \emph{An introduction to wonderful varieties with
  many examples of type {$\mathsf{F}_4$}}, J. Algebra \textbf{329} (2011),
  no.~1, 4--51.

\bibitem[BR96]{benson-ratcliff-mf}
Chal Benson and Gail Ratcliff, \emph{A classification of multiplicity free
  actions}, J. Algebra \textbf{181} (1996), no.~1, 152--186. \MR{MR1382030
  (97c:14046)}

\bibitem[Bra10]{bravi-cirmclass}
Paolo Bravi, \emph{Classification of spherical varieties}, Les cours du CIRM
  \textbf{1} (2010), no.~1, 99--111.

\bibitem[Bri97]{brion-spheriques}
Michel Brion, \emph{Vari\'et\'es sph\'eriques}, notes de la session de la
  Soci\'et\'e Math\'ematique de France ``Op\'erations hamiltoniennes et
  op\'erations de groupes alg\'ebriques,'' Grenoble,
  \url{http://www-fourier.ujf-grenoble.fr/~mbrion/notes.html}, 1997.

\bibitem[Bri10]{brion-cirmactions}
\bysame, \emph{Introduction to actions of algebraic groups}, Les cours du CIRM
  \textbf{1} (2010), no.~1, 1--22.

\bibitem[Bri11]{brion-ihs-arxivv2}
\bysame, \emph{Invariant {H}ilbert schemes}, arXiv:1102.0198v2 [math.AG], 2011.

\bibitem[Cam01]{camus}
Romain Camus, \emph{Vari\'et\'es sph\'eriques affines lisses}, Ph.D. thesis,
  Institut Fourier, Grenoble, 2001.

\bibitem[{Cup}09]{cupit-ihswv-prep}
S.~{Cupit-Foutou}, \emph{Invariant {H}ilbert schemes and wonderful varieties},
  arXiv:0811.1567v2 [math.AG], 2009.

\bibitem[{Cup}10]{cupit-wvgr-prep}
\bysame, \emph{Wonderful varieties: a geometrical realization},
  arXiv:0907.2852v3 [math.AG], 2010.

\bibitem[Del90]{delzant-rank2}
Thomas Delzant, \emph{Classification des actions hamiltoniennes compl\`etement
  int\'egrables de rang deux}, Ann. Global Anal. Geom. \textbf{8} (1990),
  no.~1, 87--112. \MR{MR1075241 (92f:58078)}

\bibitem[EH00]{geomschemes}
David Eisenbud and Joe Harris, \emph{The geometry of schemes}, Graduate Texts
  in Mathematics, vol. 197, Springer-Verlag, New York, 2000. \MR{MR1730819
  (2001d:14002)}

\bibitem[GS]{M2}
Daniel~R. Grayson and Michael~E. Stillman, \emph{Macaulay2, a software system
  for research in algebraic geometry}, available at {\tt
  http://www.math.uiuc.edu/Macaulay2/}.

\bibitem[HU91]{howe-umeda}
Roger Howe and T{\=o}ru Umeda, \emph{The {C}apelli identity, the double
  commutant theorem, and multiplicity-free actions}, Math. Ann. \textbf{290}
  (1991), no.~3, 565--619. \MR{MR1116239 (92j:17004)}

\bibitem[Hum75]{humphreys-lag}
James~E. Humphreys, \emph{Linear algebraic groups}, Springer-Verlag, New York,
  1975, Graduate Texts in Mathematics, No. 21. \MR{MR0396773 (53 \#633)}

\bibitem[Jan07]{jansou-deformations}
S{\'e}bastien Jansou, \emph{D\'eformations des c\^ones de vecteurs primitifs},
  Math. Ann. \textbf{338} (2007), no.~3, 627--667. \MR{MR2317933 (2008d:14069)}

\bibitem[Kac80]{kac-rmks}
V.~G. Kac, \emph{Some remarks on nilpotent orbits}, J. Algebra \textbf{64}
  (1980), no.~1, 190--213. \MR{MR575790 (81i:17005)}

\bibitem[Kno96]{knop-auto}
Friedrich Knop, \emph{Automorphisms, root systems, and compactifications of
  homogeneous varieties}, J. Amer. Math. Soc. \textbf{9} (1996), no.~1,
  153--174. \MR{MR1311823 (96c:14037)}

\bibitem[Kno98]{knop-rmks}
\bysame, \emph{Some remarks on multiplicity free spaces}, Representation
  theories and algebraic geometry (Montreal, PQ, 1997), NATO Adv. Sci. Inst.
  Ser. C Math. Phys. Sci., vol. 514, Kluwer Acad. Publ., Dordrecht, 1998,
  pp.~301--317. \MR{MR1653036 (99i:20056)}

\bibitem[Kno11]{knop-autoHam}
\bysame, \emph{Automorphisms of multiplicity free hamiltonian manifolds}, J.
  Amer. Math. Soc. \textbf{24} (2011), no.~2, 567--601.

\bibitem[Lea98]{leahy}
Andrew~S. Leahy, \emph{A classification of multiplicity free representations},
  J. Lie Theory \textbf{8} (1998), no.~2, 367--391. \MR{MR1650378
  (2000g:22024)}

\bibitem[Los09a]{losev-knopconj}
Ivan~V. Losev, \emph{Proof of the {K}nop conjecture}, Ann. Inst. Fourier
  (Grenoble) \textbf{59} (2009), no.~3, 1105--1134. \MR{MR2543664}

\bibitem[Los09b]{losev-uniqueness}
\bysame, \emph{Uniqueness property for spherical homogeneous spaces}, Duke
  Math. J. \textbf{147} (2009), no.~2, 315--343. \MR{MR2495078 (2010c:14055)}

\bibitem[Nor65]{northcott-syz}
D.~G. Northcott, \emph{Syzygies and specializations}, Proc. London Math. Soc.
  (3) \textbf{15} (1965), 1--25. \MR{MR0169892 (30 \#135)}

\bibitem[Pan97]{panyushev-deform}
Dmitri Panyushev, \emph{On deformation method in invariant theory}, Ann. Inst.
  Fourier (Grenoble) \textbf{47} (1997), no.~4, 985--1012. \MR{MR1488242
  (99a:13001)}

\bibitem[Pez10]{pezzini-cirmspher}
Guido Pezzini, \emph{Lectures on spherical and wonderful varieties}, Les cours
  du CIRM \textbf{1} (2010), no.~1, 33--53.

\bibitem[Pop86]{popov-contractions}
V.~L. Popov, \emph{Contractions of actions of reductive algebraic groups}, Mat.
  Sb. (N.S.) \textbf{130(172)} (1986), no.~3, 310--334, 431, English
  translation in Math. USSR-Sb. 58 (1987), no. 2, 311--335. \MR{MR865764
  (88c:14065)}

\bibitem[TY05]{tauvel-yu-laag}
Patrice Tauvel and Rupert W.~T. Yu, \emph{Lie algebras and algebraic groups},
  Springer Monographs in Mathematics, Springer-Verlag, Berlin, 2005.
  \MR{MR2146652 (2006c:17001)}

\bibitem[VP72]{vin&pop-quasi}
{\`E}.~B. Vinberg and V.~L. Popov, \emph{A certain class of quasihomogeneous
  affine varieties}, Izv. Akad. Nauk SSSR Ser. Mat. \textbf{36} (1972),
  749--764, English translation in Math. USSR Izv. 6 (1972), 743--758.
  \MR{MR0313260 (47 \#1815)}

\end{thebibliography}
\end{document}